\newcommand{\FI}{\mathbf{FI}}
\newcommand{\gen}{\mathrm{gen}}
\newcommand{\alg}{\mathrm{alg}}
\newcommand{\bone}{\mathbf{1}}
\DeclareMathOperator{\Sh}{Sh}
\DeclareMathOperator{\sh}{sh}
\DeclareMathOperator{\LEx}{LEx}
\DeclareMathOperator{\magn}{magn}
\DeclareMathOperator{\REP}{REP}
\DeclareMathOperator{\Isom}{Isom}
\newcommand{\umu}{\ul{\smash{\mu}}}
\newcommand{\unu}{\ul{\smash{\nu}}}
\newcommand{\usigma}{\ul{\smash{\sigma}}}
\newcommand{\utau}{\ul{\smash{\tau}}}
\newcommand{\lf}{\mathrm{lf}}
\newcommand{\ulambda}{\ul{\smash{\lambda}}}
\newcommand{\stacks}[1]{\cite[\href{http://stacks.math.columbia.edu/tag/#1}{Tag~#1}]{stacks}}
\title[{Stable representation theory: beyond the classical groups}]{Stable representation theory:\\ beyond the classical groups}
\author{Andrew Snowden}
\address{Department of Mathematics, University of Michigan, Ann Arbor, MI}
\email{\href{mailto:asnowden@umich.edu}{asnowden@umich.edu}}
\urladdr{\url{http://www-personal.umich.edu/~asnowden/}}
\thanks{AS was supported by NSF DMS-1453893.}
\date{September 23, 2021}
\begin{document}

\begin{abstract}
The orthogonal groups are a series of simple Lie groups associated to symmetric bilinear forms. There is no analogous series associated to symmetric trilinear forms. We introduce an infinite dimensional group-like object that can be viewed as the limit of this non-existent series, were it to exist. We show that the representation theory of this object is well-behaved, and similar to the stable representation theory of orthogonal groups. Our theory is not specific to symmetric trilinear forms, and applies to any kind of tensorial forms. Our results can be also be viewed from the perspective of semi-linear representations of the infinite general linear group, and are closely related to twisted commutative algebras.
\end{abstract}

\maketitle
\tableofcontents

\section{Introduction}

Bilinear forms are remarkable objects: they have just the right amount of complexity to be tractable and yet still interesting. Their symmetry groups, the orthogonal and symplectic groups, are among the most important objects in mathematics. Trilinear forms, on the other hand, are too complicated. Their symmetry groups are diverse, but generically finite, and do not give rise to new families of simple Lie groups.

It has recently been discovered \cite{bde,polygeom,des} that, somewhat surprisingly, trilinear forms (and higher degree tensorial forms) in infinite dimensions are \emph{less} complicated than their finite dimensional counterparts, and more like bilinear forms. In particular, up to a certain notion of equivalence, there is a unique non-degenerate form of each type (e.g., symmetric trilinear). The purpose of this paper is to introduce group-like objects (called \emph{germinal subgroups}) that capture the symmetry of these forms, and to study their representation theory. We find that this representation theory is very well-behaved, and closely parallels the stable representation theory of the classical groups. Thus, while there is not a family of simple Lie groups attached to, say, symmetric trilinear forms, there is nonetheless a reasonable limiting object.

\subsection{Generalized orbits and stabilizers} \label{ss:intro-gen-orb}

We explain our main ideas and results in the setting of symmetric trilinear forms over the complex numbers to keep the exposition simple. We work more generally in the body of the paper.

Let $X_n=\Sym^3(\bC^n)^*$ be the space of symmetric trilinear forms (i.e., cubic polynomials) in $n$ variables. Also, let $\bC^{\infty}=\bigcup_{n \ge 1} \bC^n$ and $X_{\infty}=\Sym^3(\bC^{\infty})^*$. The set $X_{\infty}$ is the inverse limit of the sets $X_n$, and as such carries the inverse limit topology. (Each $X_n$ is endowed with the discrete topology). Precisely, a sequence $\{\omega_i\}_{i \ge 1}$ in $X_{\infty}$ converges to $\omega$ if for each $n$ we have $\omega_i \vert_{\bC^n} = \omega \vert_{\bC^n}$ for all sufficiently large $i$.

The group $\GL_n(\bC)$ acts on $X_n$, and the group $\GL=\bigcup_{n \ge 1} \GL_n(\bC)$ acts on $X_{\infty}$. The group $\GL$ is, in a sense, too small\footnote{The group $\Aut(\bC^{\infty})$ is much larger than $\GL$, but it is also too small.}. To remedy this, we employ a modification of the concept of orbit: we say that two elements of $X_{\infty}$ belong to the same \defn{generalized orbit} if each belongs to the closure of the orbit of the other. This idea was introduced in a slightly different way in \cite{polygeom}; see \S \ref{ss:glvar} and Remark~\ref{rmk:gen-orb} for details.

We say that an element of $X_{\infty}$ is \defn{degenerate} if it has the form $\sum_{i=1}^n q_i \ell_i$ where $q_i \in \Sym^2(\bC^{\infty})^*$ and $\ell_i \in (\bC^{\infty})^*$, and \defn{non-degenerate} otherwise. The main theorem of \cite{des} asserts that the non-degnerate forms constitute a single generalized orbit. (The paper \cite{des} concerns only symmetric trilinear forms, but this statement was extended to other types of tensorial forms in \cite{bde,polygeom}.)

Just as the usual orbits of $\GL$ are too small, so too are the usual stabilizers. One question we sought to answer in this paper is: what is the right notion of ``generalized stabilizer''? We have come to the following idea. Let $\omega \in X_{\infty}$ be given. For $n \ge 1$, define $\Gamma_{\omega}(n)$ be the set of elements $g \in \GL$ such that $g^{-1}\omega \vert_{\bC^n} = \omega \vert_{\bC^n}$. (The inverse here is simply to make some other definitions cleaner.) Note that $\Gamma_{\omega}(n)$ is typically not a subgroup. We define the \defn{generalized stabilizer} of $\omega$ to be the system $\Gamma_{\omega}=\{\Gamma_{\omega}(n)\}_{n \ge 1}$. One should think of $\Gamma_{\omega}$ as a kind of germ of a neighborhood of the stabilizer of $\omega$. For this reason, we refer to $\Gamma_{\omega}$ as a \emph{germinal subgroup}; see Definition~ \ref{def:asub} for details. We view $\Gamma_{\omega}$ as an analog of the infinite orthogonal group associated to symmetric trilinear forms.

\subsection{Representations of generalized stabilizers} \label{ss:repgs}

Let $\omega \in X_{\infty}$ and $\Gamma_{\omega}$ be as above. We define a \defn{representation} of $\Gamma_{\omega}$ to be a complex vector space $V$ such that each finite dimensional subspace $W \subset V$ is endowed with an action map $\Gamma_{\omega}(n) \times W \to V$, for some $n$ depending on $W$, satisfying certain conditions. A little more precisely, the data defining a representation can be encoded as a linear map
\begin{displaymath}
V \to \varinjlim_{n \to \infty} \Fun(\Gamma_{\omega}(n), V).
\end{displaymath}
Thus for $v \in V$, one can regard $g \mapsto gv$ as the germ of a function on $\GL$, with respect to the system of neighborhoods $\Gamma_{\omega}$.

Every representation of $\GL$ restricts to a representation of $\Gamma_{\omega}$. We say that a representation of $\Gamma_{\omega}$ is \defn{algebraic} if it occurs as a subquotient of the restriction of a polynomial representation of $\GL$. In particular, the standard representaiton $\bC^{\infty}$ of $\GL$ restricts to an algebraic representation of $\Gamma_{\omega}$, which we call the standard representation of $\Gamma_{\omega}$. We let $\Rep^{\alg}(\Gamma_{\omega})$ denote the category of algebraic representations. This is a Grothendieck abelian category equipped with a tensor product.

The primary purpose of this paper is to understand the algebraic representation theory of $\Gamma_{\omega}$ when $\omega$ is non-degenerate. The following is a summary of our findings.
\begin{itemize}
\item Algebraic representations enjoy several finiteness properties:
\begin{itemize}
\item Every algebraic representation is the union of its finite length subrepresentations.
\item The tensor product of two finite length algebraic representations is again finite length.
\item If $V$ and $W$ are finite length algebraic representations then $\Hom_{\Gamma_{\omega}}(V, W)$ is a finite dimensional complex vector space.
\end{itemize}
\item The simple algebraic representations are well-understood:
\begin{itemize}
\item For each partition $\lambda$, there is a simple $L_{\lambda}$, and these exhaust the simples.
\item One can construct $L_{\lambda}$ using a variant of Weyl's traceless tensor construction. Let $T^{[n]}$ be the intersection of the kernels of the maps $(\bC^{\infty})^{\otimes n} \to (\bC^{\infty})^{\otimes (n-3)}$ obtained by applying $\omega$ to three tensor factors. This space carries an action of $\fS_n \times \Gamma_{\omega}$, where $\fS_n$ denotes the symmetric group. The isotypic piece of $T^{[n]}$ corresponding the Specht module $S^{\lambda}$ is exactly $L_{\lambda}$.
\end{itemize}
\item Algebraic representations are well-behaved homologically:
\begin{itemize}
\item The representations $\bS_{\lambda}(\bC^{\infty})$ are exactly the indecomposable injective algebraic representations; in fact, $\bS_{\lambda}(\bC^{\infty})$ is the injective envelope of $L_{\lambda}$.
\item Every finite length algebraic representation has finite injective dimension.
\end{itemize}
\item There is a combinatorial description of the entire category $\Rep^{\alg}(\Gamma_{\omega})$: it is equivariant to a category of representations of a certain variant of the upwards Brauer category.
\item The category $\Rep^{\alg}(\Gamma_{\omega})$ satisfies a universal property. Let $\cC$ be a $\bC$-linear abelian category equipped with a tensor product. Then giving a left-exact symmetric monoidal $\bC$-linear functor $\Rep^{\alg}(\Gamma_{\omega})^{\rf} \to \cC$ is equivalent to giving an object of $\cC$ equipped with a symmetric trilinear form. The notation $(-)^{\rf}$ here denotes the subcategory of finite length objects.
\item The symmetric monoidal category $\Rep^{\alg}(\Gamma_{\omega})$ is independent of $\omega$, up to equivalence.
\end{itemize}

\subsection{Semi-linear representations: motivation}

Recall that if a group $G$ acts on a field $K$ then a \defn{semi-linear representation} of $G$ over $K$ is a $K$-vector space $V$ equipped with an additive action of $G$ such that the equation $g(av)=(ga)(gv)$ holds, for $g \in G$, $a \in K$, and $v \in V$. Semi-linear representations will be a central topic in this paper. To motivate their appearance, we first examine a familiar case.

Let $Y_n=\Sym^2(\bC^n)^*$ be the space of symmetric bilinear forms on $\bC^n$, regarded as an algebraic variety; explicitly, $Y_n=\Spec(S_n)$ where $S_n$ is the polynomial ring $\Sym(\Sym^2(\bC^n))$. Let $Y_n^{\circ}$ be the open subvariety of $Y_n$ consisting of non-degenerate forms. The algebraic group $\GL_n$ acts transitively on $Y_n^{\circ}$. Let $y$ be a closed point of $Y_n^{\circ}$, and let $\bO_n$ be its stabilizer. If $\cF$ is a $\GL_n$-equivariant quasi-coherent sheaf on $Y_n^{\circ}$ then its fiber $\cF(y)$ at $y$ is an algebraic representation of $\bO_n$, and this construction gives an equivalence of categories
\begin{displaymath}
\QCoh(Y_n^{\circ})^{\GL_n} \to \Rep(\bO_n)
\end{displaymath}
In fact, we can get a similar equivalence using the generic point of $Y_n^{\circ}$. If $\cF$ is a $\GL_n$-equivariant quasi-coherent sheaf on $Y_n^{\circ}$ then its generic fiber is a semi-linear representation of $\GL_n$ over $\Frac(S_n)$ that is algebraic (in the sense that it is spanned by an algebraic subrepresentation). Moreover, letting $\cC_n$ be the category of such semi-linear representations, this construction defines an equivalence
\begin{displaymath}
\QCoh(Y_n^{\circ})^{\GL_n} \to \cC_n
\end{displaymath}
Thus, combined with the previous equivalence, we obtain an equivalence
\begin{displaymath}
\Rep(\bO_n) = \cC_n.
\end{displaymath}
This gives us a way of studying representations of $\bO_n$ (or, at least, the representation category) even if we do not understand the group $\bO_n$ very well.

We adopt this approach in this paper to replace representations of $\Gamma_{\omega}$ with more familiar objects. Let $R$ be the infinite variable polynomial ring $\Sym(\Sym^3(\bC^{\infty}))$ and let $K=\Frac(R)$. We show (Theorem~\ref{thm:genstab}) that $\Rep^{\alg}(\Gamma_{\omega})$ is equivalent to a certain category of semi-linear representations of $\GL$ over $K$ (precisely, the category of ``$K$-modules'' introduced below). The proof is similar to the one outlined above, but technically more involved, and relies on some non-trivial results from \cite{universality} and \cite{polygeom}. We find the semi-linear perspective to be technically much easier to work with, so most of the paper is carried out in this setting.

\subsection{Semi-linear representations: results}

We now explain some of our results on semi-linear representations in more detail. We first introduce some fundamental definitions. A \defn{$\GL$-algebra}\footnote{In characteristic~0, $\GL$-algebras are equivalent, under Schur--Weyl duality, to twisted commutative algebras; see \cite[\S 8.1]{expos}.} is an algebra object in the category of polynomial representations of $\GL$; in other words, it is a commutative ring equipped with an action of $\GL$ under which it forms a polynomial representation. For example, the ring $\Sym(\Sym^3(\bC^{\infty}))$ appearing above is a $\GL$-algebra. If $R$ is a $\GL$-algebra then an \defn{$R$-module} is a module object; in other words, it is a $\GL$-equivariant $R$-module that forms a polynomial representation.

A \defn{$\GL$-field} is a field equipped with an action of $\GL$ that can be obtained as the fraction field of an integral $\GL$-algebra. If $K$ is a $\GL$-field then a \defn{$K$-module} is a semi-linear representation of $\GL$ over $K$ that is generated by a polynomial subrepresentation. The basic example of a $K$-module is $K^{\oplus \lambda} = K \otimes_{\bC} \bS_{\lambda}(\bC^{\infty})$. While $K^{\oplus \lambda}$ is typically not projective, every $K$-module is a quotient of a sum of ones of this form. We let $\Mod_K$ denote the category of $K$-modules. This is the fundamental object of study in this paper.

We prove two main technical results about $K$-modules. To state the first one, we must introduce the shift operation. Let $G(n)$ be the subgroup of $\GL$ consisting of block matrices of the form
\begin{displaymath}
\begin{pmatrix} 1 & 0 \\ 0 & \ast \end{pmatrix}
\end{displaymath}
where the top left block is $n \times n$. This group is isomorphic to $\GL$. If $X$ is a set equipped with an action of $\GL$, we define its $n$th \defn{shift}, denoted $\Sh_n(X)$, to be the set $X$ equipped with the action of $\GL$ coming from restricting the given action to $G(n) \cong \GL$. The shift operation preserves all structure introduced so far (polynomial representations, $\GL$-fields, etc.). Our first theorem is:

\begin{theorem}[Shift theorem]
Let $K$ be a $\GL$-field and let $M$ be a finitely generated $K$-module. Then there exists $n \ge 0$ and partitions $\lambda_1, \ldots, \lambda_r$ such that $\Sh_n(M)$ is isomorphic to $\bigoplus_{i=1}^r \Sh_n(K)^{\oplus \lambda_i}$ as a $\Sh_n(K)$-module.
\end{theorem}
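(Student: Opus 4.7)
My plan is to reduce the theorem to a shift-vanishing statement for $\mathrm{Ext}^1$ in the category of $K$-modules, and then to prove that vanishing via a semi-linear analog of Hilbert~90. First, because $M$ is finitely generated, I would choose a finite-length polynomial $\GL$-subrepresentation $V \subset M$ that generates $M$ over $K$. Decomposing $V \cong \bigoplus_{i=1}^r \bS_{\lambda_i}(\bC^{\infty})$ gives a semi-linear surjection $\pi\colon F := \bigoplus_{i=1}^r K^{\oplus \lambda_i} \twoheadrightarrow M$ with kernel $N \subset F$, producing the short exact sequence
\[
0 \to N \to F \xrightarrow{\pi} M \to 0.
\]
Noetherianity of $\Mod_K$, which I expect from the structure theory of $\GL$-fields in \cite{polygeom}, then shows that $N$ is itself finitely generated.

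Next, I would argue by induction on a suitable complexity invariant for $K$-modules, chosen so that the induction reduces the analysis of $N$ to a case already known. Granted such a setup, the inductive hypothesis produces a preliminary shift $\Sh_m$ under which $\Sh_m(N) \cong \bigoplus_{j=1}^s \Sh_m(K)^{\oplus \mu_j}$. What remains is to show that for a further shift $\Sh_n$ with $n \geq m$ sufficiently large, the extension
\[
0 \to \Sh_n N \to \Sh_n F \to \Sh_n M \to 0
\]
splits in $\Mod_{\Sh_n(K)}$. Since finite direct sums of the indecomposable $\Sh_n(K)^{\oplus \nu}$ form a Krull--Schmidt category, a splitting exhibits $\Sh_n M$ as a direct summand of $\Sh_n F$ of the desired free form.

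The heart of the proof is therefore a shift-vanishing of $\mathrm{Ext}^1$: for any finitely generated free $K$-modules $P$ and $Q$, every class in $\mathrm{Ext}^1_K(P, Q)$ is killed by sufficient shift. I would attack this via a semi-linear Hilbert~90 argument. Under the shifted action, $\Sh_n(\bC^{\infty}) \cong \bC^n \oplus \bC^{\infty}$ with a trivial $\bC^n$ summand, so $\Sh_n(K)$ acquires $n$ additional trivial-representation scalars. These extra scalars should provide the flexibility to realize any given $1$-cocycle as a coboundary, reducing the problem to solving a finite linear system over $\Sh_n(K)$ whose solvability is guaranteed once $n$ exceeds a bound depending on the weights of $P$ and $Q$. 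The main obstacle I anticipate lies precisely here: translating an abstract extension class into an explicit cocycle, identifying the equations whose solutions exhibit it as a coboundary, and producing those solutions in $\Sh_n(K)$. I expect the resolution to exploit that $K$ is the fraction field of an integral $\GL$-algebra of polynomial type, combined with the universality and non-degeneracy inputs of \cite{polygeom, universality}, to guarantee that enough generic scalars become available after sufficient shift.
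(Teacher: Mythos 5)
Your opening move (present $M$ as a quotient of $F=\bigoplus_{i=1}^r K^{\oplus\lambda_i}$) matches the paper, but from there the argument rests on inputs that are either circular or unproven. First, you invoke Noetherianity of $\Mod_K$ to conclude that the kernel $N$ is finitely generated; in this paper that fact (finitely generated $K$-modules have finite length, Theorem~\ref{thm:rat-struc}(a), Theorem~\ref{thm:gen-struc}(a)) is deduced \emph{from} the shift and embedding theorems, and \cite{polygeom,universality} supply topological Noetherianity of $\GL$-varieties, not module-theoretic Noetherianity, so this input is circular. Second, your induction is not set up: no complexity invariant is specified, and there is no a priori reason the kernel of a presentation of $M$ should be ``simpler'' than $M$. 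The paper's induction runs instead on the lexicographically well-ordered magnitude of the generating tuple of $M$ itself, and the reduction step (Proposition~\ref{prop:reduction}) either splits off the top free summand or strictly shrinks the generating tuple after a shift and a localization --- no kernel is ever analyzed. Third, even granting a splitting $\Sh_n F\cong\Sh_n M\oplus\Sh_n N$, deducing that $\Sh_n M$ is free of the stated form needs a Krull--Schmidt/cancellation statement, i.e.\ that $\End_{\Sh_n(K)}(\Sh_n(K)^{\oplus\nu})$ is local; for a general $\GL$-field this too is only established downstream of the shift theorem.

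Most importantly, the entire content of the theorem is concentrated in your asserted shift-vanishing of $\Ext^1$, for which you offer only the expectation that a semi-linear Hilbert~90 argument will solve a linear system once $n$ is large. This is precisely the step the paper has to work for, and its mechanism is different from a cocycle computation: in Proposition~\ref{prop:reduction} one uses Schur--Weyl duality (Lemma~\ref{lem:reduction}) to locate an element of the kernel of the presentation whose leading term $f_1 e_1$ has $f_1$ of weight $1^m$ --- hence $G(m)$-invariant after shifting by $m$ --- and $e_1$ generating the top irreducible as a $G(m)$-representation; one then must \emph{invert} $f_1$ to express the top summand in terms of the lower ones. The need to localize at a $\GL$-invariant element that only becomes invariant after shifting (harmless once one passes to the field $K$, but essential over the algebra $K^{\pol}$ where the induction actually lives) is exactly the ingredient your sketch is missing, and nothing in the proposal indicates how the ``extra scalars'' in $\Sh_n(K)$ would be produced without it.
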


This theorem is an instance of the general principle in representation stability that objects can be made ``nice'' after shifting. The first theorem of this sort was Nagpal's shift theorem for $\FI$-modules \cite{nagpal}. The above shift theorem is closely related to the shift theorem for $\GL$-varieties \cite[Theorem~5.1]{polygeom}, and follows a similar proof.

Our second main result about $K$-modules is the following:

\begin{theorem}[Embedding theorem]
Let $K$ be a rational $\GL$-field, i.e., one of the form $\Frac(\Sym(E))$ where $E$ is a finite length polynomial representation of $\GL$, and let $M$ be a finitely generated $K$-module. Then there exist partitions $\lambda_1, \ldots, \lambda_r$ and an injection of $K$-modules $M \to \bigoplus_{i=1}^r K^{\oplus \lambda_i}$.
\end{theorem}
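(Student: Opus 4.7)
The plan is to apply the Shift Theorem to trivialize things after a shift, embed $M$ naturally into the shift, and then descend to a finite direct sum of $K^{\oplus \mu_j}$ by exploiting the rationality of $K$.

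\emph{Step 1: shift theorem.} By the Shift Theorem, there exist $n \ge 0$ and partitions $\lambda_1, \dots, \lambda_r$ such that
\begin{displaymath}
\Sh_n(M) \cong N := \bigoplus_{i=1}^{r} \Sh_n(K)^{\oplus \lambda_i}
\end{displaymath}
as $\Sh_n(K)$-modules.

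\emph{Step 2: natural embedding into $N$.} For any polynomial $\GL$-representation $V$ there is a natural $\GL$-equivariant inclusion $V \hookrightarrow \Sh_n(V)$ given by shifting basis indices (on $\bC^\infty$ it is $e_i \mapsto e_{i+n}$, and it extends to every Schur functor). Applied to the $\GL$-algebra $\Sym(E)$, where $K = \Frac(\Sym(E))$, and then to fraction fields, it produces a $\GL$-equivariant field inclusion $K \hookrightarrow \Sh_n(K)$. Applied to $M$ it produces a $K$-semilinear $\GL$-equivariant map $\iota \colon M \to \Sh_n(M) \cong N$; since $M$ is generated over $K$ by a polynomial subrepresentation on which the shift map is visibly injective, $\iota$ is itself injective.

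\emph{Step 3: rationality and descent.} Semisimplicity of polynomial representations gives $\Sh_n(E) \cong E \oplus E'$ for some finite length polynomial representation $E'$, hence $\Sh_n(K) = \Frac(A)$ with $A := K \otimes_{\bC} \Sym(E')$. So each $\Sh_n(K)^{\oplus \lambda_i}$ is the filtered union of the $K$-submodules $A[1/h] \otimes_{\bC} \bS_{\lambda_i}(\bC^{\infty})$ over nonzero $h \in A$. The image $\iota(M)$ is finitely generated over $K$ and is the $K$-span of a finite-dimensional polynomial $\GL$-subrepresentation $W \subset N$; finite-dimensionality places $W$ in $\bigoplus_i A[1/h] \otimes \bS_{\lambda_i}(\bC^\infty)$ for a single $h$. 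After enlarging $A[1/h]$ to a $\GL$-stable localization containing it, the ambient module can be written as $K \otimes_{\bC} P$ for a polynomial $\GL$-representation $P$; decomposing $P \cong \bigoplus_{\mu} \bS_{\mu}(\bC^\infty)^{(m_{\mu})}$ into isotypic pieces yields $K \otimes_{\bC} P \cong \bigoplus_{\mu} (K^{\oplus \mu})^{(m_{\mu})}$ as $K$-modules. Finite generation of $\iota(M)$ forces it to meet only finitely many of these summands, giving the required injection.

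The main obstacle is the $\GL$-stabilization step: producing a $\GL$-stable localization of $A$ that still decomposes as (tensor with $K$ of) a polynomial representation. A naive product over $\GL$-translates of $h$ is not generally $\GL$-invariant, and for many rational $\GL$-fields the algebra $A$ carries few nonzero $\GL$-invariants, so no honest invariant multiple of $h$ exists. The correct execution probably requires either working sheaf-theoretically, via the $\GL$-variety techniques of \cite{polygeom} applied to $\Spec(A)$, or replacing $\iota$ by an embedding coming from extending $M$ to a finitely generated module over the integral $\GL$-algebra $\Sym(E)$, applying the structural results of \cite{polygeom} there, and specializing to the generic point to recover an embedding of $M$ into $\bigoplus_j K^{\oplus \mu_j}$.
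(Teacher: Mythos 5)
Your Steps 1 and 2 follow the same opening moves as the paper (shift, then the natural injection into the shift), but Step 3 contains the genuine gap that you yourself flag, and the paper's resolution is both different from and much simpler than either of the repairs you sketch. The missing idea is that for $R=\Sym(k^{\oplus \usigma})$ the shifted algebra is a \emph{free} $R$-module: by semisimplicity of polynomial representations, $\Sh_n(k^{\oplus \usigma})\cong k^{\oplus \usigma}\oplus k^{\oplus \utau}$ for some tuple $\utau$, hence $\Sh_n(R)\cong E_n\otimes_k R$ as $R$-modules with $E_n=\Sym(k^{\oplus \utau})$ a polynomial representation (the paper calls such $R$ ``shift-free''). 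Consequently $\Sh_n(R)^{\oplus\ulambda}\cong (k^{\oplus\ulambda}\otimes E_n)\otimes_k R$ is a (possibly infinite) direct sum of modules $R^{\oplus\mu}$, and the $\GL$-stable localization you were searching for is not needed at all: a finitely generated submodule automatically lands in a finite sub-sum.

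The order of operations matters: you pass to fraction fields too early, which is exactly what creates the localization problem, since $\Sh_n(K)$ viewed as a $K$-module is enormous and not a sum of $K^{\oplus\mu}$'s. The paper instead works integrally throughout. Choose a finitely $\GL$-generated $R$-submodule $M_0\subset M^{\pol}$ with $M=K\otimes_R M_0$ (automatically torsion-free, as $M_0\subset M$). Apply the shift theorem for $R$-modules to get an isomorphism $\Sh_n(M_0)[1/f]\cong\Sh_n(R)[1/f]^{\oplus\ulambda}$ with $f$ a $\GL$-invariant element; torsion-freeness gives $\Sh_n(M_0)\hookrightarrow\Sh_n(M_0)[1/f]$, and rescaling the isomorphism by a power of $f$ places $\Sh_n(M_0)$ inside $\Sh_n(R)^{\oplus\ulambda}$. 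Composing with the (genuinely injective, by the polynomial-functor formalism) map $M_0\to\Sh_n(M_0)$ yields an injection of $R$-modules $M_0\hookrightarrow k^{\oplus\umu}\otimes_k R=R^{\oplus\umu}$ for a finite tuple $\umu$, and only then does one tensor up to $K$ over $R$, which is exact. This also repairs the soft spot in your Step 2: injectivity of a semilinear map is not checked on a generating polynomial subrepresentation, but it is automatic once the whole argument is run at the level of $M_0$ and base-changed along the flat map $R\to K$. Your second suggested fix --- extend $M$ to a module over $\Sym(E)$ and work there --- is the right instinct, but the decisive ingredient is shift-freeness, not the geometric machinery of \cite{polygeom}.
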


This theorem follows rather easily from the shift theorem. It is a very important theorem for us: indeed, all the statements in \S \ref{ss:repgs} have analogs for $\Mod_K$, and can be deduced from the embedding thoerem by comparitively easy arguments. The corresponding results for $\Rep^{\alg}(\Gamma_{\omega})$ are deduced from those for $\Mod_K$.

\subsection{Summary of categories}

Let $\fU$ be the analog of the upwards Brauer category for symmetric trilinear forms (see \S \ref{ss:brauer}), let $R=\Sym(\Sym^3(\bC^{\infty}))$, let $K=\Frac(R)$, and let $\omega \in \Sym^3(\bC^{\infty})^*$ be non-degenerate. We show that the following categories are equivalent:
\begin{enumerate}
\item The category $\Mod_{\fU}^{\lf}$ of $\fU$-modules that are locally of finite length.
\item The category $\Mod_R^{\lf}$ of $R$-modules that are locally of finite length.
\item The generic category $\Mod_R^{\gen}$, i.e., the Serre quotient of $\Mod_R$ by the subcategory of torsion modules.
\item The category $\Mod_K$.
\item The category $\Rep^{\alg}(\Gamma_{\omega})$.
\end{enumerate}
The equivalence between (a) and (b) is straightforward, as is the equivalence between (c) and (d). The equivalence of (b) and (c) is much more difficult, and relies upon the embedding theorem. The equivalence between (d) and (e) is also difficult, and relies on non-trivial results from \cite{universality} and \cite{polygeom}.

The equivalence between (b) and (c) above has a long history: see Remark~\ref{rmk:gen-tor}.

\subsection{Fiber functors}

The categories (a)--(e) above are $\bC$-linear tensor categories. However, only in (e) are the objects $\bC$-vector spaces (with extra structure), with the tensor product being the usual one on the underlying vector space. One can therefore view the equivalence $\Mod_K \cong \Rep^{\alg}(\Gamma_{\omega})$ as a fiber functor on $\Mod_K$. We thus get one such fiber functor for each choice of $\omega$. We show (\S \ref{s:fiber}) that all fibers functors are obtained in essentially this manner.

\subsection{Relation to previous work}

This paper is closely related to four threads of recent work:
\begin{itemize}
\item The papers \cite{bde,universality,polygeom,draisma,des} develop aspects of infinite dimensional $\GL$-equivariant algebraic geometry. These theories are based on $\GL$-algebras, which is the main connection to this paper. A few key arguments in this paper are in fact modeled on those from \cite{polygeom}. The work of Kazhdan--Ziegler \cite{kaz1,kaz2,kaz3,kaz4} is closely related.
\item The papers \cite{sym2noeth,periplectic,isomeric,symc1,symu1,symc1sp} study the module theory of a handful of specific $\GL$-algebras (and similar objects). The results of this paper generalize many of the results from those papers.
\item The papers \cite{koszulcategory,grantcharov,penkovserganova,penkovstyrkas,serganova,infrank} study the stable representation theory of classical (super)groups. The results summarized in \S \ref{ss:repgs} are all analogs of results from these papers (especially \cite{infrank}).
\item The papers \cite{semilin,rovinsky,rovinsky2,rovinsky3} study the semi-linear representation theory of the infinite symmetric group, which is thematically similar to much of the work in this paper.
\end{itemize}

\subsection{Further work}

In this paper, we give a fairly complete description of $\Mod_K$ when $K$ is a rational $\GL$-field. While we do prove some results for more general $\GL$-fields (see Theorem~\ref{thm:gen-struc}), there is still much left to be done in this direction. We hope to treat this in a future paper.

In the study of modules over $\GL$-algebras, it is also important to understand the generic categories $\Mod_R^{\gen}$ when $R$ is a ``$\GL$-domain'' (this means $\fa \fb=0$ implies $\fa=0$ or $\fb=0$ when $\fa$ and $\fb$ are $\GL$-ideals, which is a weaker condition than being a domain). In \cite{tcaprimes}, we gave a useful way of understanding the $\GL$-domain condition in terms of super mathematics, and we believe this should allow us to say something about these generic categories. We hope to return to this topic too.

\subsection{Open questions}

We list a few questions or problems raised by this work:
\begin{enumerate}
\item How much of standard Lie theory can be carried over to the generalized stabilizers $\Gamma_{\omega}$? Is there a Dynkin diagram, Cartan matrix, Weyl group, etc.?
\item Is there a Tannakian perspective that allows one to recover the generalized stabilizer $\Gamma_{\omega}$ from the fiber functor $\Phi_{\omega} \colon \Mod_K \to \Vec_k$?
\item Prove Theorem~\ref{thm:genstab} for general $K$.
\item What are the derived specializations of simple objects of $\Mod_K$? (See Remark~\ref{rmk:special}.)
\item We introduce the concept of ``germinal subgroup'' to define generalized stabilizers. While our definitions work for the purposes of this paper, we are not sure if they are optimal. For instance, our conditions do not say anything about inverses. It would be good to have more clarity on this point.
\item In this paper, we consider generalized stabilizers for actions of $\GL$ on infinite dimensional varieties. Are there other situations where generalized stabilizers are interesting? For example, one could consider generalized stabilizers arising from actions of the infinite symmetry group on infinite dimensional varieties.
\end{enumerate}

\subsection{Outline}

In \S \ref{s:bg} we provide background about $\GL$-algebras and related concepts.  In \S \ref{s:shift} we prove our two main technical theorems on $K$-modules, the shift and embedding theorems. We apply these results in \S \ref{s:struc} to deduce our main structural results on semi-linear representations. These results are in turn used in \S \ref{s:brauer} to obtain the connection to an analog of the Brauer category, which yields an analog of Weyl's construction and a universal property for $\Mod_K$. In \S \ref{s:fiber}, we classify the fiber functors of $\Mod_K$. In  \S \ref{s:genstab} we introduce germinal subgroups and generalized stabilizers in the abstract. Finally, in \S \ref{s:glstab}, we apply these concepts to $\GL$-varieties.

\subsection*{Acknowledgments}

We thank Arthur Bik, Jan Draisma, Rob Eggermont, Nate Harman, Steven Sam, and David Treumann for helpful conversations. In particular, Proposition~\ref{prop:Pi-Zariski} came from an e-mail exchange with Bik, Draisma, and Eggermont, and the material in \S \ref{ss:catthy} came from unpublished notes with Sam.

\section{\texorpdfstring{$\GL$}{GL}-equivariant algebra and geometry} \label{s:bg}

In this section, we review background material on polynomial representations, $\GL$-algebras, $\GL$-varieties, and related concepts. Additional details on these topics can be found in \cite{expos} and \cite{polygeom}.

\subsection{Polynomial representations}

Fix, for the entirety of the paper, a field $k$ of characteristic~0. Put $\GL=\bigcup_{n \ge 1} \GL_n(k)$, regarded as a discrete group. We let $\bV=\bigcup_{n \ge 1} k^n$ be the standard representation of $\GL$. We say that a representation of $\GL$ on a $k$-vector space is \emph{polynomial} if it appears as a subquotient of a (possibly infinite) direct sum of tensor powers of $\bV$. We let $\Rep^{\pol}(\GL)$ denote the category of polynomial representations. It is a semi-simple Grothendieck abelian category that is closed under tensor product.

For a partition $\lambda$, we let $\bS_{\lambda}$ denote the corresponding Schur functor. The simple polynomial representations are exactly those of the form $\bS_{\lambda}(\bV)$. Thus every polynomial representation decomposes as a (perhaps infinite) direct sum of $\bS_{\lambda}(\bV)$'s.

Every polynomial representation of $\GL$ carries a natural grading, with $\bS_{\lambda}(\bV)$ concentrated in degree $\vert \lambda \vert$, the size of the partition $\lambda$. This grading is compatible with tensor products: $\bS_{\lambda}(\bV) \otimes \bS_{\mu}(\bV)$ is concentrated in degree $\vert \lambda \vert+\vert \mu \vert$. The degree~0 piece of a polynomial representation $V$ is exactly the invariant subspace $V^{\GL}$.

We now introduce some non-standard notation that will be convenient for working with these objects. We write $k^{\oplus \lambda}$ in place of $\bS_{\lambda}(\bV)$. More generally, for a $k$-vector space $V$ we put $V^{\oplus \lambda}=V \otimes_k k^{\oplus \lambda}$; note that if $R$ is a $k$-algebra then $R^{\oplus \lambda}$ is naturally a free $R$-module. A \emph{tuple of partitions} (often simply called a \emph{tuple}) is a tuple $\ulambda=[\lambda_1, \ldots, \lambda_r]$, where each $\lambda_i$ is a partition. We put $k^{\oplus \ulambda}=\bigoplus_{i=1}^r k^{\oplus \lambda_i}$, and define $V^{\oplus \ulambda}$ similarly. We say that $\ulambda$ is \emph{pure} if it does not contain the empty partition. (This terminology comes from \cite{polygeom}.)

The category of polynomial representations is equivalent to the category of polynomial functors, with the representation $\bS_{\lambda}(\bV)$ corresponding to the functor $\bS_{\lambda}$. Given a polynomial representation $V$ and a vector space $U$, we let $V\{U\}$ be the result of regarding $V$ as a polynomial functor and evaluating on $U$. In the important special case where $U=k^n$, we can identify $V\{U\}$ with the invariant space $V^{G(n)}$, where $G(n)$ is defined in \S \ref{ss:shift}. For example, if $V=k^{\oplus \lambda}$ then $V\{k^n\}=\bS_{\lambda}(k^n)$.

\subsection{The maximal polynomial subrepresentation}

Suppose that $V$ is an arbitrary $k$-linear representation of $\GL$. We say that an element $x \in V$ is \defn{polynomial} if the subrepresentation it generates is a polynomial representation. We let $V^{\pol}$ be the set of all polynomial elements in $V$. It can be characterized as the maximal polynomial subrepresentation of $V$. Moreover, if $\REP(\GL)$ denotes the category of all $k$-linear representations of $\GL$ then $V \mapsto V^{\pol}$ is the right adjoint of the inclusion functor $\Rep^{\pol}(\GL) \to \REP(\GL)$. As such, $(-)^{\pol}$ is left-exact and continuous; it is not exact.

\subsection{The shift operation} \label{ss:shift}

Recall that $G(n)$ is the subgroup of $\GL$ consisting of block matrices of the form
\begin{displaymath}
\begin{pmatrix} 1 & 0 \\ 0 & \ast \end{pmatrix},
\end{displaymath}
where the top left block has size $n \times n$. We have a group isomorphism
\begin{displaymath}
\GL \to G(n), \qquad A \mapsto \begin{pmatrix} 1 & 0 \\ 0 & A \end{pmatrix}.
\end{displaymath}
Given some kind of object $X$ equipped with an action of $\GL$, we define its $n$th \defn{shift}, denoted $\Sh_n(X)$, to be the same object $X$ but with $\GL$ acting through the self-embedding $\GL \cong G(n) \subset \GL$.

One easily sees that if $V$ is a polynomial representation of $\GL$ then $\Sh_n(V)$ is also such a representation. From the polynomial functor point of view, we have
\begin{displaymath}
(\Sh_n{V})\{U\}=V\{k^n \oplus U\}.
\end{displaymath}
If $V$ has finite length then so does $\Sh_n(V)$. It follows that if $\ulambda$ is a tuple then there is another tuple, which we denote by $\sh_n(\ulambda)$, such that $\Sh_n(k^{\oplus \ulambda})=k^{\oplus \sh_n(\ulambda)}$. If $\ulambda=[\lambda]$ consists of a single partition, we write $\sh_n(\lambda)$ in place of $\sh_n(\ulambda)$. In this case, $\sh_n(\lambda)$ contains $\lambda$ exactly once, and all other partitions in it are strictly smaller.

\subsection{\texorpdfstring{$\GL$}{GL}-algebras}

A \emph{$\GL$-algebra} (over $k$) is a commutative algebra object in the tensor category $\Rep^{\pol}(\GL)$; thus, it is a commutative (and associative and unital) $k$-algebra equipped with an action of the group $\GL$ by algebra automorphisms, under which it forms a polynomial representation. Let $R$ be a $\GL$-algebra. By an \emph{$R$-module} we mean a module object in $\Rep^{\pol}(\GL$). Explicitly, this is an ordinary $R$-module $M$ equipped with a compatible action of $\GL$ under which $M$ forms a polynomial representation. We let $\Mod_R$ denote the category of modules, which is easily seen to be  a Grothendieck abelian category.

We say that $R$ is \emph{$\GL$-generated} (over $k$) by a set of elements if $R$ is generated as a $k$-algebra by the orbits of these elements. We say that $R$ is \emph{finitely $\GL$-generated} if it is $\GL$-generated by a finite set. We similarly speak of $\GL$-generation for $R$-modules.

We say that a $\GL$-algebra is \emph{integral} if it is integral in the usual sense (i.e., it is a domain). We will require the following important shift theorem from \cite{polygeom}.

\begin{theorem} \label{thm:bdes-shift}
Let $R$ be an integral $\GL$-algebra that is finitely $\GL$-generated. Then there exists $n \ge 0$, a non-zero $\GL$-invariant element $f \in \Sh_n(R)$, and an isomorphism $\Sh_n(R)[1/f] \cong A \otimes \Sym(k^{\oplus \usigma})$ for some finitely generated integral $k$-algebra $A$ (with trivial $\GL$-action) and pure tuple $\usigma$.
\end{theorem}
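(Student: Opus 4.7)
The plan is to translate the statement geometrically and induct on a polynomial-complexity invariant of $R$. Writing $X=\Spec(R)$, finite $\GL$-generation gives a closed $\GL$-equivariant embedding $X\hookrightarrow\Spec(\Sym(V))$ for some finite-length polynomial representation $V$, and integrality says $X$ is irreducible. The desired statement becomes: after shifting by some $n$ and passing to a nonempty $\GL$-invariant principal open $D(f)\subset\Sh_n(X)$, one has $D(f)\cong\Spec(A)\times\Spec(\Sym(k^{\oplus\usigma}))$ with $A$ a classical finitely generated integral $k$-algebra and $\usigma$ pure. Informally: after shifting and restricting to a generic open, the $\GL$-action becomes ``free on a polynomial factor'' over a classical base, with all $\GL$-invariants having been absorbed into $A$.

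I would induct on the multiset of Schur components appearing in a generating polynomial subrepresentation of $R$, well-ordered by the size of the largest partition. The base case is generators in degree $0$, where $R$ is itself classical and the conclusion is immediate with $n=0$, $f=1$, $\usigma$ empty. For the inductive step, pick a top-magnitude partition $\lambda$ among the generators. The shift formula $\Sh_n(\bS_\lambda(\bV))=\bS_\lambda(\bV)\oplus(\text{strictly smaller Schur components})$ supplies, after a sufficiently large shift, a $G(n)$-invariant highest weight vector $x\in\Sh_n(R)$ lying in (the image of) $\bS_\lambda(\bV)$. The aim is to split off the $\GL$-subalgebra generated by the orbit of $x$ as a polynomial ring $\Sym(k^{\oplus\lambda'})$ on the subrepresentation generated by $x$, reducing to a smaller integral $\GL$-algebra $R'$ to which induction applies.

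The heart of the argument, and the main obstacle, is producing a $\GL$-invariant $f\in\Sh_n(R)$ on whose complement this splitting is realized as an isomorphism of $\GL$-algebras. Concretely, one analyzes a generic point $\omega\in\Sh_n(X)$: its $\GL$-stabilizer is large enough to fix a $\GL$-equivariant complement to the orbit of $x$, giving a pointwise retraction, and one inverts the obstruction to extending this retraction $\GL$-equivariantly over a Zariski neighborhood of $\omega$. This yields $\Sh_n(R)[1/f]\cong R'\otimes\Sym(k^{\oplus\lambda'})$ with $R'$ integral and of strictly smaller complexity. Integrality of $R'$ is automatic from integrality of $R$ and faithful flatness of the polynomial factor, while the induction on $R'$ absorbs the trivial-representation piece (if any) into $A$ and the nontrivial pieces into $\usigma$, keeping $\usigma$ pure. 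Making the generic retraction and the choice of $f$ precise is exactly the generic-flatness and generic-stabilizer analysis for $\GL$-varieties carried out in \cite[Theorem~5.1]{polygeom}, whose argument the present proof would mirror.
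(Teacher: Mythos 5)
The paper does not actually prove this statement: its ``proof'' is a one-line citation to \cite[Theorem~5.1]{polygeom}, rephrased in terms of coordinate rings. Your proposal is therefore more ambitious than what the paper does, but it is not self-contained either, since you explicitly defer the ``heart of the argument'' to that same reference. Judged as a reconstruction of the cited proof, your global architecture is right and matches the strategy the present paper does carry out in detail for its linear analog (Proposition~\ref{prop:reduction} and Theorem~\ref{thm:shift}): induct on the multiset of Schur constituents of a generating subrepresentation, ordered by magnitude; use shifting to manufacture $\GL$-invariant elements; localize at such an $f$ to strictly decrease the complexity.

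Where your sketch goes off track is the mechanism of the key step. The actual argument (in \cite{polygeom}, and mirrored by Lemma~\ref{lem:reduction} and Proposition~\ref{prop:reduction} here) is an elimination dichotomy, not a stabilizer-and-retraction construction. Either the top-magnitude generator satisfies no relation, in which case it splits off a free polynomial factor $\Sym(k^{\oplus \lambda})$ with no localization needed; or there is a nonzero relation, and the Schur--Weyl/weight analysis lets one write it with a distinguished term $f_1 e_1$ in which $e_1$ generates the top Schur constituent as a $G(m)$-representation and $f_1$ is a nonzero $G(m)$-invariant coefficient. Inverting $f_1$ one solves for that constituent and eliminates it, reducing the magnitude; the generic point, its stabilizer, and an ``equivariant complement fixed by the stabilizer'' play no role, and it is not clear how your retraction would be produced or why its obstruction would be cut out by a single invariant $f$. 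If you want a genuine proof rather than a pointer, replace your third paragraph with this elimination step; as written, the one part of the argument you do not delegate to \cite{polygeom} is the part that is misdescribed.
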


\begin{proof}
This is \cite[Theorem~5.1]{polygeom}, phrased in terms of coordinate rings.
\end{proof}

\subsection{\texorpdfstring{$\GL$}{GL}-varieties} \label{ss:glvar}

An \emph{affine $\GL$-scheme} is an affine scheme $X$ over $k$ equipped with an action of the discrete group $\GL$ such that $\Gamma(X, \cO_X)$ forms a polynomial representation of $\GL$. Every affine $\GL$-scheme has the form $\Spec(R)$ where $R$ is a $\GL$-algebra. An \emph{affine $\GL$-variety} is a reduced affine $\GL$-scheme $X$ such that $\Gamma(X, \cO_X)$ is finitely $\GL$-generated over $k$.

For a tuple $\ulambda$, let $\bA^{\ulambda}$ be the spectrum of the ring $\Sym(k^{\oplus \ulambda})$. This is an affine $\GL$-variety. Moreover, every affine $\GL$-variety is isomorphic to a closed $\GL$-subvariety of some $\bA^{\ulambda}$. Thus, in the theory of $\GL$-varieties, the $\bA^{\ulambda}$ play the same role as the ordinary affine spaces $\bA^n$ in ordinary algebraic geometry.

Let $X$ be an affine $\GL$-variety and let $x$ be a (scheme-theoretic) point of $X$. We let $\ol{O}_x$ be the Zariski closure of the orbit $\GL \cdot x$ of $x$ (see \cite[\S 3.1]{polygeom}). We say that $x$ is \emph{$\GL$-generic} if $\ol{O}_x=X$. Such points play a similar role to generic points in ordinary algebraic geometry. We define the \emph{generalized orbit} of $x$, denoted $O_x$, to be the set of all points $y$ such that $\ol{O}_x=\ol{O}_y$ (see \cite[\S 3.2]{polygeom}).

Write $X=\Spec(R)$ where $R$ is a $\GL$-algebra. Recall that for a vector space $U$ we let $R\{U\}$ be the result of treating $R$ as a polynomial functor and evaluating on $U$; this is a $k$-algebra equipped with an action of $\GL(U)$. We put $X\{U\}=\Spec(R\{U\})$. The standard inclusion $k^n \to \bV$ induces a ring homomorphism $R\{k^n\} \to R$, and thus a map of $k$-schemes $\pi_n \colon X \to X\{k^n\}$. Since $R$ is the union of the $R\{k^n\}$, it follows that $X$ is the inverse limit of the $X\{k^n\}$. We define the \emph{$\Pi$-topology} on $X$ to be the inverse limit topology, where each $X\{k^n\}$ is given the discrete topology. The $\Pi$-topology is actually quite concrete: if $k$ is algebraically closed then the set of closed points of $\bA^{\ulambda}$ is identified with a product of $k$'s, and the $\Pi$-topology is just the usual product topology; thus a sequence of $k$-points of $\bA^{\ulambda}$ converges if each coordinate is eventually constant. One easily sees that any Zariski closed set is $\Pi$-closed (see \cite[Proposition~2.3]{svar}).

We require the following result that relates the Zariski and $\Pi$-topologies:

\begin{proposition} \label{prop:Pi-Zariski}
Suppose that $k$ is algebraically closed. Let $X$ be a $\GL$-variety and let $x$ and $y$ be $k$-points of $X$. Then the following conditions are equivalent:
\begin{enumerate}
\item The orbits $\GL \cdot x$ and $\GL \cdot y$ have the same Zariski cloure.
\item The orbits $\GL \cdot x$ and $\GL \cdot y$ have the same $\Pi$-closure.
\end{enumerate}
\end{proposition}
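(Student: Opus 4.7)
The direction (b) $\Rightarrow$ (a) is immediate. Since every Zariski-closed subset of $X$ is $\Pi$-closed (as stated just above the proposition), the $\Pi$-closure of any $\GL$-orbit is contained in its Zariski closure; both contain $\GL \cdot x$, so their Zariski closures agree. In particular, the Zariski closure of the $\Pi$-closure of $\GL \cdot x$ is $\overline{\GL \cdot x}^{\mathrm{Zar}}$, and equality of $\Pi$-closures for the two orbits forces equality of the Zariski orbit closures.

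For (a) $\Rightarrow$ (b), I propose to establish the stronger identity $\Pi\text{-cl}(\GL \cdot x) = \overline{\GL \cdot x}^{\mathrm{Zar}}$ for every $k$-point $x$; the implication is then immediate. Unwinding the $\Pi$-topology, whose basic neighborhoods of $y$ are the fibers $\pi_n^{-1}(\pi_n(y))$, this amounts to showing $\pi_n(\GL \cdot x) = \pi_n(\overline{\GL \cdot x}^{\mathrm{Zar}})$ for every $n \ge 0$: every $k$-point of the Zariski orbit closure must be reached, modulo $\pi_n$, by some element of the orbit.

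To prove this, replace $X$ by $\overline{\GL \cdot x}^{\mathrm{Zar}}$, so $x$ is $\GL$-generic in the $\GL$-irreducible variety $X$, and apply the shift theorem (Theorem~\ref{thm:bdes-shift}): there exist $n_0$, a nonzero $G(n_0)$-invariant $f \in R$, and a pure tuple $\usigma$ with $\Sh_{n_0}(R)[1/f] \cong A \otimes \Sym(k^{\oplus \usigma})$. The dense open $U = X_f$ then takes the product form $\Spec(A) \times \bA^{\usigma}$, on which $G(n_0) \cong \GL$ acts linearly on the $\bA^{\usigma}$ factor alone; by density we may arrange $x \in U$ and write $x = (a_0, v_0)$. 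On the $\bA^{\usigma}$ factor, the identity $\pi_n(\GL \cdot v_0) = \pi_n(\bA^{\usigma})$ is a finite-dimensional linear-algebra exercise analogous to the non-degenerate bilinear-form example of \S\ref{ss:glvar}: every point of $\bA^{\usigma}\{k^n\}$ arises as the restriction to $k^n$ of a generic point of $\bA^{\usigma}$.

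The main obstacle is the $A$-coordinate. The product decomposition of $U$ is only $G(n_0)$-equivariant: the $A$-factor parametrizes $G(n_0)$-invariants and so does not change along the $G(n_0)$-orbit of $x$. Altering the $A$-coordinate therefore requires elements of $\GL \setminus G(n_0)$, whose action neither respects the product structure nor fixes the $\bA^{\usigma}$-coordinate. The hardest step will be to combine the two kinds of moves to realize, modulo $\pi_n$, an arbitrary target point $(a, v) \in U$, and then to transfer this matching from the open subset $U$ back to all of $X$. It is this coordinated orbit analysis, rather than the shift theorem itself, that is the substantive content of the proof.
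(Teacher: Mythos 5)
Your direction (b) $\Rightarrow$ (a) is exactly the paper's argument and is fine. The direction (a) $\Rightarrow$ (b), however, has two genuine gaps.

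First, the step you describe as ``a finite-dimensional linear-algebra exercise'' --- that for a $\GL$-generic point $v_0$ of $\bA^{\usigma}$ one has $\pi_n(\GL\cdot v_0)=\pi_n(\bA^{\usigma})$ on $k$-points --- is not an exercise for a general pure tuple $\usigma$. It is precisely \cite[Corollary~2.6.3]{universality}, the universality theorem for high-strength tensors, which builds on the main theorems of \cite{des} and \cite{bde}. The bilinear case $\usigma=[(2)]$ is indeed classical (Witt-type arguments), but already for cubic forms this surjectivity is a deep result, and it is the external input that the paper's proof explicitly invokes. Mislabeling it hides the actual mathematical content of the implication.

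Second, your reduction via the shift theorem leaves the $A$-coordinate unresolved, and you acknowledge that reconciling the $G(n_0)$-linear moves on the $\bA^{\usigma}$-factor with the moves needed to change the $A$-coordinate is ``the substantive content of the proof'' --- but you do not supply it. This is where your route and the paper's diverge: the paper does not use the shift theorem here at all. Instead it takes a \emph{typical morphism} $\phi\colon B\times\bA^{\ulambda}\to X$ in the sense of \cite[\S 8.1]{polygeom}, lifts $x$ to a point $(b,\tilde x)$, and observes that the dominance built into the definition of ``typical'' forces the orbit closure of $(b,\tilde x)$ to be all of $B\times\bA^{\ulambda}$, whence $B=\{b\}$ is a single point. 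The base factor thus disappears before any orbit analysis is needed, and the convergence $g_n\tilde x\to\tilde y$ supplied by the universality theorem is simply pushed forward along the $\Pi$-continuous map $\phi$. A further caution: your proposed ``stronger identity'' (that the $\Pi$-closure of $\GL\cdot x$ equals its full Zariski closure) asks you to reach even the non-generic $k$-points of $\ol{O}_x$, which need not lie in the image of $\phi$; the paper only ever needs to reach the $\GL$-generic point $y$, for which membership in $\im(\phi)$ follows from \cite[Proposition~3.4]{polygeom}.
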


\begin{proof}
Suppose (b) holds. Then $y$ belongs to the $\Pi$-closure of $\GL \cdot x$, which is contained in the Zariski closure of $\GL \cdot x$. We thus see that $\GL \cdot y$ is contained in the Zariski closure of $\GL \cdot x$, and so the Zariski closure of $\GL \cdot y$ is contained in the Zariski closure of $\GL \cdot x$. By symmetry, the reverse inclusion holds as well, which yields (a).

Now suppose that (a) holds. We may as well replace $X$ with the Zariski closure of $\GL \cdot x$, and so that $x$ and $y$ are $\GL$-generic in $X$. Let $\phi \colon B \times \bA^{\ulambda} \to X$ be a typical morphism (see \cite[\S 8.1]{polygeom}), where $B$ is an irreducible variety and $\ulambda$ is a pure tuple. Let $(b,\tilde{x}) \in B \times \bA^{\ulambda}$ be a $k$-point lifting $x$, which exists by \cite[Proposition~7.15]{polygeom}, and let $Z$ be the closure of the $\GL$-orbit of $(b,\tilde{x})$. Then $\phi \vert_Z$ is dominant since its image contains $x$, and so, by the definition of typical, $Z=B \times \bA^{\ulambda}$. It follows that $B=\{b\}$ is a point and $\tilde{x}$ is $\GL$-generic in $\bA^{\ulambda}$. In what follows we ignore $B$, and regard $\phi$ as a morphism $\phi \colon \bA^{\ulambda} \to X$ satisfying $\phi(\tilde{x})=x$.

The image of $\phi$ contains a non-empty open subset of $X$ by \cite[Theorem~7.13]{polygeom}. Since $y$ belongs to every non-empty $\GL$-subset of $X$ \cite[Proposition~3.4]{polygeom}, we see that $y \in \im(\phi)$. Thus, applying \cite[Proposition~7.15]{polygeom} again, we can find a $k$-point $\tilde{y}$ of $\bA^{\ulambda}$ such that $\phi(\tilde{y})=y$.

Let $\pi_n \colon \bA^{\ulambda} \to \bA^{\ulambda}\{K^n\}$ be the natural map. By \cite[Corollary 2.6.3]{universality}, the restriction of $\pi_n$ to $\GL \cdot \tilde{x}$ is surjective on $k$-points. We can thus find $g_n \in \GL$ such that $\pi_n(g_n\tilde{x})=\tilde{y}$. We therefore see that the sequence $(g_n \tilde{x})_{n \ge 1}$ converges to $\tilde{y}$ in the $\Pi$-topology. Since $\phi$ is $\Pi$-continuous, it follows that the sequence $(g_n x)_{n \ge 1}$ converges to $y$ in the $\Pi$-topology. Thus $y$, and therefore $\GL \cdot y$, and therefore the $\Pi$-closure of $\GL \cdot y$, is contained in the $\Pi$-closure of $\GL \cdot x$. The reverse inclusion follows by symmetry, and so (b) holds.
\end{proof}

\begin{remark} \label{rmk:gen-orb}
Proposition~\ref{prop:Pi-Zariski} shows that, when working with closed points over an algebraically closed field, one can define the generalized orbit of $x$ using the $\Pi$-topology (as we did in \S \ref{ss:intro-gen-orb}): that is, a $k$-point $y$ belongs to $O_x$ if and only if one can find sequences $(g_n)_{n \ge 1}$ and $(h_n)_{n \ge 1}$ in $\GL$ such that $g_n x \to y$ and $h_n y \to x$ in the $\Pi$-topology.
\end{remark}

\begin{remark}
We only apply Proposition~\ref{prop:Pi-Zariski} when $X=\bA^{\ulambda}$, in which case the proof simplifies some. However, we feel that the general statement is important enough that it is worth recording here.
\end{remark}

\subsection{\texorpdfstring{$\GL$}{GL}-fields}

A \defn{$\GL$-field} over $k$ is a field extension $K/k$ equipped with an action of $\GL$ by $k$-automorphisms such that every element of $K$ can be expressed in the form $a/b$ with $a,b \in K^{\pol}$. If $K$ is a $\GL$-field then $K^{\pol}$ is an integral $\GL$-algebra over $k$, and $K=\Frac(K^{\pol})$. Thus every $\GL$-field can be realized as the fraction field of an integral $\GL$-algebra.

Let $K$ be a $\GL$-field. A \defn{$K$-module} is a semi-linear representation $M$ of $\GL$ over $K$ such that every element of $M$ has the form $ax$ with $a \in K$ and $x \in M^{\pol}$. One easily sees that the category $\Mod_K$ of $K$-modules is an abelian category satisfying the (AB5) condition. Moreover, if $M$ is any $K$-module then there is a surjection $K \otimes V \to M$ for some polynomial representation $V$ (take $V=M^{\pol}$), which shows that the objects $K^{\oplus \lambda}$ form a generating set; thus $\Mod_K$ is a Grothendieck abelian category.

We say that $K$ is \defn{finitely $\GL$-generated} over $k$ if it is generated as a field extension by the $\GL$-orbits of finitely many elements. We say that $K$ is \defn{rational} over $k$ if it has the form $\Frac(\Sym(k^{\oplus \usigma}))$ for some tuple $\usigma$. The \defn{invariant subfield} of $K$, denoted $K^{\GL}$, is the subfield of $K$ consisting of all elements that are invariant under $\GL$. It is an extension of $k$. If $K$ is finitely $\GL$-generated over $k$ then $K^{\GL}$ is finitely generated over $k$ (\cite[Proposition~5.8]{polygeom}).

\begin{proposition} \label{prop:unirat-K}
Let $K$ be a $\GL$-field that is finitely $\GL$-generated over $k$. Then there exists $n \ge 0$ such that $\Sh_n(K)$ is rational over its invariant subfield.
\end{proposition}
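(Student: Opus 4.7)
The plan is to bootstrap Proposition~\ref{prop:unirat-K} from the shift theorem for $\GL$-algebras (Theorem~\ref{thm:bdes-shift}) by passing to fraction fields and then identifying the resulting base with the invariant subfield.

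First, realize $K$ as the fraction field of a finitely $\GL$-generated integral $\GL$-algebra. If $x_1,\ldots,x_r \in K$ are $\GL$-field generators over $k$, write each as $x_i = a_i/b_i$ with $a_i, b_i \in K^{\pol}$ nonzero, and let $R_0 \subseteq K^{\pol}$ be the $\GL$-subalgebra they generate. Then $R_0$ is finitely $\GL$-generated, integral, and satisfies $\Frac(R_0) = K$.

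Next, apply Theorem~\ref{thm:bdes-shift} to $R_0$ to obtain $n \ge 0$, a nonzero $\GL$-invariant $f \in \Sh_n(R_0)$, a finitely generated integral $k$-algebra $A$ with trivial $\GL$-action, and a pure tuple $\usigma$ with
\[ \Sh_n(R_0)[1/f] \;\cong\; A \otimes_k \Sym(k^{\oplus \usigma}). \]
Setting $L := \Frac(A)$ and taking fraction fields (inverting $f$ is harmless at the level of $\Frac$) yields
\[ \Sh_n(K) \;=\; \Frac\bigl(L \otimes_k \Sym(k^{\oplus \usigma})\bigr), \]
which exhibits $\Sh_n(K)$ as rational over the subfield $L$.

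It remains to identify $L$ with $\Sh_n(K)^{\GL}$. Since $\GL$ acts trivially on $L$, the inclusion $L \subseteq \Sh_n(K)^{\GL}$ is immediate. Conversely, purity of $\usigma$ means $k^{\oplus \usigma}$ has no degree-zero component, and compatibility of the degree grading with tensor products forces $\Sym(k^{\oplus \usigma})^{\GL} = k$; extending scalars, $(L \otimes_k \Sym(k^{\oplus \usigma}))^{\GL} = L$. A dense-orbit argument --- in the spirit of the final paragraph of the proof of Proposition~\ref{prop:Pi-Zariski}, invoking the transitivity statement \cite[Corollary~2.6.3]{universality} --- then promotes this to $\Frac(L \otimes_k \Sym(k^{\oplus \usigma}))^{\GL} = L$.

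I expect this last promotion from invariants on the polynomial ring to invariants on its fraction field to be the main technical point: for general group actions the fraction-field invariants can strictly contain the fraction field of invariants (e.g.\ scalar multiplication on $k[x,y]$), so one genuinely needs the abundance of the $\GL$-action together with the purity of $\usigma$ to guarantee that no new invariants appear upon inverting denominators.
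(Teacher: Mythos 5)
Your proof follows the paper's argument exactly: realize $K$ as $\Frac(R_0)$ for a finitely $\GL$-generated integral $\GL$-algebra $R_0$, apply Theorem~\ref{thm:bdes-shift}, and pass to fraction fields to get $\Sh_n(K) \cong \Frac(\Sym(L^{\oplus\usigma}))$ with $L = \Frac(A)$. The only divergence is at the final step, identifying $L$ with the invariant subfield of $\Sh_n(K)$: the paper simply cites \cite[Proposition~5.7]{polygeom}, which is precisely the statement that $\Frac(L \otimes_k \Sym(k^{\oplus\usigma}))^{\GL} = L$ for $\usigma$ pure. You correctly isolate this as the crux and correctly observe that it does not follow formally (fraction-field invariants can exceed the fraction field of the invariants for general group actions), but your ``dense-orbit argument in the spirit of Proposition~\ref{prop:Pi-Zariski}'' is a gesture rather than a proof --- you never exhibit, for a putative invariant $a/b$ with $a,b$ polynomial, the approximation or transitivity step that forces $a/b \in L$. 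As the needed fact is exactly an existing result in the literature, the cleanest fix is to cite it; otherwise you owe an actual argument (e.g.\ via the $\GL$-genericity/orbit-density of points of $\bA^{\usigma}$) for that promotion.
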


\begin{proof}
One easily sees that $K$ can be $\GL$-generated by finitely many polynomial elements. We can thus find a finitely $\GL$-generated $k$-subalgebra $R$ of $K$ such that $K=\Frac(R)$. Apply Theorem~\ref{thm:bdes-shift} to write $\Sh_n(R)[1/f] \cong A \otimes \Sym(k^{\oplus \usigma})$ where $A$ is a $k$-algebra with trivial $\GL$-action and $\usigma$ is a pure tuple. Taking fraction fields, we find $\Sh_n(K) \cong \Frac(\Sym(\ell^{\oplus \usigma}))$ where $\ell=\Frac(A)$. It follows from \cite[Proposition~5.7]{polygeom} that $K^{\GL} \cong \ell$, and so $\Sh_n(K)$ is rational over its invariant subfield.
\end{proof}

\subsection{Generic categories} \label{ss:gen}

Let $R$ be an integral $\GL$-algebra. We say that an $R$-module $M$ is \defn{torsion} if every element of $M$ is annihilated by a non-zero element of $R$. The category $\Mod_R^{\tors}$ of torsion $R$-modules is a Serre subcategory of $\Mod_R$. We define the \defn{generic category} of $R$, denoted $\Mod_R^{\gen}$, to be the Serre quotient $\Mod_R/\Mod_R^{\tors}$.

The generic category can be described in terms of semi-linear representations. Let $K=\Frac(R)$. We have a functor
\begin{displaymath}
T \colon \Mod_R \to \Mod_K, \qquad T(M) = K \otimes_R M.
\end{displaymath}
We also have a functor
\begin{displaymath}
S \colon \Mod_K \to \Mod_R, \qquad T(N) = N^{\pol}.
\end{displaymath}
Indeed, if $N$ is a $K$-module then $R \otimes N^{\pol}$ is a polynomial representation, so its image under the natural map $R \otimes N^{\pol} \to N$ consists of polynomial elements, and is therefore contained in $N^{\pol}$; this shows that $N^{\pol}$ is stable under multiplication by $R$, and is thus an $R$-module.

\begin{proposition} \label{prop:genK}
We have the following:
\begin{enumerate}
\item The functor $T$ is exact and kills torsion modules. The induced functor $\Mod_R^{\gen} \to \Mod_K$ is an equivalence.
\item The functors $(T,S)$ form an adjoint pair.
\item The co-unit $TS \to \id$ is an isomorphism.
\end{enumerate}
\end{proposition}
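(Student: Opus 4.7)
The plan is to verify parts (b) and (c) first, since the Serre quotient equivalence in (a) then follows by formal nonsense from the fact that $T$ is exact and $S$ is fully faithful.

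For (a), the exactness of $T$ is immediate: $K$ is the localization of $R$ at the multiplicative set $R\setminus\{0\}$, and localization is flat. If $M$ is torsion and $x\in M$ with $ax=0$ for some $0\neq a\in R$, then $1\otimes x=(1/a)\otimes ax=0$ in $K\otimes_R M$, so $T$ kills $\Mod_R^{\tors}$. Conversely, if $T(M)=0$ then for each $x\in M$ the relation $1\otimes x=0$ in the localization forces $ax=0$ for some nonzero $a\in R$, so $\ker(T)=\Mod_R^{\tors}$.

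For the adjunction in (b), I would argue as in the usual extension/restriction of scalars along $R\to K$, with the wrinkle that we must track polynomiality. Given an $R$-linear, $\GL$-equivariant map $f\colon M\to N$ with $M$ a polynomial representation, the image $f(M)$ is a polynomial subrepresentation of $N$ and hence lies in $N^{\pol}=S(N)$; so $\Hom_R(M,N)=\Hom_R(M,S(N))$. Combined with the standard bijection $\Hom_K(K\otimes_R M, N)\cong \Hom_R(M,N)$ (which is manifestly $\GL$-equivariant), this yields the adjunction $\Hom_K(T(M),N)\cong\Hom_R(M,S(N))$. For (c), the co-unit $\varepsilon_N\colon K\otimes_R N^{\pol}\to N$ sends $a\otimes x\mapsto ax$. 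Surjectivity is the defining property of a $K$-module (every element of $N$ is $ax$ with $x\in N^{\pol}$). For injectivity, given $\sum a_i\otimes x_i$ with $a_i\in K$ and $x_i\in N^{\pol}$, we clear a common denominator: write $a_i=b_i/c$ with $b_i,c\in R$ and $c\neq 0$, so
\begin{displaymath}
\sum a_i\otimes x_i=\frac{1}{c}\otimes \sum b_i x_i.
\end{displaymath}
If this maps to $0$ in $N$ then $\sum b_i x_i=0$ (since multiplication by $c\in K^\times$ is injective), and the element is already $0$ in $K\otimes_R N^{\pol}$ because $\sum b_ix_i$ lies in the $R$-submodule $N^{\pol}$.

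Finally, (a) reduces to a standard Gabriel-style argument: since $T$ is exact with right adjoint $S$ and the co-unit $TS\to\id$ is an isomorphism, $S$ is fully faithful, so $T$ realizes $\Mod_K$ as the Serre quotient of $\Mod_R$ by $\ker(T)$; by the first paragraph, $\ker(T)=\Mod_R^{\tors}$, yielding the asserted equivalence $\Mod_R^{\gen}\xrightarrow{\sim}\Mod_K$. The only point that required any real thought was the injectivity half of (c), where one has to confirm that clearing denominators inside the tensor product does not leave a hidden torsion contribution in $N^{\pol}$; this is handled by the observation that $N^{\pol}$ is an $R$-submodule of $N$, so an equation in $N$ with polynomial terms is already an equation in $N^{\pol}$.
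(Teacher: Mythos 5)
Your proof is correct, and it supplies exactly the standard localization argument that the paper delegates to \cite[\S 2.4]{sym2noeth} (the cited proof proceeds the same way: flatness of $K$ over $R$, identification of $\ker T$ with the torsion subcategory, the extension/restriction adjunction combined with the fact that a $\GL$-equivariant image of a polynomial representation lands in $N^{\pol}$, and Gabriel's criterion that an exact functor with fully faithful right adjoint induces an equivalence from the Serre quotient by its kernel). The only detail you leave implicit is that $T(M)=K\otimes_R M$ is indeed an object of $\Mod_K$, i.e.\ every element has the form $ax$ with $x$ polynomial; this follows from the same denominator-clearing trick you use for the injectivity of the co-unit.
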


\begin{proof}
See \cite[\S 2.4]{sym2noeth}.
\end{proof}

We say that an $R$-module $M$ is \defn{saturated} if the natural map $M \to S(T(M))$ is an isomorphism. We will require the following result concerning this concept:

\begin{proposition} \label{prop:sat}
Let $\usigma$ be a pure tuple, let $R=\Sym(k^{\oplus \usigma})$, and let $V$ be a polynomial representation. Then $R \otimes V$ is a saturated $R$-module.
\end{proposition}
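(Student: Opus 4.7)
The natural map $R \otimes V \to (K \otimes V)^{\pol}$ is immediately injective: $R \otimes V$ is a polynomial representation of $\GL$ (tensor product of polynomial representations) and embeds into $K \otimes V$ via the flat extension $R \hookrightarrow K$. The plan for surjectivity is to exploit the short exact sequence of $\GL$-representations
\[ 0 \to R \otimes V \to K \otimes V \to (K/R) \otimes V \to 0, \]
which is exact because $V$ is $k$-flat. Applying the left-exact functor $(-)^{\pol}$ and inspecting the resulting four-term sequence, surjectivity reduces to showing $((K/R) \otimes V)^{\pol} = 0$. Since $(-)^{\pol}$ commutes with direct sums and polynomial representations are semisimple, this further reduces to the case $V = \bS_\lambda(\bV)$ simple.

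A hypothetical nonzero polynomial element of $(K/R) \otimes \bS_\lambda(\bV)$ generates a simple polynomial subrepresentation $\bS_\mu(\bV)$, given by a $T$-weight-$\mu$ element $\bar x \in (K/R) \otimes \bS_\lambda(\bV)$ fixed by the upper-triangular unipotent $U \subset \GL$. Using that $R$ is a polynomial ring in a $T$-weight basis of $E$ (and hence a $T$-weight-graded subring of $K$), one lifts $\bar x$ to a $T$-weight-$\mu$ element $x \in K \otimes \bS_\lambda(\bV)$; expanding $x$ in a weight basis of $\bS_\lambda(\bV)$ shows each coordinate is a $T$-weight vector in $K$, hence of the form $p/q$ with $p, q$ coprime weight vectors of $R$. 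The remaining task is to show $x \in R \otimes \bS_\lambda(\bV)$, forcing $\bar x = 0$.

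For this last step, study the $\GL$-stable ideal $J = \{\, r \in R : r \cdot \GL \cdot x \subseteq R \otimes V \,\}$, which is nonzero because $x$ has a finite expansion with a common $R$-denominator. Since $\usigma$ is pure, $\Sym^d(E)^{\GL} = 0$ for all $d \geq 1$ (each summand of $\Sym^d(E)$ has strictly positive polynomial degree), hence $R^{\GL} = k$. Moreover, the only character of $\GL = \bigcup_n \GL_n$ is trivial, because compatibility of a character across the embeddings $\GL_m \hookrightarrow \GL_n$ forces any $\det_n^{k}$ to have $k=0$; so the $\GL$-semi-invariants of $R$ coincide with $\GL$-invariants, namely $k^*$. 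The main obstacle is then to exploit the polynomial structure of the $\GL$-orbit of $x$ to force the denominators $q$ to be $\GL$-semi-invariants, hence scalars, yielding $J \ni 1$ and $x \in R \otimes \bS_\lambda(\bV)$; this is the technically delicate step and requires a careful interplay between the $\GL$-action on $K$ (which moves denominators around) and the weight combinatorics of $R = \Sym(E)$.
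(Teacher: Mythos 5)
Your opening reductions are sound: saturation of $R \otimes V$ is the statement $(K \otimes V)^{\pol} = R \otimes V$, and via the exact sequence $0 \to R \otimes V \to K \otimes V \to (K/R) \otimes V \to 0$ together with left-exactness of $(-)^{\pol}$ and its compatibility with direct sums, everything comes down to $((K/R) \otimes \bS_{\lambda}(\bV))^{\pol} = 0$. The problem is that your argument stops exactly where the content of the proposition begins. You reduce to showing that a weight vector $x \in K \otimes \bS_{\lambda}(\bV)$ whose $\GL$-orbit spans a polynomial representation must already lie in $R \otimes \bS_{\lambda}(\bV)$, you introduce the conductor ideal $J = \{\, r \in R : r \cdot \GL \cdot x \subseteq R \otimes V \,\}$, and you observe that it is nonzero and $\GL$-stable --- and then you label the step of forcing the denominators to be units ``the technically delicate step.'' That step \emph{is} the theorem. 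Nothing you have established implies $1 \in J$: a nonzero $\GL$-stable ideal of $R$ need not be the unit ideal (the augmentation ideal of positive-degree elements is a counterexample to that inference), and the observation $R^{\GL} = k$ cuts against you rather than for you, since it shows a proper $\GL$-stable ideal can easily avoid the invariants. The interplay between ``the orbit of $x$ spans a polynomial representation'' and the weight combinatorics of denominators in $\Frac(\Sym(k^{\oplus \usigma}))$ is precisely what a proof must supply, and you have supplied only the assertion that it can be done.

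Two smaller points. First, the lift of $\bar{x}$ to a weight vector of $K \otimes \bS_{\lambda}(\bV)$ is not automatic, since $K$ is not a polynomial representation and does not decompose into weight spaces; one has to argue via a common denominator and the weight-grading of $R$, which is doable but should be said. Second, note that the paper itself offers no internal proof to compare against --- it cites \cite[Proposition~2.8]{sym2noeth} --- so the honest options here are either to cite that result as the paper does, or to actually carry out the denominator-clearing argument; the present proposal does neither.
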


\begin{proof}
See \cite[Proposition~2.8]{sym2noeth}.
\end{proof}

\section{The shift and embedding theorems} \label{s:shift}

In this section, we prove our two main technical results on $K$-modules: the shift theorem (Theorem~\ref{thm:shift}) and the embedding theorem (Theorem~\ref{thm:embed}).

\subsection{A preliminary result}

The following proposition is the key input needed for the shift theorem proven in the subsequent subsection. It is a linear analog of \cite[Theorem~4.2]{polygeom}, a result that was essentially taken from arguments in \cite{draisma}.

\begin{proposition} \label{prop:reduction}
Let $R$ be an integral $\GL$-algebra, let $\lambda$ be a partition, let $F$ and $M$ be $R$-modules, and suppose we have a surjection of $R$-modules
\begin{displaymath}
R^{\oplus \lambda} \oplus F \to M.
\end{displaymath}
Then at least one of the following holds:
\begin{enumerate}
\item The given map induces an isomorphism $R^{\oplus \lambda} \oplus N \to M$, where $N$ is a quotient of $F$.
\item There exists $n \ge 0$ and a non-zero $\GL$-invariant element $f \in \Sh_n(R)$ such that the natural map
\begin{displaymath}
\Sh_n(R)[1/f]^{\oplus \umu} \oplus \Sh_n(F)[1/f] \to \Sh_n(M)[1/f]
\end{displaymath}
is surjective, where $\umu$ is obtained from $\sh_n(\lambda)$ by deleting $\lambda$.
\end{enumerate}
\end{proposition}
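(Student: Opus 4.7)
Let $\phi \colon R^{\oplus \lambda} \oplus F \to M$ be the given surjection and set $K = \ker(\phi)$. Write $\pi \colon K \to R^{\oplus \lambda}$ for the projection onto the first summand. My plan is to case-split on whether $\pi$ vanishes.

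If $\pi(K) = 0$, then $K$ is contained in the second summand $0 \oplus F$, and so the map descends to an isomorphism $R^{\oplus \lambda} \oplus N \xrightarrow{\sim} M$ with $N = F/(K \cap F)$; this is conclusion (a). Otherwise, I pick $(a,b) \in K$ with $a \in R^{\oplus \lambda}$ nonzero. The relation $\phi(a,0) = -\phi(0,b)$, combined with $\GL$-equivariance, shows that the image in $M$ of every $\GL$-translate $g \cdot a$ lies in the image of $F \to M$. By $R$-linearity, the same is true for every element of the $\GL$-equivariant $R$-submodule $R\langle a \rangle \subseteq R^{\oplus \lambda}$ generated by the orbit of $a$.

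The crux is then to upgrade this to the following statement: for some $n \ge 0$ and some nonzero $\GL$-invariant $f \in \Sh_n(R)$, the $\Sh_n(R)[1/f]$-submodule $\Sh_n(R)[1/f]\langle a\rangle$ contains the $\Sh_n(R)[1/f]^{\oplus \lambda}$ summand in the decomposition
\[
\Sh_n(R^{\oplus \lambda})[1/f] \;=\; \Sh_n(R)[1/f]^{\oplus \lambda} \;\oplus\; \Sh_n(R)[1/f]^{\oplus \umu}
\]
coming from $\sh_n(\lambda) = [\lambda] \sqcup \umu$. Granting this, every element of the first summand is a finite $\Sh_n(R)[1/f]$-linear combination of $\GL$-translates of $a$, and therefore maps into the image of $\Sh_n(F)[1/f]$ inside $\Sh_n(M)[1/f]$. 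The first summand is consequently redundant, and the natural map $\Sh_n(R)[1/f]^{\oplus \umu} \oplus \Sh_n(F)[1/f] \to \Sh_n(M)[1/f]$ is already surjective, which is conclusion (b).

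The hard part, and main obstacle, is this generation claim. Following the strategy of \cite[Theorem~4.2]{polygeom}, my plan is to first apply Theorem~\ref{thm:bdes-shift} to produce an $n_0$ and a nonzero invariant $f_0 \in \Sh_{n_0}(R)$ with $\Sh_{n_0}(R)[1/f_0] \cong A \otimes \Sym(k^{\oplus \usigma})$, where $A$ carries the trivial $\GL$-action and $\usigma$ is pure. In this normalized form, $\Sh_{n_0}(R^{\oplus \lambda})[1/f_0]$ becomes a free module over $\Sh_{n_0}(R)[1/f_0]$, and its Schur-functor decomposition is explicit. The key technical step, mirroring Draisma's argument, is to show that after a further shift and localization at another $\GL$-invariant, the $\GL$-orbit of $a$ in fact reaches every vector in the $\lambda$-isotypic summand; this uses the high transitivity of the $\GL$-action on the $k$-points of $\bA^{\usigma}$ (cf.\ Proposition~\ref{prop:Pi-Zariski}) together with the explicit freeness after localization. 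Tracking that the ``top'' summand indexed by $\lambda$ in $\sh_n(\lambda)$ is captured, rather than merely some smaller partition, is the subtle combinatorial point.
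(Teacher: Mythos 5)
Your opening reduction matches the paper's: split on whether the kernel $K$ projects to zero in $R^{\oplus \lambda}$, with case (a) handled identically. But everything after that is a plan rather than a proof, and the plan both overstates what must be shown and leaves the actual content unproved. First, the ``crux'' as you state it --- that after a shift and localization the $\Sh_n(R)[1/f]$-submodule generated by the orbit of $a$ \emph{contains} the top summand $\Sh_n(R)[1/f]^{\oplus \lambda}$ --- is false in general. That submodule lies inside (the shift/localization of) $\ol{K}$, the projection of the kernel, and $\ol{K}$ need not contain the top summand even after shifting and localizing: e.g.\ for $R=\Sym(\bV)$, $\lambda=(1)$ and $\ol{K}$ the submodule of $\bV \otimes R$ generated by $\lw^2 \bV$, the quotient is $\bigoplus_d \Sym^d(\bV)$, which survives every localization at a nonzero invariant. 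What is true, and what suffices for (b), is only that the top summand is reached \emph{modulo} the complementary summand $\Sh_n(R)[1/f]^{\oplus \umu}$ (and modulo the image of $F$); the correction terms in the lower summands cannot be stripped away. Second, and more seriously, you prove no version of this claim: you defer it as ``the hard part, and main obstacle,'' and the route you gesture at --- normalize $R$ via Theorem~\ref{thm:bdes-shift} and invoke transitivity of $\GL$ on $k$-points of $\bA^{\usigma}$ --- does not connect to the problem, since transitivity on points of $\Spec(R)$ says nothing about which elements of the module $R^{\oplus\lambda}$ lie in a given submodule. Theorem~\ref{thm:bdes-shift} is not needed here at all.

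The paper's argument for the missing step is purely representation-theoretic. Write $R^{\oplus\lambda}=E\otimes R$ with $E=k^{\oplus\lambda}$ irreducible of degree $n=\vert\lambda\vert$. A Schur--Weyl weight lemma (Lemma~\ref{lem:reduction}) produces an element $x=\sum_i f_i e_i$ of $\ol{K}$ in the $1^{n+m}$-weight space such that $e_1$ has weight $1^{\{m+1,\ldots,n+m\}}$, $f_1$ has weight $1^m$ (hence is $G(m)$-invariant, i.e.\ a $\GL$-invariant element of $\Sh_m(R)$), and every other $e_i$ has weight meeting $[m]$. Decomposing $\Sh_m(E)=E^{\rm big}\oplus E^{\rm small}$ into the $\lambda$-summand and the rest, one has $e_i\in E^{\rm small}$ for $i\ge 2$, so the image of $f_1e_1$ in $M$ lies in the image $M'$ of $(E^{\rm small}\otimes R)\oplus F$. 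Inverting $f_1$ and using that $e_1$ generates the $G(m)$-irreducible $E^{\rm big}$, that $f_1$ is $G(m)$-invariant, and that $M'$ is $G(m)$-stable, all of $E^{\rm big}\otimes R$ maps into $M'[1/f_1]$, which is exactly (b) with $n=m$ and $f=f_1$. This weight-theoretic selection of a good element of $\ol{K}$, and the passage through the image in $M$ rather than through containment in $\ol{K}$, are the points your proposal is missing.
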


We require some preparation before giving the proof. A \emph{weight} of $\GL$ is a tuple $\lambda=(\lambda_1, \lambda_2, \ldots)$ where $\lambda_i \in \bZ$ for all $i$ and $\lambda_i=0$ for $i \gg 0$. For a finite subset $A$ of $[\infty]=\{1,2,\ldots\}$, we let $1^A$ be the weight that is~1 at the coordinates in $A$, and~0 away from $A$. We also write $1^n$ in place of $1^A$ when $A=[n]$.

Suppose that $V$ is a polynomial representation and $\lambda$ is a weight. We say that $v \in V$ is a \emph{weight vector} of weight $\lambda$ if whenever $g=\diag(a_1,a_2,\ldots)$ we have
\begin{displaymath}
gv=\big( \prod_{i \ge 1} a_i^{\lambda_i} \big) \cdot v.
\end{displaymath}
We let $V_{\lambda}$ be the space of all weight vectors of weight $\lambda$; this is the \emph{$\lambda$ weight space}. The space $V$ is the direct sum of its weight spaces $V_{\lambda}$ over all $\lambda$. Moreover, if $V_{\lambda}$ is non-zero then $\lambda$ is non-negative in the sense that $\lambda_i \ge 0$ for all $i$.

The weight space $V_{1^n}$ carries a representation of $\fS_n \subset \GL$. Let $\Rep^{\pol,n}(\GL)$ be the subcategory of $\Rep^{\pol}(\GL)$ spanned by representations of degree $n$. One formulation of Schur--Weyl duality states that the functor
\begin{align*}
\Rep^{\pol,n}(\GL) &\to \Rep(\fS_n) \\
V &\mapsto V_{1^n}
\end{align*}
is an equivalence of categories.

\begin{lemma} \label{lem:reduction}
Let $V$ and $W$ be polynomial representations of degrees $n$ and $m$, with $V$ irreducible, let $S$ be a subset of $[n+m]$ of cardinality $n$, and let $U$ be a non-zero subrepresentation of $V \otimes W$. Then $U$ contains a vector of the form $x=\sum_{i=1}^r v_i \otimes w_i$, for some $r \ge 1$, such that:
\begin{itemize}
\item $v_i$ is a weight vector of $V$ of weight $1^{A_i}$ and $w_i$ is a weight vector of $W$ of weight $1^{B_i}$, where $A_i$ and $B_i$ are disjoint and $A_i \cup B_i=[n+m]$;
\item we have $A_1=S$, and $v_1$ and $w_1$ are non-zero;
\item we have $A_i \ne S$ for $i>1$.
\end{itemize}
\end{lemma}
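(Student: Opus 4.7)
The plan is to pass to the weight-$1^{n+m}$ space of $U$ and exploit Schur--Weyl duality together with the irreducibility of $V$ to force a pure-tensor structure on the $S$-summand.

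First, since $V \otimes W$ is a polynomial representation concentrated in degree $n+m$, so is $U$. By the Schur--Weyl equivalence $\Rep^{\pol,n+m}(\GL) \cong \Rep(\fS_{n+m})$ recalled earlier, the weight space $U_{1^{n+m}}$ is non-zero. Decompose
\[
(V \otimes W)_{1^{n+m}} \;=\; \bigoplus_{\substack{A \subset [n+m] \\ |A|=n}} V_{1^A} \otimes W_{1^{[n+m] \setminus A}},
\]
and let $\pi_S$ denote the projection onto the $A = S$ summand. Since $\fS_{n+m}$ acts on this decomposition by permuting the subsets $A$ (via $\sigma \cdot (V_{1^A} \otimes W_{1^{[n+m]\setminus A}}) = V_{1^{\sigma(A)}} \otimes W_{1^{[n+m]\setminus\sigma(A)}}$), and $U_{1^{n+m}}$ is $\fS_{n+m}$-stable, a permutation carrying some non-vanishing component onto the $S$-summand witnesses that $\pi_S(U_{1^{n+m}}) \neq 0$.

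Next, the key structural step: the subspace $\pi_S(U_{1^{n+m}}) \subseteq V_{1^S} \otimes W_{1^{[n+m]\setminus S}}$ is stable under the stabilizer $\fS_S \times \fS_{[n+m]\setminus S}$ of $S$ in $\fS_{n+m}$. Since $V$ is irreducible as a polynomial representation, the Schur--Weyl correspondence identifies $V_{1^S}$ with a (Specht) module that is absolutely irreducible as an $\fS_S$-representation. A standard fact from semisimple representation theory in characteristic zero then forces every $\fS_S \times \fS_{[n+m]\setminus S}$-subrepresentation of $V_{1^S} \otimes W_{1^{[n+m]\setminus S}}$ to factor as $V_{1^S} \otimes W'$ for some $\fS_{[n+m]\setminus S}$-subrepresentation $W'$. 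In particular, $\pi_S(U_{1^{n+m}}) = V_{1^S} \otimes W'$ for some non-zero $W'$, which contains many non-zero pure tensors.

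To conclude, pick non-zero vectors $v_1 \in V_{1^S}$ and $w_1 \in W'$, so $v_1 \otimes w_1 \in \pi_S(U_{1^{n+m}})$. Lift to any $x \in U_{1^{n+m}}$ with $\pi_S(x) = v_1 \otimes w_1$; then $x - v_1 \otimes w_1$ lies in $\bigoplus_{A \neq S} V_{1^A} \otimes W_{1^{[n+m]\setminus A}}$, which we may expand into pure tensors $\sum_{i=2}^r v_i \otimes w_i$ by choosing bases of the weight spaces involved. The conditions of the lemma are then met with $A_1 = S$, $B_1 = [n+m]\setminus S$, and $A_i \neq S$ for $i > 1$. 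The main obstacle in this plan is the structural step: recognising that the irreducibility of $V$ (equivalently, of the Specht module $V_{1^S}$) collapses the $S$-projection to a tensor-product subspace is what allows us to extract a pure tensor in the $S$-component rather than merely a non-zero element of arbitrary rank.
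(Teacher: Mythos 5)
Your proof is correct, and it follows the same skeleton as the paper's: pass to the $1^{n+m}$-weight space via Schur--Weyl duality, use the $\fS_{n+m}$-action to arrange a non-zero component in the $S$-summand, and then exploit the irreducibility of the Specht module $V_{1^S}$ to extract a pure tensor there. The only real difference is in how that last step is executed. The paper takes an arbitrary non-zero weight vector $x=\sum_i v_i\otimes w_i$ with the $v_i$ linearly independent and, by the density theorem for the irreducible $\fS_n$-module $V_{1^n}$, finds $a\in k[\fS_n]$ with $av_1=v_1$ and $av_i=0$ for the other terms having $A_i=S$; since $\fS_S$ acts trivially on $W_{1^{B_1}}$ and cannot move a weight $1^{A}$ with $A\ne S$ onto $1^S$, the element $ax\in U$ already has the desired form. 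You instead invoke the structural fact that every $\fS_S\times\fS_{[n+m]\setminus S}$-submodule of $V_{1^S}\otimes W_{1^{[n+m]\setminus S}}$ has the form $V_{1^S}\otimes W'$ (valid because $V_{1^S}$ is absolutely irreducible in characteristic zero), which identifies the entire projection $\pi_S(U_{1^{n+m}})$ at once and lets you lift a pure tensor back to $U$. Both mechanisms rest on the same underlying fact --- that $k[\fS_S]$ surjects onto $\End_k(V_{1^S})$ --- so the arguments are essentially equivalent; yours is slightly more structural, the paper's more hands-on, and each yields exactly the stated normal form for the element of $U$.
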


\begin{proof}
We may as well assume $S=[n]$. By Schur--Weyl duality, the $1^{n+m}$-weight space of $U$ is non-zero. We can thus find a non-zero element $x$ of $U$ of the form $x=\sum_{i=1}^r v_i \otimes w_i$ satisfying the first condition, and with the $v_i$ and $w_i$ linearly independent. Applying an element of the symmetric group $\fS_{n+m} \subset \GL$, we can assume that $v_1$ has weight $1^n$. Relabeling, we can assume that $v_1, \ldots, v_k$ have weight $1^n$, and that the remaining $v_i$ have weight $\ne 1^n$.

Now, by Schur--Weyl duality, the $1^n$ weight space of $V$ is an irreducible representation of $\fS_n$ (acting through the standard inclusion $\fS_n \subset \GL$). Since $v_1, \ldots, v_k$ are linearly independent elements, we can find $a \in \bC[\fS_n]$ such that $av_1=v_1$ and $av_i=0$ for $2 \le i \le k$, Since $w_1$ has weight $1^{B_1}$ with $B_1=\{n+1, \ldots, n+m\}$, the group $\fS_n$ acts trivially on it, and so $a(v_1 \otimes w_1)=v_1 \otimes w_1$. For $k<i$ the element $av_i$ is a sum of weight vectors having weight of the form $1^A$ with $A \ne [n]$. We thus see that $ax$ is an element of $U$ of the required form.
\end{proof}

\begin{proof}[Proof of Proposition~\ref{prop:reduction}]
Let $K$ be the kernel of $R^{\oplus \lambda} \oplus F \to M$, and let $\ol{K}$ be the projection of $K$ to $R^{\oplus \lambda}$. If $\ol{K}=0$ then $K$ is contained in $F$, and case (a) holds with $N=F/K$. Suppose now that $\ol{K} \ne 0$. Let $n=\vert \lambda \vert$ and let $m \ge 0$ be such that $\ol{K}$ has a non-zero element of degree $n+m$. Recall that $R^{\oplus \lambda}=E \otimes R$, where $E=k^{\oplus \lambda}$. Applying Lemma~\ref{lem:reduction}, we can find an element $x$ of $\ol{K}$ of the form $x=\sum_{i=1}^r f_i e_i$, where:
\begin{itemize}
\item $e_i$ is a weight vector of $E$ of weight $1^{A_i}$ and $f_i$ is a weight vector of $R$ of weight $1^{B_i}$, where $A_i$ and $B_i$ are disjoint and $A_i \cup B_i=[n+m]$;
\item $A_1=\{m+1, \ldots, n+m\}$, and $f_1$ and $e_1$ are non-zero.
\item $A_i\ne \{m+1,\ldots,n+m\}$ for $i>1$.
\end{itemize}
Say that a weight $\lambda$ is \emph{big} if $\lambda_i=0$ for $i \in [m]$, and \emph{small} otherwise. Let $E^{\rm big}$ and $E^{\rm small}$ be the sum of the big and small weight spaces in $E$. Then we have a decomposition of $G(m)$-representations
\begin{displaymath}
E=E^{\rm big} \oplus E^{\rm small}.
\end{displaymath}
Identifying $G(m)$ with $\GL$, this becomes the decomposition
\begin{displaymath}
\Sh_m(E) = k^{\oplus \lambda} \oplus k^{\oplus \umu}.
\end{displaymath}
We thus see that $E^{\rm big}$ is irreducible as a $G(m)$-representation. Note that $e_1$ is a non-zero element of $E^{\rm big}$ (and thus generates it as a $G(m)$-representation), and that $f_1$ is $G(m)$-invariant (as it has weight $1^m$).

Let $y \in F$ be such that $x+y \in K$. Let $M'$ be the image of $(E^{\rm small} \otimes R) \oplus F$ in $M$. Since $x+y$ maps to~0 in $M$, we see that the image of $f_1 e_1$ in $M$ belongs to $M'$, and so the image of $e_1$ belongs to $M'[1/f_1]$. Since $M'$ is $G(m)$-stable, $f_1$ is $G(m)$-invariant, and $e_1$ generates $E^{\rm big}$ as a $G(m)$-representation, we see that any element of $E^{\rm big} \otimes R$ maps into $M'[1/f_1]$. Thus $M'[1/f_1]=M[1/f_1]$, and the result follows.
\end{proof}

\subsection{The shift theorem}

We now prove the first main result of this section. It is an analog of \cite[Theorem~5.1]{polygeom}.

\begin{theorem}[Shift theorem] \label{thm:shift}
Let $R$ be an integral $\GL$-algebra and let $M$ be a finitely generated $R$-module. Then there exists $n \ge 0$, a tuple $\ulambda$, and a non-zero $\GL$-invariant element $f \in \Sh_n(R)$ such that we have an isomorphism $\Sh_n(M)[1/f] \cong \Sh_n(R)[1/f]^{\oplus \ulambda}$ of $\Sh_n(R)[1/f]$-modules.
\end{theorem}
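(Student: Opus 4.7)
The plan is to prove the theorem by well-founded induction, repeatedly applying Proposition~\ref{prop:reduction}. Since $M$ is finitely generated, fix a surjection $R^{\oplus \ulambda_0} \to M$ for some tuple $\ulambda_0$, and induct on $\ulambda_0$ with respect to the lexicographic order on the pair (maximum partition size appearing in $\ulambda_0$, number of partitions of that size). The base case $\ulambda_0 = \emptyset$ is trivial. For the inductive step, pick a partition $\lambda \in \ulambda_0$ of maximum size, write $R^{\oplus \ulambda_0} = R^{\oplus \lambda} \oplus F$ with $F = R^{\oplus \ulambda_0'}$ and $\ulambda_0' = \ulambda_0 \setminus \{\lambda\}$, and invoke Proposition~\ref{prop:reduction}.

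In case (a) of that proposition, we get $M \cong R^{\oplus \lambda} \oplus N$ with $N$ a quotient of $R^{\oplus \ulambda_0'}$. The tuple $\ulambda_0'$ has one fewer maximum-size partition than $\ulambda_0$, so the inductive hypothesis applied to $N$ furnishes $n \ge 0$, a nonzero $\GL$-invariant $f \in \Sh_n(R)$, and a tuple $\unu$ with $\Sh_n(N)[1/f] \cong \Sh_n(R)[1/f]^{\oplus \unu}$. Combining this with $\Sh_n(R^{\oplus \lambda})[1/f] \cong \Sh_n(R)[1/f]^{\oplus \sh_n(\lambda)}$ yields the desired conclusion for $M$.

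In case (b), after a shift $\Sh_n$ and localization at a nonzero $\GL$-invariant $f \in \Sh_n(R)$, we obtain a surjection
\[
\Sh_n(R)[1/f]^{\oplus \utau} \to \Sh_n(M)[1/f], \qquad \utau = \umu \cup \sh_n(\ulambda_0').
\]
Every partition in $\umu$ is strictly smaller than $\lambda$ by construction, while each $\sh_n(\mu)$ for $\mu \in \ulambda_0'$ contains $\mu$ once and all other entries strictly smaller. Thus the number of size-$|\lambda|$ partitions in $\utau$ is exactly that of $\ulambda_0$ minus one, so $\utau$ strictly precedes $\ulambda_0$ in the well-order. Now $\Sh_n(R)[1/f]$ is itself an integral $\GL$-algebra: it is a domain as a localization of one, and its $\GL$-action is polynomial since $f$ is invariant, so each $\Sh_n(R) \cdot f^{-k}$ is a polynomial $\GL$-subrepresentation. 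The inductive hypothesis therefore applies to $\Sh_n(M)[1/f]$ over $\Sh_n(R)[1/f]$ and produces $m \ge 0$, a nonzero $\GL$-invariant $g \in \Sh_m(\Sh_n(R)[1/f])$, and a tuple $\unu$ such that $\Sh_m(\Sh_n(M)[1/f])[1/g]$ is isomorphic to $\Sh_m(\Sh_n(R)[1/f])[1/g]^{\oplus \unu}$. Using $\Sh_m \circ \Sh_n = \Sh_{m+n}$ and absorbing the two successive localizations into a single $\GL$-invariant $h \in \Sh_{m+n}(R)$ (write $g = g'/f^k$ with $g' \in \Sh_{m+n}(R)$ and take $h = f g'$) collapses this to $\Sh_{m+n}(M)[1/h] \cong \Sh_{m+n}(R)[1/h]^{\oplus \unu}$, completing the induction.

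The main obstacle is case (b): one must check that $\Sh_n(R)[1/f]$ is still an integral $\GL$-algebra so that the inductive hypothesis can be invoked a second time, and that the two layers of shift-and-localize really assemble into a single shift-and-localize at a $\GL$-invariant element of a higher shift of $R$. Both points are routine, and the overall structure closely parallels the proof of the $\GL$-variety shift theorem \cite[Theorem~5.1]{polygeom} (i.e., Theorem~\ref{thm:bdes-shift}).
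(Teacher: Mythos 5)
Your proof is correct and follows essentially the same route as the paper's: a well-founded induction on the generating tuple driven by Proposition~\ref{prop:reduction} (the paper orders tuples by their full ``magnitude'' vector, you by the pair consisting of the maximal partition size and its multiplicity, and both measures strictly decrease in each of the two cases). The two technical points you single out --- that $\Sh_n(R)[1/f]$ is again an integral $\GL$-algebra and that iterated shift-and-localize collapses to a single shift-and-localize at a $\GL$-invariant element --- are precisely the steps the paper elides with ``it easily follows,'' and your handling of them is correct.
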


\begin{proof}
Say that an $R$-module is \defn{good} if the conclusion of the theorem holds for it. Consider the following statement, for a tuple $\umu$:
\begin{itemize}
\item[$S(\umu)$] If $R$ is an integral $\GL$-algebra and $M$ is a quotient module of $R^{\oplus \umu}$ then $M$ is good.
\end{itemize}
It suffices to prove $S(\umu)$ for all tuples $\umu$. The \emph{magnitude} of a tuple $\umu$, denoted $\magn(\umu)$, is the tuple $(n_0, n_1, \ldots)$ where $n_i$ is the number of partitions of size $i$ in $\mu$. We order magnitudes lexicographically; this is a well-order. We can thus prove $S(\umu)$ by induction on $\magn(\umu)$. Thus let $\umu$ be given, and suppose $S(\unu)$ holds for all $\unu$ with $\magn(\unu)<\magn(\umu)$. We prove $S(\umu)$. If $\umu$ is empty the statement is vacuous, so suppose this is not the case.

Let $R$ be an integral $\GL$-algebra and let $M$ be a quotient of $R^{\oplus \umu}$. Let $\kappa$ be a partition in $\umu$ of maximal size, and let $\unu$ be the tuple obtained from $\umu$ by deleting $\kappa$. We thus have a surjection $R^{\oplus \kappa} \oplus R^{\oplus \unu} \to M$. We apply Proposition~\ref{prop:reduction} with $F=R^{\oplus \unu}$. We consider the two cases separately.

Suppose case (a) holds. Then $M=R^{\oplus \kappa} \oplus N$ where $N$ is a quotient of $R^{\oplus \unu}$. Since $\unu$ has smaller magnitude than $\umu$, statement $S(\unu)$ holds, and so $N$ is good. It is clear then that $M$ is good as well.

Now suppose case (b) holds. Then there is some $n \ge 0$ and a $\GL$-invariant function $f \in \Sh_n(R)$ such that the natural map
\begin{displaymath}
\Sh_n(R)[1/f]^{\oplus \ul{\rho}} \oplus \Sh_n(R^{\oplus \unu})[1/f] \to \Sh_n(M)[1/f]
\end{displaymath}
is a surjection, where $\sh_n(\kappa)=[\kappa] \cup \ul{\rho}$. Now, the left side above has the form $\Sh_n(R)[1/f]^{\oplus \ul{\sigma}}$ where $\ul{\sigma}=\ul{\rho} \cup \sh_n(\unu)$. The tuple $\ul{\sigma}$ has smaller magnitude than $\umu$, and so statement $S(\ul{\sigma})$ holds. We thus see that $\Sh_n(M)[1/f]$ is good as a $\Sh_n(R)[1/f]$-module, from which it easily follows that $M$ is good as an $R$-module. This completes the proof.
\end{proof}

We also have the following statement, which appears to be slightly stronger, but in fact follows easily from the theorem:

\begin{corollary} \label{cor:shift}
Let $R$ be an integral $\GL$-algebra and let $M$ be a finitely generated $R$-module. Then there exists $n \ge 0$, a tuple $\ulambda$, and a non-zero $\GL$-invariant element $f \in \Sh_n(R)$ such that there is an injection $\Sh_n(R)^{\oplus \ulambda} \to \Sh_n(M)$ of $\Sh_n(R)$-modules with cokernel annihilated by $f$.
\end{corollary}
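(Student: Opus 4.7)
The plan is to apply the shift theorem (Theorem~\ref{thm:shift}) and clear denominators. It provides $n \ge 0$, a tuple $\ulambda$, a non-zero $\GL$-invariant $f_0 \in \Sh_n(R)$, and an isomorphism $\Phi \colon \Sh_n(R)[1/f_0]^{\oplus \ulambda} \to \Sh_n(M)[1/f_0]$. Writing $\bar R = \Sh_n(R)$ and $\bar M = \Sh_n(M)$, the task reduces to producing an $\bar R$-linear, $\GL$-equivariant map $\bar R^{\oplus \ulambda} \to \bar M$ whose $f_0$-localization agrees with $\Phi$ up to multiplication by a power of $f_0$.

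To construct such a map, note that $\Phi$ embeds the generating polynomial subrepresentation $k^{\oplus \ulambda} \subset \bar R[1/f_0]^{\oplus \ulambda}$ as a finite-length polynomial subrepresentation $W \subset \bar M[1/f_0]$. Fixing a finite $\GL$-generating set $w_1, \ldots, w_s$ of $W$ and writing $w_i = x_i/f_0^{m_i}$ with $x_i \in \bar M$, we may take $m = \max m_i$; since $f_0$ is $\GL$-invariant, it follows that $f_0^m W \subset \bar M$. Thus $v \mapsto f_0^m \Phi(v)$ defines a $\GL$-equivariant map $k^{\oplus \ulambda} \to \bar M$ of polynomial representations, which extends uniquely to a $\GL$-equivariant $\bar R$-linear map $\psi \colon \bar R^{\oplus \ulambda} \to \bar M$.

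To check the properties, observe that localizing $\psi$ at $f_0$ yields $f_0^m \cdot \Phi$, which is an isomorphism, so both $\ker \psi$ and $\coker \psi$ are $f_0$-torsion. Since $\bar R$ is a domain (shifting does not alter the underlying ring) and $\bar R^{\oplus \ulambda}$ is free over $\bar R$, multiplication by $f_0$ is injective on $\bar R^{\oplus \ulambda}$, forcing $\ker \psi = 0$. For the cokernel, $\coker \psi$ is finitely $\GL$-generated as an $\bar R$-module (being a quotient of $\bar M$), and each of its finitely many $\GL$-generators is annihilated by some power of $f_0$; taking the maximum gives $N$ with $f_0^N \cdot \coker \psi = 0$, where I use that $\GL$-invariance of $f_0$ propagates this annihilation to $\GL$-orbits. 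Setting $f = f_0^N$, which remains non-zero and $\GL$-invariant, yields an injection $\Sh_n(R)^{\oplus \ulambda} \hookrightarrow \Sh_n(M)$ with cokernel killed by $f$.

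I expect no real obstacle: the shift theorem supplies all the substantive content, and passing from a localized isomorphism to an honest injection with $f$-torsion cokernel is a routine denominator-clearing argument. The only things worth being explicit about are $\GL$-invariance of $f_0$ (so multiplication by $f_0^m$ is $\GL$-equivariant and preserves polynomial subrepresentations) and finite $\GL$-generation of $\coker \psi$ (so a single power of $f_0$ suffices).
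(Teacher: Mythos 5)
Your argument is essentially the paper's: invoke the shift theorem, scale the resulting isomorphism by a power of $f_0$, and use $\GL$-invariance of $f_0$ together with finite $\GL$-generation of the cokernel to kill it with a single power. The one place that needs repair is the assertion that $v \mapsto f_0^m\Phi(v)$ defines a map $k^{\oplus\ulambda} \to \Sh_n(M)$. The element $f_0^m\Phi(v)$ lives in $\Sh_n(M)[1/f_0]$, and what your denominator-clearing actually shows is that it lies in the image $M'$ of $\Sh_n(M)$ in that localization. If $\Sh_n(M)$ has $f_0$-torsion, $M'$ is a proper quotient of $\Sh_n(M)$, so you still have to lift the resulting map $\Sh_n(R)^{\oplus\ulambda} \to M'$ along the surjection $\Sh_n(M) \to M'$; this is immediate from projectivity (freeness) of $\Sh_n(R)^{\oplus\ulambda}$, and is exactly the extra sentence in the paper's proof. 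With that lift inserted, your injectivity argument (an $f_0$-torsion submodule of a free module over a domain is zero) and your cokernel argument go through verbatim.
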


\begin{proof}
Let $n$, $\ulambda$, and $f$ be as in the shift theorem, so that we have an isomorphism of $\Sh_n(R)[1/f]$-modules $\Sh_n(R)[1/f]^{\oplus \ulambda} \to \Sh_n(M)[1/f]$. Let $M'$ be the image of $\Sh_n(M)$ in $\Sh_n(M)[1/f]$. Scaling our isomorphism by an appropriate power of $f$, we can assume that $\Sh_n(R)^{\oplus \ulambda}$ maps into $M'$. Since $\Sh_n(R)^{\oplus \ulambda}$ is projective, we can find a lift $\Sh_n(R)^{\oplus \ulambda} \to \Sh_n(M)$ of our map, which is necessarily injective. Since this map is an isomorphism after inverting $f$, every element in the cokernel is annihilated by a power of $f$. But the cokernel is finitely $\GL$-generated and $f$ is $\GL$-invariant, so there is some power of $f$ that annihilates the entire cokernel. Replace $f$ by this power.
\end{proof}

The shift theorem for $\GL$-algebras implies an analogous result for $\GL$-fields:

\begin{corollary}
Let $K$ be a $\GL$-field and let $M$ be a finitely generated $K$-module. Then there exists $n \ge 0$ and a tuple $\ulambda$ such that we have an isomorphism $\Sh_n(M) \cong \Sh_n(K)^{\oplus \ulambda}$ of $\Sh_n(K)$-modules.
\end{corollary}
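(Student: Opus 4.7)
The plan is to reduce to the shift theorem for $\GL$-algebras (Theorem~\ref{thm:shift}) by base-changing from a suitable integral $\GL$-algebra $R$ with $\Frac(R)=K$. Taking $R=K^{\pol}$, which by definition of a $\GL$-field is an integral $\GL$-algebra with $K=\Frac(R)$, I then produce a finitely $\GL$-generated $R$-submodule $M_0$ of $M$ with $K\cdot M_0=M$: choose any finite length polynomial subrepresentation $V\subset M^{\pol}$ that generates $M$ over $K$ (which exists by the definition of a finitely generated $K$-module) and let $M_0$ be the $R$-submodule of $M$ it spans. Then $M_0$ is a quotient of $R\otimes_k V$, so it is finitely $\GL$-generated. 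Since $M$ is a $K$-vector space it is torsion-free over $R$, so the natural surjection $K\otimes_R M_0\to M$ is also injective, and hence an isomorphism.

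Next I would apply Theorem~\ref{thm:shift} to $M_0$ to obtain $n\ge 0$, a tuple $\ulambda$, and a non-zero $\GL$-invariant $f\in\Sh_n(R)$ together with an isomorphism of $\Sh_n(R)[1/f]$-modules
\begin{displaymath}
\Sh_n(M_0)[1/f]\cong \Sh_n(R)[1/f]^{\oplus\ulambda}.
\end{displaymath}
The operation $\Sh_n$ only changes the $\GL$-action on the underlying object, so $\Sh_n(K)$ is the same ring as $K$, equal to $\Frac(\Sh_n(R))$; in particular $f$ is invertible in $\Sh_n(K)$, and $\Sh_n(K)$ is flat over $\Sh_n(R)[1/f]$. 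Applying $\Sh_n(K)\otimes_{\Sh_n(R)[1/f]}(-)$ to the displayed isomorphism, and using that the shift operation commutes with tensor products, I obtain
\begin{displaymath}
\Sh_n(M)=\Sh_n(K\otimes_R M_0)\cong \Sh_n(K)^{\oplus\ulambda},
\end{displaymath}
which is the desired conclusion.

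The only points requiring real care are the preliminary reduction $K\otimes_R M_0=M$ (surjectivity from $K\cdot M_0=M$, injectivity from torsion-freeness) and the observation that $\Sh_n$ is compatible with localization and fraction fields. Neither step is deep, so I do not expect a genuine obstacle: the corollary is in essence the flat base change of Theorem~\ref{thm:shift} from $\Sh_n(R)[1/f]$ to its fraction field $\Sh_n(K)$.
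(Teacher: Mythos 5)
Your argument is correct and is essentially the paper's proof: take $R=K^{\pol}$, choose a finitely $\GL$-generated $R$-submodule of $M^{\pol}$ spanning $M$ over $K$, apply Theorem~\ref{thm:shift}, and tensor up to $\Sh_n(K)$. The extra details you supply (injectivity of $K\otimes_R M_0\to M$ via torsion-freeness, and the identification $\Sh_n(K)=\Frac(\Sh_n(R))$) are exactly the points the paper leaves implicit.
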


\begin{proof}
Let $R=K^{\pol}$ and let $N \subset M^{\pol}$ be a finitely $\GL$-generated $R$-module that spans $M$ over $K$. By Theorem~\ref{thm:shift}, we have an isomorphism $\Sh_n(N)[1/f] \cong \Sh_n(R)[1/f]^{\oplus \ulambda}$ for some $n$, $f$, and $\ulambda$. Tensoring up to $\Sh_n(K)$, we obtain the stated result.
\end{proof}

\subsection{Some consequences}

We now give a few consequences of the shift theorem.

\begin{proposition} \label{prop:loc-free}
Let $R$ be an integral $\GL$-algebra and let $M$ be a finitely generated $R$-module. Then there exists a non-empty open $\GL$-stable subset $U$ of $\Spec(R)$ such that $M_{\fp}$ is free over $R_{\fp}$ for all $\fp \in U$.
\end{proposition}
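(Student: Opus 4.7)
The plan is to invoke the shift theorem directly and then spread out the resulting open set by the $\GL$-action. Apply Theorem~\ref{thm:shift} to obtain $n \ge 0$, a tuple $\ulambda$, and a non-zero $\GL$-invariant element $f \in \Sh_n(R)$ together with an isomorphism $\Sh_n(M)[1/f] \cong \Sh_n(R)[1/f]^{\oplus \ulambda}$ of $\Sh_n(R)[1/f]$-modules. The key observation is that the shift operation does not alter the underlying ring or module structure — it only changes the $\GL$-action. In particular, $\Sh_n(R)[1/f]$ and $R[1/f]$ are the very same ring, and the right-hand side $\Sh_n(R)[1/f]^{\oplus \ulambda} = \Sh_n(R)[1/f] \otimes_k k^{\oplus \ulambda}$ is a free $R[1/f]$-module (of possibly infinite rank, but that is harmless). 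Forgetting the $\GL$-action, we conclude that $M[1/f]$ is a free $R[1/f]$-module; hence $M_{\fp}$ is free over $R_{\fp}$ for every $\fp$ in the basic open $D(f) \subset \Spec(R)$.

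The remaining issue is that $f$ is only $G(n)$-invariant, not $\GL$-invariant, so $D(f)$ is a priori only $G(n)$-stable. To fix this, I would set
\[
U = \bigcup_{g \in \GL} g \cdot D(f) = \bigcup_{g \in \GL} D(g \cdot f).
\]
This is a non-empty $\GL$-stable open subset of $\Spec(R)$ containing $D(f)$. For any $\fp \in U$, pick $g \in \GL$ with $g^{-1} \fp \in D(f)$; the action of $g$ on $R$ yields a ring isomorphism $R_{g^{-1}\fp} \cong R_{\fp}$, and the equivariant $R$-module structure on $M$ gives a compatible abelian group isomorphism $M_{g^{-1}\fp} \cong M_{\fp}$ intertwining the two scalar actions. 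Since $M_{g^{-1}\fp}$ is free over $R_{g^{-1}\fp}$ by the previous paragraph, transporting a basis through $g$ shows that $M_{\fp}$ is free over $R_{\fp}$, as desired.

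I do not anticipate a serious obstacle here: all of the real work is packaged in Theorem~\ref{thm:shift}, and what remains is the standard bookkeeping of translating a shifted/localized statement back to one on $\Spec(R)$, together with the elementary observation that $\GL$-translates of a basic open cover a $\GL$-stable open on which freeness is preserved by the equivariance of $M$.
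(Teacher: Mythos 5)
Your proposal is correct and follows essentially the same route as the paper: apply the shift theorem, note that freeness is independent of the $\GL$-action so $M[1/f]$ is free over $R[1/f]$, and then take $U=\bigcup_{g\in\GL} gD(f)$, using equivariance to transport freeness to every point of $U$. The only difference is that you spell out the translation step in more detail than the paper, which simply remarks that the free locus is $\GL$-stable.
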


\begin{proof}
The shift theorem shows that $\Sh_n(M)[1/f]$ is free as an $\Sh_n(R)[1/f]$-module, for some non-zero $f$. Since freeness does not depend on the $\GL$-actions, it follows that $M[1/f]$ is free as an $R[1/f]$-module. Thus $M_{\fp}$ is free over $R_{\fp}$ for all $\fp \in D(f)$, where $D(f) \subset \Spec(A)$ is the distinguished open defined by $f$. Since the free locus is obvious $\GL$-stable, we can take $U=\bigcup_{g \in \GL} gD(f)$.
\end{proof}

\begin{corollary} \label{cor:flat}
Let $R$ be an integral $\GL$-algebra, let $M$ be an $R$-module, and let $\fp$ be a $\GL$-generic prime of $R$. Then $M_{\fp}$ is flat over $R_{\fp}$.
\end{corollary}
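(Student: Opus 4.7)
The plan is to reduce to the finitely generated case handled by Proposition~\ref{prop:loc-free} and then use the $\GL$-genericity of $\fp$ to land in the free locus. The corollary is essentially a combination of three standard ingredients; I do not expect any genuinely hard step.

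First I would deal with the hypothesis gap: Proposition~\ref{prop:loc-free} assumes that $M$ is finitely ($\GL$-)generated, while here $M$ is arbitrary. Any $R$-module is the filtered union of its finitely $\GL$-generated submodules (the submodule $\GL$-generated by a finite set of elements is itself finitely $\GL$-generated, and such finite sets form a directed system). Localization commutes with filtered colimits and a filtered colimit of flat modules is flat, so it suffices to prove the statement when $M$ is finitely generated.

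Next, I would invoke Proposition~\ref{prop:loc-free} to produce a non-empty $\GL$-stable open subset $U \subseteq \Spec(R)$ such that $M_{\fq}$ is free (in particular flat) over $R_{\fq}$ for every $\fq \in U$. The key observation is that every non-empty $\GL$-stable open subset of $\Spec(R)$ contains every $\GL$-generic prime. Indeed, if $Z = \Spec(R) \setminus U$ were to contain $\fp$, then by $\GL$-stability of $Z$ we would have $\GL \cdot \fp \subseteq Z$, and then $\ol{O}_{\fp} \subseteq Z$ because $Z$ is closed; but $\ol{O}_{\fp} = \Spec(R)$ by definition of $\GL$-generic, forcing $U = \emptyset$, a contradiction. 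Hence $\fp \in U$, so $M_{\fp}$ is free and in particular flat.

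Combining the two steps, if $M = \bigcup_i N_i$ with each $N_i$ finitely $\GL$-generated, then each $(N_i)_{\fp}$ is flat over $R_{\fp}$ by the argument above, and $M_{\fp} = \varinjlim_i (N_i)_{\fp}$ is a filtered colimit of flat modules, hence flat. The only potentially delicate point is verifying that flatness behaves well under the reduction, but this is a standard property of filtered colimits; there is no serious obstacle.
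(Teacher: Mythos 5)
Your proposal is correct and follows the paper's proof essentially verbatim: reduce to the finitely $\GL$-generated case via filtered colimits, apply Proposition~\ref{prop:loc-free}, and note that a non-empty $\GL$-stable open set contains every $\GL$-generic prime (the paper cites \cite[Proposition~3.4]{polygeom} for this last point, whereas you supply the short direct argument).
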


\begin{proof}
First suppose that $M$ is finitely generated. By Proposition~\ref{prop:loc-free}, $M$ is flat at an non-empty $\GL$-stable open subset of $\Spec(R)$. Such a subset contains all $\GL$-generic points \cite[Proposition~3.4]{polygeom}. Thus $M$ is flat at $\fp$. In general, write $M=\varinjlim M_i$ with each $M_i$ finitely generated. Then $M_{\fp}=\varinjlim (M_i)_{\fp}$ is a direct limit of flat modules, and thus flat.
\end{proof}

The following result shows that finitely generated modules have ``bounded torsion'' in an appropriate sense:

\begin{proposition}
Let $R$ be an integral $\GL$-algebra, let $M$ be a finitely generated $R$-module, and let $M_{\rm tors}$ be the torsion submodule of $M$. Then there exists a non-zero $f \in R$ such that $f M_{\rm tors}=0$.
\end{proposition}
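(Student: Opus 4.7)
The plan is to derive the result as a direct consequence of Corollary~\ref{cor:shift}. Applying that corollary, I obtain $n \ge 0$, a tuple $\ulambda$, and a non-zero $\GL$-invariant element $f \in \Sh_n(R)$ fitting into a short exact sequence of $\Sh_n(R)$-modules
\begin{displaymath}
0 \to \Sh_n(R)^{\oplus \ulambda} \to \Sh_n(M) \to C \to 0
\end{displaymath}
with $fC=0$.

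The key observation is that the shift operation only alters the $\GL$-action: as an abelian group with $R$-module structure, $\Sh_n(M)$ coincides with $M$, and likewise $\Sh_n(R)=R$ as a commutative ring. In particular, $f$ is a non-zero element of $R$, the torsion submodule $\Sh_n(M)_{\rm tors}$ equals $M_{\rm tors}$, and the free module $\Sh_n(R)^{\oplus \ulambda} = R \otimes_k k^{\oplus \ulambda}$ is torsion-free as an $R$-module (being free over the domain $R$).

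Given $x \in M_{\rm tors}$, the element $fx$ maps to $0$ in $C$ since $fC=0$, so $fx$ lies in the image of the injection, i.e., in $\Sh_n(R)^{\oplus \ulambda}$. But $fx$ is still torsion (as $M_{\rm tors}$ is a submodule), and it sits inside a torsion-free module, so $fx=0$. Thus $fM_{\rm tors}=0$, completing the proof.

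There is no real obstacle to overcome here, since the shift theorem and its corollary have already done the substantive work. The only point requiring any care is the conceptual one that shifts leave the underlying $R$-module structure untouched, which is what allows us to transfer the conclusion about $\Sh_n(M)$ back to a statement about $M$.
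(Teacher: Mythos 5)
Your proof is correct and follows essentially the same route as the paper: both apply Corollary~\ref{cor:shift} and then use that the torsion submodule must die upon landing in the torsion-free module $\Sh_n(R)^{\oplus\ulambda}$ (the paper phrases this as $\Sh_n(M_{\rm tors})$ injecting into the cokernel, which is killed by $f$). Your explicit remark that shifting leaves the underlying ring and module structure unchanged is a point the paper leaves implicit, but there is no substantive difference.
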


\begin{proof}
Applying Corollary~\ref{cor:shift}, let $i \colon \Sh_n(R)^{\oplus \ulambda} \to \Sh_n(M)$ be an injection of $\Sh_n(R)$-modules with cokernel annihilated by $f$. Since $\Sh_n(M_{\rm tors})$ is torsion, it cannot intersect $\im(i)$, and so it injects into $\coker(i)$. Since $\coker(i)$ is annihilated by $f$, so is $M_{\rm tors}$.
\end{proof}

\subsection{The embedding theorem}

We now prove the second main result of this section.

\begin{theorem}[Embedding theorem] \label{thm:embed}
Let $A$ be an integral $k$-algebra, let $\usigma$ be a tuple, and let $R=A \otimes \Sym(k^{\oplus \usigma})$. Let $M$ be a finitely $\GL$-generated torsion-free $R$-module. Then there is a tuple $\umu$ and an injection $M \to R^{\oplus \umu}$ of $R$-modules.
\end{theorem}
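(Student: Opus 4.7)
The plan is to reduce the statement in stages to an embedding theorem for $K$-modules over a rational $\GL$-field, which is then attacked via the shift theorem.

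First, simplify $R$. Since $k^{\oplus \emptyset} = k$ carries the trivial $\GL$-action, any factor $\Sym(k^{\oplus [\emptyset]^m}) \cong k[t_1,\ldots,t_m]$ in $R$ can be absorbed into $A$; thus WLOG $\usigma$ is pure. Next, reduce to the case $A = L$ is a field: for general integral $A$, set $L = \Frac(A)$ and base-change to $M_L := L \otimes_A M$ over $R_L := L \otimes \Sym(k^{\oplus \usigma})$. Given an $R_L$-module embedding $M_L \hookrightarrow R_L^{\oplus \umu}$, the finite $\GL$-generation of $M$ lets a single $0 \ne a \in A$ clear all denominators; multiplication by $a$ is $R$-linear and $\GL$-equivariant (as $A$ has trivial $\GL$-action), giving the desired $M \hookrightarrow R^{\oplus \umu}$.

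Now with $A = L$ a field, set $K = \Frac(R)$ and $N := K \otimes_R M$. Since $M$ is torsion-free, $M \hookrightarrow N$, and $N$ is a $K$-module. An extension of Proposition~\ref{prop:sat}---the proposition applied over the base field $L$, combined with the observation that every element of $L$ is $k$-polynomial because of the trivial $\GL$-action on $L$---shows that $R^{\oplus \umu}$ is saturated, i.e., $(K^{\oplus \umu})^{\pol} = R^{\oplus \umu}$. Consequently, any $K$-module embedding $\iota\colon N \hookrightarrow K^{\oplus \umu}$ sends the polynomial subrepresentation $M$ into $(K^{\oplus \umu})^{\pol} = R^{\oplus \umu}$, giving the desired embedding of $R$-modules.

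To produce the $K$-module embedding, apply the corollary to Theorem~\ref{thm:shift} for $\GL$-fields: there exist $n \ge 0$ and a tuple $\ulambda$ with $\Sh_n(N) \cong \Sh_n(K)^{\oplus \ulambda}$ as $\Sh_n(K)$-modules. Since $\sh_n(\ulambda)$ contains $\ulambda$ as a sub-multiset (each partition of $\ulambda$ appears once, together with smaller ones), there is a natural inclusion $\Sh_n(K)^{\oplus \ulambda} \hookrightarrow \Sh_n(K)^{\oplus \sh_n(\ulambda)} = \Sh_n(K^{\oplus \ulambda})$ of $\Sh_n(K)$-modules, yielding a $K$-linear, $G(n)$-equivariant injection $N \hookrightarrow K^{\oplus \ulambda}$. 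The \emph{main obstacle} is to upgrade this $G(n)$-equivariant map to a genuinely $\GL$-equivariant $K$-module embedding, since the functor $\Sh_n$ is faithful but not full. I expect this step to exploit the canonical $\GL$-subrepresentation inclusion $k^{\oplus \ulambda} \hookrightarrow k^{\oplus \sh_n(\ulambda)}$ together with the polynomial structure of $M$, possibly after enlarging $\ulambda$ to a tuple $\umu$ that accommodates the full $\GL$-orbit structure.
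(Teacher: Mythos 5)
Your argument stops exactly where the real content of the theorem lies, and the surrounding reductions do not make that step easier. After applying the shift theorem you have a $K$-linear, $G(n)$-equivariant injection $N \to K^{\oplus \ulambda}$, and you correctly observe that $\Sh_n$ is faithful but not full, so this is not yet a morphism of $K$-modules; the ``upgrade'' you hope for does not exist in general, and no averaging or enlargement of $\ulambda$ will produce it. The paper's resolution is different in kind: one does not try to descend the $G(n)$-equivariant map to a $\GL$-equivariant map with the same source and target. Instead one composes with the canonical morphism $M \to \Sh_n(M)$ in $\Mod_R$ (induced by $\bV \hookrightarrow k^n \oplus \bV$ at the level of polynomial functors, living over the $\GL$-algebra map $R \to \Sh_n(R)$), obtaining an honest injection of $R$-modules $M \to \Sh_n(R)^{\oplus \ulambda}$. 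The key point --- and the reason the hypothesis $R = A \otimes \Sym(k^{\oplus \usigma})$ appears --- is that such $R$ are \emph{shift-free} (Lemma~\ref{lem:shift-free}): $\Sh_n(R) \cong E_n \otimes R$ as $\GL$-algebras with $E_n = \Sym(k^{\oplus \utau(n)})$ a genuine polynomial representation, because $\Sh_n(k^{\oplus\usigma})$ splits off $k^{\oplus\usigma}$. Hence $\Sh_n(R)^{\oplus\ulambda} \cong (k^{\oplus\ulambda} \otimes E_n) \otimes R$ is a free object of $\Mod_R$, and finite $\GL$-generation of $M$ cuts the image down to $k^{\oplus\umu} \otimes R$ for a finite-length piece. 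No equivariance needs to be repaired.

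Your detour through $K$-modules actively obstructs this fix: the shift-free identification fails at the field level, since $\Sh_n(K) = \Frac(E_n \otimes R)$ is strictly larger than $E_n \otimes K$, so $\Sh_n(K)^{\oplus\ulambda}$ is not of the form $F \otimes K$ for any polynomial representation $F$. This is precisely why the paper proves the embedding theorem for $R$-modules first and only then deduces the $K$-module version (Corollary~\ref{cor:embed}) by tensoring up --- the opposite of your order of reductions. Your preliminary reductions (absorbing empty partitions into $A$, passing to $A = L$ a field and clearing denominators, and the saturation argument via an extension of Proposition~\ref{prop:sat}) are individually workable but unnecessary; in particular the saturation claim for $L \otimes \Sym(k^{\oplus\usigma})$ is not in the paper as stated and would itself need proof. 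I would recommend discarding the $K$-module framing, keeping your observation that $\Sh_n(k^{\oplus\usigma})$ contains $k^{\oplus\usigma}$, and pushing it one step further to the isomorphism $\Sh_n(R) \cong E_n \otimes R$ of $R$-modules; that single identity closes the gap.
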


We require some discussion before giving the proof. If $V$ is a polynomial representation of $\GL$ then $V$ is identified with $V\{\bV\}$ and $\Sh_n(V)$ is identified with $V\{k^n \oplus \bV\}$. The natural inclusion $\bV \to k^n \oplus \bV$ thus induces a map $V \to \Sh_n(V)$, which is injective. If $R$ is a $\GL$-algebra then the map $R \to \Sh_n(R)$ is one of $\GL$-algebras, and if $M$ is an $R$-module then the map $M \to \Sh_n(M)$ is one of $R$-modules. We say that $R$ is \emph{shift-free} if for each $n$ the $R$-module $\Sh_n(R)$ has the form $E_n \otimes R$ for some polynomial representation $E_n$. Theorem~\ref{thm:embed} thus follows from the following two lemmas.

\begin{lemma} \label{lem:shift-free}
Let $A$ be a $k$-algebra, let $\usigma$ be a tuple, and let $R=A \otimes \Sym(k^{\oplus \usigma})$. Then $R$ is shift-free.
\end{lemma}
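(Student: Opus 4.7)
The approach is to exhibit a canonical splitting $\Sh_n(V) = V \oplus W_n$ of polynomial $\GL$-representations, where $V = k^{\oplus \usigma}$, with the natural inclusion $V \to \Sh_n(V)$ identified with the first-factor inclusion. Once this is in hand, the lemma will follow by applying $\Sym$ and tensoring with $A$.

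To construct the splitting, I would invoke the polynomial functor calculus: $\Sh_n(\bS_\sigma(\bV))\{U\} = \bS_\sigma(k^n \oplus U)$, and the Littlewood--Richardson rule gives
\[\bS_\sigma(k^n \oplus U) = \bigoplus_{\mu,\nu} c^\sigma_{\mu\nu}\, \bS_\mu(k^n) \otimes \bS_\nu(U).\]
The $\mu = \emptyset$ summand is exactly $\bS_\sigma(U)$ (since $c^\sigma_{\emptyset,\nu} = \delta_{\sigma\nu}$), and by naturality of Schur functors the inclusion $\bS_\sigma(U) \hookrightarrow \bS_\sigma(k^n \oplus U)$ induced by $U \hookrightarrow k^n \oplus U$ identifies with the inclusion of this summand. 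Summing over the finitely many $\sigma$ appearing in $\usigma$ yields the desired splitting, with $W_n$ the direct sum of the $\mu \neq \emptyset$ terms; this $W_n$ is a finite-length polynomial representation of $\GL$.

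To finish, observe that since $\GL$ acts trivially on $A$, we have $\Sh_n(R) = A \otimes \Sh_n(\Sym(V))$, and $\Sh_n$ commutes with $\Sym$ on polynomial representations (both sides are computed by the same polynomial functor). Therefore
\[\Sh_n(R) = A \otimes \Sym(V \oplus W_n) = A \otimes \Sym(V) \otimes \Sym(W_n) = R \otimes E_n,\]
where $E_n := \Sym(W_n)$ is a polynomial representation of $\GL$. This identification is $R$-linear because under it the structural map $R \to \Sh_n(R)$ corresponds to $r \mapsto r \otimes 1$.

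The only real subtlety is the naturality claim in the second paragraph: one must verify that the summand inclusion in the Littlewood--Richardson decomposition is compatible with the functoriality of $\bS_\sigma$ applied to the embedding $U \hookrightarrow k^n \oplus U$. This is essentially formal; everything else is routine manipulation of symmetric algebras on polynomial representations.
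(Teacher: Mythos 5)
Your proof is correct and is essentially the paper's argument: the paper likewise writes $\Sh_n(k^{\oplus \usigma})=k^{\oplus \usigma}\oplus k^{\oplus \utau(n)}$ (this splitting, with $\usigma$ appearing once and the complement consisting of smaller partitions, is already recorded in \S 2.3), applies $\Sym$ and tensors with $A$ to get $\Sh_n(R)\cong E_n\otimes R$ with $E_n=\Sym(k^{\oplus\utau(n)})$. Your extra care about naturality of the summand inclusion, which makes the isomorphism $R$-linear, is the point the paper compresses into ``as $\GL$-algebras, and, in particular, as $R$-modules.''
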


\begin{proof}
We have $\Sh_n(R)=A \otimes \Sym(\Sh_n(k^{\oplus \usigma}))$. Write $\Sh_n(k^{\oplus \usigma})=k^{\oplus \usigma} \oplus k^{\oplus \utau(n)}$ for some tuple $\utau(n)$, and let $E_n=\Sym(k^{\oplus \utau(n)})$. Then $\Sh_n(R) \cong E_n \otimes R$, as $\GL$-algebras, and, in particular, as $R$-modules. Thus $R$ is shift-free.
\end{proof}

\begin{lemma}
Let $R$ be an integral shift-free $\GL$-algebra and let $M$ be a finitely $\GL$-generated torsion-free $R$-module. Then there is a tuple $\umu$ and an injection $M \to R^{\oplus \umu}$ of $R$-modules.
\end{lemma}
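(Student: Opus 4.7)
The plan is to combine Corollary~\ref{cor:shift} with the shift-freeness hypothesis. First I would invoke Corollary~\ref{cor:shift} to obtain $n \geq 0$, a tuple $\ulambda$, and a nonzero $\GL$-invariant element $f \in \Sh_n(R)$, together with an injection $\Sh_n(R)^{\oplus \ulambda} \hookrightarrow \Sh_n(M)$ of $\Sh_n(R)$-modules whose cokernel is annihilated by $f$. The useful consequence is the containment $f \cdot \Sh_n(M) \subseteq \Sh_n(R)^{\oplus \ulambda}$ inside $\Sh_n(M)$.

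Next, shift-freeness supplies an isomorphism $\Sh_n(R) \cong R \otimes E_n$ of $\GL$-algebras, and hence $\Sh_n(R)^{\oplus \ulambda} \cong R \otimes W$ as $R$-modules, where $W := E_n \otimes k^{\oplus \ulambda}$ is a polynomial representation. Crucially, under this identification the shift $\GL$-action on $\Sh_n(R)^{\oplus \ulambda}$ becomes the standard polynomial $\GL$-action on $R \otimes W$; in particular, $f$ transports to an element of $R \otimes E_n$ that is $\GL$-invariant for the standard action.

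I would then set
\[
\phi \colon M \hookrightarrow \Sh_n(M) \xrightarrow{\,m \mapsto fm\,} R \otimes W,
\]
which is well defined because $fm \in R \otimes W$ by the first step. This $\phi$ is $R$-linear, and it is $\GL$-equivariant for the standard actions on both sides, since $M \hookrightarrow \Sh_n(M)$ is $\GL$-equivariant and multiplication by $f$ is $\GL$-equivariant once $f$ is recognized as genuinely $\GL$-invariant. For injectivity, exactness of $\Sh_n$ applied to $M \hookrightarrow M \otimes_R K$ (with $K = \Frac R$) yields $\Sh_n(M) \hookrightarrow \Sh_n(M) \otimes_{\Sh_n(R)} \Sh_n(K)$, so $\Sh_n(M)$ is torsion-free over $\Sh_n(R)$ and multiplication by the nonzero element $f$ is injective on it.

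To conclude, choose a finite-dimensional polynomial subrepresentation $V \subseteq M$ that $\GL$-generates $M$ over $R$. Its image $\phi(V) \subset R \otimes W$ is finite-dimensional, so only finitely many Schur summands of $W$ can intersect it nontrivially, and within each summand only finitely many isotypic copies are involved; hence $\phi(V) \subseteq R \otimes W'$ for some finite-length polynomial subrepresentation $W' \subset W$. Writing $W' = k^{\oplus \umu}$, we obtain $\phi(M) = R \cdot \phi(V) \subseteq R \otimes W' = R^{\oplus \umu}$, giving the desired embedding. The main obstacle in the argument is the equivariance of $\phi$, which rests entirely on how the shift-freeness isomorphism converts the shift-invariance of $f$ into genuine $\GL$-invariance.
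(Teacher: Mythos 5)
Your proof is correct and follows essentially the same route as the paper: the shift theorem plus torsion-freeness give an injection of $M$ into $\Sh_n(R)^{\oplus \ulambda}$, shift-freeness identifies the target with a free $R$-module $R \otimes W$ carrying the standard $\GL$-action, and finite $\GL$-generation cuts the image down to $R^{\oplus\umu}$ for a finite tuple $\umu$. One terminological slip: the generating subrepresentation $V$ and its image $\phi(V)$ are finite \emph{length}, not finite-dimensional (a nonzero polynomial representation of $\GL$ is infinite-dimensional), but your actual argument --- that the finitely many generators involve only finitely many Schur summands of $W$, whose span is a $\GL$-stable finite-length $W' \cong k^{\oplus\umu}$ --- is exactly what is needed.
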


\begin{proof}
Applying the shift theorem (Theorem~\ref{thm:shift}), we have an isomorphism $i \colon \Sh_n(M)[1/f] = \Sh_n(R)[1/f]^{\oplus \ulambda}$ for some $n$, $f$, and $\ulambda$. Since $M$ is torsion-free, the natural map $\Sh_n(M) \to \Sh_n(M)[1/f]$ is injective. Scaling $i$ by a power of $f$, we can assume it maps $\Sh_n(M)$ into $\Sh_n(R)^{\oplus \ulambda}$. Composing with the natural map $M \to \Sh_n(M)$, we obtain an injection of $R$-modules $j \colon M \to \Sh_n(R)^{\oplus \ulambda}$. As $R$-modules, we have $\Sh_n(R) \cong E_n \otimes R$ for some polynomial representation $E_n$. Thus we can identify the target of $j$ with $F \otimes R$ where $F=k^{\oplus \ulambda} \otimes E_n$. Since $M$ is finitely generated, the image of $j$ is contained in $F_0 \otimes R$ for some finite length subrepresentation $F_0$ of $F$. Writing $F_0 \cong k^{\oplus \umu}$ for some tuple $\umu$ yields the result.
\end{proof}

There is also an embedding theorem for rational $\GL$-fields:

\begin{corollary} \label{cor:embed}
Let $\usigma$ be a pure tuple, let $K=\Frac(\Sym(k^{\oplus \usigma}))$, and let $M$ be a finitely generated $K$-module. Then there exists an injection $M \to K^{\oplus \ulambda}$ of $K$-modules for some tuple $\ulambda$.
\end{corollary}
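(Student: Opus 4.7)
The plan is to transfer the question from $K$-modules to $R$-modules, where $R = \Sym(k^{\oplus \usigma})$, apply Theorem~\ref{thm:embed}, and then tensor back up. The adjoint pair $(T,S)$ from Proposition~\ref{prop:genK} gives us the bridge: $T = K \otimes_R(-)$ is exact and $TS = \id$.

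First I would produce a suitable $R$-submodule of $M^{\pol}$. Since $M$ is finitely generated as a $K$-module, I can fix a finite-dimensional polynomial subrepresentation $V \subset M^{\pol}$ whose $K$-span is all of $M$ (take the image of a generating polynomial representation from a surjection $K^{\oplus \ulambda_0} \twoheadrightarrow M$). Set $N = R \cdot V$. Because $R \otimes M^{\pol}$ is a polynomial representation whose image in $M$ lands in $M^{\pol}$, the module $N$ is a finitely $\GL$-generated $R$-submodule of $M^{\pol}$, and by construction $K \otimes_R N = M$, i.e.\ $T(N) = M$.

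Next I would verify that $N$ is torsion-free as an $R$-module. This is automatic: $N$ sits inside the $K$-semi-linear representation $M$, and every non-zero $r \in R$ is a unit in $K$, so $rn = 0$ inside $M$ forces $n = 0$. At this point Theorem~\ref{thm:embed} applies with $A = k$ and the given pure tuple $\usigma$, giving an injection of $R$-modules $N \hookrightarrow R^{\oplus \umu}$ for some tuple $\umu$. Applying the exact functor $T$ (Proposition~\ref{prop:genK}(a)) and using $T(R \otimes k^{\oplus \umu}) = K \otimes k^{\oplus \umu} = K^{\oplus \umu}$ yields the desired injection $M = T(N) \hookrightarrow K^{\oplus \umu}$.

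There is no real obstacle here beyond bookkeeping; the only mild subtlety is ensuring that $N$ is both finitely $\GL$-generated (needed to apply Theorem~\ref{thm:embed}) and large enough that $T(N) = M$, which is where finite generation of $M$ as a $K$-module enters. Purity of $\usigma$ is not used directly in the argument, only through the hypotheses of Theorem~\ref{thm:embed}.
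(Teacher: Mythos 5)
Your proof is correct and follows essentially the same route as the paper: take a finitely $\GL$-generated $R$-submodule $N\subset M^{\pol}$ spanning $M$ over $K$, observe it is torsion-free because $R\setminus\{0\}$ acts invertibly on $M$, apply Theorem~\ref{thm:embed}, and tensor up to $K$. The only slip is calling $V$ ``finite-dimensional'' --- finite length is what you mean (and what you need), since nonzero polynomial representations such as $\bS_{\lambda}(\bV)$ are infinite-dimensional.
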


\begin{proof}
Let $R=\Sym(k^{\oplus \usigma})$ and let $M_0 \subset M^{\pol}$ be a finitely generated $R$-module with $M=K \otimes_R M_0$. Since $M_0$ is contained in $M$, it is torsion-free. By Theorem~\ref{thm:embed}, there is an injection of $R$-modules $M_0 \to R^{\oplus \ulambda}$ for some tuple $\ulambda$. Tensoring up to $K$ gives the stated result.
\end{proof}

\begin{remark}
Theorem~\ref{thm:embed} is a linear analog of \cite[Theorem~5.4]{polygeom}. That result is stated for $\GL$-varieties, but if formulated in terms of $\GL$-algebras it states that certain $\GL$-algebras can be embedded into polynomial $\GL$-algebras, which is analogous to embedding modules into free modules.
\end{remark}

\begin{remark}
We do not know if there are any examples of shift-free $\GL$-algebras besides the ones appearing in Lemma~\ref{lem:shift-free}.
\end{remark}

\section{The main structural results for semi-linear representations} \label{s:struc}

In this section, we prove many of the results stated in \S \ref{ss:repgs} in the $\Mod_K$ setting. These results essentially follow in a formal manner from the embedding theorem. In \S \ref{ss:catthy}, we give an axiomatization of the formal arguments. In \S \ref{ss:rat-struct}, we apply this axiomatization to prove the results on $\Mod_K$, when $K$ is a rational $\GL$-field. Finally, in \S \ref{ss:genfield}, we prove some results for more general $\GL$-fields.

\subsection{Some category theory} \label{ss:catthy}

Let $\cA$ be a $k$-linear Grothendieck abelian category, let $\{I_{\lambda}\}_{\lambda \in \Lambda}$ be a set of non-zero objects in $\cA$, and let $\vert \cdot \vert \colon \Lambda \to \bZ_{\ge 0}$ be a function.  Suppose that the following conditions hold:
\begin{itemize}
\item[(A1)] Every object of $\cA$ is the union of its finitely generated subobjects.
\item[(A2)] For any $n$, there are only finitely many $\lambda \in \Lambda$ with $\vert \lambda \vert \le n$.
\item[(A3)] The object $I_{\lambda}$ is finitely generated, for all $\lambda \in \Lambda$.
\item[(A4)] The space $\Hom(I_{\lambda}, I_{\mu})$ is finite dimensional over $k$ for all $\lambda,\mu \in \Lambda$.
\item[(A5)] The ring $\End(I_{\lambda})$ is a division ring for all $\lambda \in \Lambda$.
\item[(A6)] We have $\Hom(I_{\lambda}, I_{\mu}) \ne 0$ only if $\vert \mu \vert< \vert \lambda \vert$ or $\lambda=\mu$.
\item[(A7)] Let $\lambda \in \Lambda$ and let $J$ be a direct sum of objects of the form $I_{\mu}$ with $\vert \mu \vert<\vert \lambda \vert$. Then there is no injection $I_{\lambda} \to J$.
\item[(A8)] Every finitely generated object of $\cA$ injects into a finite direct sum of the $I_{\lambda}$'s.
\end{itemize}
We introduce one more piece of notation: for $\lambda \in \Lambda$, we let $L_{\lambda}$ be the intersection of the kernels of all maps $I_{\lambda} \to I_{\mu}$ with $\vert \mu \vert<\vert \lambda \vert$. The object $L_{\lambda}$ is non-zero by (A7).

\begin{proposition} \label{prop:Ilambda}
In the above situation, we have the following:
\begin{enumerate}
\item Every object of $\cA$ is locally of finite length.
\item The $I_{\lambda}$'s are exactly the indecomposable injectives of $\cA$.
\item Every finite length object of $\cA$ has finite injective dimension.
\item The object $L_{\lambda}$ is simple, and is equal to the socle of $I_{\lambda}$. Every simple object is isomorphic to a unique $L_{\lambda}$.
\item The simple object $L_{\lambda}$ occurs in $I_{\lambda}$ with multiplicity one; the remaining simple constituents of $I_{\lambda}$ have the form $L_{\mu}$ with $\vert \mu \vert<\vert \lambda \vert$.
\end{enumerate}
\end{proposition}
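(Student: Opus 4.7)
The approach is strong induction on $n = |\lambda|$, with inductive hypothesis asserting that all five conclusions of the proposition hold for every $I_\mu$ with $|\mu| < n$; in particular each such $I_\mu$ is then a finite-length injective of $\cA$ whose socle is $L_\mu$ of multiplicity one. Before beginning the induction I record one preliminary lemma, which follows from (A6) and (A8): every simple $S \in \cA$ embeds into $L_\mu$ for some $\mu$. Indeed $S$ is finitely generated (being simple), so (A8) embeds it in a finite $\bigoplus I_{\mu_i}^{a_i}$; picking a component $S \to I_{\mu_i}$ that is nonzero with $|\mu_i|$ minimal, any further nonzero composite $S \to I_{\mu_i} \to I_\nu$ with $|\nu| < |\mu_i|$ would give an injection contradicting minimality, so the image of $S$ lies in $L_{\mu_i}$.

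For the inductive step, fix $\lambda$ with $|\lambda| = n$. Bundle all homomorphisms $I_\lambda \to I_\mu$ with $|\mu| < n$ into a single map $\alpha \colon I_\lambda \to J$, where $J = \bigoplus_{|\mu|<n} I_\mu^{m_\mu}$ with $m_\mu = \dim_k \Hom(I_\lambda, I_\mu)$; axioms (A2) and (A4) make this a finite direct sum, and $\ker \alpha = L_\lambda$. Axiom (A7) prevents $\alpha$ from being injective, so $L_\lambda \ne 0$; combined with the inductive hypothesis applied to $J$, it follows that $I_\lambda/L_\lambda$ has finite length with composition factors of the form $L_\mu$, $|\mu| < n$.

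The heart of the argument is the following observation, which uses (A3), (A5), (A6), and (A8) together: \emph{for any subobject $N \subsetneq I_\lambda$ such that the only $\phi \in \End(I_\lambda)$ with $\phi(N) = 0$ is $\phi = 0$, one has $L_\lambda \subseteq N$.} Indeed $I_\lambda/N$ is finitely generated (by (A3)), so (A8) embeds it into $\bigoplus I_{\nu_i}^{a_i}$, and each composite $I_\lambda \to I_\lambda/N \to I_{\nu_i}$ lies in $\Hom(I_\lambda, I_{\nu_i})$. By (A6) either $\nu_i = \lambda$, in which case the component is an endomorphism vanishing on $N$ and must be zero by the standing hypothesis, or $|\nu_i| < n$, in which case it vanishes on $L_\lambda$ by the very definition of $L_\lambda$. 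Hence the full composite $I_\lambda \to \bigoplus I_{\nu_i}^{a_i}$ vanishes on $L_\lambda$, yet its kernel is exactly $N$, so $L_\lambda \subseteq N$.

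Two applications of this observation complete the picture. First, if $N$ is a proper nonzero subobject of $L_\lambda$, then any $\phi \in \End(I_\lambda)$ vanishing on $N$ must be zero (since by (A5) a nonzero $\phi$ is invertible and cannot vanish on $N \ne 0$), so the observation forces $L_\lambda \subseteq N$, contradicting $N \subsetneq L_\lambda$; thus $L_\lambda$ is simple. Second, if $T \subseteq I_\lambda$ is nonzero with $T \cap L_\lambda = 0$, the same reasoning forces $L_\lambda \subseteq T$, contradicting $L_\lambda \ne 0$; hence $L_\lambda$ is essential in $I_\lambda$, giving $\soc(I_\lambda) = L_\lambda$ with multiplicity one. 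Injectivity of $I_\lambda$ then follows by comparison with the abstract injective hull $E = E(L_\lambda)$: the inclusion $L_\lambda \subset I_\lambda$ extends to an injection $I_\lambda \hookrightarrow E$ via essentialness, and any finitely generated $E' \subseteq E$ containing $I_\lambda$ admits, through (A8) together with an (A7)/(A5) invertibility extraction, a retraction $E' \to I_\lambda$ whose kernel meets the essential socle $L_\lambda$ trivially and thus vanishes, forcing $E' = I_\lambda$ and ultimately $E = I_\lambda$. Parts (a)--(e) then fall out: (a) from (A1), (A8), and the finite length of each $I_\lambda$; (b) from injectivity together with (A5)-indecomposability and the preliminary lemma; (c) by injectively resolving any finite-length object using (A8); (d) from the preliminary lemma and simplicity of $L_\lambda$; (e) from the composition-factor analysis of $I_\lambda$. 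The main obstacle is securing the key observation, whose proof relies on the precise interplay of (A5), (A6), and (A8).
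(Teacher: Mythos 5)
Your argument is correct in its main content and follows the same basic strategy as the paper: induction on $\vert\lambda\vert$, the universal map $\alpha \colon I_{\lambda} \to J$ with kernel $L_{\lambda}$, and the ``split off an invertible component'' trick using (A5)--(A7) after embedding via (A8). Your ``key observation'' is a clean repackaging of the paper's proof that $L_{\lambda}$ is simple (Lemma~\ref{lem:Ilambda-3}), and using it twice to get both simplicity and essentialness of $L_{\lambda}$ is a nice economy. The genuine difference is in how injectivity of $I_{\lambda}$ is obtained: the paper first proves a Baer-type criterion (Lemma~\ref{lem:Ilambda-1}: an object is injective if every embedding of it into a finitely generated object splits) and deduces injectivity of $I_{\lambda}$ \emph{before} analyzing its socle; you instead establish $\soc(I_{\lambda})=L_{\lambda}$ first and then identify $I_{\lambda}$ with the abstract injective hull $E(L_{\lambda})$ of the Grothendieck category, showing every finitely generated $E'$ with $I_{\lambda} \subseteq E' \subseteq E(L_{\lambda})$ retracts onto $I_{\lambda}$ with kernel meeting the essential $L_{\lambda}$ trivially, and then exhausting $E(L_{\lambda})$ by such $E'$ via (A1) and (A3). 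Both routes work; yours leans on the existence of injective hulls in a Grothendieck category where the paper keeps things self-contained via Baer's criterion, and yours makes the essentialness of $L_{\lambda}$ do the work that (A5)-invertibility does in the paper's splitting lemma.

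One point is under-justified: for part (c) you say finite injective dimension follows ``by injectively resolving any finite-length object using (A8).'' Having embeddings into injectives does not by itself make the resolution terminate. The correct argument (the paper's Lemma~\ref{lem:Ilambda-7}) is another induction on $\vert\lambda\vert$: by (e) the constituents of $I_{\lambda}/L_{\lambda}$ are $L_{\mu}$ with $\vert\mu\vert<\vert\lambda\vert$, which have finite injective dimension by induction, hence so does $I_{\lambda}/L_{\lambda}$, hence so does $L_{\lambda}$ from the sequence $0 \to L_{\lambda} \to I_{\lambda} \to I_{\lambda}/L_{\lambda} \to 0$. You have all the ingredients for this (your composition-factor analysis of $I_{\lambda}$ is exactly what is needed), so the gap is easily filled, but as written the justification for (c) is not a proof.
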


We break the proof up into a series of lemmas. We assume that $\cA$ satisfies (A1)--(A8) in the following.

\begin{lemma} \label{lem:Ilambda-1}
Let $I$ be a finitely generated object of $\cA$. Suppose that every injection $I \to M$, with $M$ finitely generated, splits. Then $I$ is injective.
\end{lemma}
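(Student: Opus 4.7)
The plan is to embed $I$ into its injective envelope and show that this envelope coincides with $I$ itself. Since $\cA$ is a Grothendieck abelian category, every object admits an injective envelope; let $E$ denote the injective envelope of $I$, so that the inclusion $I \hookrightarrow E$ is an essential extension, meaning every nonzero subobject of $E$ meets $I$ nontrivially.

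By axiom (A1), $E$ is the directed union of its finitely generated subobjects $E'_{\alpha}$. Since $I$ is finitely generated by assumption, the subobjects $E_{\alpha} := I + E'_{\alpha}$ are also finitely generated, contain $I$, and still have union equal to $E$. For each $\alpha$, the inclusion $I \hookrightarrow E_{\alpha}$ is a monomorphism whose codomain is finitely generated, so by the hypothesis of the lemma it splits; pick a decomposition $E_{\alpha} = I \oplus K_{\alpha}$ for some subobject $K_{\alpha} \subseteq E_{\alpha} \subseteq E$.

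Now, $I \cap K_{\alpha} = 0$ inside $E$ by construction. But $E$ is an essential extension of $I$, so any nonzero subobject of $E$ must intersect $I$ nontrivially. It follows that $K_{\alpha} = 0$, and hence $E_{\alpha} = I$ for every $\alpha$. Passing to the union gives $E = \bigcup_{\alpha} E_{\alpha} = I$, so $I$ is injective.

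The only axiom used is (A1), together with the hypothesis of the lemma and the standard existence of essential injective envelopes in a Grothendieck abelian category; the whole argument hinges on the interplay between these two facts. There is no real obstacle here: once one has the idea to compare $I$ with its injective envelope, the proof is short. The delicate point (if one wishes to call it that) is merely the bookkeeping to arrange that the finitely generated subobjects $E_{\alpha}$ all contain $I$, which is made possible precisely by the hypothesis that $I$ itself is finitely generated.
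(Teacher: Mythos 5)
Your proof is correct, but it takes a different route from the paper's. The paper argues via a variant of Baer's criterion: given a finitely generated $M$, a subobject $N \subset M$, and a map $N \to I$, it forms the finitely generated object $(I \oplus M)/N$ (with $N$ embedded diagonally), observes that $I$ injects into it, splits this injection by hypothesis, and thereby extends $N \to I$ to $M \to I$; Baer's criterion for the generating family of finitely generated objects (which is a generating family by (A1)) then gives injectivity. You instead pass to the injective envelope $E$ of $I$, exhaust $E$ by finitely generated subobjects containing $I$ (using (A1) and the finite generation of $I$), split each inclusion, and use essentiality to kill the complements, concluding $E = I$. Both arguments use exactly the same inputs — (A1), the finite generation of $I$, and the splitting hypothesis — so the trade-off is mainly which piece of general Grothendieck-category machinery one leans on: the paper invokes the Baer-type criterion for a generating family, while you invoke the existence of essential injective envelopes. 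Your version is slightly more conceptual; the paper's is slightly more self-contained in that it only needs the (easier) generation statement rather than the envelope theorem. One small point worth making explicit in your write-up: the sum $I + E'_\alpha$ is finitely generated because it is a quotient of $I \oplus E'_\alpha$, and quotients and finite direct sums of finitely generated objects are finitely generated; this is where the hypothesis on $I$ is genuinely used, as you note.
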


\begin{proof}
Let $M$ be a finitely generated object of $\cA$, let $N$ be a subobject of $M$, and let $N \to I$ be a given morphism. Consider the map $I \to (I \oplus M)/N$, where $N$ is embedded diagonally, which is easily seen to be injective. Since $(I  \oplus M)/N$ is finitely generated, this map splits by hypothesis. This yields a map $M \to I$ extending the given map $N \to I$. A variant of Baer's criterion (see \stacks{079G}) now shows that $I$ is injective. (The key point here is that $\cA$ is generated by its finitely generated objects, due to (A1).)
\end{proof}

\begin{lemma} \label{lem:Ilambda-2}
The object $I_{\lambda}$ is an indecomposable injective, for all $\lambda \in \Lambda$.
\end{lemma}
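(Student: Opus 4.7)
The plan is to check indecomposability directly from (A5), and to establish injectivity by verifying the hypothesis of Lemma~\ref{lem:Ilambda-1}.

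For indecomposability, note that $\End(I_\lambda)$ is a division ring by (A5), so it contains no non-trivial idempotents. Any direct sum decomposition of $I_\lambda$ would correspond to such an idempotent, so $I_\lambda$ is indecomposable.

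For injectivity, by Lemma~\ref{lem:Ilambda-1} it suffices to show that every injection $f \colon I_\lambda \to M$ into a finitely generated object $M$ splits. By (A8), we may choose an embedding $M \hookrightarrow N$ where $N = \bigoplus_{i=1}^r I_{\mu_i}$; composing, we obtain an injection $g \colon I_\lambda \to N$ with components $g_i \colon I_\lambda \to I_{\mu_i}$. By (A6), $g_i = 0$ whenever $\mu_i \ne \lambda$ and $|\mu_i| \ge |\lambda|$, so every non-zero component satisfies either $\mu_i = \lambda$ or $|\mu_i| < |\lambda|$.

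The key step is to show that at least one component $g_{i_0}$ with $\mu_{i_0} = \lambda$ is non-zero. Indeed, if all such components vanished, then $g$ would factor through the sub-direct-sum of those $I_{\mu_i}$ with $|\mu_i| < |\lambda|$, giving an injection of $I_\lambda$ into a direct sum of $I_\mu$'s with $|\mu| < |\lambda|$, which is forbidden by (A7). Fix such an $i_0$; then $g_{i_0} \in \End(I_\lambda)$ is non-zero, hence invertible by (A5). Defining $h \colon N \to I_\lambda$ as $g_{i_0}^{-1}$ composed with the projection to the $i_0$-th summand, we get $h \circ g = \id_{I_\lambda}$, so the restriction $h|_M$ splits $f$.

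I expect no serious obstacle; the only subtle point is realising that (A7) must be applied to eliminate the possibility that all ``diagonal'' components $g_i \colon I_\lambda \to I_\lambda$ vanish. Everything else is a direct combination of (A5)--(A8) with Lemma~\ref{lem:Ilambda-1}.
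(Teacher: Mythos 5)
Your proof is correct and follows essentially the same route as the paper: embed $M$ into a finite direct sum of $I_{\mu}$'s, use (A6) together with (A7) to produce a non-zero component landing in a copy of $I_{\lambda}$, invert it using (A5), and conclude via Lemma~\ref{lem:Ilambda-1}. The only cosmetic difference is that the paper phrases the injectivity step as an induction on $\vert \lambda \vert$, whereas your argument (correctly) shows the induction hypothesis is never actually needed.
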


\begin{proof}
Since $\End(I_{\lambda})$ has no non-trivial idempotents, it follows that $I_{\lambda}$ is indecomposable. It is clear that $I_{\lambda}$ is injective if $\vert \lambda \vert<0$, since this hypothesis is void. Assume now that $I_{\mu}$ is injective for all $\mu$ with $\vert \mu \vert<n$ and let $\lambda$ satisfy $\vert \lambda \vert=n$. Suppose we have an injection $f \colon I_{\lambda} \to M$, with $M$ finitely generated. Choose an injection $g \colon M \to I$, where $I$ is a finite direct sum of $I_{\mu}$'s, which is possible by (A8). Write $I=I_1 \oplus I_2 \oplus I_3$, where $I_1$ is a sum of $I_{\mu}$'s with $\vert \mu \vert<n$, $I_2$ is a sum of $I_{\lambda}$'s and $I_3$ is a sum of $I_{\mu}$'s with $\vert \mu \vert \ge n$ and $\mu \ne \lambda$. Let $p_i$ be the projection of $I$ onto $I_i$. Then $p_3gf=0$ by (A6) and $p_1gf$ is not injective by (A7). Since $gf$ is injective, it follows that $p_2gf$ is non-zero. Thus $p_2g$, followed by a further projection, provides a map $h \colon M \to I_{\lambda}$ such that $hf$ is non-zero. Since $\End(I_{\lambda})$ is a division algebra by (A5), we can find $h' \in \End(I_{\lambda})$ such that $h'hf=\id$, and so $f$ is split. It follows from Lemma~\ref{lem:Ilambda-1} that $I_{\lambda}$ is injective. The result now follows by induction.
\end{proof}

\begin{lemma} \label{lem:Ilambda-3}
The object $L_{\lambda}$ is simple, and is the socle of $I_{\lambda}$.
\end{lemma}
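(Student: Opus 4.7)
The plan is to prove both assertions --- simplicity of $L_\lambda$ and the identification $L_\lambda = \soc(I_\lambda)$ --- through three steps. First, I will show that every simple subobject of $I_\lambda$ lies in $L_\lambda$: given a simple $S \subseteq I_\lambda$ and a morphism $f \colon I_\lambda \to I_\mu$ with $\vert \mu \vert < \vert \lambda \vert$, the restriction $f|_S$ is either zero or injective (since $S$ is simple). If injective, then by the injectivity of $I_\lambda$ (Lemma~\ref{lem:Ilambda-2}), the inclusion $S \hookrightarrow I_\lambda$ extends along $f|_S \colon S \hookrightarrow I_\mu$ to a nonzero morphism $I_\mu \to I_\lambda$, violating (A6). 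Hence $S \subseteq \ker f$ for every such $f$, giving $S \subseteq L_\lambda$ and in particular $\soc(I_\lambda) \subseteq L_\lambda$.

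Second, I will show that $L_\lambda$ is essential in $I_\lambda$. Applying injectivity of $I_\lambda$ to the essential inclusion $L_\lambda \hookrightarrow E(L_\lambda)$ into the injective envelope extends the inclusion $L_\lambda \hookrightarrow I_\lambda$ to an injection $E(L_\lambda) \hookrightarrow I_\lambda$. The image is an injective summand of the indecomposable $I_\lambda$, so $E(L_\lambda) = I_\lambda$ and $L_\lambda$ is essential.

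Third, and this is the crux, I show $L_\lambda$ is simple. Let $N \subseteq L_\lambda$ be a nonzero finitely generated subobject, available by (A1). By (A8), embed $N \hookrightarrow \bigoplus_{i=1}^r I_{\mu_i}$; extending each projection $N \to I_{\mu_i}$ along $N \hookrightarrow I_\lambda$ using injectivity of $I_{\mu_i}$ gives morphisms $g_i \colon I_\lambda \to I_{\mu_i}$. The containment $N \subseteq L_\lambda$ kills the projection when $\vert \mu_i \vert < \vert \lambda \vert$, and (A6) forces $g_i = 0$, hence the projection, when $\vert \mu_i \vert \geq \vert \lambda \vert$ with $\mu_i \ne \lambda$. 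After discarding these trivial components, $N$ embeds into $I_\lambda^s$ via $(g_1|_N, \ldots, g_s|_N)$, with each $g_i \in D = \End(I_\lambda)$ and at least one nonzero --- hence an automorphism by (A5). The main obstacle is closing the argument that $N = L_\lambda$. My plan is to use Step~2 to extend this embedding to an injection $\tilde\eta \colon I_\lambda \hookrightarrow I_\lambda^s$ with the same components $g_i$ (uniqueness of the extension follows because essentialness together with (A5) rules out nonzero endomorphisms of $I_\lambda$ vanishing on $L_\lambda$), normalize by a diagonal automorphism of $I_\lambda^s$ so that $g_1 = \mathrm{id}$, and then exploit the retraction $\pi_1 \circ \tilde\eta = \mathrm{id}_{I_\lambda}$ together with essentialness and the division-ring property of $D$ to rule out any proper nonzero subobject of $L_\lambda$. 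Combined with Step~1, simplicity of $L_\lambda$ yields $L_\lambda = \soc(I_\lambda)$.
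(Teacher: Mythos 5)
Your Steps 1 and 2 are correct: restricting maps $I_{\lambda} \to I_{\mu}$ ($\vert\mu\vert < \vert\lambda\vert$) to a simple $S \subseteq I_{\lambda}$ and extending backwards via injectivity of $I_{\lambda}$ does show $\soc(I_{\lambda}) \subseteq L_{\lambda}$, and the injective-envelope argument does show $L_{\lambda}$ is essential (using $L_{\lambda} \neq 0$, which is (A7)). But Step 3, which you correctly identify as the crux, has a genuine gap, and the plan you sketch for closing it cannot work. Applying (A8) to $N$ itself only produces maps \emph{out of} $N$; after extending them to $I_{\lambda}$ and discarding the components that die, all you learn is that $N$ embeds into $I_{\lambda}^{s}$ via restrictions of elements of the division ring $D$. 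This conclusion is equally satisfied by $L_{\lambda}$ and by any hypothetical proper nonzero subobject of $L_{\lambda}$, so no amount of normalizing $g_1 = \mathrm{id}$ or invoking the retraction $\pi_1 \circ \tilde\eta = \mathrm{id}$ can distinguish $N$ from $L_{\lambda}$: these manipulations are formal identities carrying no new information, and no contradiction is ever derived. What is missing is an upper bound on $I_\lambda/N$, i.e.\ a proof that $L_{\lambda} \subseteq N$.

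The way to get that (and the route the paper takes) is to apply (A8) to the \emph{quotient} $I_{\lambda}/N$, which is finitely generated by (A3). Embed $I_{\lambda}/N \hookrightarrow \bigoplus_i I_{\mu_i}$; any component $I_{\lambda}/N \to I_{\mu}$ with $\vert\mu\vert \ge \vert\lambda\vert$ and $\mu \neq \lambda$ vanishes by (A6), and any component $I_{\lambda}/N \to I_{\lambda}$ vanishes because its composite with the surjection $I_{\lambda} \to I_{\lambda}/N$ is an endomorphism of $I_{\lambda}$ with nonzero kernel, hence zero by (A5). So $I_{\lambda}/N$ embeds into a sum of $I_{\mu}$'s with $\vert\mu\vert < \vert\lambda\vert$ only. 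The composite $h \colon I_{\lambda} \to I_{\lambda}/N \to \bigoplus I_{\mu}$ then has $\ker(h) = N$, while each component of $h$ is a map $I_{\lambda} \to I_{\mu}$ with $\vert\mu\vert < \vert\lambda\vert$ and therefore contains $L_{\lambda}$ in its kernel by the very definition of $L_{\lambda}$. Hence $L_{\lambda} \subseteq N$, giving $N = L_{\lambda}$ and simplicity; combined with your Steps 1--2 this finishes the lemma.
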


\begin{proof}
Consider the natural map
\begin{displaymath}
f \colon I_{\lambda} \to J, \qquad J = \bigoplus_{\vert \mu \vert<n} \Hom(I_{\lambda}, I_{\mu})^* \otimes I_{\mu}.
\end{displaymath}
This is the universal map from $I_{\lambda}$ to a sum of $I_{\mu}$'s with $\vert \mu \vert<\vert \lambda \vert$. Thus $L_{\lambda}=\ker(f)$.

Suppose $N$ is a non-zero subobject of $L_{\lambda}$. The object $I_{\lambda}/N$ is finitely generated by (A3). Thus, by (A8), we have an injection $I_{\lambda}/N \to I$ where $I$ is a finite sum of $I_{\mu}$'s. Any map $I_{\lambda}/N \to I_{\mu}$ with $\vert \mu \vert \ge \vert \lambda \vert$ and $\mu \ne \lambda$ is automatically zero by (A6); similarly, any map $I_{\lambda}/N \to I_{\lambda}$ is zero, since any non-zero map $I_{\lambda} \to I_{\lambda}$ is injective by (A5). It follows that $I$ can be taken to be a finite sum of $I_{\mu}$'s with $\vert \mu \vert<\vert \lambda \vert$. Let $h \colon I_{\lambda} \to I$ be the composition $I_{\lambda} \to I_{\lambda}/N \to I$. By the universality of $f$, we have $h=gf$ for some $g \colon J \to I$, and so $\ker(f) \subset \ker(h)$. Since $\ker(h)=N$, this shows that $N=L_{\lambda}$, and so $L_{\lambda}$ is simple.

Since $I_{\lambda}$ is indecomposable, it follows that it is the injective envelope of $L_{\lambda}$. Since $L_{\lambda}$ is simple, it is therefore the socle of $I_{\lambda}$.
\end{proof}

\begin{lemma}
Every simple object of $\cA$ is isomorphic to $L_{\lambda}$, for a unique $\lambda$.
\end{lemma}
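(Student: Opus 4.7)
The plan is to first observe that any simple object $S$ must be finitely generated, then use the embedding axiom (A8) to place $S$ inside some $I_\lambda$, and finally conclude by identifying $S$ with the socle $L_\lambda$ obtained in the previous lemma. Uniqueness will follow from the uniqueness of injective envelopes combined with axiom (A6).

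First I would argue that $S$ is finitely generated: by (A1), $S$ is the directed union of its finitely generated subobjects, and since $S$ is simple and non-zero, some term of this union must be non-zero, hence must equal $S$. Then axiom (A8) gives an injection $S \hookrightarrow \bigoplus_{i=1}^r I_{\lambda_i}$; composing with the projection to at least one summand yields a non-zero map $S \to I_{\lambda_i}$, and any non-zero map out of a simple object is injective. So $S$ embeds into some $I_\lambda$.

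By Lemma~\ref{lem:Ilambda-3}, the socle of $I_\lambda$ equals $L_\lambda$, which is itself simple. Any simple subobject of $I_\lambda$ is contained in its socle, so $S \subseteq L_\lambda$; as both are simple and non-zero, $S \cong L_\lambda$, giving existence.

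For uniqueness, suppose $L_\lambda \cong L_\mu$. By Lemma~\ref{lem:Ilambda-2}, $I_\lambda$ and $I_\mu$ are indecomposable injectives with socles $L_\lambda$ and $L_\mu$ respectively, so they are both injective envelopes of the same simple object in the Grothendieck abelian category $\cA$, hence isomorphic. An isomorphism $I_\lambda \xrightarrow{\sim} I_\mu$ is in particular a non-zero element of $\Hom(I_\lambda, I_\mu)$, which by (A6) forces $\lambda = \mu$ or $\vert \mu \vert < \vert \lambda \vert$; applying the same reasoning to the inverse isomorphism forces $\lambda = \mu$ or $\vert \lambda \vert < \vert \mu \vert$. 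These are compatible only when $\lambda = \mu$. No real obstacle is anticipated; the whole argument is a short formal consequence of (A1), (A6), (A8) and the previous two lemmas.
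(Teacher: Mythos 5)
Your proposal is correct and follows essentially the same route as the paper: finite generation of a simple object via (A1), embedding into a single $I_{\lambda}$ via (A8) and simplicity, identification with the socle $L_{\lambda}$ from the previous lemma, and uniqueness via isomorphism of injective envelopes together with (A6). The only difference is that you spell out some steps the paper leaves implicit (e.g., the two-directional application of (A6)), which is fine.
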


\begin{proof}
Let $L$ be a simple object of $\cA$. Then $L$ is necessarily finitely generated, and so by (A8) we have an injection $L \to I$, where $I$ is a finite sum of $I_{\lambda}$'s. Since $L$ is simple, it follows that $L$ must inject into one of the factors, and land in the socle. This gives an isomorphism $L \cong L_{\lambda}$.

Suppose now that $L_{\lambda} \cong L_{\mu}$. Then the injective envelopes of $L_{\lambda}$ and $L_{\mu}$ would be isomorphic, i.e., $I_{\lambda} \cong I_{\mu}$. By (A6), this implies that $\lambda=\mu$.
\end{proof}

\begin{lemma} \label{lem:Ilambda-4}
Every object of $\cA$ is locally of finite length.
\end{lemma}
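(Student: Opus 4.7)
The plan is to reduce to showing that each injective $I_\lambda$ has finite length, and then to conclude by combining (A8) with (A1). By (A1), every object is the union of its finitely generated subobjects, so it suffices to check that every finitely generated object of $\cA$ has finite length. By (A8), any such object embeds into a finite direct sum of the $I_\lambda$'s; since a subobject of a finite length object has finite length, the task reduces to proving that $I_\lambda$ has finite length for every $\lambda \in \Lambda$.

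To handle this, I would induct on the integer $|\lambda| \in \bZ_{\ge 0}$. The induction is well-founded because, by (A2), the image of $|\cdot|$ in $\bZ_{\ge 0}$ is bounded below on any level set. In the base case, $|\lambda|$ is minimal in $|\Lambda|$, so there is no $\mu$ with $|\mu|<|\lambda|$; thus $L_\lambda = I_\lambda$, which is simple by Lemma~\ref{lem:Ilambda-3}, and so $I_\lambda$ has length one. For the inductive step, fix $\lambda$ with $|\lambda|=n$ and assume $I_\mu$ has finite length for every $\mu$ with $|\mu|<n$. By (A2) there are only finitely many such $\mu$, and by (A4) each $\Hom(I_\lambda, I_\mu)$ is finite dimensional over $k$. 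Hence the object
\[
J \;=\; \bigoplus_{|\mu|<n} \Hom(I_\lambda, I_\mu)^* \otimes_k I_\mu
\]
appearing in the proof of Lemma~\ref{lem:Ilambda-3} is a finite direct sum of finite length objects, and so has finite length. From that same lemma, $L_\lambda$ is precisely the kernel of the natural map $f \colon I_\lambda \to J$, and $L_\lambda$ is simple. The image $f(I_\lambda)$ is a subobject of $J$, hence of finite length, and the exact sequence $0 \to L_\lambda \to I_\lambda \to f(I_\lambda) \to 0$ then forces $I_\lambda$ to have finite length.

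With each $I_\lambda$ of finite length established, the opening reduction closes out the proof. The argument is essentially an assembly of pieces already in place: the real content is the interplay of (A2) and (A4), which together guarantee that the target $J$ of the universal map $f$ has finite length, and (A8), which propagates finite length from the $I_\lambda$'s to arbitrary finitely generated objects. I do not anticipate a serious obstacle; the only subtle point is ensuring the induction is well-founded, which is handled by (A2), and making sure one uses the kernel description of $L_\lambda$ coming from Lemma~\ref{lem:Ilambda-3} rather than from (A6) alone.
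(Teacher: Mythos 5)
Your proof is correct and follows essentially the same route as the paper's: reduce via (A1) and (A8) to showing each $I_{\lambda}$ has finite length, then induct on $\vert\lambda\vert$ using the exact sequence $0 \to L_{\lambda} \to I_{\lambda} \to J$ from Lemma~\ref{lem:Ilambda-3}, with (A2) and (A4) guaranteeing $J$ has finite length. The only difference is that you spell out the base case and the well-foundedness of the induction, which the paper leaves implicit.
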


\begin{proof}
By (A1), it suffices to show that every finitely generated object of $\cA$ is finite length. By (A8), it suffices to show that each $I_{\lambda}$ has finite length. We proceed by induction on $\vert \lambda \vert$. Thus suppose $I_{\mu}$ has finite length for $\vert \mu \vert<n$ and let $\lambda$ be given with $\vert \lambda \vert=n$. Using notation as in Lemma~\ref{lem:Ilambda-3}, we have an exact sequence
\begin{displaymath}
0 \to L_{\lambda} \to I_{\lambda} \to \bigoplus_{\vert \mu \vert<n} \Hom(I_{\lambda}, I_{\mu})^* \otimes I_{\mu}.
\end{displaymath}
By Lemma~\ref{lem:Ilambda-3}, the object $L_{\lambda}$ is simple. By induction, each $I_{\mu}$ appearing in the sum on the right has finite length. By (A2), the sum is finite, and by (A4) each $\Hom$ space is finite dimensional. Thus the rightmost term above has finite length. It follows that $I_{\lambda}$ has finite length, as required.
\end{proof}

\begin{lemma} \label{lem:Ilambda-5}
Every indecomposable injective object of $\cA$ is isomorphic to $I_{\lambda}$ for a unique $\lambda$.
\end{lemma}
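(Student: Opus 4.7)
The plan is to show that any nonzero indecomposable injective $I$ contains some $L_\lambda$ as a subobject, and then leverage properties of injective envelopes together with the indecomposability of both $I$ and $I_\lambda$ to upgrade this to an isomorphism $I \cong I_\lambda$. The uniqueness of $\lambda$ will follow immediately from (A6).

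First I would produce a simple subobject of $I$. By (A1), $I$ contains a nonzero finitely generated subobject, which by Lemma~\ref{lem:Ilambda-4} has finite length and hence admits a simple subobject. By the preceding lemma, every simple object is isomorphic to some $L_\lambda$, so I obtain an embedding $L_\lambda \hookrightarrow I$ for some $\lambda \in \Lambda$.

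Next I would exhibit an embedding $I_\lambda \hookrightarrow I$ in the reverse direction. By Lemma~\ref{lem:Ilambda-3}, $L_\lambda$ is the socle of $I_\lambda$, so there is an inclusion $L_\lambda \hookrightarrow I_\lambda$. Using injectivity of $I$ (no external input needed, it is given), the inclusion $L_\lambda \hookrightarrow I$ extends along $L_\lambda \hookrightarrow I_\lambda$ to a morphism $f \colon I_\lambda \to I$. Since $f|_{L_\lambda}$ is injective, $\ker(f) \cap L_\lambda = 0$. Now, $I_\lambda$ is an indecomposable injective with simple socle $L_\lambda$ (Lemmas~\ref{lem:Ilambda-2} and~\ref{lem:Ilambda-3}), so it is the injective envelope of $L_\lambda$; in particular, $L_\lambda$ is essential in $I_\lambda$, which forces $\ker(f)=0$. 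Thus $f$ is an embedding, which splits because $I_\lambda$ is injective, and indecomposability of $I$ then yields $I \cong I_\lambda$. For uniqueness, if $I_\lambda \cong I_\mu$, then there are nonzero morphisms in both directions, and (A6) forces $\lambda=\mu$.

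The one mildly subtle step is the essentiality of $L_\lambda$ in $I_\lambda$. This rests on the general fact that an indecomposable injective in a Grothendieck abelian category is the injective envelope of its socle — obtained by noting that the injective envelope of $L_\lambda$ inside $I_\lambda$ is a nonzero direct summand of the indecomposable object $I_\lambda$, hence equals $I_\lambda$. This uses existence of injective envelopes in Grothendieck categories, a standard but nontrivial input. Beyond this, the proof is a straightforward recombination of Lemmas~\ref{lem:Ilambda-2}, \ref{lem:Ilambda-3}, and~\ref{lem:Ilambda-4} together with (A6).
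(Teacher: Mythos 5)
Your proof is correct and takes essentially the same route as the paper: both locate a simple subobject $L_{\lambda}$ of $I$ (via Lemma~\ref{lem:Ilambda-4}) and identify $I$ with the injective envelope $I_{\lambda}$ of $L_{\lambda}$, with uniqueness from (A6). Your write-up just makes the "injective envelope" step more explicit by constructing and splitting the map $I_{\lambda}\to I$, which is a fine elaboration of the paper's terser argument.
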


\begin{proof}
Let $I$ be an indecomposable injective. Since $I$ is the union of its finite length subobjects b Lemma~\ref{lem:Ilambda-4}, it follows that the socle of $I$ is simple, and that $I$ is its injective envelope. Thus $I \cong I_{\lambda}$ for some $\lambda$. This $\lambda$ is unique, as $I_{\lambda} \cong I_{\mu}$ implies $\lambda=\mu$ by (A6).
\end{proof}

\begin{lemma} \label{lem:Ilambda-6}
The simple $L_{\lambda}$ occurs in $I_{\lambda}$ with multiplicity one. The remaining simple constituents of $I_{\lambda}$ have the form $L_{\mu}$ with $\vert \mu \vert<\vert \lambda \vert$.
\end{lemma}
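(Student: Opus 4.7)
The plan is to prove both assertions simultaneously by induction on $\vert \lambda \vert$, relying on the exact sequence constructed in the proof of Lemma~\ref{lem:Ilambda-3}. Specifically, I would invoke the sequence
\begin{displaymath}
0 \to L_{\lambda} \to I_{\lambda} \to \bigoplus_{\vert \mu \vert < \vert \lambda \vert} \Hom(I_{\lambda}, I_{\mu})^* \otimes I_{\mu},
\end{displaymath}
and note that the right-hand term is a finite direct sum by (A2) and (A4), hence of finite length by Lemma~\ref{lem:Ilambda-4}. The cokernel $I_{\lambda}/L_{\lambda}$ therefore embeds into a finite direct sum of $I_{\mu}$'s with $\vert \mu \vert < \vert \lambda \vert$.

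For the base case $\vert \lambda \vert = 0$, condition (A6) forces $\Hom(I_{\lambda}, I_{\mu}) = 0$ for $\mu \ne \lambda$, so the sum on the right is empty and $I_{\lambda} = L_{\lambda}$, which trivially satisfies the claim. For the inductive step, assume the result holds for all $\mu$ with $\vert \mu \vert < \vert \lambda \vert$. By the inductive hypothesis, every simple constituent of each $I_{\mu}$ appearing on the right is of the form $L_{\nu}$ with $\vert \nu \vert \le \vert \mu \vert < \vert \lambda \vert$. Since $I_{\lambda}/L_{\lambda}$ injects into this sum, all its simple constituents are of the form $L_{\nu}$ with $\vert \nu \vert < \vert \lambda \vert$. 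Combined with the socle $L_{\lambda}$, this gives the description of the constituents of $I_{\lambda}$.

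For the multiplicity one claim, observe that $L_{\lambda}$ cannot appear as a constituent of $I_{\lambda}/L_{\lambda}$, since the latter's constituents all have strictly smaller $\vert \cdot \vert$. Hence $L_{\lambda}$ occurs in $I_{\lambda}$ exactly once, via the socle. This is a purely formal induction; the only subtle point is ensuring the target of the map in the defining sequence actually has finite length and its constituents have been inductively controlled, both of which are already in place from (A2), (A4), and Lemma~\ref{lem:Ilambda-4}. I do not anticipate any serious obstacle, as the heavy lifting was done in Lemmas~\ref{lem:Ilambda-3} and~\ref{lem:Ilambda-4}.
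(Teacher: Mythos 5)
Your proposal is correct and matches the paper's argument: the paper also proceeds by induction on $\vert \lambda \vert$ using the exact sequence $0 \to L_{\lambda} \to I_{\lambda} \to \bigoplus_{\vert \mu \vert < \vert \lambda \vert} \Hom(I_{\lambda}, I_{\mu})^* \otimes I_{\mu}$ from Lemma~\ref{lem:Ilambda-3}, leaving the details you supplied to the reader. (For the base case it is even simpler to note that the indexing set $\{\mu : \vert\mu\vert < 0\}$ is empty, though your appeal to (A6) also works.)
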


\begin{proof}
We proceed by induction on $\vert \lambda \vert$. Using notation as in Lemma~\ref{lem:Ilambda-3}, we have an exact sequence
\begin{displaymath}
0 \to L_{\lambda} \to I_{\lambda} \to \bigoplus_{\vert \mu \vert<n} \Hom(I_{\lambda}, I_{\mu})^* \otimes I_{\mu}.
\end{displaymath}
The result now follows.
\end{proof}

\begin{lemma} \label{lem:Ilambda-7}
Every finite length object of $\cA$ has finite injective dimension.
\end{lemma}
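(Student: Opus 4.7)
The plan is to reduce to showing each simple $L_\lambda$ has finite injective dimension, and then to prove this by induction on $|\lambda|$ using the short exact sequence produced in Lemma~\ref{lem:Ilambda-3} together with Lemma~\ref{lem:Ilambda-6}.

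First I would observe that the class of objects of finite injective dimension is closed under extensions: if $0 \to A \to B \to C \to 0$ is exact and both $A$ and $C$ have $\mathrm{injdim} \le d$, then the long exact sequence for $\mathrm{Ext}^i(X,-)$ shows $\mathrm{Ext}^{i}(X,B) = 0$ for $i > d$, hence $\mathrm{injdim}(B) \le d$. Since every finite length object admits a composition series whose factors are simples of the form $L_\mu$ (by Lemma~\ref{lem:Ilambda-4} together with the classification of simples), it therefore suffices to show that each simple $L_\lambda$ has finite injective dimension.

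Now I would induct on $|\lambda|$. For the base case $|\lambda| = 0$, axiom (A2) and Lemma~\ref{lem:Ilambda-6} say that the only composition factor of $I_\lambda$ is $L_\lambda$ (since there is no $\mu$ with $|\mu| < 0$), so $I_\lambda = L_\lambda$ is itself injective and $\mathrm{injdim}(L_\lambda) = 0$. For the inductive step, assume $\mathrm{injdim}(L_\mu) < \infty$ for every $\mu$ with $|\mu| < n$, and let $|\lambda| = n$. Consider the short exact sequence
\begin{displaymath}
0 \to L_\lambda \to I_\lambda \to Q_\lambda \to 0,
\end{displaymath}
where $Q_\lambda = I_\lambda/L_\lambda$. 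By Lemma~\ref{lem:Ilambda-6}, $Q_\lambda$ has finite length and every composition factor of $Q_\lambda$ is an $L_\mu$ with $|\mu| < n$. By the induction hypothesis and the extension-closure property above, $Q_\lambda$ has finite injective dimension, say $\le d$.

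Finally, since $I_\lambda$ is injective by Lemma~\ref{lem:Ilambda-2}, the long exact sequence for $\mathrm{Ext}^i(X,-)$ associated to the displayed SES collapses to give isomorphisms
\begin{displaymath}
\mathrm{Ext}^{i+1}(X, L_\lambda) \cong \mathrm{Ext}^{i}(X, Q_\lambda)
\end{displaymath}
for all $i \ge 1$ and all $X$. Taking $i > d$ shows $\mathrm{Ext}^{i+1}(X, L_\lambda) = 0$, so $\mathrm{injdim}(L_\lambda) \le d+1 < \infty$, completing the induction. I do not see any serious obstacle: the argument is a routine dimension-shifting induction, and the only thing that needs checking is that the axioms supply the two ingredients used, namely the resolution from Lemma~\ref{lem:Ilambda-3} and the finite length of $Q_\lambda$ with composition factors of smaller magnitude from Lemma~\ref{lem:Ilambda-6}.
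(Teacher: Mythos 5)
Your proof is correct and follows essentially the same route as the paper: reduce to the simples $L_\lambda$, induct on $\vert\lambda\vert$, and use the exact sequence $0 \to L_\lambda \to I_\lambda \to I_\lambda/L_\lambda \to 0$ together with Lemma~\ref{lem:Ilambda-6} and the injectivity of $I_\lambda$ to dimension-shift. The paper's version is just a terser statement of the same argument, leaving the extension-closure and long-exact-sequence details implicit.
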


\begin{proof}
It suffices to prove that each $L_{\lambda}$ has finite injective dimension. We proceed by induction on $\lambda$. Thus suppose that $L_{\mu}$ has finite injective dimension for $\vert \mu \vert<\vert \lambda \vert$. By Lemma~\ref{lem:Ilambda-6}, it thus follows that $I_{\lambda}/L_{\lambda}$ has finite injective dimension, and so $L_{\lambda}$ does as well.
\end{proof}

\subsection{Applications to rational \texorpdfstring{$\GL$}{GL}-fields} \label{ss:rat-struct}

Fix a pure tuple $\usigma$, let $R=\Sym(k^{\oplus \sigma})$, and let $K=\Frac(R)$. For a partition $\lambda$, we let $L_{\lambda}$ be the intersection of the kernels of all maps $K^{\oplus \lambda} \to K^{\oplus \mu}$ with $\vert \mu \vert<\vert \lambda \vert$. The following is our main result on the structure of $K$-modules.

\begin{theorem} \label{thm:rat-struc}
We have the following:
\begin{enumerate}
\item Every finitely generated $K$-module has finite length.
\item The indecomposable injective $K$-modules are exactly the $K^{\oplus \lambda}$, with $\lambda$ a partition.
\item Every finite lengh $K$-module has finite injective dimension.
\item The $K$-module $L_{\lambda}$ is simple, and is the socle of $K^{\oplus \lambda}$. Every simple $K$-module is isomorphic to a unique $L_{\lambda}$.
\item The simple $L_{\lambda}$ occurs in $K^{\oplus \lambda}$ with multiplicity one; the remaining simple constituents have the form $L_{\mu}$ with $\vert \mu \vert<\vert \lambda \vert$.
\end{enumerate}
\end{theorem}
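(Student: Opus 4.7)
The plan is to deduce the theorem from Proposition~\ref{prop:Ilambda}. I would take $\cA=\Mod_K$, $\Lambda$ the set of partitions, $I_\lambda=K^{\oplus\lambda}$, and $|\cdot|$ the size function; the $L_\lambda$ defined inside the proposition then coincides with the one in the statement, and conclusions (a)--(e) of the proposition become exactly (a)--(e) of the theorem. So the work reduces to verifying axioms (A1)--(A8).

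Axioms (A1)--(A3) are formal: $\Mod_K$ is Grothendieck, finite categorical generation coincides with finite $\GL$-generation, only finitely many partitions have bounded size, and $K^{\oplus\lambda}$ is finitely $\GL$-generated by $\bS_\lambda(\bV)$. Axiom (A8) is precisely the field-version embedding theorem, Corollary~\ref{cor:embed}. For (A4)--(A6), I would note that a $K$-linear $\GL$-equivariant map $K^{\oplus\lambda}\to K^{\oplus\mu}$ is determined by its restriction to the generating copy of $\bS_\lambda(\bV)$; this restriction is polynomial, so by Proposition~\ref{prop:sat} it lands in $(K^{\oplus\mu})^{\pol}=R\otimes\bS_\mu(\bV)$, giving
\[
\Hom_K(K^{\oplus\lambda},K^{\oplus\mu})=\Hom_{\GL}\bigl(\bS_\lambda(\bV),R\otimes\bS_\mu(\bV)\bigr).
\]
A Littlewood--Richardson analysis now yields all three axioms: $\bS_\lambda$ can appear in $R\otimes\bS_\mu$ only when $|\lambda|\ge|\mu|$ (giving (A6)); the multiplicity in the relevant polynomial degree is finite because $R_{|\lambda|-|\mu|}$ has finite length (giving (A4)); and for $\lambda=\mu$ only the $\GL$-invariant part of $R$ contributes, identifying the endomorphism ring with the field $K^{\GL}$ (giving (A5)).

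The main obstacle is (A7): no injection $K^{\oplus\lambda}\hookrightarrow J$ exists when $J$ is a direct sum of $K^{\oplus\mu}$'s with $|\mu|<|\lambda|$. After reducing to a finite sum (since $K^{\oplus\lambda}$ is finitely generated), I would pass to polynomial parts to obtain a purported $R$-module injection $R^{\oplus\lambda}\hookrightarrow\bigoplus R^{\oplus\mu_i}$; such a map is determined by an embedding $\iota\colon\bS_\lambda\hookrightarrow\bigoplus R_{|\lambda|-|\mu_i|}\otimes\bS_{\mu_i}$ extended $R$-linearly. The plan is to show this extension fails to be injective by comparing Schur-function characters in polynomial degree $|\lambda|+e$ for a suitable $e$: the decomposition of $R_e\otimes\bS_\lambda$ contains a Schur summand $s_\nu$ that does not appear in any $R_{|\lambda|-|\mu_i|+e}\otimes\bS_{\mu_i}$ (for example, one with $\ell(\nu)>\max_i\ell(\mu_i)+1$, which cannot arise by adding a horizontal strip to any $\mu_i$), forcing a nontrivial kernel. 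An alternative route is to combine the shift theorem (Theorem~\ref{thm:shift}) with induction on $|\lambda|$, exploiting the fact that the leading partition of the tuple arising in the free-module description of the image is bounded above by $\max_i|\mu_i|$, contradicting its containment of $\lambda$.

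With (A1)--(A8) established, Proposition~\ref{prop:Ilambda} immediately yields (a)--(e), completing the proof.
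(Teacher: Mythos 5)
Your overall strategy is exactly the paper's: reduce to Proposition~\ref{prop:Ilambda} with $\cA=\Mod_K$, $I_\lambda=K^{\oplus\lambda}$, verify (A1)--(A8), with (A8) given by Corollary~\ref{cor:embed} and (A4)--(A6) computed via Proposition~\ref{prop:sat} as $\Hom_K(K^{\oplus\lambda},K^{\oplus\mu})=\Hom_{\GL}(k^{\oplus\lambda},R^{\oplus\mu})$. That part is fine. The gap is in (A7). Your character-theoretic argument rests on the claim that $R_e\otimes\bS_\lambda(\bV)$ contains a constituent $\bS_\nu$ with $\ell(\nu)>\max_i\ell(\mu_i)+1$ that is absent from every $R_{|\lambda|-|\mu_i|+e}\otimes\bS_{\mu_i}(\bV)$, justified by ``adding a horizontal strip.'' But tensoring with a graded piece of $R=\Sym(k^{\oplus\usigma})$ is a horizontal-strip operation only when $\usigma=[(1)]$; for a general pure tuple the pieces $R_e$ contain Schur functors with arbitrarily many rows (e.g.\ $\Sym^2(\bS_{(1,1)}(\bV))\supset\bS_{(1^4)}(\bV)$), so both source and target contain constituents of unbounded length and the row-count criterion distinguishes nothing. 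Even in the case $\usigma=[(1)]$ your criterion requires $\ell(\lambda)>\max_i\ell(\mu_i)$, which can fail (take $\lambda=(3)$, $\mu_i=(1,1)$). Whether a missing constituent always exists is plausible but is precisely what needs proof, and a support-only (i.e., which $\bS_\nu$ occur, ignoring multiplicities) argument is not obviously available. Your alternative route via the shift theorem is also not viable as stated: after shifting, $\Sh_n(R^{\oplus\mu})\cong\Sh_n(R)^{\oplus\sh_n(\mu)}$ where $\sh_n(\mu)$ still contains $\mu$ itself, so the purported injection becomes another injection of free modules of the same shape, and the argument is circular.

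The paper closes (A7) with a multiplicity (rank) count rather than a support count: from the purported injection $K^{\oplus\lambda}\to K^{\oplus\umu}$ one applies $S=(-)^{\pol}$ to get an injection of $R$-modules $R^{\oplus\lambda}\to R^{\oplus\umu}$, then evaluates the polynomial functors on $k^n$. Since $\dim\bS_\lambda(k^n)$ is a polynomial in $n$ of degree $|\lambda|$ while $\dim\bS_{\umu}(k^n)$ has degree $<|\lambda|$, for $n$ large one obtains an injection of finite free $R\{k^n\}$-modules from one of strictly larger rank into one of smaller rank, which is impossible over the domain $R\{k^n\}$. You should replace your (A7) argument with this one (or supply an actual proof of the missing-constituent claim for general $\usigma$); everything else in your proposal then goes through as in the paper.
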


\begin{proof}
We apply Proposition~\ref{prop:Ilambda}. We take $\cA=\Mod_K$, take $\Lambda$ to be the set of partitions, and take $\vert \lambda \vert$ to have its usual meaning (the size of $\lambda$). For $\lambda \in \Lambda$, we let $I_{\lambda}=K^{\oplus \lambda}$. We verify the conditions (A1)--(A8). The first three conditions are clear.

Now, recall from \S \ref{ss:gen} that we have a functor $T \colon \Mod_R \to \Mod_K$ given by $M \mapsto K \otimes_R M$, which has a right adjoint $S \colon \Mod_K \to \Mod_R$ given by $S(N)=N^{\pol}$. Moreover, $S(K^{\oplus \ulambda})=R^{\oplus \ulambda}$ for any tuple $\ulambda$ (Proposition~\ref{prop:sat}). In particular, we have
\begin{displaymath}
\Hom_K(K^{\oplus \lambda}, K^{\oplus \mu})=\Hom_R(R^{\oplus \lambda}, R^{\oplus \mu})=\Hom_{\GL}(k^{\oplus \lambda}, R^{\oplus \mu}).
\end{displaymath}
This is finite dimensional over $k$ since $k^{\oplus \lambda}$ occurs in $R^{\oplus \mu}$ with finite multiplicity; this proves (A4). If $\lambda=\mu$ then we find that the above space is isomorphic to $k$, which proves (A5). Finally, if $\vert \lambda \vert<\vert \mu \vert$, or if $\vert \lambda \vert=\vert \mu \vert$ but $\lambda \ne \mu$, then the above space is~0, which proves (A6).

We now handle (A7). Since $I_{\lambda}$ is finitely generated, it suffices to consider the case where $J$ is a finite direct sum in (A7). Thus, suppose by way of contraction that we have an injection $K^{\oplus \lambda} \to K^{\oplus \umu}$, where $\umu$ is a tuple composed of partitions that are strictly smaller than $\lambda$. Applying the $S$ functor, this gives an injection of $R$-modules $R^{\oplus \lambda} \to R^{\oplus \umu}$. Let $n$ be such that $\dim \bS_{\lambda}(k^n) > \dim \bS_{\umu}(k^n)$. This is possible since $\dim \bS_{\lambda}(k^n)$ is a polynomial in $n$ of degree $\vert \lambda \vert$, while $\dim \bS_{\umu}(k^n)$ is a polynomial of degree $<\vert \lambda \vert$. Evaluating our injection on $k^n$, we obtain an injection
\begin{displaymath}
R\{k^n\} \otimes \bS_{\lambda}(k^n) \to R\{k^n\} \otimes \bS_{\umu}(k^n)
\end{displaymath}
of $R\{k^n\}$-modules. This is impossible, as the two modules above are free of finite rank, and the domain has greater rank. We thus have a contradiction, which proves (A7).

Finally, (A8) is exactly Corollary~\ref{cor:embed}. This completes the verification of (A1)--(A8). Thus Proposition~\ref{prop:Ilambda} applies, which completes the proof.
\end{proof}

\begin{corollary}
All projective $R$-modules are injective.
\end{corollary}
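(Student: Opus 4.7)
The plan is to show that every projective $K$-module sits as a direct summand of some direct sum of the indecomposable injectives $K^{\oplus \lambda}$ classified in Theorem~\ref{thm:rat-struc}(b); the conclusion will then follow because such direct sums remain injective once one knows $\Mod_K$ is locally Noetherian. (I will read the ``$R$-modules'' in the statement as ``$K$-modules'', matching the ambient category of the preceding theorem.)

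Concretely, let $P$ be a projective $K$-module. By the very definition of a $K$-module, $P$ is generated over $K$ by its polynomial subrepresentation $P^{\pol}$; decomposing $P^{\pol} \cong \bigoplus_i k^{\oplus \lambda_i}$ as a polynomial representation and extending scalars gives a canonical surjection $\pi \colon \bigoplus_i K^{\oplus \lambda_i} \twoheadrightarrow P$. Since $P$ is projective, $\pi$ splits, exhibiting $P$ as a direct summand of $\bigoplus_i K^{\oplus \lambda_i}$.

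To finish the argument, Theorem~\ref{thm:rat-struc}(b) says each $K^{\oplus \lambda_i}$ is injective, and Theorem~\ref{thm:rat-struc}(a) says $\Mod_K$ is locally of finite length and, in particular, locally Noetherian. The classical theorem (due to Gabriel) that arbitrary direct sums of injectives are injective in a locally Noetherian Grothendieck abelian category then shows that $\bigoplus_i K^{\oplus \lambda_i}$ is injective, and its direct summand $P$ is therefore also injective.

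I do not anticipate any substantive obstacle. The only non-formal input is the preservation of injectivity under arbitrary direct sums, which rests entirely on local Noetherianity and is supplied by part (a) of the previous theorem.
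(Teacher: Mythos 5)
The corollary is a statement about the category $\Mod_R$ of modules over the $\GL$-algebra $R=\Sym(k^{\oplus \usigma})$, not about $\Mod_K$; your decision to read ``$R$-modules'' as ``$K$-modules'' changes the content of the statement rather than merely its notation. In $\Mod_R$ the projective objects are exactly the free modules $R\otimes V$ with $V$ a polynomial representation, and the assertion that these are injective \emph{in $\Mod_R$} is not something you can extract from injectivity of the $K^{\oplus\lambda}$ inside $\Mod_K$ by a splitting argument: $R^{\oplus\lambda}$ is not a direct summand of any object of $\Mod_K$ viewed in $\Mod_R$. The paper bridges the two categories with the adjoint pair $(T,S)$ of Proposition~\ref{prop:genK}: since $T=K\otimes_R(-)$ is exact, its right adjoint $S=(-)^{\pol}$ carries injectives of $\Mod_K$ to injectives of $\Mod_R$; Proposition~\ref{prop:sat} (saturation) identifies $S(K\otimes V)$ with $R\otimes V$; and since every projective $R$-module has this form, the corollary follows. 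That transfer step is the whole point of the corollary and is absent from your argument.

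Within $\Mod_K$ your reasoning is sound --- the surjection $\bigoplus_i K^{\oplus\lambda_i}\to P$ splits by projectivity, and local noetherianity of $\Mod_K$ (Theorem~\ref{thm:rat-struc}(a)) does legitimately give injectivity of arbitrary direct sums of the $K^{\oplus\lambda}$ --- but it establishes a different and much weaker claim. Note also that you could not simply rerun the same template in $\Mod_R$: you would need both that $R\otimes V$ is injective in $\Mod_R$, which is precisely the nontrivial content supplied by the adjunction, and that direct sums of injectives in $\Mod_R$ are injective, which would require local noetherianity of $\Mod_R$ --- a property the paper never claims and which is a difficult open problem for general pure tuples $\usigma$.
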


\begin{proof}
Let $S$ and $T$ be as in the above proof. Since $T$ is exact, the its right adjoint $S$ takes injectives to injectives. In particular, we see that $S(K \otimes V)$ is an injective $R$-module for any polynomial representation $V$. As $S(K \otimes V)=R \otimes V$ (Proposition~\ref{prop:sat}), and every projective $R$-module has this form, the result follows.
\end{proof}

\subsection{Applications to other \texorpdfstring{$\GL$}{GL}-fields} \label{ss:genfield}

By leveraging Theorem~\ref{thm:rat-struc}, we are able to deduce the following fundamental result for more general $\GL$-fields:

\begin{theorem} \label{thm:gen-struc}
Let $K$ be a $\GL$-field that is finitely generated over its invariant subfield $k$.
\begin{enumerate}
\item Any finitely generated $K$-module has finite length.
\item If $M$ and $N$ are finitely generated $K$-modules then $\Hom_K(M,N)$ is a finite dimensional $k$-vector space.
\end{enumerate}
\end{theorem}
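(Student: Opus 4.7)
The plan is to reduce to the rational case via shifting. By Proposition~\ref{prop:unirat-K}, we choose $n \ge 0$ such that $K' := \Sh_n(K)$ is rational over its invariant subfield $\ell$, so that Theorem~\ref{thm:rat-struc} applies in full to $\Mod_{K'}$.

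For (a), the module $\Sh_n(M)$ is finitely generated over $K'$ (the elements generating $M$ over $K$ also work here), hence has finite length by Theorem~\ref{thm:rat-struc}(a). Any strictly increasing chain of $K$-submodules of $M$ is also a strictly increasing chain of $K' = K$-submodules of $\Sh_n(M)$: the underlying ring action is unchanged, and $\GL$-stability trivially forces $G(n)$-stability. Hence any such chain terminates, proving $M$ has finite length over $K$.

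For (b), combining (a) with long exact sequences in $\Hom$ reduces us to showing $\End_K(L)$ is finite-dimensional over $k$ for each simple $K$-module $L$ (Hom between non-isomorphic simples vanishes by Schur's lemma). Picking a finite length polynomial subrepresentation $V \subseteq L^{\pol}$ that $K$-generates $L$, the surjection $K \otimes V \twoheadrightarrow L$ gives an injection $\End_K(L) \hookrightarrow \Hom_\GL(V, L) = \Hom_\GL(V, L^{\pol})$. After decomposing $V$ into Schur summands, the task becomes showing that each $\bS_\lambda$-isotypic component of $L^{\pol}$ is finite-dimensional over $k$.

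The hard part will be this last step. After shifting, $\Sh_n(L)$ is a finite length $K'$-module, and by Corollary~\ref{cor:embed} it embeds into some $(K')^{\oplus \umu}$; the polynomial part of the latter has $G(n)$-isotypic components of finite $\ell$-dimension. The $\GL$-isotypics of $L^{\pol}$ sit inside this picture as $k$-subspaces (they are not $\ell$-subspaces, since scaling by $\ell \setminus k$ destroys $\GL$-polynomiality), and extracting $k$-finite-dimensionality from finite $\ell$-dimensionality will require a careful semilinear descent argument exploiting the precise interplay between the $\GL$- and $G(n)$-equivariance conditions.
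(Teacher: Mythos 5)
Part (a) of your argument is correct and is essentially the paper's proof verbatim: shift until $\Sh_n(K)$ is rational over its invariant subfield, apply Theorem~\ref{thm:rat-struc}(a), and observe that chains of $K$-submodules remain chains of $\Sh_n(K)$-submodules.

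Part (b) has a genuine gap at exactly the point you flag as ``the hard part.'' Your reduction is fine as far as it goes: by d\'evissage and Schur's lemma it suffices to bound $\End_K(L)$ for $L$ simple, and the injection $\End_K(L) \hookrightarrow \Hom_{\GL}(V, L^{\pol})$ reduces this to showing that the multiplicity space $\Hom_{\GL}(k^{\oplus\lambda}, L^{\pol}) = \Hom_K(K^{\oplus\lambda}, L)$ is finite dimensional over $k$ --- but note this is itself just a special case of (b), so nothing has been gained yet. The proposed finish does not work as stated: after shifting, what Theorem~\ref{thm:rat-struc} and Lemma~\ref{lem:struc-1} give you is finite dimensionality of $\Hom_{\Sh_n(K)}(\Sh_n(K)^{\oplus\sh_n(\lambda)}, \Sh_n(L))$ over $\ell = K^{G(n)}$, and your space sits inside this as a $k$-subspace. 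But $\ell$ is in general an infinite extension of $k$ (it is only finitely generated as a field, and typically has positive transcendence degree), so a $k$-subspace of a finite dimensional $\ell$-vector space can perfectly well be infinite dimensional over $k$. The ``careful semilinear descent'' you defer to is precisely the entire content of the theorem, and no mechanism for it is supplied.

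The paper gets around this by a different device. Shifting shows that $\End_K(M)$ embeds in a finite dimensional $K^{G(n)}$-algebra, whence every endomorphism $\phi$ satisfies a nonzero polynomial over $K$; the key step is then that the \emph{minimal} polynomial of $\phi$ over $K$ is unique, hence $\GL$-invariant, hence has coefficients in $k = K^{\GL}$ (Lemma~\ref{lem:struc-3}). Thus the division ring $\End_K(L)$ is algebraic over $k$, so equals $k$ when $k$ is algebraically closed; the general case follows by base change to $\bar{k}$, which requires knowing that $k$ is algebraically closed inside $K$ (Lemma~\ref{lem:struc-6}) so that $\bar{k}\otimes_k K$ is again a $\GL$-field, together with the compatibility of $\Hom$ with this base change (Lemma~\ref{lem:struc-7}). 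Some argument of this kind --- converting the finite dimensionality over the large field $K^{G(n)}$ into algebraicity over the small field $k$ via $\GL$-invariance --- is what your proof is missing.
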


The first statement is reasonably straightforward:

\begin{proof}[Proof of Theorem~\ref{thm:gen-struc}(a)]
Applying Proposition~\ref{prop:unirat-K}, let $n$ be such that $\Sh_n(K)$ is a rational $\GL$-field over its invariant subfield. Let $V$ be a finitely generated $K$-module. Then $\Sh_n(V)$ is a finitely generated $\Sh_n(K)$-module, and therefore of finite length by Theorem~\ref{thm:rat-struc}(a). It follows that $V$ has finite length. In fact, if $\Sh_n(V)$ has length $\ell$ then $V$ has length $\le \ell$, for if $U_0 \subset \cdots \subset U_{\ell+1}$ is any chain of $K$-submodules of $V$ then $\Sh_n(U_0) \subset \cdots \subset \Sh_n(U_{\ell+1})$ is a chain of $\Sh_n(K)$-submodules of $\Sh_n(V)$, and so $\Sh_n(U_i)=\Sh_n(U_{i+1})$ for some $i$, and so $U_i=U_{i+1}$.
\end{proof}

The second part of the theorem will take the remainder of the section. We require a number of lemmas.

\begin{lemma} \label{lem:struc-1}
Theorem~\ref{thm:gen-struc}(b) holds if $K$ is a rational $\GL$-field over $k$.
\end{lemma}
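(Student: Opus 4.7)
The plan is to sandwich $\Hom_K(M,N)$ between Hom spaces of the form $\Hom_K(K^{\oplus\ulambda}, K^{\oplus\umu})$, and then invoke the finite-dimensionality computation already carried out in the proof of Theorem~\ref{thm:rat-struc} when verifying axiom (A4). Concretely, recall that the argument for (A4) showed that $\Hom_K(K^{\oplus\lambda}, K^{\oplus\mu}) \cong \Hom_{\GL}(k^{\oplus\lambda}, R^{\oplus\mu})$ (via $T \dashv S$ and Proposition~\ref{prop:sat}), which is finite dimensional over $k$ because $k^{\oplus\lambda}$ occurs in the polynomial representation $R^{\oplus\mu}$ with finite multiplicity.

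First, since $M$ is finitely generated, there is a tuple $\ulambda$ and a surjection $K^{\oplus\ulambda} \twoheadrightarrow M$; applying the left-exact contravariant functor $\Hom_K(-,N)$ yields an injection
\begin{displaymath}
\Hom_K(M,N) \hookrightarrow \Hom_K(K^{\oplus\ulambda}, N).
\end{displaymath}
Next, because $N$ is finitely generated and $K$ is rational, Corollary~\ref{cor:embed} provides a tuple $\umu$ together with an injection $N \hookrightarrow K^{\oplus\umu}$. Applying the left-exact covariant functor $\Hom_K(K^{\oplus\ulambda},-)$ produces a further injection
\begin{displaymath}
\Hom_K(K^{\oplus\ulambda}, N) \hookrightarrow \Hom_K(K^{\oplus\ulambda}, K^{\oplus\umu}).
\end{displaymath}

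Finally, since $\ulambda = [\lambda_1,\ldots,\lambda_r]$ and $\umu=[\mu_1,\ldots,\mu_s]$ are finite tuples, the rightmost term decomposes as the finite direct sum $\bigoplus_{i,j} \Hom_K(K^{\oplus\lambda_i}, K^{\oplus\mu_j})$, each summand of which is finite dimensional over $k$ by the recalled computation. Hence $\Hom_K(M,N)$ embeds into a finite-dimensional $k$-vector space and is itself finite dimensional, establishing the lemma. No serious obstacle is expected here: the embedding theorem and the Hom computation from \S\ref{ss:rat-struct} have done all the real work, and the remaining argument is purely formal left-exactness.
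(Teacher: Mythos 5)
Your proposal is correct and is essentially identical to the paper's own proof: both choose a surjection $K^{\oplus\ulambda}\twoheadrightarrow M$ and an embedding $N\hookrightarrow K^{\oplus\umu}$ via Corollary~\ref{cor:embed}, obtain an injection $\Hom_K(M,N)\hookrightarrow\Hom_K(K^{\oplus\ulambda},K^{\oplus\umu})$, and cite the finite-dimensionality already established in the proof of Theorem~\ref{thm:rat-struc}. No issues.
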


\begin{proof}
Choose a surjection $K^{\oplus \ulambda} \to M$ for some tuple $\ulambda$, which is possible in general, and an injection $N \to K^{\oplus \umu}$ for some tuple $\umu$, which is possible by the embedding theorem (Corollary~\ref{cor:embed}) since $K$ is rational. We thus obtain an injection
\begin{displaymath}
\Hom_K(M,N) \to \Hom_K(K^{\oplus \ulambda}, K^{\oplus \umu}).
\end{displaymath}
We have seen (in the proof of Theorem~\ref{thm:rat-struc}) that this is finite dimensional over $k$. The result follows.
\end{proof}

\begin{lemma} \label{lem:struc-2}
Let $M$ be a finitely generated $K$-module and let $\phi$ be an endomorphism of $M$. Then $\phi$ satisfies a non-zero polynomial with coefficients in $K$.
\end{lemma}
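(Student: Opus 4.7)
The plan is to reduce to the rational case, in which Lemma~\ref{lem:struc-1} already furnishes the needed finiteness. First, by Proposition~\ref{prop:unirat-K}, I would choose $n \ge 0$ so that $\Sh_n(K)$ is rational over its invariant subfield $\ell := \Sh_n(K)^{\GL}$. A finite-length polynomial subrepresentation that generates $M$ over $K$ remains a finite-length polynomial subrepresentation after shifting (now with respect to $G(n) \cong \GL$) and continues to generate $\Sh_n(M)$ over $\Sh_n(K)$, so $\Sh_n(M)$ is finitely generated as a $\Sh_n(K)$-module.

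The key observation is that shifting only restricts the group action; it leaves the underlying $K$-module structure alone. Consequently, every $\GL$-equivariant $K$-linear endomorphism of $M$ is in particular a $G(n)$-equivariant $K$-linear endomorphism of $\Sh_n(M)$, yielding a natural inclusion of rings
\[
\End_{\Mod_K}(M) \hookrightarrow \End_{\Mod_{\Sh_n(K)}}(\Sh_n(M))
\]
under which $\phi$ corresponds to itself. Sums and powers are computed identically in the two rings, since both are subrings of the ring of semilinear $K$-endomorphisms of $M$.

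Finally, I would apply Lemma~\ref{lem:struc-1} to the rational $\GL$-field $\Sh_n(K)$ and the finitely generated $\Sh_n(K)$-module $\Sh_n(M)$: the target endomorphism ring above is finite-dimensional over $\ell$. Hence the sequence $1, \phi, \phi^2, \ldots$ is $\ell$-linearly dependent, producing constants $c_0, \ldots, c_d \in \ell$, not all zero, with $\sum_{i} c_i \phi^i = 0$. Since $\ell \subset K$, the polynomial $p(t) = \sum_{i} c_i t^i \in K[t]$ is non-zero and annihilates $\phi$, as desired.

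I do not anticipate a serious obstacle; the only mild subtlety is confirming that $\phi$ legitimately enters the larger endomorphism ring and that the ring operations match, both of which are immediate from unwinding the definition of $\Sh_n$ and of semilinear $K$-linearity.
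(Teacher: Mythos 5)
Your proposal is correct and is essentially identical to the paper's proof: both shift by $n$ via Proposition~\ref{prop:unirat-K}, identify $G(n)$-equivariant $K$-linear endomorphisms of $M$ with $\End_{\Sh_n(K)}(\Sh_n(M))$, invoke Lemma~\ref{lem:struc-1} for finite-dimensionality over $K^{G(n)}$, and conclude that the powers of $\phi$ are linearly dependent. No issues.
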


\begin{proof}
Applying Proposition~\ref{prop:unirat-K}, let $n$ be such that $\Sh_n(K)$ is a rational $\GL$-field over its invariant subfield; in other words, this means $K$ is rational over $K^{G(n)}$ as a $G(n)$-field. Let $E$ be the space of all $K$-linear $G(n)$-equivariant maps $M \to M$; this is identified with $\End_{\Sh_n(K)}(\Sh_n(M))$. By Lemma~\ref{lem:struc-1}, $E$ is a finite dimensional vector space over the field $K^{G(n)}$. Thus the elements $\{\phi^i\}_{i \ge 0}$ of $E$ are linearly dependent, which gives the requisite polynomial.
\end{proof}

For $\phi$ as above, the set of all polynomials that $\phi$ satisfies forms an ideal in the univariate polynomial ring $K[T]$. We define the \emph{minimal polynomial} of $\phi$ to be the unique monic generator of this ideal. In other words, the minimal polynomial of $\phi$ is the unique monic polynomial that $\phi$ satisfies of minimal degree.

\begin{lemma} \label{lem:struc-3}
Let $M$ be a finitely generated $K$-module and let $\phi$ be an endomorphism of $M$. Then the minimal polyomial of $\phi$ has coefficients in the invariant field $k$.
\end{lemma}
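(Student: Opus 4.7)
The plan is to exploit the $\GL$-equivariance of $\phi$ together with the uniqueness of the monic minimal polynomial. Since $\phi$ is a morphism of $K$-modules, it commutes with the action of $\GL$ in the semi-linear sense: $\phi(gm)=g\phi(m)$ for all $g\in\GL$ and $m\in M$. The polynomial ring $K[T]$ carries a natural action of $\GL$ via its action on the coefficients (acting trivially on $T$), and I want to show that the minimal polynomial $p(T)$ of $\phi$ is fixed by this action.

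First, I would write $p(T)=T^d+\sum_{i=0}^{d-1}a_i T^i$ with $a_i\in K$, and for $g\in\GL$ define $(g\cdot p)(T)=T^d+\sum_{i=0}^{d-1}(ga_i)T^i$; note that this is again monic of degree $d$ because $g$ fixes the leading coefficient $1$. Next, for any $m\in M$, I would compute
\begin{align*}
0 = g\bigl(p(\phi)(g^{-1}m)\bigr) = g\Bigl(\phi^d(g^{-1}m)+\sum_{i} a_i\,\phi^i(g^{-1}m)\Bigr) = \phi^d(m)+\sum_i (ga_i)\phi^i(m),
\end{align*}
where the last equality uses that $\phi$ commutes with $g$ and that $\GL$ acts semi-linearly, so $g(a_i v)=(ga_i)(gv)$. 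This shows $(g\cdot p)(\phi)=0$.

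Thus $g\cdot p$ is a monic polynomial of degree $d$ annihilating $\phi$. By the uniqueness clause in the definition of the minimal polynomial (which was established just before the lemma via Lemma~\ref{lem:struc-2}), we conclude $g\cdot p=p$, i.e., $g a_i=a_i$ for every $i$ and every $g\in\GL$. Therefore each $a_i$ lies in the invariant subfield $K^{\GL}=k$, as required.

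I do not anticipate a real obstacle here: the argument is purely formal, turning on the interplay between the semi-linear action and the equivariance of $\phi$, and the only substantive ingredient is the existence of the minimal polynomial from the previous lemma.
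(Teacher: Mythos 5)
Your argument is correct and is essentially the paper's own proof: apply $g$ to the minimal-polynomial relation, use the $\GL$-equivariance of $\phi$ and the semi-linearity of the action to see that the coefficient-wise translate $g\cdot p$ is again a monic annihilating polynomial of the same degree, and conclude $g\cdot p=p$ by uniqueness, so the coefficients lie in $K^{\GL}=k$. The paper states this more tersely but the content is identical.
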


\begin{proof}
Suppose that $\sum_{i=0}^d c_i \phi^i=0$ is the equation given by the minimal polynomial. If $g \in \GL$ then we also have $\sum_{i=0}^d (gc_i) \phi^i=0$. By uniquness of the minimal polynomial, we therefore have $gc_i=c_i$. Since this holds for all $g$, it follows that $c_i \in k$, as required.
\end{proof}

The following lemma is a version of Schur's lemma:

\begin{lemma} \label{lem:struc-4}
Suppose that $k$ is algebraically closed and $M$ is a simple $K$-module. Then $\End_K(M)=k$.
\end{lemma}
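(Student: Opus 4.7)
The plan is to combine Lemma~\ref{lem:struc-3} with the hypothesis that $k$ is algebraically closed, mimicking the classical proof of Schur's lemma. Let $\phi \in \End_K(M)$ be arbitrary. By Lemma~\ref{lem:struc-2}, $\phi$ has a minimal polynomial $p(T) \in K[T]$, and by Lemma~\ref{lem:struc-3} its coefficients lie in $k$. Since $k$ is algebraically closed, we can factor $p(T) = \prod_{i=1}^d (T-c_i)$ with $c_i \in k$.

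The key observation is that each $c_i \in k$ is $\GL$-invariant, so the map $\phi - c_i \colon M \to M$ is again $K$-linear and $\GL$-equivariant, hence a morphism in $\Mod_K$. Consequently $\ker(\phi-c_i)$ and $\im(\phi-c_i)$ are sub-$K$-modules of $M$ in the sense of $\Mod_K$, and the simplicity hypothesis applies to them.

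Now consider $\phi - c_1$. Since $M$ is simple, $\ker(\phi - c_1)$ is either $0$ or $M$, and similarly for the image. If $\ker(\phi - c_1) = M$, then $\phi = c_1 \cdot \id_M \in k$ and we are done. Otherwise $\phi - c_1$ is injective, and then $\im(\phi - c_1)$ cannot be zero (as $M \neq 0$), so it equals $M$; that is, $\phi - c_1$ is an isomorphism. But then the equation $(\phi - c_1)\prod_{i=2}^d (\phi - c_i) = 0$ forces $\prod_{i=2}^d (\phi - c_i) = 0$, contradicting the minimality of $p(T)$ unless $d = 1$. Hence $d=1$ and $\phi \in k$, proving $\End_K(M) = k$.

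No step here is particularly difficult: the only subtle point is making sure $\phi - c_i$ really is an endomorphism in $\Mod_K$, which is exactly why Lemma~\ref{lem:struc-3} is formulated as it is. The rest is a direct transcription of the usual Schur argument.
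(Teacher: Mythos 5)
Your proof is correct and follows essentially the same route as the paper: both arguments rest on Lemma~\ref{lem:struc-3} (the minimal polynomial has coefficients in $k$) together with algebraic closedness of $k$; the paper simply packages the remaining step abstractly (a division ring algebraic over an algebraically closed central subfield equals that subfield), whereas you unwind it by factoring the minimal polynomial and applying the Schur dichotomy directly. The only point worth making explicit, as the paper does, is that $M$ is finitely generated because it is simple, so that Lemma~\ref{lem:struc-2} applies.
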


\begin{proof}
Since $M$ is simple, it follows that $D=\End_K(M)$ is a division ring. We know that $D$ contains $k$ in its center. By the Lemma~\ref{lem:struc-3}, every element of $D$ is algebraic over $k$. (Note that $M$ is necessarily finitely generated since it is simple.) Since $k$ is algebraically closed, it follows that $D=k$.
\end{proof}

\begin{lemma} \label{lem:struc-5}
Suppose that $k$ is algebraically closed. Then Theorem~\ref{thm:gen-struc}(b) holds.
\end{lemma}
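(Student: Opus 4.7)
The plan is to combine Theorem~\ref{thm:gen-struc}(a) (finite length of finitely generated $K$-modules) with the Schur-lemma computation in Lemma~\ref{lem:struc-4} via a standard dévissage. The only place the hypothesis that $k$ is algebraically closed enters is through Lemma~\ref{lem:struc-4}; everything else is formal.

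First I would handle the base case, where $M$ and $N$ are both simple. Any non-zero $K$-linear map between simples is an isomorphism, so $\Hom_K(M, N) = 0$ unless $M \cong N$, in which case $\Hom_K(M, N) \cong \End_K(M) = k$ by Lemma~\ref{lem:struc-4}. In particular, the base case is finite dimensional.

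Next I would induct on $\ell(M) + \ell(N)$, where $\ell$ denotes composition length (finite by Theorem~\ref{thm:gen-struc}(a)). Suppose $\ell(M) \ge 2$, and pick a simple subobject $M_1 \subset M$ with quotient $M_2 = M/M_1$; both have strictly smaller length than $M$, and both are finitely generated (hence of finite length). Applying the left-exact functor $\Hom_K(-, N)$ to $0 \to M_1 \to M \to M_2 \to 0$ gives
\begin{displaymath}
0 \to \Hom_K(M_2, N) \to \Hom_K(M, N) \to \Hom_K(M_1, N),
\end{displaymath}
so the inductive hypothesis bounds $\dim_k \Hom_K(M, N)$ by the sum of the dimensions of the two outer terms, which are finite. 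The case $\ell(N) \ge 2$ is handled symmetrically by applying $\Hom_K(M, -)$ to a short exact sequence realizing a simple subobject of $N$. The only remaining case is $\ell(M) = \ell(N) = 1$, which is the base case.

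There is essentially no obstacle: once finite length and Schur's lemma are in hand, the statement reduces by dévissage to the case of simples. The one point to keep in mind is that throughout the induction I stay inside the category of finitely generated $K$-modules (submodules and quotients of finitely generated modules of finite length remain finitely generated of finite length), so the inductive hypothesis applies at each step.
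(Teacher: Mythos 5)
Your proof is correct and follows exactly the paper's argument: the paper's proof is a two-line appeal to Lemma~\ref{lem:struc-4} for the simple case followed by d\'evissage using Theorem~\ref{thm:gen-struc}(a), and your induction on total length with the left-exactness of $\Hom$ is just that d\'evissage written out explicitly.
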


\begin{proof}
It follows from the previous lemma that $\Hom_K(M,N)$ is finite dimensional over $k$ if $M$ and $N$ are simple. As $M$ and $N$ have finite length by Theorem~\ref{thm:gen-struc}(a), the general case follows from d\'evissage.
\end{proof}

We now deduce the general case from the case with $k$ algebraically closed using a base change argument. For this, we require two more lemmas.

\begin{lemma} \label{lem:struc-6}
Any element of $K$ that is algebraic over $k$ belongs to $k$, i.e., $k$ is algebraically closed within $K$.
\end{lemma}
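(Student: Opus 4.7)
The plan is to reduce to the shifted setting, where $K$ becomes rational over its invariant subfield, and then exploit the elementary fact that a purely transcendental extension is algebraically closed over its base.

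First, I would apply Proposition~\ref{prop:unirat-K} to choose $n \geq 0$ such that $\Sh_n(K)$ is rational over its invariant subfield. Unpacking the proof of that proposition (which in turn is a direct consequence of Theorem~\ref{thm:bdes-shift}), this produces an isomorphism
\begin{displaymath}
\Sh_n(K) \cong \Frac(\Sym(\ell^{\oplus \usigma}))
\end{displaymath}
where $\ell$ is a field carrying the trivial $\GL$-action, and where the original invariant subfield satisfies $K^{\GL} \cong \ell$ under this identification (this being the substantive content of \cite[Prop.~5.7]{polygeom} invoked in the proof of Proposition~\ref{prop:unirat-K}). Crucially, $\Sh_n(K)$ agrees with $K$ as an abstract field; only the $\GL$-action has been altered.

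Second, I would observe that $\Frac(\Sym(\ell^{\oplus \usigma}))$ is the fraction field of a polynomial ring over $\ell$ in countably many variables. As an abstract field extension, this is purely transcendental over $\ell$, and so $\ell$ is algebraically closed inside it; any finite algebraic extension collapses to $\ell$ itself.

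Third, combining the two previous steps: since $k = K^{\GL} = \ell$ and $\ell$ is algebraically closed inside $K = \Sh_n(K)$, any $x \in K$ that is algebraic over $k$ must already lie in $\ell = k$.

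The only real content is the first step, and the main obstacle is really bookkeeping: one must be careful that the $\ell$ appearing as the new invariant subfield after shifting coincides with the old invariant subfield $K^{\GL} = k$, which is precisely what Proposition~\ref{prop:unirat-K} (via \cite[Prop.~5.7]{polygeom}) guarantees. Once this identification is in hand, the lemma is an immediate consequence of the triviality that $\ell$ is algebraically closed in $\ell(X_1, X_2, \ldots)$.
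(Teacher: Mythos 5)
There is a genuine gap, and it sits exactly at the point you flag as ``bookkeeping.'' The field $\ell$ produced by Proposition~\ref{prop:unirat-K} is the invariant subfield of the \emph{shifted} field, that is, $\ell = \Sh_n(K)^{\GL} = K^{G(n)}$, and this is in general strictly larger than $k = K^{\GL}$. (The phrase ``$K^{\GL} \cong \ell$'' in the printed proof of Proposition~\ref{prop:unirat-K} must be read as referring to the invariant field of $\Sh_n(K)$: that is what the cited result computes, and it is what the conclusion ``rational over its invariant subfield'' requires. Also, $A$ need not equal $k$ --- producing a nontrivial $A$ is the whole point of Theorem~\ref{thm:bdes-shift}.) For a concrete instance of the discrepancy, take $K = \Frac(\Sym(\bV))$: then $x_1 \in K^{G(1)} \setminus K^{\GL}$. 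Your argument therefore only shows that an element $a \in K$ algebraic over $k$ lies in $K^{G(n)}$. But this is automatic and carries no information: any $a \in K$ is a ratio of two polynomial elements, each of which is fixed by $G(m)$ for $m \gg 0$, so $a \in K^{G(m)}$ for $m \gg 0$ in any case. The entire difficulty of the lemma is in passing from $G(m)$-invariance to $\GL$-invariance, and the purely transcendental structure of $\Sh_n(K)$ over $K^{G(n)}$ does not address that. Rescuing your approach would require showing that $k$ is algebraically closed in $\ell = K^{G(n)}$, which is essentially the original lemma again.

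The paper's proof supplies the missing step by a group-theoretic argument: $\GL$ permutes the finitely many roots in $K$ of the minimal polynomial of $a$ over $k$, giving a homomorphism from $\GL$ to a finite symmetric group; each elementary subgroup of $\GL$ is a divisible group (we are in characteristic $0$), so maps trivially to any finite group, and these subgroups generate $\SL$; hence $a$ is fixed by $\SL$. Combined with the automatic $G(m)$-invariance and the factorization $\GL = \SL \cdot G(m)$, this gives $a \in K^{\GL} = k$.
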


\begin{proof}
Let $a \in K$ be algebraic over $k$, and let $f(T) \in k[T]$ be its minimal polynomial. Since $\GL$ acts on $K$ by field homomorphisms, it permutes the roots of $f$ in $K$. This action corresponds to a homomorphism $\phi \colon \GL \to \fS_n$ where $n$ is the number of roots of $f$ in $K$. Since any group homomorphism $\bQ \to \fS_n$ is trivial, it follows that $\phi$ is trivial on each group of elementary matrices in $\GL$. Since these groups generate $\SL$, it follows that $\phi(\SL)=1$. We thus see that $a$ is fixed by $\SL$. However, $a$ is also fixed by $G(m)$ for $m \gg 0$. It follows that $a$ is fixed by $\GL=\SL \cdot G(m)$, i.e., $a \in k$.
\end{proof}

Suppose $k'$ is an algebraic extension of $k$. Then the above lemma implies that $K'=k' \otimes_k K$ is a field. Letting $\GL$ act on $K'$ by acting trivially on $k'$, one easily sees that $K'$ is a $\GL$-field, its invariant field is $k'$, and it is finitely $\GL$-generated over $k'$.

\begin{lemma} \label{lem:struc-7}
Let $k'$ be an algebraic extension of $k$ and put $K'=k' \otimes_k K$. Let $M$ and $N$ be $K$-modules, with $M$ finitely generated. Then the natural map
\begin{displaymath}
k' \otimes_k \Hom_K(M,N) \to \Hom_{K'}(k' \otimes_k M, k' \otimes_k N)
\end{displaymath}
is an isomorphism.
\end{lemma}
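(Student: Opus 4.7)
The plan is to first verify the statement when $M$ is a ``free'' $K$-module of the form $K^{\oplus \ulambda}$ for a finite tuple $\ulambda$, then bootstrap to the general case using a finite presentation of $M$.

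\textbf{Step 1 (Free case).} Suppose $M = K \otimes_k V$ where $V$ is a finite length polynomial representation. By the tensor-hom adjunction, $\Hom_K(K \otimes_k V, N) = \Hom_{\GL}(V, N^{\pol})$, so the left side becomes $k' \otimes_k \Hom_{\GL}(V, N^{\pol})$. For the right side, note $k' \otimes_k(K \otimes_k V) = K' \otimes_{k'}(k' \otimes_k V)$, so the same adjunction gives $\Hom_{\GL}(V, (k' \otimes_k N)^{\pol})$ (viewed over $k$, since $\GL$ acts trivially on $k'$). The claim thus reduces to two compatibilities: (i) $(k' \otimes_k N)^{\pol} = k' \otimes_k N^{\pol}$ as $\GL$-subrepresentations, and (ii) $\Hom_{\GL}(V, -)$ commutes with the scalar extension $(-) \mapsto k' \otimes_k (-)$ on polynomial representations. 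Item (ii) is immediate since in characteristic zero every polynomial representation decomposes canonically as a sum of Schur functors, and Schur functors commute with base change.

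\textbf{Step 2 (The key identity).} The inclusion $k' \otimes_k N^{\pol} \subseteq (k' \otimes_k N)^{\pol}$ is automatic: $k' \otimes_k W$ is a polynomial representation over $k'$ whenever $W$ is polynomial over $k$, since $\GL$ acts trivially on $k'$. For the reverse, take $v \in (k' \otimes_k N)^{\pol}$ and write $v = \sum_{i=1}^r c_i \otimes n_i$ with $c_1, \ldots, c_r \in k'$ linearly independent over $k$. Extending $\{c_i\}$ to a $k$-basis of $k'$ produces $k$-linear functionals $\phi_i \colon k' \to k$ with $\phi_i(c_j) = \delta_{ij}$. Each $\phi_i$ is $\GL$-equivariant (trivial action on both sides), so $\phi_i \otimes \id_N$ is a $\GL$-equivariant map $k' \otimes_k N \to N$ sending $v$ to $n_i$. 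Since $(-)^{\pol}$ is preserved under $\GL$-equivariant maps, each $n_i$ lies in $N^{\pol}$, giving $v \in k' \otimes_k N^{\pol}$.

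\textbf{Step 3 (Reduction to the free case).} For general finitely generated $M$, by Theorem~\ref{thm:gen-struc}(a) already established, $M$ has finite length. Choose a surjection $K^{\oplus \ulambda} \to M$ with $\ulambda$ a finite tuple; the kernel, being a submodule of the finite length module $K^{\oplus \ulambda}$, is again finitely generated, so we obtain a finite presentation
\begin{displaymath}
K^{\oplus \umu} \to K^{\oplus \ulambda} \to M \to 0.
\end{displaymath}
Apply $\Hom_K(-, N)$ to get a left exact sequence, then tensor with $k'$ (flat over $k$) to preserve left exactness. On the other hand, applying $k' \otimes_k(-)$ to the presentation yields a presentation $K'^{\oplus \umu} \to K'^{\oplus \ulambda} \to k' \otimes_k M \to 0$, to which we apply $\Hom_{K'}(-, k' \otimes_k N)$. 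The natural maps assemble into a commutative diagram of left exact sequences, and Step~1 shows the maps are isomorphisms on the two ``free'' terms. The five lemma (or rather the left exact version) finishes the proof.

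\textbf{Main obstacle.} The only nontrivial ingredient is Step~2, the identity $(k' \otimes_k N)^{\pol} = k' \otimes_k N^{\pol}$. Everything else is formal tensor-hom manipulation and flatness of $k'/k$. Fortunately, the fact that $\GL$ acts trivially on $k'$ makes the projection trick via dual functionals routine, so no real difficulty arises.
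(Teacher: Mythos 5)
Your proof is correct, but it takes a genuinely different and noticeably longer route than the paper's. The paper's argument is purely formal: by the extension/restriction of scalars adjunction, $\Hom_{K'}(k' \otimes_k M, k' \otimes_k N) = \Hom_K(M, k' \otimes_k N)$; then, choosing a $k$-basis $\{e_i\}_{i \in I}$ of $k'$, the natural map $k' \otimes_k \Hom_K(M,N) \to \Hom_K(M, k' \otimes_k N)$ is identified with $\Hom_K(M,N)^{\oplus I} \to \Hom_K(M, N^{\oplus I})$, which is an isomorphism because $M$ is finitely generated, so $\Hom_K(M,-)$ commutes with direct sums. This sidesteps everything you do: no finite presentation, no appeal to Theorem~\ref{thm:gen-struc}(a), no analysis of $(-)^{\pol}$ under base change, and no five lemma. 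Your Steps 1--3 are nonetheless sound: the dual-functional argument for $(k' \otimes_k N)^{\pol} = k' \otimes_k N^{\pol}$ works precisely because $\GL$ acts trivially on $k'$, and Theorem~\ref{thm:gen-struc}(a) is indeed proved before this lemma, so the kernel of $K^{\oplus \ulambda} \to M$ is finitely generated and your presentation exists. What your approach buys is the explicit base-change compatibility of $(-)^{\pol}$, which has some independent interest; what the paper's approach buys is brevity and independence from the finite-length theorem. If you keep your version, note only that the "five lemma" step is really the four lemma for left-exact rows, and that exactness of $k' \otimes_k (-)$ is automatic since $k'$ is a field extension.
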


\begin{proof}
By adjunction, we have
\begin{displaymath}
\Hom_{K'}(k' \otimes_k M, k' \otimes_k N) = \Hom_K(M, k' \otimes_k N).
\end{displaymath}
Now, for any $k$-vector space $E$, we have a natural map
\begin{displaymath}
E \otimes_k \Hom_K(M,N) \to \Hom_K(M, E \otimes_k N).
\end{displaymath}
Picking a $k$-basis $\{e_i\}_{i \in I}$ for $E$, we find that the above map is isomorphic to the map
\begin{displaymath}
\Hom_K(M,N)^{\oplus I} \to \Hom_K(M, N^{\oplus I}).
\end{displaymath}
This map is an isomorphism since $M$ is finitely generated. Applying this with $E=k'$ gives the result.
\end{proof}

\begin{proof}[Proof of Theorem~\ref{thm:gen-struc}(b)]
Let $k'$ be an algebraic closure of $k$, and let $K'=k' \otimes_k K$. Let $M$ and $N$ be finitely generated $K$-modules. By Lemma~\ref{lem:struc-7}, the map
\begin{displaymath}
k' \otimes_k \Hom_K(M,N) \to \Hom_{K'}(k' \otimes_k M, k' \otimes_k N)
\end{displaymath}
is an isomorphism. As $k' \otimes_k M$ and $k' \otimes_k N$ are finitely generated $K'$-modules, it follows from Lemma~\ref{lem:struc-5} that the right side above is a finite dimensional $k'$-vector space. It thus follows that $\Hom_K(M,N)$ is a finite dimensional $k$-vector space, which completes the proof.
\end{proof}

\section{Brauer categories, Weyl's construction, universal properties} \label{s:brauer}

The purpose of this section is to describe $\Mod_K$, when $K$ is a rational $\GL$-field, in terms of a combinatorial category, the upwards $\usigma$-Brauer category $\fU(\usigma)$. We begin in \S \ref{ss:repcat} by reviewing generalities on representations of categories. We introduce $\fU(\usigma)$ in \S \ref{ss:brauer}. The main equivalences are established in \S \ref{ss:equiv}. Finally, in \S \ref{ss:weyl} and \S \ref{ss:universal}, we give applications of these equivalences: we establish a version of Weyl's traceless tensor construction for $\Mod_K$, and give a universal property for $\Mod_K$.

\subsection{Representations of categories} \label{ss:repcat}

We now review a bit of material on representations of categories. See \cite[\S 3]{brauercat1} for more detail.

Let $\fG$ be an essentially small $k$-linear category. A \defn{representation of $\fG$}, or a \defn{$\fG$-module}, is a functor $\fG \to \Vec_k$, and a \defn{map} of $\fG$-modules is a natural transformation. We let $\Mod_{\fG}$ be the category of $\fG$-modules. For $\fG$-modules $M$ and $N$, we write $\Hom_{\fG}(M,N)$ for the set of maps of $\fG$-modules $M \to N$.

Let $x$ be an object of $\fG$. We define the \defn{principal projective} $\fG$-module at $x$, denoted $\bP_x$, by $\bP_x(y)=\Hom_{\fG}(x,y)$. If $M$ is an arbitary $\fG$-module then we have an identification
\begin{displaymath}
\Hom_{\fG}(\bP_x, M) = M(x)
\end{displaymath}
by Yoneda's lemma, which shows that $\bP_x$ is projective. The above identity also shows that $M$ can be realized as a quotient of a direct sum of principal projectives.

We similarly define the \defn{principal injective} $\fG$-module at $x$, denoted $\bI_x$, by $\bI_x(y)=\Hom_{\fG}(y,x)^*$. If $M$ is an arbitary $\fG$-module then we have an identification
\begin{displaymath}
\Hom_{\fG}(M, \bI_x) = M(x)^*
\end{displaymath}
(see \cite[Proposition~3.2]{brauercat1}), which shows that $\bI_x$ is injective.

\begin{proposition} \label{prop:inj-proj-eq}
Suppose that the $\Hom$ sets in $\fG$ are finite dimensional. Then the following categories are equivalent:
\begin{enumerate}
\item The category $\fG^{\op}$.
\item The full subcategory of $\Mod_{\fG}$ spanned by the principal projectives.
\item The full subcategory of $\Mod_{\fG}$ spanned by the principal injectives.
\end{enumerate}
\end{proposition}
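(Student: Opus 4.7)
The plan is to produce two fully faithful functors from $\fG^{\op}$ into $\Mod_{\fG}$, one whose essential image consists of the principal projectives and one whose essential image consists of the principal injectives. Each of these will immediately yield the desired equivalence with (b) or (c) respectively, and the three equivalences chain together.

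For the equivalence (a)$\simeq$(b), I would define $\Phi^{\proj}\colon \fG^{\op}\to\Mod_{\fG}$ on objects by $x\mapsto \bP_x$, and on a morphism $f\colon x'\to x$ in $\fG$ (i.e.\ a morphism $x\to x'$ in $\fG^{\op}$) by precomposition, yielding a natural transformation $\bP_x\to\bP_{x'}$. That $\Phi^{\proj}$ is fully faithful is pure Yoneda: by the identity $\Hom_{\fG}(\bP_x,M)=M(x)$ recalled in the text, applied to $M=\bP_{x'}$, one gets $\Hom_{\fG}(\bP_x,\bP_{x'})=\bP_{x'}(x)=\Hom_{\fG}(x',x)=\Hom_{\fG^{\op}}(x,x')$, and one checks that this bijection is the one induced by $\Phi^{\proj}$. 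The essential image is by definition the full subcategory of principal projectives, so this gives (a)$\simeq$(b). No finite-dimensionality hypothesis is needed here.

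For (a)$\simeq$(c), I would define $\Phi^{\inj}\colon\fG^{\op}\to\Mod_{\fG}$ on objects by $x\mapsto\bI_x$. Note that $\bI_x(y)=\Hom_{\fG}(y,x)^*$ is contravariant in $x$ via the dual of post-composition, so $\Phi^{\inj}$ is indeed defined on $\fG^{\op}$. To show it is fully faithful, I apply the displayed adjunction-type identity $\Hom_{\fG}(M,\bI_{x'})=M(x')^*$ with $M=\bI_x$, obtaining
\[
\Hom_{\fG}(\bI_x,\bI_{x'}) \;=\; \bI_x(x')^{*} \;=\; \bigl(\Hom_{\fG}(x',x)^{*}\bigr)^{*}.
\]
This is the main obstacle, and it is where the finite-dimensionality assumption on the $\Hom$ sets in $\fG$ enters: it lets me identify the double dual $\Hom_{\fG}(x',x)^{**}$ canonically with $\Hom_{\fG}(x',x)=\Hom_{\fG^{\op}}(x,x')$. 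A short check (tracing an element of $\Hom_{\fG}(x',x)$ through the two definitions) confirms that this identification agrees with the map induced by $\Phi^{\inj}$. Hence $\Phi^{\inj}$ is fully faithful with essential image the full subcategory of principal injectives, establishing (a)$\simeq$(c).

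Combining the two equivalences, we obtain a zig-zag of equivalences (b)$\simeq$(a)$\simeq$(c), which proves the proposition. The only nontrivial ingredient is the biduality step in the second equivalence, and every other step is a direct application of the Yoneda-style identities $\Hom_{\fG}(\bP_x,M)=M(x)$ and $\Hom_{\fG}(M,\bI_x)=M(x)^*$ already recorded in the text.
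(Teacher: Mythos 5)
Your proposal is correct and follows essentially the same route as the paper: the functor $x\mapsto\bP_x$ is fully faithful by Yoneda, and the functor $x\mapsto\bI_x$ is handled via $\Hom_{\fG}(\bI_x,\bI_{x'})=\bI_x(x')^*=\Hom_{\fG}(x',x)^{**}$, with the finite-dimensionality hypothesis used exactly where you use it, to collapse the double dual. Nothing is missing.
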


\begin{proof}
Let $\cC$ be the category in (b). We have a functor $\fG^{\op} \to \cC$ given by $x \mapsto \bP_x$. It is obviously essentially surjective, and is fully faithful by Yoneda's lemma. Similarly, let $\cC'$ be the category in (c). Then we have a functor $\fG^{\op} \to \cC'$ given by $x \mapsto \bI_x$. We have
\begin{displaymath}
\Hom_{\fG}(\bI_x, \bI_y)=\bI_x(y)^* = \Hom_{\fG}(y,x)^{**} = \Hom_{\fG}(y,x) = \Hom_{\fG^{\op}}(x,y).
\end{displaymath}
One easily sees that this identification is induced by the functor under consideration, which shows that it is fully faithful.
\end{proof}

Suppose now that the isomorphism classes of $\fG$ are in bijection with the set $\bN$ of natural numbers; for $n \in \bN$, we let $[n]$ be a representative of the $n$th isomorphism class. We say that $\fG$ is \emph{upwards} if $\Hom_{\fG}([n], [m])\ne 0$ implies $n \le m$.

\begin{proposition} \label{prop:prin-inj}
Suppose $\fG$ is upwards and all $\Hom$ sets are finite dimensional. Then the principal injectives are of finite length, and every finite length $\fG$-module embeds into a finite sum of principal injectives.
\end{proposition}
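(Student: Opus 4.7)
The plan is to handle the two assertions separately, but both rest on the observation that the ``upwards'' hypothesis plus finite-dimensionality of $\Hom$-spaces lets us control the total $k$-dimension of certain $\fG$-modules, and the embedding will come from the adjunction $\Hom_{\fG}(M, \bI_x) = M(x)^*$ already recorded.

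For the first claim, I would note that because $\fG$ is upwards, $\bI_{[m]}([n]) = \Hom_{\fG}([n],[m])^* = 0$ whenever $n > m$, and it is finite dimensional for $n \le m$ by hypothesis. Thus $\bI_{[m]}$ has finite total $k$-dimension $\sum_n \dim \bI_{[m]}([n])$. Any strictly ascending or descending chain of sub-$\fG$-modules must yield a strictly monotone chain of $k$-subspaces of this finite-dimensional total, so such chains have bounded length and $\bI_{[m]}$ is of finite length.

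For the second claim, the first step is to show that any finite length $M$ has finite total $k$-dimension. This splits into two parts. \emph{Support is finite:} using that $\fG$ is upwards, the truncation $M^{\ge m}$ defined by $M^{\ge m}([n]) = M([n])$ for $n \ge m$ and zero otherwise is a sub-$\fG$-module, because a morphism $[n] \to [n']$ only exists when $n \le n'$, making the required compatibility vacuous. If the support of $M$ were infinite, say $M([n_i]) \ne 0$ with $n_1 < n_2 < \cdots$, then $M^{\ge n_1} \supsetneq M^{\ge n_2} \supsetneq \cdots$ would contradict DCC. \emph{Each $M([n])$ is finite dimensional:} setting $E = \End_{\fG}([n])$, any $E$-submodule $V \subseteq M([n])$ extends to a sub-$\fG$-module $N_V$ of $M$ with $N_V([n]) = V$, $N_V([m]) = \sum_{f \colon [n]\to[m]} f \cdot V$ for $m > n$, and $0$ for $m < n$ (again the upwards hypothesis makes this assignment compatible with all morphisms). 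Since $E$ is finite dimensional over $k$, every cyclic $E$-module is finite dimensional; so if $M([n])$ were infinite dimensional it would carry an infinite strictly ascending chain of $E$-submodules (choose $v_i$ outside the $E$-span of $v_1,\ldots,v_{i-1}$), which would lift to a strictly ascending chain of $N_V$'s, contradicting ACC.

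Given this total dimension bound, the embedding is constructed by assembling, for each of the finitely many $n$ in the support of $M$, the canonical map $\eta_{[n]} \colon M \to \bI_{[n]} \otimes_k M([n])$ adjoint to $\id_{M([n])}$. Unwinding the adjunction, $\eta_{[n]}$ at an object $[m]$ sends $v \in M([m])$ to the functional $f \mapsto f \cdot v$ on $\Hom_{\fG}([m],[n])$; at $m = n$, evaluation at $\id_{[n]}$ recovers $v$, so $\eta_{[n]}$ is injective on $M([n])$. Hence the combined map $M \to \bigoplus_n \bI_{[n]} \otimes_k M([n])$ is injective, and since each $M([n])$ is finite dimensional the target is a finite direct sum of copies of principal injectives. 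The main obstacle is the finite-dimensionality of each $M([n])$: support finiteness is immediate from the upwards hypothesis, but bounding the fiber dimension requires building sub-$\fG$-modules from $E$-submodules, and uses finite-dimensionality of the endomorphism algebra in an essential way.
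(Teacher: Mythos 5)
Your proof is correct and follows essentially the same route as the paper: both arguments rest on the truncation submodules $\tau_{\ge n}(M)$ being $\fG$-submodules and on the characterization of finite length modules as those with finite support and finite-dimensional fibers, which you prove in detail (via the submodules $N_V$ built from $\End_{\fG}([n])$-submodules) where the paper says ``one easily verifies.'' The only divergence is at the end: the paper embeds a finite length module by first identifying the simples (supported in a single degree, with fiber simple over $\End_{\fG}([n])$) and embedding those into $\bI_n$, whereas you assemble the embedding directly from the unit maps $M \to \bI_{[n]} \otimes_k M([n])$ of the adjunction, checked to be injective by evaluating at the identity --- a clean shortcut that bypasses the classification of simple objects.
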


\begin{proof}
Let $M$ be a $\fG$-module, and write $M_n$ in place of $M([n])$. Define the \emph{support} of $M$ to be the set of natural numbers $n$ for which $M_n \ne 0$. Define the $n$th \emph{truncation} of $M$, denoted $\tau_{\ge n}(M)$, to be the $\fG$-module given by
\begin{displaymath}
\tau_{\ge n}(M)_m = \begin{cases}
M_m & \text{if $m \ge n$} \\
0 & \text{if $m<n$} \end{cases};
\end{displaymath}
one easily sees that this is a $\fG$-submodule of $M$ since $\fG$ is upwards. From the above structure, one easily verifies the following two statements:
\begin{enumerate}
\item A $\fG$-module $M$ is simple if and only if it is supported in a single degree $n$ and $M_n$ is a simple module over the ring $\End_{\fG}([n])$.
\item A $\fG$-module $M$ has finite length if and only if it has finite support and $M_n$ is finite dimensional for all $n$.
\end{enumerate}
It follows from (b) that the principal injective $\bI_n$ is of finite length. It follows from (a) that if $M$ is a simple supported in degree $n$ then $M$ embeds into $\bI_n$. One now easily sees that any finite length objects embeds into a sum of $\bI_n$'s.
\end{proof}

\subsection{A variant of the Brauer category} \label{ss:brauer}

The upwards and downwards Brauer categories were introduced in \cite[\S 4.2.5]{infrank} as a means to describe the category of algebraic representations of the infinite orthogonal group. We now introduce a generalization that will similarly allow us to describe the category of $K$-modules.

For a partition $\lambda$ of $n$, recall that $S^{\lambda}$ is the irreducible representation of $\fS_n$ associated to $\lambda$ (the Specht module). For a finite set $A$ of cardinality $n$, we let $S^{\lambda}_A$ be the associated representation of $\Aut(A) \cong \fS_n$. One can define this in a canonical manner by mimicking the construction of $S^{\lambda}$, but using elements of $A$ in place of the integers $1, \ldots, n$.

Fix a pure tuple $\usigma=[\sigma_1, \ldots, \sigma_r]$. A \defn{$\usigma$-block} on a set $S$ is a triple $(A, p, x)$ where
\begin{itemize}
\item $p$ is an element of $[r]$,
\item $A$ is a subset of $S$ of cardinality $\sigma_p$ (called the \defn{support} of the block),
\item $x$ is an element of the Specht module $S^{\sigma_p}_A$.
\end{itemize}
Let $S$ and $T$ be a finite sets. A \defn{downwards $\usigma$-diagram} from $S$ to $T$ is a pair $(i, \Gamma)$ where $\Gamma$ is a collection of $\usigma$-blocks on $S$ with disjoint supports and $i \colon S \setminus \vert \Gamma \vert \to T$ is a bijection, where $\vert \Gamma \vert$ is the union of the supports of the blocks in $\Gamma$. The \defn{space of downwards $\usigma$-diagrams} is the vector space spanned by elements $[i,\Gamma]$, with $(i,\Gamma)$ an downwards $\usigma$-diagram, with the following relation imposed:
\begin{itemize}
\item Suppose that $\Gamma$ contains a block $(A, p, x)$, and let $x=\alpha y+\beta z$ be a linear combination in the Specht module. Let $\Gamma'$ be the diagram obtained by replacing this block with $(A, p, y)$, and let $\Gamma'$ be defined similarly but using $z$. Then $[i,\Gamma]=\alpha [i,\Gamma']+\beta [i,\Gamma'']$.
\end{itemize}
We now come to the main definition:

\begin{definition}
The \defn{downwards $\usigma$-Brauer category}, denoted $\fD(\usigma)$, is the $k$-linear category described as follows.
\begin{itemize}
\item The objects of $\fD(\usigma)$ are finite sets.
\item Given finite sets $S$ and $T$, the space of morphisms $\Hom_{\fD(\usigma)}(S, T)$ is the space of downwards $\usigma$-diagrams from $S$ to $T$.
\item Composition is defined as follows. Let $(i, \Gamma)$ be a diagram from $S$ to $T$, and let $(i', \Gamma')$ be a diagram from $T$ to $U$. Let $j=i' \circ i$ and let $\Delta=\Gamma \sqcup i^{-1}(\Gamma')$, where $i^{-1}(\Gamma')$ denotes the result of transporting $\Gamma'$ along the bijection $i^{-1} \colon \vert \Gamma' \vert \to i^{-1}(\vert \Gamma' \vert)$. Then $[i', \Gamma'] \circ [i, \Gamma]=[j, \Delta]$. \qedhere
\end{itemize}
\end{definition}

\begin{example}
If $\usigma=[(2)]$ then $\fD(\usigma)$ is the downwards Brauer category from \cite[\S 4.2.5]{infrank}. Similarly, if $\sigma=[(1,1)]$ then $\fD(\usigma)$ is the signed downwards Brauer category, also discussed in \cite[\S 4.2.11]{infrank}.
\end{example}

\begin{example}
Suppose that $\usigma=[\sigma_1, \ldots, \sigma_d]$ where $\sigma_i=(1)$ for all $i$. Then a downwards $\usigma$-diagram from $S$ to $T$ is an injection $f \colon T \to S$ together with a $d$-coloring on $S \setminus \im(f)$. We thus see that $\fD(\usigma)$ is the opposite of the category $\FI_d$ introducing in \cite[\S 7]{catgb}. (The category $\FI_1$ is just the category $\FI$ of finite sets and injections, as in \cite{fimodule}.)
\end{example}

The category $\fD(\usigma)$ carries a natural symmetric monoidal structure $\amalg$ given by disjoint union. Precisely, for two objects $S$ and $T$, the object $S \amalg T$ is simply the disjoint union of the sets $S$ and $T$. Given two morphisms $[i, \Gamma] \colon S \to T$ and $[i', \Gamma'] \colon S' \to T'$, the morphism $[i, \Gamma] \amalg [i', \Gamma']$ is defined to be $[i \amalg i', \Gamma \amalg \Gamma']$. Note that $\amalg$ is a $k$-linear functor in each of its arguments.

The category $\fD(\usigma)$ admits a universal property, which we now describe. Let $\cC$ be an $k$-linear symmetric monoidal category. Let $T_{\usigma}(\cC)$ be the category whose objects are pairs $(X, \omega)$, where $X$ is an object of $\cC$ and $\omega \colon \bS_{\usigma}(X) \to \bone$ is a morphism in the Karoubian--additive envelope of $\cC$, where $\bone$ is the unit object of $\cC$; of course, if $\cC$ is additive and Karoubian (e.g., abelian) then one does not need to take the envelope here. Morphisms in $T_{\usigma}(\cC)$ are defined in the obvious manner.

\begin{proposition} \label{prop:Duniv}
Notation as above, we have a natural equivalence of categories
\begin{displaymath}
\Phi \colon \Fun^{\otimes}_k(\fD(\usigma), \cC) \to T_{\usigma}(\cC).
\end{displaymath}
Here $\Fun^{\otimes}_k(-, -)$ denotes the category of symmetric monoidal $k$-linear functors.
\end{proposition}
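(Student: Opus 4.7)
The plan is to establish the universal property by constructing an explicit quasi-inverse $\Psi$ to $\Phi$ and checking the two are mutually inverse, the whole argument reducing to Schur--Weyl duality applied in the Karoubian-additive envelope of $\cC$.

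First, I describe $\Phi$ carefully. Given a symmetric monoidal $k$-linear functor $F \colon \fD(\usigma) \to \cC$, set $X = F([1])$, where $[1]$ is a singleton. Monoidality gives canonical identifications $F(S) \cong X^{\otimes S}$ for every finite $S$, and every bijection in $\fD(\usigma)$ is sent to the corresponding symmetry isomorphism. For a fixed $p \in [r]$ and each $x \in S^{\sigma_p}_{[\sigma_p]}$, the single-block morphism $[\mathrm{id},\{([\sigma_p],p,x)\}] \colon [\sigma_p] \to \emptyset$ maps under $F$ to a morphism $X^{\otimes \sigma_p} \to \bone$. By the Specht-linearity relation in $\fD(\usigma)$, this assembles into a $k$-linear map $\alpha_p \colon S^{\sigma_p}_{[\sigma_p]} \to \Hom_{\cC}(X^{\otimes \sigma_p}, \bone)$; monoidal compatibility with permutations of the underlying set makes $\alpha_p$ equivariant for $\fS_{\sigma_p}$. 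Via Schur--Weyl duality inside the Karoubian-additive envelope of $\cC$, such an equivariant map corresponds to a morphism $\omega_p \colon \bS_{\sigma_p}(X) \to \bone$, and $\omega := \bigoplus_p \omega_p$ yields the desired pair $(X,\omega) \in T_{\usigma}(\cC)$.

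Second, I construct $\Psi$. Given $(X,\omega)$, define $\Psi(X,\omega)(S) := X^{\otimes S}$ on objects. On a morphism $[i,\Gamma] \colon S \to T$ in $\fD(\usigma)$, the image is built as the tensor product of (i) the symmetry isomorphism $X^{\otimes (S \setminus |\Gamma|)} \to X^{\otimes T}$ induced by $i$, and (ii) for each block $(A,p,x) \in \Gamma$, the morphism $X^{\otimes A} \to \bone$ obtained by applying to $x$ the canonical $\fS_A$-equivariant embedding $S^{\sigma_p}_A \hookrightarrow \Hom_{\cC}(X^{\otimes A}, \bS_{\sigma_p}(X))$ coming from Schur--Weyl (inside the Karoubian-additive envelope) and post-composing with $\omega_p$. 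The Specht-linearity relation in $\fD(\usigma)$ is automatically respected since $x \mapsto (x \text{ applied})$ is itself $k$-linear.

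Third, I verify functoriality and symmetric monoidality of $\Psi(X,\omega)$. The composition rule in $\fD(\usigma)$ amounts to composing bijections and taking a disjoint union of blocks (with the second diagram's blocks transported along $i^{-1}$); monoidality of the construction of the image ensures the two sides of the composition law agree, once one checks that the Schur--Weyl identifications used for blocks are natural with respect to bijections of their supports. Symmetric monoidality of $\Psi(X,\omega)$ is built in, since both objects and morphism-images are defined purely in terms of disjoint-union/tensor-product operations on indexing sets.

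Finally, $\Phi \circ \Psi = \mathrm{id}$ is tautological because the block morphisms in $\Psi(X,\omega)$ were constructed from $\omega$, and the recipe for $\Phi$ reads them back off. For $\Psi \circ \Phi \cong \mathrm{id}$, observe that any symmetric monoidal $k$-linear functor out of $\fD(\usigma)$ is determined, up to natural isomorphism, by its value on $[1]$ together with the equivariant maps $\alpha_p$ attached to single blocks $[\sigma_p] \to \emptyset$, because every morphism of $\fD(\usigma)$ factors as a symmetry followed by a tensor product of single-block annihilations. The main obstacle in the whole argument is the bookkeeping in the composition check: one must verify that pulling back a block along a bijection $i^{-1}$ commutes with the Schur--Weyl identification $S^{\sigma_p}_A \hookrightarrow \Hom(X^{\otimes A}, \bS_{\sigma_p}(X))$. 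This is a naturality statement for the Specht modules $S^{\lambda}_A$ and the Schur functors $\bS_{\lambda}$ under bijections $A \to A'$, which is exactly why $S^{\lambda}_A$ was defined canonically from $A$ rather than via a choice of ordering.
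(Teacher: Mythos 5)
Your proposal is correct and follows essentially the same route as the paper: extract $\omega_p$ from the single-block morphisms $[\,|\sigma_p|\,]\to[0]$ via the Specht-linearity relation and Schur--Weyl, and build the quasi-inverse by sending a diagram $(i,\Gamma)$ to the tensor product of the bijection-induced symmetry with the block morphisms obtained from the canonical identification $\Hom_{\cC}(X^{\otimes A},\bone)\cong\bigoplus_{\mu}S^{\mu}_A\otimes\Hom_{\cC}(\bS_{\mu}(X),\bone)$. Your explicit remarks on the naturality of $S^\lambda_A$ under bijections of $A$ and on the factorization of every morphism as a symmetry followed by single-block annihilations fill in exactly the "one easily verifies" steps of the paper's argument.
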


\begin{proof}
We first define the functor $\Phi$. Thus suppose given a symmetric monoidal $k$-linear functor $\theta \colon \fD(\usigma) \to \cC$. Let $X=\theta([1])$. We define a map $\omega \colon \bS_{\usigma}(X) \to \bone$. It suffices to define maps $\omega_p \colon \bS_{\sigma_p}(X) \to \bone$ for each $p \in [r]$. Thus fix such $p$. Put $n_p=\vert \sigma_p \vert$. Since $\theta$ is a symmetric monoidal functor, it induces an $\fS_{n_p}$-equivariant map
\begin{displaymath}
\Hom_{\fD(\usigma)}([1]^{\otimes n_p}, [0]) \to \Hom_{\cC}(X^{\otimes n_p}, \bone).
\end{displaymath}
Now, $[1]^{\otimes n_p}=[n_p]$. Inside of $\Hom_{\fD(\usigma)}([n_p], [0])$ one has the subspace spanned by diagrams that consist of a single block of type $p$. This subspace is isomorphic to $S^{\sigma_p}$ as a representation of $\fS_{n_p}$. We thus obtain a canonical $\fS_{n_p}$-equivariant map $S^{\sigma_p} \to \Hom_{\cC}(X^{\otimes n_p}, \bone)$, which yields a map $\bS_{\sigma_p}(X) \to \bone$, as required. We have thus defined $\omega$. We define $\Phi$ on objects by $\Phi(\theta)=(X,\omega)$. The definition on morphisms is clear.

To show that $\Phi$ is an equivalence, we construct a quasi-inverse functor
\begin{displaymath}
\Psi \colon T_{\usigma}(\cC) \to \Fun^{\otimes}_k(\fD(\usigma), \cC).
\end{displaymath}
Thus let $(X, \omega)$ in $T_{\usigma}(\cC)$ be given. We define a symmetric monoidal $k$-linear functor $\theta \colon \fD(\usigma) \to \cC$. On objects, we define $\theta$ by $\theta(S)=X^{\otimes S}$. Now, consider a $\usigma$-block $(A, p, x)$. We have
\begin{displaymath}
\Hom_{\cC}(X^{\otimes A}, \bone) = \bigoplus_{\mu \vdash n_p} S^{\mu}_A \otimes \Hom_{\cC}(\bS_{\mu}(X), \bone).
\end{displaymath}
The $\mu=\sigma_p$ summand on the right side contains the element $x \otimes \omega_p$. We say that the corresponding morphism $X^{\otimes A} \to \bone$ is \defn{associated} to this block. Note that this construction is linear in the element $x$. Now, consider a morphism $f \colon S \to T$ in $\fD(\usigma)$ represented by a diagram. Suppose this diagram corresponds to a pair $(i, \Gamma)$, where $\Gamma$ is a collection of disjoint blocks on $S$ and $i \colon S \setminus \vert \Gamma \vert \to T$ is a bijection. We define a morphism $\theta(f) \colon X^{\otimes S} \to X^{\otimes T}$ as follows. Write $X^{\otimes S}=X^{\otimes \vert \Gamma \vert} \otimes X^{\otimes S \setminus \vert \Gamma \vert}$. We have a map $X^{\otimes \vert \Gamma \vert} \to \bone$ by tensoring together the maps associated to individual blocks. We also have a map $X^{\otimes S \setminus \vert \Gamma \vert} \to X^{\otimes T}$ from the bijection $i$. The map $\theta(f)$ is the tensor product of these two maps. The construction $\theta$ extends to a $k$-linear map
\begin{displaymath}
\theta \colon \Hom_{\fD(\usigma)}(S, T) \to \Hom_{\cC}(X^{\otimes S}, X^{\otimes T}).
\end{displaymath}
One easily verifies that $\theta$ is compatible with composition and is naturally a symmetric monoidal functor. We define $\Psi$ on objects by $\Psi(X,\omega)=\theta$. The definition of $\Psi$ on morphisms is clear.

One easily verifies that $\Phi$ and $\Psi$ are naturally quasi-inverse. This completes the proof.
\end{proof}

There is also an \defn{upwards $\usigma$-Brauer category} $\fU(\usigma)$, defined in the same manner, but where now blocks are only allowed on the target of a morphism. In other words, $\fU(\usigma)$ is simply the opposite category of $\fD(\usigma)$. The category $\fU(\usigma)$ admits a natural symmetric monoidal structure, and has a similar universal property to $\fD(\usigma)$.

\subsection{Equivalences} \label{ss:equiv}

We now establish a number of equivalences between categories associated to $R$, $K$, and the $\usigma$-Brauer categories..

\begin{proposition} \label{prop:equivs}
The following symmetric monoidal $k$-linear categories are equivalent:
\begin{enumerate}
\item The downwards $\usigma$-Brauer category $\fD(\usigma)$.
\item The full subcategory of $\Mod_R$ spanned by the objects $R \otimes \bV^{\otimes n}$ for $n \ge 0$.
\item The full subcategory of $\Mod_K$ spanned by the objects $K \otimes \bV^{\otimes n}$ for $n \ge 0$.
\end{enumerate}
As $k$-linear categories (ignoring the monoidal structure), these categories are also equivalent to
\begin{enumerate}[resume]
\item The full subcategory of $\Mod_{\fU(\usigma)}$ spanned by the principal projective objects.
\item The full subcategory of $\Mod_{\fU(\usigma)}$ spanned by the principal injective objects.
\end{enumerate}
\end{proposition}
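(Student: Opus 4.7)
The plan is to handle the block $\{$(a), (b), (c)$\}$ first via a direct Hom calculation, and then obtain (d) and (e) from (a) by the formalism of \S\ref{ss:repcat}.

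Paragraph 1 (the formal reductions). The equivalences (a) $\cong$ (d) $\cong$ (e), as $k$-linear categories, are immediate from Proposition~\ref{prop:inj-proj-eq} applied to $\fG = \fU(\usigma)$, together with the identification $\fU(\usigma)^{\op} = \fD(\usigma)$. The hypothesis on $\fG$ is satisfied: a morphism $[n] \to [m]$ in $\fU(\usigma)$ is a downwards $\usigma$-diagram from $[m]$ to $[n]$, which is specified by a partial bijection plus finitely many vectors in Specht modules attached to the blocks, all finite-dimensional data.

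Paragraph 2 (constructing the comparison functor $\fD(\usigma) \to \Mod_R$). I invoke the universal property (Proposition~\ref{prop:Duniv}) with $\cC = \Mod_R$. Take $X = R \otimes \bV$; since $R = \Sym(k^{\oplus \usigma})$ contains the degree-one piece $\bS_{\sigma_p}(\bV)$, there is a canonical inclusion $\bS_{\sigma_p}(\bV) \hookrightarrow R$ in $\Rep^{\pol}(\GL)$, which by the free-forgetful adjunction corresponds to an $R$-linear map $\bS_{\sigma_p}(R \otimes \bV) = R \otimes \bS_{\sigma_p}(\bV) \to R$. Bundling these over $p$ gives $\omega \colon \bS_{\usigma}(X) \to \bone$ in $\Mod_R$ (the unit object being $R$). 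Proposition~\ref{prop:Duniv} produces a symmetric monoidal $k$-linear functor $F \colon \fD(\usigma) \to \Mod_R$ with $F([n]) = (R \otimes \bV)^{\otimes_R n} = R \otimes \bV^{\otimes n}$, which is essentially surjective onto the subcategory (b) by definition.

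Paragraph 3 (full faithfulness of $F$, the main step). The free-forgetful adjunction gives
\begin{displaymath}
\Hom_{\Mod_R}(R \otimes \bV^{\otimes n}, R \otimes \bV^{\otimes m}) = \Hom_{\GL}(\bV^{\otimes n}, R \otimes \bV^{\otimes m}),
\end{displaymath}
and Schur--Weyl duality identifies the latter with the $1^n$ weight space $(R \otimes \bV^{\otimes m})_{1^n}$ as an $\fS_n$-representation. A weight vector decomposes according to a disjoint partition $[n] = A \sqcup B$ with $\vert B \vert = m$: the $\bV^{\otimes m}$-factor contributes a bijection $B \to [m]$, and $R = \bigotimes_p \Sym(\bS_{\sigma_p}(\bV))$ contributes a product of weight vectors in distinct copies of $\bS_{\sigma_p}(\bV)$ whose supports partition $A$. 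A second application of Schur--Weyl identifies the $1^C$ weight space of $\bS_{\sigma_p}(\bV)$ with the Specht module $S^{\sigma_p}_C$. This yields a canonical basis of $(R \otimes \bV^{\otimes m})_{1^n}$ in exact bijection with the spanning set defining $\Hom_{\fD(\usigma)}([n],[m])$, and the linear relations on diagrams (linearity in the Specht-module entry of a block) correspond to linearity in the weight-vector entry. One then checks that $F$ sends each diagram to the prescribed element: by linearity and multiplicativity it suffices to verify this for a single-block diagram, and here the universal-property construction in Proposition~\ref{prop:Duniv} unwinds precisely to plugging the Specht-module vector into $\omega_p$, giving the same element of the weight space. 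This is the main technical hurdle: matching bases, $\fS_n$-actions, and relations simultaneously.

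Paragraph 4 (passing to $\Mod_K$). For (b) $\cong$ (c), I use the base-change functor $K \otimes_R (-) \colon \Mod_R \to \Mod_K$, which is symmetric monoidal and sends $R \otimes \bV^{\otimes n}$ to $K \otimes \bV^{\otimes n}$, so is essentially surjective onto (c). For full faithfulness, the $K$-module adjunction gives
\begin{displaymath}
\Hom_{\Mod_K}(K \otimes \bV^{\otimes n}, N) = \Hom_{\GL}(\bV^{\otimes n}, N^{\pol})
\end{displaymath}
for any $K$-module $N$, since any $\GL$-equivariant map from a polynomial representation lands in $N^{\pol}$. Taking $N = K \otimes \bV^{\otimes m}$ and invoking Proposition~\ref{prop:sat} (which gives $(K \otimes \bV^{\otimes m})^{\pol} = R \otimes \bV^{\otimes m}$), we recover exactly the Hom space computed in Paragraph 3, completing the equivalence. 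Throughout, the combinatorial weight-space calculation is the content-bearing step; everything else is formal adjunction or direct application of previously established results.
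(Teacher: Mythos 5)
Your proposal is correct and follows essentially the same route as the paper: the universal property of $\fD(\usigma)$ applied to $X = R\otimes\bV$ with the form coming from the inclusion $\bS_{\usigma}(\bV)\subset R$, full faithfulness via Schur--Weyl and the explicit monomial basis of the $1^n$ weight space of $R\otimes\bV^{\otimes m}$, base change plus Proposition~\ref{prop:sat} for (b)$\cong$(c), and Proposition~\ref{prop:inj-proj-eq} for (d) and (e). No gaps; your Paragraph 4 merely makes the adjunction underlying the paper's appeal to Proposition~\ref{prop:sat} slightly more explicit.
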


\begin{proof}
We break the proof into three steps.

\textit{Step 1: equivalence of (a) and (b).} Let $\cC$ be the category in (b) and let $X=R \otimes \bV$. Then $\bS_{\usigma}(X)=R \otimes \bS_{\usigma}(\bV)$, where on the left side $\bS_{\usigma}$ is formed with respect to $\otimes_R$. Since $R$ contains $\bS_{\usigma}(\bV)$ as a subrepresentation, there is a natural map of $R$-modules $R \otimes \bS_{\usigma}(\bV) \to R$. We thus have a natural map $\omega \colon \bS_{\usigma}(X) \to R$. Since $R$ is the unit object for $\otimes_R$, the universal property of $\fD(\usigma)$ (Proposition~\ref{prop:Duniv}) furnishes a symmetric monoidal $k$-linear functor
\begin{displaymath}
\theta \colon \fD(\usigma) \to \cC.
\end{displaymath}
This functor has the property that $\theta([n])=X^{\otimes n} = R \otimes \bV^{\otimes n}$. It is clear that $\theta$ is essentially surjective. To complete this step, it suffices to show that $\theta$ is fully faithful.

Before doing this, we introduce some notation. Identify the weight lattice of the diagonal torus in $\GL$ with $\bZ^{\oplus \infty}$. For a finite subset $A \subset [\infty]$, let $1^A$ denote the weight that is~1 in the $A$ coordinates and~0 elsewhere; also, write $1^n$ in place of $1^{[n]}$. Given a weight $\lambda$ and a polynomial representation $V$, let $V_{\lambda}$ be the $\lambda$ weight space of $V$.

Now, we have
\begin{displaymath}
\Hom_R(X^{\otimes n}, X^{\otimes m})
=\Hom_R(R \otimes \bV^{\otimes n}, R \otimes \bV^{\otimes m})
=\Hom_{\GL}(\bV^{\otimes n}, R \otimes \bV^{\otimes m}).
\end{displaymath}
By Schur--Weyl duality, $\Hom_{\GL}(\bV^{\otimes n}, W)=W_{1^n}$ for any polynomial representation $W$; explicitly, a map $\phi \colon \bV^{\otimes n} \to W$ corresponds to $\phi(e_1 \otimes \cdots \otimes e_n) \in W_{1^n}$. We must therefore understand the $1^n$ weight space of $R \otimes \bV^{\otimes m}$.

Let $p \in [r]$. The $1^n$-weight space of $\bS_{\sigma_p}(\bV)$ is canonically isomorphic to the Specht module $S^{\sigma_p}$ if $n=\# \sigma_p$, and vanishes for other values of $n$. More generally, let $A$ be a subset of $[\infty]$ of size $\# \sigma_p$. Then we have a canonical isomorphism $(\bS_{\sigma_p}(\bV))_{1^A}=S^{\sigma_p}_A$. Fix a basis $\cS^{\sigma_p}_A$ for $S^{\sigma_p}_A$. For $x \in \cS^{\sigma_p}_A$, let $t_{A,p,x} \in (\bS_{\sigma_p}(\bV))_{1^A}$ be the image of $x$ under this isomorphism. We refer to $A$ as the \defn{support} of the element $t_{A,p,x}$. Let $\cT \subset R$ be the set of all elements of the form $t_{A,p,x}$ for all choices of $A$, $p$, and $x$, and let $\cM_A$ be the set of all monomials $t_1 \cdots t_s$ where the $t_i$'s belong to $\cT$ and their supports form a partition of $A$. We thus see that $\cM_A$ is a basis for $R_{1^A}$.

From the above discussion, we see that the $1^n$-weight space of $R \otimes \bV^{\otimes m}$ has for a basis all elements of the form
\begin{displaymath}
T \otimes e_{s_1} \otimes \cdots \otimes_{s_m}
\end{displaymath}
where $s_1, \ldots, s_m$ are distinct elements of $[m]$ and $T \in \cM_A$ with $A=[n] \setminus \{s_1, \ldots, s_m\}$. We associate to the above element the $\usigma$-diagram given by the pair $(i, \Gamma)$, where $\Gamma$ is the collection of blocks corresponding to $T$ (each $t_{A,p,x}$ corresponds to a block $(A,p,x)$), and $i \colon [n] \setminus A \to [m]$ is the bijection taking $s_j$ to $j$. We have thus constructed a natural linear isomorphism
\begin{displaymath}
(R \otimes \bV^{\otimes m})_{1^n} = \Hom_{\fD(\usigma)}([n], [m]).
\end{displaymath}
As we have already seen, the left side above is identified with $\Hom_R(X^{\otimes n}, X^{\otimes m})$. One easily sees that the resulting isomorphism
\begin{displaymath}
\Hom_{\fD(\usigma)}([n], [m]) = \Hom_R(X^{\otimes n}, X^{\otimes m})
\end{displaymath}
is induced by $\theta$. This shows that $\theta$ is fully faithful. This completes the first step of the proof.

\textit{Step 2: equivalence of (b) and (c).} Let $\cC'$ be the category in (c). The functor $\Mod_R \to \Mod_K$ given by $M \mapsto K \otimes_R M$ induces a functor $\cC \to \cC'$. This functor is clearly symmetric monoidal, faithful, and essentially surjective. It is full by Proposition~\ref{prop:sat}. Thus it is an equivalence.

\textit{Step 3: the remainder.} To complete the proof, it suffices to show that the categories in (a), (d), and (e) are equivalent, as $k$-linear categories. This follows from Proposition~\ref{prop:inj-proj-eq}.
\end{proof}

For an abelian category $\cA$, we let $\cA^{\lf}$ be the full subcategory spanned by objects that are locally of finite length (i.e., the union of their finite length subobjects).

\begin{proposition}
We have the following equivalences of $k$-linear abelian categories:
\begin{enumerate}
\item $\Mod_R \cong \Mod_{\fU(\usigma)}$
\item $\Mod_K \cong \Mod_{\fU(\usigma)}^{\lf}$
\item $\Mod_K \cong \Mod_R^{\lf}$.
\end{enumerate}
\end{proposition}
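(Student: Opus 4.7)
The strategy is to prove (a) via the Gabriel--Mitchell theorem, to prove (c) via the key equivalence $\Mod_R^{\gen} \cong \Mod_R^{\lf}$ (which the introduction flags as the nontrivial step, relying on the embedding theorem), and to derive (b) formally.

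For part (a): the objects $R \otimes \bV^{\otimes n}$ ($n \ge 0$) are compact projective generators of $\Mod_R$. Projectivity follows since $\Hom_R(R \otimes V, -) = \Hom_{\GL}(V, -)$ is exact by semisimplicity of $\Rep^{\pol}(\GL)$; compactness from finite $\GL$-generation; and generation since any $M \in \Mod_R$ receives a surjection from $R \otimes M^{\pol}$ and $M^{\pol}$ is a sum of Schur functors, each a summand of some $\bV^{\otimes n}$. By the equivalence between parts (a) and (b) of Proposition~\ref{prop:equivs}, the full subcategory of $\Mod_R$ on these generators is $k$-linearly equivalent to $\fD(\usigma)$, so the Gabriel--Mitchell theorem yields $\Mod_R \cong \Mod_{\fU(\usigma)}$.

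For part (c): by Proposition~\ref{prop:genK}, it is enough to exhibit $\Mod_R^{\gen} \cong \Mod_R^{\lf}$. Both sides are Grothendieck abelian and locally of finite length (the former via Theorem~\ref{thm:rat-struc} transported through $\Mod_K \cong \Mod_R^{\gen}$, the latter by definition), so each equals the Ind-completion of its finite-length subcategory, and it suffices to exhibit an equivalence of the finite-length pieces. Both of these satisfy the hypotheses (A1)--(A8) of Proposition~\ref{prop:Ilambda} with indecomposable injectives indexed by partitions: for $\Mod_K$ this is the content of Theorem~\ref{thm:rat-struc} with $I_\lambda = K^{\oplus \lambda}$; for $\Mod_R^{\lf}$, equivalently $\Mod_{\fU(\usigma)}^{\lf}$ via part (a), it follows from a parallel application of Proposition~\ref{prop:Ilambda} using Proposition~\ref{prop:prin-inj}, with indecomposable injectives the summands of the $\bI_{[n]}$. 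By parts (c) and (e) of Proposition~\ref{prop:equivs}, the full subcategories on $\{K \otimes \bV^{\otimes n}\}$ and on $\{\bI_{[n]}\}$ are both $k$-linearly equivalent to $\fD(\usigma)$.

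The main obstacle is then to promote this matched subcategory of injectives to an actual equivalence of the ambient abelian categories. I plan to invoke the reconstruction principle that, for a $k$-linear abelian category in which every object has finite length and admits a two-step injective coresolution $0 \to M \to I^0 \to I^1$ by objects from a set $\cI$ of injectives, the category is determined up to equivalence by the $k$-linear category $\cI$: one recovers $M$ as $\ker(I^0 \to I^1)$, and morphisms are computed as lifts of coresolutions modulo homotopy, an entirely $\cI$-intrinsic computation. The coresolution hypothesis is satisfied on both sides by Corollary~\ref{cor:embed} for $\Mod_K$ and by Proposition~\ref{prop:prin-inj} for $\Mod_{\fU(\usigma)}^{\lf}$. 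Applied with $\cI \cong \fD(\usigma)$, this yields $(\Mod_R^{\gen})^{\rf} \cong (\Mod_R^{\lf})^{\rf}$; Ind-completion gives (c), and then (b) follows by combining (a) and (c): $\Mod_K \cong \Mod_R^{\lf} \cong \Mod_{\fU(\usigma)}^{\lf}$.
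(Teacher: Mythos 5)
Your proposal is correct and follows essentially the same route as the paper: part (a) via the equivalence of the generating subcategories of projectives from Proposition~\ref{prop:equivs} (your Gabriel--Mitchell invocation is the same principle), and the injective side via the matching of the cogenerating subcategories of injectives (Proposition~\ref{prop:equivs}(c),(e)), with Corollary~\ref{cor:embed} and Proposition~\ref{prop:prin-inj} supplying the coresolution hypotheses. The only differences are organizational --- you derive (b) from (a) and (c) where the paper derives (c) from (a) and (b), and you pass through finite-length subcategories and Ind-completion rather than working with the locally finite categories directly --- and your ``lifts of coresolutions modulo homotopy'' phrasing is slightly imprecise (the relevant point is that two lifts of the same map induce the same map on kernels), but the argument is sound.
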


\begin{proof}
Let $\cC$ and $\cC'$ be Grothendieck abelian categories, and let $\cP$ and $\cP'$ be full subcategories of $\cC$ and $\cC'$ consisting of projective objects. Suppose that $\cP$ and $\cP'$ are enough projectives (i.e., they form generating families). Then any equivalence $\cP \to \cP'$ extends uniquely to an equivalence $\cC \to \cC'$. A similar statement holds for categories of injective objects.

Statement~(a) now follows from the equivalence between the categories (b) and (d) in Proposition~\ref{prop:equivs}; it is clear that the categories in (b) and (d) are enough projectives in $\Mod_R$ and $\Mod_{\fU(\usigma)}$. Statement~(b) follows from the equivalence between the categories (c) and (e) in Proposition~\ref{prop:equivs}; the fact that category (c) gives enough injectives in $\Mod_K$ follows from Theorem~\ref{thm:rat-struc}, while the fact that category (d) gives enough injectives in $\Mod_{\fU(\usigma})^{\lf}$ is Proposition~\ref{prop:prin-inj} (note that $\fU(\usigma)$ is an upwards category, as defined before Proposition~\ref{prop:prin-inj}). Statement~(c) follows from statements (a) and (b).
\end{proof}

\begin{remark} \label{rmk:gen-tor}
The equivalence $\Mod_K \cong \Mod_R^{\lf}$ has previously been established for a few values of $\usigma$: for $[(1)]$ in \cite{symc1}, for $[(1),\ldots,(1)]$ in \cite{symu1}, for $[(2)]$ and $[(1,1)]$ in \cite{sym2noeth}, and for $[(1,1),(1)]$ in \cite{symc1sp}. Related results also appear in \cite{periplectic} and \cite{isomeric}.
\end{remark}

For a partition $\lambda$, recall that $L_{\lambda}$ denote the simple object of $\Mod_K$ indexed by $\lambda$. Using the above proposition, we can compute the $\Ext$ groups between these objects:

\begin{corollary}
We have
\begin{displaymath}
\Ext^i_K(L_{\lambda}, L_{\mu})= \Hom_{\GL}(\lw^i(k^{\oplus \usigma}) \otimes k^{\oplus \lambda}, k^{\oplus \mu}).
\end{displaymath}
\end{corollary}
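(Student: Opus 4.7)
The plan is to transport the computation to the upwards Brauer category $\fU(\usigma)$ and then to the $R$-module category, where a classical Koszul calculation applies. Throughout, write $V = k^{\oplus \usigma}$ and $R = \Sym V$.

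First I would chase $L_\lambda$ through the two equivalences of the previous proposition. Under the injective-based equivalence $\Mod_K \cong \Mod_{\fU(\usigma)}^{\lf}$, $L_\lambda$ corresponds to the simple $\fU(\usigma)$-module $M_\lambda$ concentrated in degree $|\lambda|$ with value $S^\lambda$ (by the classification of simple objects for upwards categories given just before Proposition~\ref{prop:prin-inj}). Under the projective-based equivalence $\Mod_R \cong \Mod_{\fU(\usigma)}$, this $M_\lambda$ corresponds to the simple $R$-module $\bS_\lambda \bV$ endowed with the trivial $R$-action. The second identification is verified by a weight-space computation: the equivalence sends an $R$-module $N$ to the $\fU(\usigma)$-module $[m] \mapsto \Hom_R(R \otimes \bV^{\otimes m}, N) = N_{1^m}$, and by Schur--Weyl duality, $(\bS_\lambda \bV)_{1^m}$ equals $S^\lambda$ precisely when $m = |\lambda|$ and vanishes otherwise.

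Next I would justify that $\Ext$ in $\Mod_{\fU(\usigma)}^{\lf}$ agrees with $\Ext$ in $\Mod_{\fU(\usigma)}$: by Proposition~\ref{prop:prin-inj}, every finite length $\fU(\usigma)$-module embeds into a finite direct sum of principal injectives, each of which is finite length, so the injective envelope of $M_\mu$ lies in the lf subcategory and, iterating, an entire injective resolution can be taken inside $\Mod_{\fU(\usigma)}^{\lf}$. Combined with the first step, this reduces the problem to computing $\Ext^i_R(\bS_\lambda \bV, \bS_\mu \bV)$. I would then apply the classical Koszul complex $\cdots \to R \otimes \lw^2 V \to R \otimes V \to R \to k \to 0$, a free resolution of the residue field $k$; tensoring over $k$ with $\bS_\lambda \bV$ yields a free resolution of $\bS_\lambda \bV$ as a trivial $R$-module. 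Applying $\Hom_R(-, \bS_\mu \bV)$ produces a complex whose $i$-th term is $\Hom_{\GL}(\lw^i V \otimes \bS_\lambda \bV, \bS_\mu \bV)$, and the induced differentials vanish because the Koszul differential involves multiplication by $V \subset R$, which acts as zero on the trivial module $\bS_\mu \bV$. This yields the desired formula.

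The main obstacle I anticipate is carefully tracking the simple $L_\lambda$ through the two distinct equivalences --- injective-based from $\Mod_K$ to $\Mod_{\fU(\usigma)}^{\lf}$, and projective-based from $\Mod_{\fU(\usigma)}$ to $\Mod_R$ --- since these give different identifications in the full module categories (in particular, the saturation $L_\lambda^{\pol}$ and the trivial module $\bS_\lambda \bV$ are quite different objects of $\Mod_R$). Once the correspondence $L_\lambda \leftrightarrow \bS_\lambda \bV$ is secured, the vanishing of the Hom-differentials and the Koszul computation are elementary.
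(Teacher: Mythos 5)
Your proposal is correct and follows essentially the same route as the paper's proof: identify $L_\lambda$ with the simple $\fU(\usigma)$-module concentrated in degree $|\lambda|$ with value $S^\lambda$, pass from $\Ext$ in the locally-finite-length subcategory to $\Ext$ in $\Mod_{\fU(\usigma)}$ via resolutions by principal injectives, transport to the trivial simple $R$-module $k^{\oplus\lambda}$, and finish with the Koszul resolution. Your write-up is in fact somewhat more detailed than the paper's (the weight-space verification and the vanishing of the induced differentials are left implicit there).
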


\begin{proof}
Let $\cA=\Mod_{\fU(\usigma)}$ and let $\Phi \colon \Mod_K \to \cA^{\lf}$ be the equivalence constructed above. Tracing through the definition, we see that $\Phi$ takes $K \otimes \bV^{\otimes n}$ to the principal injective $\bI_n$. We thus see that $\Phi(K^{\oplus \lambda})$ is the $S^{\lambda}$-isotypic piece of $\bI_n$, with respect to its natural $\fS_n$-action. Taking socles, we see that $L'_{\lambda}=\Phi(L_{\lambda})$ is the simple $\fU(\sigma)$-module with $(L'_{\lambda})_n=S^{\lambda}$. We thus have
\begin{displaymath}
\Ext^i_K(L_{\lambda}, L_{\mu})=\Ext^i_{\cA^{\lf}}(L'_{\lambda}, L'_{\mu}).
\end{displaymath}
The $\Ext$ on the right side can be computed by taking an injective resolution of $L'_{\mu}$ in $\cA^{\lf}$. As we have seen (Proposition~\ref{prop:prin-inj}), this can be accomplished using principal injectives. As these objects are injective in the larger category $\cA$, we find
\begin{displaymath}
\Ext^i_{\cA^{\lf}}(L'_{\lambda}, L'_{\mu})=\Ext^i_{\cA}(L'_{\lambda}, L'_{\mu}).
\end{displaymath}
We now appeal to the equivalence $\cA=\Mod_R$. One easily sees that $L'_{\lambda}$ corresponds to the simple $R$-module $k^{\oplus \lambda}$ (with positive degree elements of $R$ acting by~0). We thus have
\begin{displaymath}
\Ext^i_{\cA}(L'_{\lambda}, L'_{\mu})=\Ext^i_R(k^{\oplus \lambda}, k^{\oplus \mu}).
\end{displaymath}
The right group can be computed using the projective resolution of $k^{\oplus \lambda}$ provided by the Koszul complex. This yields the stated result.
\end{proof}

\subsection{Weyl's construction} \label{ss:weyl}

We recall Weyl's classical traceless tensor construction. Equip $\bC^r$ with a non-degenerate symmetric bilinear form. Let $T^n=(\bC^r)^{\otimes n}$. Given $1 \le i<j \le n$, let $\phi_{i,j} \colon T^n \to T^{n-2}$ be the map obtained by applying the form to the $i$ and $j$ tensor factors. Let $T^{[n]}$ be the intersection of the kernels of $\phi_{i,j}$, over all choices of $i$ and $j$; this is the space of \defn{traceless tensors}. The space $T^{[n]}$ is a $(\fS_n \times \bO_r)$-subrepresentation of $T^n$. Weyl proved that the $S^{\lambda}$ isotypic piece of $T^{[n]}$ is either~0 or the irreducible of $\bO_r$ with highest weight $\lambda$.

We now establish an analog of this construction for $\Mod_K$. Recall that $\usigma=[\sigma_1, \ldots, \sigma_r]$. For each $1 \le i \le r$, let $\phi_i \colon K^{\oplus \sigma_i} \to K$ be the natural map (coming from the inclusion $k^{\oplus \sigma_i} \subset K$). Given an element $x$ of the Specht module $S^{\sigma_i}$, let $\phi_{i,x}$ be the composition
\begin{displaymath}
K \otimes \bV^{\otimes \vert \sigma_i \vert} \to K \otimes \bS_{\sigma_i}(\bV) \to K
\end{displaymath}
where the first map comes from the projection $\bV^{\otimes \vert \sigma_i \vert} \to \bS_{\sigma_i}(\bV)$ provided by $x$, and the second map is $\phi_i$. Let $T^n=K \otimes \bV^{\otimes n}$. Given $1 \le i \le n$, $x \in S^{\sigma_i}$, and a subset $S$ of $[n]$ of size $\vert \sigma_i \vert$, we let $\phi_{i,x,S} \colon T^n \to T^{n-\vert \sigma_i \vert}$ be the map obtained by applying $\phi_{i,x}$ to the tensor factors indexed by $S$. Let $T^{[n]}$ be the intersection of the kernels of the $\phi_{i,x,S}$ over all choices of $i$, $x$, and $S$. This is a $K$-module equipped with an action of $\fS_n$. The following is our analog of Weyl's construction:

\begin{proposition}
Let $\lambda$ be a partition of $n$. Then the $S^{\lambda}$ isotypic piece of $T^{[n]}$ is the simple $K$-module $L_{\lambda}$.
\end{proposition}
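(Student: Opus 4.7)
The plan is to use Schur--Weyl duality to decompose $T^n$ into its $\fS_n$-isotypic pieces, and then to re-interpret the ``traceless'' condition via the identification of $\Hom$ spaces with the downwards $\usigma$-Brauer category from Proposition~\ref{prop:equivs}. First, Schur--Weyl gives a canonical decomposition of $K$-modules with commuting $\fS_n$-action
\[
T^n = K \otimes \bV^{\otimes n} = \bigoplus_{\lambda \vdash n} S^{\lambda} \otimes_k K^{\oplus \lambda},
\]
so the $S^{\lambda}$-isotypic piece $T^n_{\lambda}$ is $S^{\lambda} \otimes K^{\oplus \lambda}$, realizing $K^{\oplus \lambda}$ with multiplicity $\dim S^{\lambda}$.

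The key step is the reformulation: I claim $v \in T^n$ lies in $T^{[n]}$ if and only if $\psi(v) = 0$ for every $K$-linear $\GL$-equivariant morphism $\psi \colon T^n \to T^m$ with $m < n$. The ``if'' direction is immediate since each basic contraction $\phi_{i,x,S}$ is itself such a morphism (note $|\sigma_i| \ge 1$ since $\usigma$ is pure). The ``only if'' direction follows from Proposition~\ref{prop:equivs}: every element of $\Hom_K(T^n, T^m)$ corresponds to a $\usigma$-diagram $(i,\Gamma)$, which realizes the morphism as a permutation composed with a product of the basic contractions $\phi_{i,x,S}$, all of which annihilate $v$ by assumption.

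Next, restrict this characterization to the isotypic piece. Since the decomposition of $T^m$ is the same, and since $\Hom_K(T^n_\lambda, T^m_\mu) = \Hom_k(S^\lambda,S^\mu) \otimes_k \Hom_K(K^{\oplus \lambda}, K^{\oplus \mu})$, we see that $v \in T^n_\lambda$ lies in $T^{[n]}$ iff $(f \otimes \psi)(v) = 0$ for every $\mu$ with $|\mu| < |\lambda|$, every $f \in \Hom_k(S^\lambda, S^\mu)$, and every $\psi \in \Hom_K(K^{\oplus \lambda}, K^{\oplus \mu})$. (Here $|\mu| = m < n = |\lambda|$ automatically.) Fix a basis $s_1, \ldots, s_d$ of $S^\lambda$ and write $v = \sum_i s_i \otimes w_i$ uniquely with $w_i \in K^{\oplus \lambda}$. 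Ranging $f$ over a dual basis of $\Hom_k(S^\lambda, S^\mu)$ reduces the condition $\sum_i f(s_i) \otimes \psi(w_i) = 0$ to $\psi(w_i) = 0$ for every $i$. Thus $v \in T^n_\lambda \cap T^{[n]}$ iff each $w_i$ lies in the intersection of the kernels of all $\psi \colon K^{\oplus \lambda} \to K^{\oplus \mu}$ with $|\mu| < |\lambda|$, i.e., iff each $w_i \in L_\lambda$. Therefore
\[
T^n_\lambda \cap T^{[n]} = S^\lambda \otimes_k L_\lambda,
\]
which as a $K$-module is $L_\lambda^{\oplus \dim S^\lambda}$, proving the claim.

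The only non-routine obstacle is the bookkeeping in the first reformulation step, i.e., verifying that being killed by the single-block contractions $\phi_{i,x,S}$ really is equivalent to being killed by every morphism to every smaller $T^m$; once Proposition~\ref{prop:equivs} is invoked, this is a direct unpacking, and the remainder of the argument is pure linear algebra combined with the definition of $L_\lambda$.
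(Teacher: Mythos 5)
Your argument is correct and is essentially the paper's own proof: both reduce the traceless condition, via the diagram description of $\Hom_K(T^n,T^m)$ from Proposition~\ref{prop:equivs}, to the statement that $T^{[n]}$ is the common kernel of all maps $T^n \to T^m$ with $m<n$, and then pass to the $S^\lambda$-isotypic piece to recover the defining description of $L_\lambda$ from \S\ref{ss:rat-struct}. The only (cosmetic) discrepancy is that you compute the full isotypic component $S^\lambda \otimes_k L_\lambda \cong L_\lambda^{\oplus \dim S^\lambda}$, whereas the paper's ``isotypic piece'' is the multiplicity space, which is literally $L_\lambda$.
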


\begin{proof}
Under the equivalence $\Mod_K=\Mod_{\fU(\ulambda)}^{\lf}$, the $K$-module $T^n$ corresponds to the $n$th principal injective $\fU(\ulambda)$-module. Thinking in terms of $\ulambda$-diagrams, we see that any map $T^n \to T^m$, with $m<n$, is a linear combination of maps of the form $f \circ \phi_{i,x,S}$, where $f$ is some map. It follows that $T^{[n]}$ is the intersection of the kernels of all maps $T^n \to T^m$ with $m<n$. From this, we see that the $S^{\lambda}$-isotypic piece of $T^{[n]}$ is the intersection of the kernels of alls maps $K^{\oplus \lambda} \to K^{\oplus \mu}$ with $\vert \mu \vert < \vert \lambda \vert$. This is the simple object $L_{\lambda}$ (see \S \ref{ss:rat-struct}).
\end{proof}

\subsection{Universal properties} \label{ss:universal}

We can now give the universal property for the category $\Mod_K$. This is analogous to the universal property for $\Rep(\bO)$ given in \cite[\S 4.4]{infrank}. For symmetric monoidal $k$-linear abelian categories $\cC$ and $\cD$, we let $\LEx^{\otimes}_k(\cC, \cD)$ be the category of left-exact symmetric monoidal $k$-linear functors $\cC \to \cD$. Also, recall the category $T_{\usigma}(\cC)$ defined before Proposition~\ref{prop:Duniv}.

\begin{theorem}
Let $(\cC, \otimes)$ be a symmetric monoidal $k$-linear abelian category with $\otimes$ exact. Then we have a natural equivalence of categories
\begin{displaymath}
\LEx^{\otimes}_k(\Mod_K^{\rf}, \cC) \cong T_{\usigma}(\cC).
\end{displaymath}
In other words, to give a $k$-linear left-exact symmetric monoidal functor $\Mod_K^{\rf} \to \cC$ is the same as to give an object of $\cC$ equipped with a $\usigma$-form.
\end{theorem}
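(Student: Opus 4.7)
The plan is to construct quasi-inverse functors $\Phi \colon \LEx^{\otimes}_k(\Mod_K^{\rf}, \cC) \to T_\usigma(\cC)$ and $\Psi$ in the reverse direction. For $\Phi$, evaluate a left-exact symmetric monoidal $k$-linear $F$ at the standard $K$-module: the object $K \otimes \bV = K^{\oplus (1)}$ carries a tautological $\usigma$-form $\omega_K \colon \bS_\usigma(K \otimes \bV) \to K$, obtained from the inclusion $\bS_\usigma(\bV) \subset K = \Frac(\Sym(k^{\oplus \usigma}))$ combined with the fact that $K$ is the unit for $\otimes_K$. Setting $X = F(K \otimes \bV)$ and using the monoidality of $F$ to identify $F(\bS_\usigma(K \otimes \bV)) = \bS_\usigma(X)$ and $F(K) = \mathbf{1}$, we put $\Phi(F) = (X, F(\omega_K))$.

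The content lies in constructing $\Psi$. Given $(X, \omega) \in T_\usigma(\cC)$, the universal property of $\fD(\usigma)$ (Proposition~\ref{prop:Duniv}) yields a symmetric monoidal $k$-linear functor sending $[n] \mapsto X^{\otimes n}$. Under the equivalence in Proposition~\ref{prop:equivs}, this is the same as a symmetric monoidal $k$-linear functor $\theta_0$ on the full subcategory $\cI_0 \subset \Mod_K^{\rf}$ spanned by the objects $K \otimes \bV^{\otimes n}$. Since $\cC$ is Karoubian (abelian categories are) and each $K^{\oplus \lambda}$ is a canonical direct summand of $K \otimes \bV^{\otimes |\lambda|}$ via a Schur--Weyl idempotent, $\theta_0$ extends uniquely to a symmetric monoidal $k$-linear functor $\theta_1$ on the full subcategory $\cI \subset \Mod_K^{\rf}$ of finite direct sums of the $K^{\oplus \lambda}$. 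By Theorem~\ref{thm:rat-struc}(b) these are exactly the finite-length injectives of $\Mod_K^{\rf}$, and $\cI$ is closed under $\otimes_K$ by Littlewood--Richardson.

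To extend $\theta_1$ to all of $\Mod_K^{\rf}$, invoke left-exactness. Each $M \in \Mod_K^{\rf}$ admits a two-step coresolution $0 \to M \to I^0_M \to I^1_M$ with $I^j_M \in \cI$ (take $I^0_M$ the injective envelope of $M$, then $I^1_M$ an injective envelope of $I^0_M/M$, both finite length and hence in $\cI$). Define $\Psi(X,\omega)(M) = \ker(\theta_1(I^0_M) \to \theta_1(I^1_M))$; standard comparison-of-resolutions arguments make this functorial and independent of the chosen resolution up to canonical isomorphism, and the snake lemma plus exactness of $\otimes$ in $\cC$ delivers left-exactness. For symmetric monoidality, exactness of $\otimes_K$ on $\Mod_K$ (as $K$ is a field) lets one tensor coresolutions of $M$ and $N$ to obtain an exact sequence
\[
0 \to M \otimes_K N \to I^0_M \otimes_K I^0_N \to (I^1_M \otimes_K I^0_N) \oplus (I^0_M \otimes_K I^1_N)
\]
entirely within $\cI$; applying $\theta_1$ and combining exactness of $\otimes$ on $\cC$ with monoidality of $\theta_1$ on $\cI$ gives the natural isomorphism $\Psi(M) \otimes \Psi(N) \cong \Psi(M \otimes_K N)$, with coherence inherited from $\theta_1$.

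Finally, $\Phi \circ \Psi \cong \id$ follows by direct inspection at the standard object, while for $\Psi \circ \Phi \cong \id$ one observes that $F$ and $\Psi\Phi(F)$ agree on $\cI_0$ by the uniqueness part of the Brauer universal property, hence on $\cI$ by uniqueness of the Karoubian extension, and hence on all of $\Mod_K^{\rf}$ since both compute $M \mapsto \ker((-)(I^0_M) \to (-)(I^1_M))$ by left-exactness. The principal obstacle I expect is the verification that the extension of $\theta_1$ from $\cI$ to $\Mod_K^{\rf}$ is genuinely symmetric monoidal: this needs the Künneth-type bicomplex manipulation sketched above, together with coherence checks, using essentially the exactness of $\otimes$ on both $\Mod_K^{\rf}$ and $\cC$ and closure of $\cI$ under $\otimes_K$.
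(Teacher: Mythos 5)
Your proposal is correct and follows essentially the same route as the paper: restrict to the subcategory of objects $K \otimes \bV^{\otimes n}$, identify it with $\fD(\usigma)$ via Proposition~\ref{prop:equivs} and apply Proposition~\ref{prop:Duniv}, then extend uniquely to a left-exact functor on $\Mod_K^{\rf}$ using the injectivity of these objects (Theorem~\ref{thm:rat-struc}(b)) and the embedding theorem, with monoidality following from closure of this subcategory under $\otimes_K$ and exactness of the tensor products. The only difference is expository: you spell out the Karoubian intermediate step, the two-step coresolution construction, and the Künneth-type verification of monoidality, all of which the paper leaves implicit.
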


\begin{proof}
Let $\cI$ be the full subcategory of $\Mod_K^{\rf}$ spanned by the objects $K \otimes \bV^{\otimes n}$ for $n \ge 0$. This category is stable under tensor products. As a $k$-linear symmetric monoidal category, it is equivalent to $\fD(\usigma)$ by Proposition~\ref{prop:equivs}. Thus by the universal property for $\fD(\usigma)$ (Proposition~\ref{prop:Duniv}), we have a natural equivalence
\begin{displaymath}
\Fun^{\otimes}_k(\cI, \cC) \cong T_{\usigma}(\cC).
\end{displaymath}
Now, every object of $\cI$ is injective in $\Mod_K$ (Theorem~\ref{thm:rat-struc}(b)), and every object of $\Mod_K^{\rf}$ embeds into a finite direct sum of objects in $\cI$ (Theorem~\ref{thm:embed}). It follows that any functor $\cI \to \cC$ extends uniquely to a left-exact functor $\Mod_K^{\rf} \to \cC$. Since $\cI$ is stable under tensor products, and all tensor products are exact, it follows that this extended functor is symmetric monoidal if the original functor is. This completes the proof.
\end{proof}

\begin{remark} \label{rmk:special}
Let $V$ be a finite dimensional $k$-vector space equipped with a form $\omega \colon \bS_{\usigma}(V) \to k$. From the universal property, get a left-exact cocontinuous symmetric monoidal functor
\begin{displaymath}
\Gamma \colon \Mod_K \to \Vec_k
\end{displaymath}
that we call the \defn{specialization functor} with respect to $V$ and $\omega$. Since $\Gamma$ is left-exact, one can consider its right derived functors $\rR^i \Gamma$, which we call the \defn{derived specialization functors}. Is it possible to compute the values of these functors on simple objects for a generic form $\omega$? When $\usigma=[(2)]$ the category $\Mod_K$ is equivalent to the category of algebraic representations of the infinite orthogonal group (see \cite[Theorem~3.1]{sym2noeth}), as studied in \cite{infrank}, and the derived specialization of simple objects was computed in \cite{lwood}.
\end{remark}

\section{Classification of fiber functors} \label{s:fiber}

In this section, we introduce the notion of a fiber functor on $\Mod_K$, and give a complete classification of them.

\subsection{Definitions}

Fix, for the duration of \S \ref{s:fiber}, a $\GL$-field $K$ that is finitely generated over its invariant subfield $k$, and a $\GL$-algebra $R$ finitely generated over $R_0=k$ with $\Frac(R)=K$. Furthermore, let $X=\Spec(R)$ be the $\GL$-variety associated to $R$. The following is the main object of study in this section:

\begin{definition}
A \defn{fiber functor} on $\Mod_K$ is a symmetric monoidal functor $\Phi \colon \Mod_K \to \Vec_k$ that is exact, faithful, cocontinuous, and $k$-linear.
\end{definition}

The goal of this section is to classify the fiber functors on $\Mod_K$. This is accomplished in Theorem~\ref{thm:fiber} below.

\subsection{Examples of fiber functors} \label{ss:fiber-ex}

Let $x$ be a $\GL$-generic $k$-point of $X$ and let $\fm$ be the corresponding maximal ideal of $R$. Define a functor
\begin{displaymath}
\tilde{\Phi}_x \colon \Mod_R \to \Vec_k, \qquad \tilde{\Phi}_x(M)=M/\fm M.
\end{displaymath}
Since every $R$-module is flat at $\fm$ (Corollary~\ref{cor:flat}), it follows that $\tilde{\Phi}_x$ is exact. Moreover, it is clear that $\tilde{\Phi}_x$ kills torsion $R$-modules. It follows that $\tilde{\Phi}_x$ factors through the generic category $\Mod_R^{\gen}$. Identifying this with $\Mod_K$, we thus obtain a functor
\begin{displaymath}
\Phi_x \colon \Mod_K \to \Vec_k.
\end{displaymath}
We now have:

\begin{proposition} \label{prop:fiber}
The functor $\Phi_x$ is a fiber functor (in a natural manner).
\end{proposition}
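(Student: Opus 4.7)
The plan is to verify the five axioms of a fiber functor---$k$-linearity, cocontinuity, exactness, symmetric monoidality, and faithfulness---by transporting them from $\tilde\Phi_x$ on $\Mod_R$ to $\Phi_x$ on $\Mod_K$ via the factorization $\tilde\Phi_x = \Phi_x \circ T$ and the structural properties of the quotient functor $T \colon \Mod_R \to \Mod_R^{\gen} \cong \Mod_K$, which is exact, cocontinuous, symmetric monoidal, and admits a fully faithful right adjoint $S$ (Proposition~\ref{prop:genK}). The $k$-linearity is immediate. For cocontinuity, note that $\tilde\Phi_x(-) = -\otimes_R (R/\fm)$ is a left adjoint on $\Mod_R$ and hence cocontinuous; for a small diagram $(N_i)$ in $\Mod_K$, the lifts $M_i = S(N_i)$ satisfy $T(M_i) \cong N_i$, and cocontinuity of $T$ and $\tilde\Phi_x$ then forces $\Phi_x(\varinjlim N_i) \cong \varinjlim \Phi_x(N_i)$.

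Exactness of $\tilde\Phi_x$ is precisely Corollary~\ref{cor:flat}. To deduce exactness of $\Phi_x$, lift a short exact sequence $0 \to N_1 \to N_2 \to N_3 \to 0$ in $\Mod_K$ by setting $Q = \coker(S(N_1) \to S(N_2))$; since $S$ is left exact, the sequence $0 \to S(N_1) \to S(N_2) \to Q \to 0$ is exact in $\Mod_R$, and applying the exact $T$ together with $TS \cong \id$ gives $T(Q) \cong N_3$. Exactness of $\tilde\Phi_x$ applied to this lifted sequence then yields the required exactness of $\Phi_x$. Symmetric monoidality reduces to the canonical isomorphism
\[
(M_1 \otimes_R M_2) \otimes_R k \;\cong\; (M_1/\fm M_1) \otimes_k (M_2/\fm M_2),
\]
combined with the symmetric monoidality of $T$, i.e., $T(M_1 \otimes_R M_2) \cong T(M_1) \otimes_K T(M_2)$; in particular $\Phi_x(K) = R/\fm = k$, so the unit is preserved.

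The main obstacle is faithfulness, where the $\GL$-genericity of $x$ becomes essential. Since $\Phi_x$ is exact, it suffices to show $\Phi_x(N) \neq 0$ for every nonzero $N \in \Mod_K$. From $N \cong T(S(N))$ we have $S(N) = N^{\pol} \neq 0$; choose a nonzero finitely $\GL$-generated $R$-submodule $M_0 \subseteq S(N)$. Because $M_0$ embeds into the semi-linear module $N$ over the domain $R$, it is torsion-free, so $M_0 \otimes_R K \neq 0$ and hence $(M_0)_\fm \neq 0$. By Proposition~\ref{prop:loc-free}, $M_0$ is locally free on a nonempty $\GL$-stable open $U \subseteq \Spec(R)$, and by \cite[Proposition~3.4]{polygeom} the $\GL$-generic prime $\fm$ lies in $U$, so $(M_0)_\fm$ is a nonzero free $R_\fm$-module. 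Its reduction $M_0/\fm M_0 = \tilde\Phi_x(M_0)$ is therefore nonzero, and exactness of $T$ and $\Phi_x$ converts the inclusion $T(M_0) \hookrightarrow T(S(N)) = N$ into an inclusion $\Phi_x(T(M_0)) = \tilde\Phi_x(M_0) \hookrightarrow \Phi_x(N)$, so $\Phi_x(N) \neq 0$.
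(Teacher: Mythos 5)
Your proof is correct and follows essentially the same route as the paper: exactness, cocontinuity, linearity, and monoidality descend from $\tilde{\Phi}_x$ through the Serre quotient, and faithfulness rests on Proposition~\ref{prop:loc-free} and Corollary~\ref{cor:flat} at the $\GL$-generic point. The only cosmetic difference is that you reduce faithfulness to object-conservativity via exactness and exhibit a finitely generated torsion-free submodule with nonzero fiber, whereas the paper argues the contrapositive (a torsion-free module with vanishing fiber is zero) and applies it to the image of a morphism; these are mirror images of the same argument.
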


\begin{proof}
The functor $\tilde{\Phi}_x$ is clearly exact, cocontinuous, and $k$-linear, and also admits a natural symmetric monoidal structure; it follows that $\Phi_x$ inherits these properties. To complete the proof, we must show that $\Phi_x$ is faithful.

We first claim that if $M$ is a torsion-free $R$-module such that $M/\fm M=0$ then $M=0$. To see this, first suppose that $M$ is finitely generated. Then $M_{\fm}$ is free over $R_{\fm}$ (Proposition~\ref{prop:loc-free}). Thus the vanishing of $M/\fm M=M_{\fm}/\fm M_{\fm}$ implies that of $M_{\fm}$, and thus of $M$ since $M$ is torsion-free. We now treat the general case. Let $N$ be a finitely generated submodule of $M$. Since $M/N$ is flat at $\fm$ (Corollary~\ref{cor:flat}), the map $N/\fm N \to M/\fm M$ is injective, and so $N/\fm N=0$. Thus $N=0$ by the previous case. Since $N$ was arbitrary, it follows that $M=0$ as well.

Now, to prove faithfulness, it suffices to show that if $f \colon M \to N$ is a map of torsion-free $R$-modules such that the induced map $\ol{f} \colon M/\fm M \to N/\fm N$ vanishes then $f=0$. Thus let such an $f$ be given. Let $I$ be the image of $f$. Since $N/I$ is flat at $\fm$ (Corollary~\ref{cor:flat}), it follows that $I/\fm I$ is the image of $\ol{f}$, and thus vanishes. Hence $I=0$ by the previous paragraph, and so $f=0$ as well.
\end{proof}

\begin{remark}
Proposition~\ref{prop:fiber} was proven for $R=\Sym(\Sym^2(\bC^{\infty}))$ (and a specific choice of $x$) in \cite[\S 3]{sym2noeth}. Similar results were also proved in \cite[\S 6]{periplectic}, \cite[\S 5]{isomeric},  \cite[\S 5]{symc1sp}. However, these papers did not have the benefit of the shift theorem and its corollaries, such as Corollary~\ref{cor:flat}, and as a result the arguments given there are much more involved.
\end{remark}

It is possible that $\Mod_K$ does not admit a fiber functor. However, this can be fixed by passing to a finite extension:

\begin{proposition}
There exists a finite extension $k'/k$ such that, putting $K'=k' \otimes_k K$, the category $\Mod_{K'}$ admits a fiber functor.
\end{proposition}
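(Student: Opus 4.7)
The plan is to find a finite extension $k'/k$ over which $\Spec(R')$ admits a $\GL$-generic $k'$-point, where $R' = k' \otimes_k R$, and then invoke Proposition~\ref{prop:fiber} to produce the fiber functor. Note that $R'$ is a $\GL$-algebra over $k'$ with $\Frac(R') = K'$, and by Lemma~\ref{lem:struc-6} (and the discussion following it) $K'$ is a $\GL$-field with invariant subfield $k'$.

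First, I would apply Theorem~\ref{thm:bdes-shift} to produce an isomorphism $\Sh_n(R)[1/f] \cong A \otimes \Sym(k^{\oplus \usigma})$ for some $n \ge 0$, non-zero $\GL$-invariant $f \in \Sh_n(R)$, finitely generated integral $k$-algebra $A$, and pure tuple $\usigma$. By the Hilbert Nullstellensatz, there is a finite extension $k'/k$ admitting a closed $k'$-point $\alpha$ of $\Spec(A)$. Since $k'$ is infinite (as $k$ has characteristic zero), the results of \cite{bde, des, polygeom} on generalized orbits of non-degenerate forms guarantee a $\GL$-generic $k'$-point $\tilde{v}$ of $\bA^{\usigma}_{k'}$ (e.g., one given by a non-degenerate $\usigma$-tensor, where non-degeneracy is a Zariski-open condition verifiable over $k'$).

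The pair $(\alpha, \tilde{v})$ determines a $k'$-algebra homomorphism $A \otimes \Sym(k^{\oplus \usigma}) \to k'$, which pulls back through $R \hookrightarrow \Sh_n(R) \hookrightarrow \Sh_n(R)[1/f]$ (the first inclusion being the natural map induced by $\bV \hookrightarrow k^n \oplus \bV$) to give a homomorphism $R \to k'$, i.e., a $k'$-point $y$ of $\Spec(R')$. One then argues that $y$ is $\GL$-generic in $\Spec(R')$ with respect to the original (unshifted) $\GL$-action; granting this, Proposition~\ref{prop:fiber} applied to the pair $(R', y)$ produces the sought fiber functor $\Phi_y \colon \Mod_{K'} \to \Vec_{k'}$.

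The main obstacle is verifying the $\GL$-genericity of $y$. Viewed through the shifted $G(n)$-action alone, the orbit closure of $y$ lies inside $\{\alpha\} \times \bA^{\usigma}_{k'}$, which is a proper subvariety of $\Spec(\Sh_n(R')[1/f])$ whenever $\dim A > 0$. However, the original $\GL$-action on $\Sh_n(R)[1/f]$ does not preserve the tensor decomposition furnished by Theorem~\ref{thm:bdes-shift}, so elements of $\GL \setminus G(n)$ should move $\alpha$ across $\Spec(A_{k'})$ and mix the two tensor factors, forcing the full $\GL$-orbit closure to be all of $\Spec(R')$. Making this rigorous likely requires typical morphisms and the lifting statements of \cite[\S 7--8]{polygeom}, parallel to the argument used in the proof of Proposition~\ref{prop:Pi-Zariski}.
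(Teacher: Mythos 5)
Your overall architecture is the right one and matches the paper's: base-change to a finite extension $k'$ over which $\Spec(R')$ acquires a $\GL$-generic $k'$-point $y$, note via Lemma~\ref{lem:struc-6} that $K'$ is a $\GL$-field with invariant field $k'$, and then apply Proposition~\ref{prop:fiber} to get $\Phi_y$. But there is a genuine gap at exactly the step you flag as ``the main obstacle'': the existence of a $\GL$-generic $k'$-point after a finite extension is not something you can wave at --- it is precisely \cite[Theorem~8.8]{polygeom}, which the paper invokes as a black box, and your proposal amounts to an attempt to reprove that theorem with the hard part omitted.

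Concretely, the point $y=(\alpha,\tilde v)$ you construct is \emph{not} generic for the shifted $G(n)$-action whenever $\dim A>0$: its $G(n)$-orbit closure inside $D(f)\cong\Spec(A_{k'})\times\bA^{\usigma}_{k'}$ is $\{\alpha\}\times\bA^{\usigma}_{k'}$, so $\GL$-genericity must come entirely from elements of $\GL\setminus G(n)$, and the assertion that these ``mix the two tensor factors'' enough to force the orbit closure to be all of $\Spec(R')$ is a heuristic, not an argument. What one actually needs is a statement of the form: the only closed $\GL$-subvariety of $X$ containing $\{\alpha\}\times\bA^{\usigma}$ is $X$ itself --- and this does not follow from the tensor decomposition of Theorem~\ref{thm:bdes-shift} alone (which is only $G(n)$-equivariant). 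The machinery that makes this work in \cite{polygeom} is the theory of typical morphisms $B\times\bA^{\ulambda}\to X$ together with the defining property that the restriction to the orbit closure of any point with dense image is all of $B\times\bA^{\ulambda}$; without carrying out that argument (or simply citing \cite[Theorem~8.8]{polygeom} directly, as the paper does), the proof is incomplete. If you replace your second and third paragraphs with that citation, the rest of your argument goes through.
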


\begin{proof}
There is a finite extension $k'/k$ such that $X$ contains a $\GL$-generic $k'$-point $x$ \cite[Theorem~8.8]{polygeom}. As we have seen (Lemma~\ref{lem:struc-6} and following discussion), $K'=k' \otimes_k K$ is then a $\GL$-field that is finitely generated over its invariant field $k'$. It follows that $\Phi_x$ is a fiber functor for $\Mod_{K'}$.
\end{proof}

\subsection{More examples of fiber functors}

Let $V$ be an infinite dimensional $k$-vector space. Recall that $X\{V\}=\Spec(R\{V\})$, where $R\{V\}$ is obtained by treating $R$ as a polynomial functor and evaluating on $V$. Suppose that $x$ is a $\GL$-generic $k$-point of $X\{V\}$, corresponding to the maximal ideal $\fm$ of $R\{V\}$. (By $\GL$-generic here, we mean there is no proper closed $\GL$-subvariety $Z$ of $X$ with $x \in Z\{V\}$.) Define a functor
\begin{displaymath}
\tilde{\Phi}_{V,x} \colon \Mod_R \to \Mod_k, \qquad
\tilde{\Phi}_{V,x}(M) = M\{V\}/\fm M\{V\}.
\end{displaymath}
Once again, this functor is exact and kills torsion modules, and thus induces a functor
\begin{displaymath}
\Phi_{V,x} \colon \Mod_K \to \Mod_k.
\end{displaymath}
The same argument as in Proposition~\ref{prop:fiber} shows that it too is a fiber functor. If $V=k \otimes \bV$ then $\Phi_{V,x}$ is the funtor $\Phi_x$ introduced above. We note that $\Phi_{V,x}(K \otimes \bV)=V$, and so $\Phi_{V,x}$ and $\Phi_{V',x'}$ can only be isomorphic if $\dim{V}=\dim{V'}$ (as cardinal numbers). In particular, if $\dim(V) \ne \dim(\bV)$ then $\Phi_{V,x}$ will not be isomorphic to a fiber functor of the form $\Phi_{x'}$.

\subsection{The main theorem}

In the remainder of this section, a pair $(V,x)$ will always stand for an infinite dimensional $k$-vector space $V$ and a $\GL$-generic $k$-point $x$ of $X\{V\}$. If $(V',x')$ is a second such pair, then an \defn{isomorphism} $(V,x) \to (V',x')$ is a linear isomorphism $V \to V'$ such that the induced map $X\{V'\} \to X\{V\}$ carries $x'$ to $x$. The following theorem classifies fiber functors:

\begin{theorem} \label{thm:fiber}
We have the following:
\begin{enumerate}
\item Any fiber functor on $\Mod_K$ is isomorphic to one of the form $\Phi_{V,x}$.
\item Given two pairs $(V,x)$ and $(V',x')$, we have a natural bijection
\begin{displaymath}
\Isom((V,x),(V',x')) = \Isom(\Phi_{V,x},\Phi_{V',x'}).
\end{displaymath}
These bijections are compatible with composition of isomorphisms.
\end{enumerate}
\end{theorem}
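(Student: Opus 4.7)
The plan is to exploit the universal property of $\Mod_K^{\rf}$ (Section~\ref{ss:universal}) to match fiber functors with pairs $(V,\omega)$ consisting of a vector space and a $\usigma$-form, and then to promote the form to a $\GL$-generic point of $X\{V\}$ using the exactness of $\Phi$. Given a fiber functor $\Phi$, I would first set $V=\Phi(K\otimes\bV)$ and define a $\usigma$-form $\omega\colon \bS_\usigma(V)\to k$ by applying the symmetric monoidal functor $\Phi$ to the canonical structural morphism $\bS_\usigma(K\otimes\bV)=K\otimes\bS_\usigma(\bV)\to K$ induced by the inclusion $\bS_\usigma(\bV)\subset R\subset K$. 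Extending $\omega$ via the universal property of the symmetric algebra produces an algebra homomorphism $R\{V\}=\Sym(\bS_\usigma(V))\to k$, and hence a $k$-point $x$ of $X\{V\}$. By the universal property of $\Mod_K^{\rf}$, the restriction $\Phi\vert_{\Mod_K^{\rf}}$ is canonically determined by the pair $(V,\omega)$.

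Next I would check that $\Phi_{V,x}$, as constructed in Section~\ref{ss:fiber-ex}, restricts on $\Mod_K^{\rf}$ to the left-exact symmetric monoidal functor classified by the same pair $(V,\omega)$. Using Proposition~\ref{prop:sat} to identify $S(K\otimes\bV)=R\otimes\bV$, direct computation gives $\Phi_{V,x}(K\otimes\bV)=(R\{V\}/\fm)\otimes V=V$, and unwinding the structural morphism recovers exactly the form $\omega$. Once both $\Phi$ and $\Phi_{V,x}$ are seen to realize $(V,\omega)$, the universal property yields a canonical isomorphism between their restrictions to $\Mod_K^{\rf}$, which then extends uniquely to a natural isomorphism $\Phi\cong\Phi_{V,x}$ because both functors are cocontinuous and every $K$-module is a filtered colimit of its finite-length subobjects by Theorem~\ref{thm:gen-struc}(a).

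The main obstacle is showing that the point $x$ thus obtained is actually $\GL$-generic, since otherwise the construction of Section~\ref{ss:fiber-ex} is not available. Suppose by contradiction that $x\in Z\{V\}$ for some proper closed $\GL$-subvariety $Z=V(J)\subsetneq X$, with $J\subset R$ a nonzero $\GL$-ideal, and pick any nonzero polynomial subrepresentation $W\cong\bS_\mu(\bV)$ of $J$. The inclusion $W\subset R\subset K$ induces a $K$-module map $K\otimes W\to K$ whose image contains a nonzero element of the field $K$ and is therefore all of $K$; applying the exact functor $\Phi$ yields a surjection $\bS_\mu(V)\twoheadrightarrow k$. On the other hand, the composition $\bS_\mu(V)\to R\{V\}\to k$ obtained by evaluating the polynomial subfunctor $W\subset R\subset K$ on $V$ and then applying $x$ must vanish, because $x\in Z\{V\}$ forces $J\{V\}\subseteq\ker(x)$ and the image of $\bS_\mu(V)\to R\{V\}$ lies in $J\{V\}$ by functoriality; this is precisely the map $\Phi$ produces on the morphism in question, contradicting its surjectivity. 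Hence $x$ is $\GL$-generic.

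For part (b), the bijection is formal from the universal property. An isomorphism of pairs $(V,x)\to (V',x')$ is, by definition, an isomorphism in $T_\usigma(\Vec_k)$ (a linear isomorphism $V\to V'$ intertwining the $\usigma$-forms), which corresponds under the equivalence $\LEx^{\otimes}_k(\Mod_K^{\rf},\Vec_k)\cong T_\usigma(\Vec_k)$ to an isomorphism of the associated left-exact symmetric monoidal functors on $\Mod_K^{\rf}$; by cocontinuity this extends uniquely to a natural tensor isomorphism $\Phi_{V,x}\cong\Phi_{V',x'}$, and conversely any such isomorphism of fiber functors restricts to $\Mod_K^{\rf}$ and yields a compatible isomorphism of pairs. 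Compatibility with composition is inherited from functoriality of the universal property's equivalence of categories.
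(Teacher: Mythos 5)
Your argument is essentially sound for a \emph{rational} $\GL$-field, but it does not prove the theorem as stated. Section~\ref{s:fiber} fixes an arbitrary $\GL$-field $K$ finitely generated over its invariant subfield together with an arbitrary finitely $\GL$-generated $R$ satisfying $\Frac(R)=K$, and Theorem~\ref{thm:fiber} is asserted in that generality. Every tool you invoke --- the universal property $\LEx^{\otimes}_k(\Mod_K^{\rf},\cC)\cong T_{\usigma}(\cC)$, the notion of a $\usigma$-form, the identification $R\{V\}=\Sym(\bS_{\usigma}(V))$, and, behind the universal property, the injectivity of the $K^{\oplus\lambda}$ and the embedding theorem --- is established in the paper only for $R=\Sym(k^{\oplus\usigma})$. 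For a general $R$ there is no structural morphism $\bS_{\usigma}(K\otimes\bV)\to K$ to feed into $\Phi$, and no classification of left-exact tensor functors out of $\Mod_K^{\rf}$. The paper's proof (Lemma~\ref{lem:fiber-2}) avoids all of this: it sets $V=\Phi(K\otimes\bV)$, uses that $\Phi$ commutes with polynomial functors to get $\Phi(U\otimes K)=U\{V\}$, defines $x$ by applying $\Phi$ to the multiplication map $R\otimes_k K\to K$, and then identifies $\Phi(K\otimes_R M)$ with $M\{V\}/\fm M\{V\}$ by applying $\Phi$ to a presentation $U_1\otimes R\to U_0\otimes R\to M\to 0$ and invoking right-exactness. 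That argument needs nothing beyond the existence of such presentations, so it works for every $R$. To keep your route you would have to either restrict the theorem to rational $K$ or supply the missing general-$R$ input; the presentation argument is the cheaper fix.

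Two smaller points. First, you never verify that $V=\Phi(K\otimes\bV)$ is infinite dimensional, which is part of the definition of an admissible pair $(V,x)$ and is needed for $\Phi_{V,x}$ to be a fiber functor at all; this is a one-line addition, since $\dim V=n<\infty$ would give $\Phi(K^{\oplus (1^{n+1})})=\lw^{n+1}(V)=0$, contradicting faithfulness. Second, your genericity argument is correct and is essentially Lemma~\ref{lem:fiber-1} in a different guise (the paper applies $\Phi$ to $K\otimes_R R/I=0$ rather than to the surjection $K\otimes W\to K$); and within the rational case your universal-property derivation of parts (a) and (b) is a legitimate, arguably cleaner, alternative to the paper's explicit construction of the bijections on $\Isom$-sets.
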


The theorem is proved in \S \ref{ss:fiber-proof} below. We make a few remarks here.

\begin{remark}
The theorem can be stated more concisely as: the groupoid of fiber functors on $\Mod_K$ is equivalent to the groupoid of pairs $(V,x)$.
\end{remark}

\begin{remark}
It follows from the theorem that the automorphism group of the fiber functor $\Phi_{V,x}$ is the stabilizer of $x$ in the group $\Aut_k(V)$. In most cases, this group will be finite, and so $\Mod_K$ cannot be recovered as its representation category. This issue is addressed in \S \ref{s:genstab} and \S \ref{s:glstab} by introducing the notion of ``generalized stabilizers.''
\end{remark}

\begin{remark}
Given $K$, there are potentially many choices of $X$. The theorem implies that any two choices of $X$ have the same set of $\GL$-generic points (up to natural bijection). In fact, this can be seen directly. Suppose $R'$ is a second $\GL$-algebra that is finitely $\GL$-generated over $k$ and has $\Frac(R')=K$, and let $X'=\Spec(R')$. One can show that $X$ and $X'$ are birational, in the sense that there are open $\GL$-subsets $U \subset X$ and $U' \subset X'$ and an isomorphism $i \colon U \to U'$ of $\GL$-varieties. Every $\GL$-generic point of $X$ is contained in $U$, and similarly every $\GL$-generic point of $X'$ is contained in $U'$ (see \cite[Proposition~3.4]{polygeom}). Clearly, these points are mapped bijectively to one another via $i$.
\end{remark}

\subsection{Proof of Theorem~\ref{thm:fiber}} \label{ss:fiber-proof}

Let $V$ be a vector space, let $x$ be a $k$-point of $X\{V\}$, and let $\fm$ be the corresponding maximal ideal of $R\{V\}$. We define $\tilde{\Phi}_{V,x}$ as above; that is, for an $R$-module $M$, we put
\begin{displaymath}
\tilde{\Phi}_{V,x}(M)=M\{V\}/\fm M\{V\}.
\end{displaymath}
Previously, we had only used this when $V$ is infinite dimensional and $x$ is $\GL$-generic, but we now consider it more generally.

\begin{lemma} \label{lem:fiber-1}
Let $V$ and $x$ be as above. Suppose that there is a fiber functor $\Phi$ on $\Mod_K$ such that $\tilde{\Phi}_{V,x}(M)=\Phi(K \otimes_R M)$. Then $V$ is infinite dimensional and $x$ is $\GL$-generic.
\end{lemma}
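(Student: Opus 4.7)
The plan is to prove both conclusions by contradiction, producing in each case a nonzero $K$-module $N$ and a nonzero $R$-module $M$ with $K\otimes_R M \cong N$ yet $\tilde{\Phi}_{V,x}(M)=0$ (violating faithfulness of $\Phi$), or conversely a nonzero $\tilde{\Phi}_{V,x}(M)$ with $K\otimes_R M=0$ (violating exactness/faithfulness again, since $\Phi(0)=0$).

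For the first assertion, suppose $V$ has finite dimension $n$. Choose a partition $\lambda$ with more than $n$ parts and take $M=R^{\oplus\lambda}=R\otimes\bS_\lambda(\bV)$. Viewing $R$ and $\bS_\lambda$ as polynomial functors, we get
\[
M\{V\}=R\{V\}\otimes\bS_\lambda(V)=0
\]
because $\bS_\lambda(V)=0$ for such $\lambda$. Hence $\tilde{\Phi}_{V,x}(M)=0$. On the other hand $K\otimes_R M=K^{\oplus\lambda}$ is a nonzero $K$-module (it contains $L_\lambda$), so an exact faithful functor must send it to a nonzero vector space. This contradicts the hypothesis $\tilde{\Phi}_{V,x}(M)=\Phi(K\otimes_R M)$, forcing $\dim V=\infty$.

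For the second assertion, suppose $x$ is not $\GL$-generic. By definition there exists a proper closed $\GL$-subvariety $Z\subsetneq X$ with $x\in Z\{V\}$, so $Z=V(I)$ for a nonzero $\GL$-ideal $I\subset R$. Take $M=R/I$, which is a polynomial representation and hence an object of $\Mod_R$. Since $R$ is a domain and $I$ contains a nonzero element $f$, the element $f$ is invertible in $K$ while acting as $0$ on $R/I$, so $K\otimes_R(R/I)=0$ and therefore $\Phi(K\otimes_R M)=0$. Meanwhile $(R/I)\{V\}=R\{V\}/I\{V\}$, and the condition $x\in Z\{V\}$ is precisely $I\{V\}\subset\fm$; consequently
\[
\tilde{\Phi}_{V,x}(R/I)=R\{V\}/\bigl(I\{V\}+\fm\bigr)=R\{V\}/\fm\cong k\neq 0,
\]
again contradicting $\tilde{\Phi}_{V,x}(M)=\Phi(K\otimes_R M)$.

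Neither step is subtle — once one chooses the right test module ($R^{\oplus\lambda}$ for part (a), $R/I$ for part (b)), the arguments are immediate from the polynomial-functor interpretation of $M\{V\}$, the definition of $\GL$-generic in this context, and the fact that $K\otimes_R(R/I)=0$ for any nonzero $\GL$-ideal $I$. The only mild point worth flagging is the verification that $\tilde{\Phi}_{V,x}(R/I)\cong R\{V\}/\fm$, which reduces to noting $I\{V\}\subset\fm$; this is where the chosen definition of $\GL$-generic (``$x$ avoids every $Z\{V\}$ with $Z\subsetneq X$ a closed $\GL$-subvariety'') is used.
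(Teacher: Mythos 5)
Your proof is correct. For the second assertion ($x$ is $\GL$-generic) your argument with $M=R/I$ is exactly the paper's: the paper also observes $K\otimes_R R/I=0$ while $\tilde{\Phi}_{V,x}(R/I)=R\{V\}/\fm\ne 0$. Where you differ is the first assertion: the paper disposes of finite-dimensional $V$ with a one-line parenthetical claim that finite-dimensionality of $V$ forces $x$ to be non-$\GL$-generic, and then runs only the $R/I$ argument. You instead give a direct, self-contained argument: choose $\lambda$ with more than $\dim V$ rows, so that $\bS_\lambda(V)=0$ and hence $\tilde{\Phi}_{V,x}(R^{\oplus\lambda})=0$ while $K^{\oplus\lambda}\ne 0$, contradicting faithfulness. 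This is a genuine (if small) improvement in rigor: the paper's parenthetical is not justified and is not obviously true for every $X$ under the stated definition of ``$\GL$-generic point of $X\{V\}$'' (e.g.\ for $X=\bA^{(1)}$ the only proper closed $\GL$-subvariety is the origin, so a nonzero point of $V^*$ with $V$ finite dimensional is not excluded by the genericity condition alone), whereas your Schur-functor vanishing argument rules out finite-dimensional $V$ unconditionally. Both routes use the same underlying principle that an exact faithful additive functor kills no nonzero object; yours simply tests it on two modules instead of one.
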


\begin{proof}
Suppose, by way of contradiction, that $x$ is not $\GL$-generic (which is automatic if $V$ is finite dimensional). There is then a non-zero $\GL$-ideal $I$ of $R$ such that $x$ belongs to the vanishing locus of $I\{V\}$. Then $\tilde{\Phi}_{V,x}(R/I)=R\{V\}/\fm$ is non-zero. On the other hand, $K \otimes_R R/I=0$, and so $\Phi(K \otimes_R R/I)=0$. This is a contradiction, which completes the proof.
\end{proof}

\begin{lemma} \label{lem:fiber-2}
Let $\Phi$ be a fiber functor on $\Mod_K$. Then $\Phi$ is isomorphic to some $\Phi_{V,x}$.
\end{lemma}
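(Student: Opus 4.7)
The plan is to reconstruct the pair $(V,x)$ tautologically from $\Phi$. First, set
\[
V := \Phi(K \otimes_k \bV).
\]
Because $\Phi$ is a $k$-linear symmetric monoidal functor, we get $\Phi(K \otimes_k \bV^{\otimes n}) \cong V^{\otimes n}$, and more generally, since every polynomial representation $E$ is obtained as the image of a Young symmetrizer acting on some $\bV^{\otimes n}$ (and any $k$-linear symmetric monoidal functor preserves such idempotents), we obtain canonical natural isomorphisms
\[
\Phi(K \otimes_k E) \cong E\{V\}
\]
for every $E \in \Rep^{\pol}(\GL)$. Next, the inclusion $R \hookrightarrow K^{\pol} \subset K$ together with multiplication in $K$ yields a morphism of $K$-modules $\mu \colon K \otimes_k R \to K$. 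Since $\Phi$ is symmetric monoidal and $R$ is a commutative algebra object in $\Rep^{\pol}(\GL)$, applying $\Phi$ to $\mu$ produces a $k$-algebra homomorphism $R\{V\} \to k$; its kernel $\fm$ defines a $k$-point $x$ of $X\{V\} = \Spec(R\{V\})$.

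To verify $\Phi \cong \Phi_{V,x}$, note that on the generators $K \otimes_k E$ a direct computation shows
\[
\Phi_{V,x}(K \otimes_k E) = (R \otimes_k E)\{V\}/\fm (R \otimes_k E)\{V\} = (R\{V\}/\fm) \otimes_k E\{V\} = E\{V\},
\]
matching the value of $\Phi$ by the very construction of $\fm$. To extend to arbitrary $K$-modules, one uses the $(T,S)$-adjunction of \S\ref{ss:gen} to rewrite any $K$-module morphism $K \otimes_k E \to K \otimes_k F$ as an $R$-module morphism $R \otimes_k E \to (K \otimes_k F)^{\pol}$, and then checks that both functors act on such a morphism by reducing the $R$-coefficients via the augmentation $R\{V\} \twoheadrightarrow k$. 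Since the objects $K \otimes_k E$ generate $\Mod_K$ under colimits and both $\Phi$ and $\Phi_{V,x}$ are cocontinuous and exact, the natural isomorphism on the subcategory of generators extends uniquely to all of $\Mod_K$. The resulting identification means exactly that $\tilde{\Phi}_{V,x}(M) = \Phi(K \otimes_R M)$ for every $R$-module $M$, so Lemma~\ref{lem:fiber-1} applies and yields that $V$ is infinite-dimensional and $x$ is $\GL$-generic; thus $(V,x)$ is a valid pair and $\Phi \cong \Phi_{V,x}$.

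The main obstacle is the naturality statement on the generating subcategory $\{K \otimes_k E\}$: morphisms there carry $K$-coefficients, and one must show that $\Phi$ reduces them through the augmentation $R\{V\} \to k$ in a way compatible with the previously fixed identifications $\Phi(K \otimes_k E) = E\{V\}$. Concretely, this requires describing $(K \otimes_k F)^{\pol}$ as an $R$-module and tracking the symmetric monoidal structure through the $(T,S)$-adjunction; once that bookkeeping is in place, the compatibility becomes a formal consequence of the symmetric monoidality of $\Phi$ together with the saturation results of \S\ref{ss:gen}.
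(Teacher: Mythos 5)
Your proposal is correct and follows essentially the same route as the paper: define $V=\Phi(K\otimes_k\bV)$, use symmetric monoidality to get $\Phi(K\otimes_k E)=E\{V\}$, obtain $x$ from the augmentation $R\{V\}\to k$ induced by the multiplication map $K\otimes_k R\to K$, and invoke Lemma~\ref{lem:fiber-1} at the end. The only organizational difference is in the final verification: the paper applies $\Phi$ to a presentation $U_1\otimes_k R\to U_0\otimes_k R\to M\to 0$ and compares the two resulting right-exact rows, which packages the morphism-level ``bookkeeping'' you flag into a single diagram, whereas you identify the functors on the generating objects and extend by exactness and cocontinuity; your observation that the compatibility on morphisms is a formal consequence of monoidality (via saturation, a map $K\otimes E\to K\otimes F$ factors as $K\otimes E\to K\otimes R\otimes F\xrightarrow{\mu\otimes 1}K\otimes F$) is correct.
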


\begin{proof}
Let $V=\Phi(\bV \otimes K)$. Suppose $U$ is a polynomial representation of $\GL$. Then we have $U \otimes K = U(\bV \otimes K)$, where on the right side we treat $U$ as a polynomial functor and apply it to the object $\bV \otimes K$ of $\Mod_K$. We thus find
\begin{displaymath}
\Phi(U \otimes K)=\Phi(U(\bV \otimes K))=U(\Phi(\bV \otimes K))=U\{V\},
\end{displaymath}
where in the second step we used that $\Phi$ commutes with the action of poylnomial functors, as $\Phi$ is symmetric monoidal.

We have a natural surjective map $\alpha \colon R \otimes_k K \to K$ of algebra objects in $\Mod_K$, given by multiplication. Applying $\Phi$, and appealing to the above, this yields a surjective $k$-algebra homomorphism map $\beta \colon R\{V\} \to k$. Let $\fm=\ker(\beta)$, a maximal ideal of $R\{V\}$, and let $x \in X\{V\}$ is the associated point.

Now, let $M$ be an $R$-module. Choose a presentation
\begin{displaymath}
U_1 \otimes_k R \to U_0 \otimes_k R \to M \to 0
\end{displaymath}
where $U_0$ and $U_1$ are polynomial representations. We obtain a commutative diagram
\begin{displaymath}
\xymatrix@C=4em{
U_1 \otimes_k R \otimes_k K \ar[r] \ar[d]^{\id \otimes \alpha} & U_0 \otimes_k R \otimes_k K \ar[r] \ar[d]^{\id \otimes \alpha} & M \otimes_k K \ar[r] \ar[d] & 0 \\
U_1 \otimes_k K \ar[r] & U_0 \otimes_k K \ar[r] & M \otimes_R K \ar[r] & 0 }
\end{displaymath}
with exact rows. Applying $\Phi$, we obtain a commutative diagram
\begin{displaymath}
\xymatrix@C=4em{
U_1\{V\} \otimes_k R\{V\} \ar[r] \ar[d]^{\id \otimes \beta} & U_0\{V\} \otimes_k R\{V\} \ar[r] \ar[d]^{\id \otimes \beta} & M\{V\} \ar[r] \ar[d] & 0 \\
U_1\{V\} \ar[r] & U_0\{V\} \ar[r] & \Phi(M \otimes_R K) \ar[r] & 0 }
\end{displaymath}
It follows that the right vertical map induces an isomorphism
\begin{displaymath}
\tilde{\Phi}_{V,x}(M)=M\{V\}/\fm M\{V\} \to \Phi(M \otimes_R K).
\end{displaymath}
By Lemma~\ref{lem:fiber-1}, we see that $V$ is infinite dimensional and $x$ is $\GL$-generic. The above isomorphism thus induces an isomorphism $\Phi \cong \Phi_{V,x}$.
\end{proof}

\begin{lemma}
Let $(V,x)$ and $(V',x')$ be given. Then we have a natural bijection
\begin{displaymath}
\Isom((V,x),(V',x')) = \Isom(\Phi_{V,x}, \Phi_{V',x'})
\end{displaymath}
that is compatible with composition of isomorphisms.
\end{lemma}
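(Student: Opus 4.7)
The plan is to define maps in both directions explicitly, leveraging the recipe from the proof of Lemma~\ref{lem:fiber-2} for recovering a pair $(V,x)$ from a fiber functor, and then show the two maps are mutually inverse.

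For the forward direction, given an isomorphism $\phi \colon (V,x) \to (V',x')$, I would construct a natural isomorphism $\eta^{\phi} \colon \Phi_{V,x} \to \Phi_{V',x'}$ as follows. For each $R$-module $M$, treating $M$ as a polynomial functor, $\phi$ yields a $k$-linear isomorphism $M(\phi) \colon M\{V\} \to M\{V'\}$ that is compatible with the algebra isomorphism $R(\phi) \colon R\{V\} \to R\{V'\}$. The assumption that $X\{V'\} \to X\{V\}$ sends $x'$ to $x$ is exactly the statement $R(\phi)(\fm) = \fm'$, so $M(\phi)$ descends to an isomorphism on quotients $\tilde\Phi_{V,x}(M) \to \tilde\Phi_{V',x'}(M)$. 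Naturality in $M$ and compatibility with the symmetric monoidal structure are formal. Since this assignment kills torsion $R$-modules, it factors through $\Mod_K$ to give the desired $\eta^{\phi}$.

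For the backward direction, given a natural isomorphism $\eta$ of fiber functors, following Lemma~\ref{lem:fiber-2}, we have canonical identifications $\Phi_{V,x}(K \otimes \bV) = V$ and $\Phi_{V',x'}(K \otimes \bV) = V'$, so $\eta_{K \otimes \bV}$ gives a $k$-linear isomorphism $\phi \colon V \to V'$. To see $\phi$ sends $x'$ to $x$, I apply naturality of $\eta$ to the multiplication $\alpha \colon R \otimes_k K \to K$ and obtain a commutative square whose top arrow, by the monoidal structure of $\eta$, equals $R(\phi) \colon R\{V\} \to R\{V'\}$, whose vertical arrows are $\beta$ and $\beta'$ (the algebra maps corresponding to $x$ and $x'$ as extracted in Lemma~\ref{lem:fiber-2}), and whose bottom arrow is the identity on $k$. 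Commutativity gives $\beta = \beta' \circ R(\phi)$, i.e., $\phi$ sends $x'$ to $x$.

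That these two maps are inverse to one another: from $\phi$, computing $\eta^{\phi}$ at $K \otimes \bV$ returns $\phi$ by the very construction. Conversely, starting from $\eta$ and extracting $\phi$, the natural isomorphisms $\eta$ and $\eta^{\phi}$ agree on $K \otimes \bV$, hence on each tensor power $K \otimes \bV^{\otimes n}$ by monoidality. Using that every object of $\Mod_K$ is a quotient of a direct sum of the $K \otimes \bV^{\otimes n}$ (via $K^{\oplus \lambda}$, which is a summand of $K \otimes \bV^{\otimes |\lambda|}$), and that both $\eta$ and $\eta^{\phi}$ are natural transformations of right-exact cocontinuous functors, one concludes $\eta = \eta^{\phi}$ by a diagram chase. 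Compatibility with composition is immediate from the constructions.

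The main obstacle is the last step: verifying that two natural transformations of fiber functors coinciding on $K \otimes \bV$ must coincide everywhere. This is not automatic from the universal property of \S\ref{ss:universal} since $K$ is not assumed rational here; one must argue directly using monoidality (to propagate to tensor powers), splitting of Schur summands, and right exactness (to propagate to arbitrary quotients). Once this uniqueness is in hand, everything else is formal.
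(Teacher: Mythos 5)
Your proposal is correct and follows essentially the same route as the paper: the forward map transports $\fm$ to $\fm'$ via the induced isomorphism $R\{V\}\to R\{V'\}$, and the backward map extracts $\phi$ from $\eta_{K\otimes\bV}$ and uses naturality against the multiplication map $R\otimes_k K\to K$ (the paper uses its kernel $I$ instead, an immaterial difference). The paper leaves the mutual-inverse check to the reader, and your argument for it (agreement on $K\otimes\bV$ propagates to tensor powers by monoidality, then to all of $\Mod_K$ by surjecting from sums of these and using exactness) is a valid way to fill that in.
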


\begin{proof}
We first construct a map
\begin{displaymath}
\alpha \colon \Isom((V,x),(V',x')) \to \Isom(\Phi_{V,x}, \Phi_{V',x'}).
\end{displaymath}
Thus let $f \colon V \to V'$ be a $k$-linear isomorphism such that the induced map $X\{V'\} \to X\{V\}$ takes $x'$ to $x$. It follows that under the induced ring homomorphism $R\{V\} \to R\{V'\}$ the ideal $\fm'$ contracts to the ideal $\fm$. Let $M$ be an $R$-module. Then $f$ induces an isomorphism $M\{V\} \to M\{V'\}$, which further induces an isomorphism on the quotients by $\fm$ and $\fm'$. This yields an isomorphism $\tilde{\Phi}_{V,x} \cong \tilde{\Phi}_{V',x'}$ which, in turn, leads to an isomorphism $g \colon \Phi_{V,x} \cong \Phi_{V',x'}$. We define $\alpha(f)=g$.

We now define a map
\begin{displaymath}
\beta \colon \Isom(\Phi_{V,x}, \Phi_{V',x'}) \to \Isom((V,x),(V',x')).
\end{displaymath}
Let $g \colon \Phi_{V,x} \to \Phi_{V',x'}$ be an isomorphism of fiber functors. As $\Phi_{V,x}(\bV \otimes_k K) = V$, and similarly for $\Phi_{V',x'}$, we see that $g$ induces a $k$-linear isomorphism $f \colon V \to V'$. Let $I$ be the kernel of the map $R \otimes_k K \to K$ in $\Mod_K$. As we have seen, $\Phi_{V,x}=\fm$, and similarly for $\Phi_{V',x'}$. We thus see that under the ring isomorphism $R\{V\} \to R\{V'\}$ induces by $f$, the ideal $\fm$ is taken to $\fm'$. Thus $f$ defines an isomorphism $(V,x) \to (V',x')$. We put $\beta(g)=f$.

We leave to the reader the verification that $\alpha$ and $\beta$ are mutually inverse, and that these bijections are compatible with composition of isomorphisms.
\end{proof}

\section{Germinal subgroups and their representations} \label{s:genstab}

In this section, we introduce germinal subgroups (\S \ref{ss:asub}), their representation theory (\S \ref{ss:arep}), and generalized stabilizers (\S \ref{ss:genstab}) in the abstract. We also describe a general procedure for construction representations of generalized stabilizers (\S \ref{ss:eqbun}). This theory is applied in the next section when we study generalized stabilizers on $\GL$-varieties.

\subsection{Germinal subgroups} \label{ss:asub}

Fix a group $G$. The following definition introduces the main concept studied in this section:

\begin{definition} \label{def:asub}
A \defn{germinal subgroup} of $G$ is a family $\Gamma=\{\Gamma(i)\}_{i \in I}$, where $I$ is a directed set and each $\Gamma(i)$ is a subset of $G$, satisfying the following conditions:
\begin{enumerate}
\item If $i \le j$ then $\Gamma(j) \subset \Gamma(i)$.
\item Each $\Gamma(i)$ contains the identity element.
\item Given $g \in \Gamma(i)$ there is some $j \in I$ such that $\Gamma(j) g \subset \Gamma(i)$. \qedhere
\end{enumerate}
\end{definition}

The generalized stabilizer of a point on a $\GL$-variety will be a germinal subgroup. In this case, the intersection of the sets $\Gamma(i)$ will be the usual stabilizer, which is typically ``too small.'' Each of the sets $\Gamma(i)$, on the other hand, is ``too big.'' One can think of the germinal subgroup $\Gamma$ as a kind of filter on $G$ that is attempting to pick out a hypothetical subset that is bigger than the intersection but smaller than each $\Gamma(i)$. As this picture suggests, one should always be allowed to pass to a cofinal subset of $I$ when working in the setting of germinal subgroups.

\subsection{Representations} \label{ss:arep}

We fix a germinal subgroup $\Gamma=\{\Gamma(i)\}_{i \in I}$ of $G$ for \S \ref{ss:arep}.

\begin{definition} \label{def:prerep}
A \defn{pre-representation} of $\Gamma$ over a field $k$ consists of a $k$-vector space $V$ and a linear function
\begin{displaymath}
V \to \varinjlim_{i \in I} \Fun(\Gamma(i), V).
\end{displaymath}
Suppose that $V$ and $W$ are pre-representationss of $\Gamma$ over $k$. A \defn{map} of pre-representations is a $k$-linear map $V \to W$ such that the obvious diagram commutes.
\end{definition}

Suppose $V$ is a pre-representation. Given $v \in V$, its image in $\varinjlim_{i \in I} \Fun(\Gamma(i), V)$ is represented by a function $\Gamma(i) \to V$ for some $i$. Given an element $g$ of this $\Gamma(i)$, we denote its image in $V$ under this function by $gv$. We thus think of a pre-representation as a kind of partially defined action map $G \times V \dashrightarrow V$.

\begin{definition}
A \defn{representation} of $\Gamma$ is a pre-representation $V$ such that the following two conditions hold:
\begin{itemize}
\item We have $1v=v$ for all $v \in V$.
\item Given $v\in V$ there exists $i \in I$ such that for each $g \in \Gamma(i)$ there exists some $j\ge i$ such that $h(gv)=(hg)v$ for all $h \in \Gamma(j)$.
\end{itemize}
A \defn{map} of representations is simply a map of pre-representations. We let $\Rep(\Gamma)$ be the category of representations of $\Gamma$ over $k$.
\end{definition}

We make a number of remarks concerning this definition.
\begin{itemize}
\item Let $V$ be a representation of $\Gamma$ and let $W$ be a subspace of $V$. Then $W$ is a subrepresentation of $V$ if and only if for every $w \in W$ there exists $i \in I$ such that $\Gamma(i) w \subset W$.
\item Let $V$ and $W$ be representations of $\Gamma$ and let $f \colon V \to W$ be a linear map. Then $f$ is a map of representations if and only if for each $v \in V$ there exists $i \in I$ such that $f(gv)=gf(v)$ for all $g \in \Gamma(i)$.
\item Let $V$ be a representation of $G$. Then $V$ naturally carries the structure of a $\Gamma$-representation. A similar comments applies to maps of representations. We thus have a restriction functor $\Rep(G) \to \Rep(\Gamma)$.
\item The category $\Rep(\Gamma)$ is abelian. Kernels, cokernels, images, (arbitrary) direct sums, and direct limits are given in the usual manner on the underlying vector spaces. It follows that axiom (AB5) holds.
\item Let $V$ be a representation of $\Gamma$. Extend the partially defined action map to a function $G \times V \to V$ in any manner. This gives $V$ the structure of a module over the non-commutative polynomial $R$ ring with variables indexed by $g$. Let $\kappa$ be the dimension of $R$ as a $k$-vector space. One easily sees that any $R$-submodule of $V$ is a $\Gamma$-subrepresentation. It follows that every $v \in V$ is contained in a $\Gamma$-subrepresentation of dimension at most $\kappa$, namely, $Rv$. Thus, taking one $\Gamma$ representation from each isomorphism class of representations of dimension at most $\kappa$, one obtains a generating set for $\Rep(\Gamma)$. It follows that $\Rep(\Gamma)$ is a Grothendieck abelian category. In particular, it is complete.
\item From the above, we see that $\Rep(\Gamma)$ has arbitrary products. These are not necessarily computed in the usual manner on the underlying vector space.
\item Similarly, we see that there is a notion of intersection for an arbitrary family of subrepresentations of a $\Gamma$-representation. This intersection may not coincide with the usual intersection of vector subspaces.
\item Let $V$ and $W$ be representations of $\Gamma$. We give the vector space $V \otimes W$ the structure of a representation in the usual manner: that is, we define
\begin{displaymath}
g \cdot \big( \sum_{i=1}^n v_i \otimes w_i \big) = \sum_{i=1}^n gv_i \otimes gw_i,
\end{displaymath}
provided $gv_i$ and $gw_i$ are defined for all $i$. One easily verifies that this is indeed a representation. This construction endows $\Rep(\Gamma)$ with a symmetric monoial structure.
\end{itemize}

\subsection{Weak subrepresentations}

Given any vector space $V$, the dual space $V^*$ carries a natural topology: namely, a sequence (or net) $\{\lambda_j\}_{j \in J}$ in $V^*$ converges to $\lambda$ if for every vector $v \in V$ there is some $j_0 \in J$ such that $\lambda_j(v)=\lambda(v)$ for all $j \ge j_0$. We call this the \defn{$\Pi$-topology}. For a subspace $W$ of $V$, we let $W^{\perp} \subset V^*$ be its annihilator, i.e., the set of functionals $\lambda \in V^*$ such that $\lambda(w)=0$ for all $w \in W$. One easily sees that $W^{\perp}$ is $\Pi$-closed, and that $W \mapsto W^{\perp}$ is a bijection between subspaces of $V$ and closed subspaces of $V^*$.

Let $V$ be a representation of $G$ and let $W$ be a subspace. A \emph{$\Gamma$-sequence} is a sequence $\{g_j\}_{j \in J}$ in $G$, indexed by some directed set $J$, such that for each $i \in I$ there exists $j_0 \in J$ such that $g_j \in \Gamma(i)$ for all $j \ge j_0$. We say that $W$ is a \defn{weak $\Gamma$-subrepresentation} of $V$ if it satisfies the following condition: given $\lambda \in W^{\perp}$ and a $\Gamma$-sequence $\{g_j\}$ such that $g_j^{-1} \lambda$ converges in $V^*$ to an element $\mu$, we have $\mu \in W^{\perp}$.

\begin{proposition} \label{prop:weak-sub}
Let $V$ be a representation of $G$ and let $W$ be $\Gamma$-subrepresentation of $V$. Then $W$ is a weak $\Gamma$-subrepresentation of $V$.
\end{proposition}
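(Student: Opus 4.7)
The plan is to unwind both definitions and observe that the two conditions match up directly on a fixed test vector $w \in W$. Given $\lambda \in W^{\perp}$, a $\Gamma$-sequence $\{g_j\}_{j \in J}$, and an element $\mu \in V^*$ with $g_j^{-1}\lambda \to \mu$ in the $\Pi$-topology, I want to show $\mu(w) = 0$ for every $w \in W$. The key computation is that $(g_j^{-1}\lambda)(w) = \lambda(g_j w)$, using the standard left action $(g\lambda)(v) = \lambda(g^{-1}v)$ of $G$ on $V^*$.

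Fix $w \in W$. Since $W$ is a $\Gamma$-subrepresentation of $V$, there exists $i \in I$ with $\Gamma(i) w \subset W$. Because $\{g_j\}$ is a $\Gamma$-sequence, there exists $j_0 \in J$ such that $g_j \in \Gamma(i)$ for all $j \ge j_0$; hence $g_j w \in W$ and so $\lambda(g_j w) = 0$ for $j \ge j_0$. On the other hand, by the definition of $\Pi$-convergence applied to the specific vector $w$, there exists $j_1 \in J$ such that $(g_j^{-1}\lambda)(w) = \mu(w)$ for all $j \ge j_1$. Choosing any $j \ge j_0, j_1$ (which exists because $J$ is directed), we get
\[
\mu(w) \;=\; (g_j^{-1}\lambda)(w) \;=\; \lambda(g_j w) \;=\; 0.
\]
Since $w \in W$ was arbitrary, $\mu \in W^{\perp}$, establishing the claim.

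There is essentially no obstacle here: the proposition is a straightforward bookkeeping exercise that confirms the notion of weak subrepresentation is weaker than the notion of subrepresentation, as the name suggests. The only mild subtlety is making sure the partial action $g_j w$ is actually defined for $j$ large: this is exactly what being a $\Gamma$-sequence combined with $\Gamma(i) w \subset W$ guarantees. No induction, no choice of cofinal subsystem, and no appeal to earlier structural results is needed.
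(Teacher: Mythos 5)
Your proof is correct and is essentially the paper's own argument: fix $w \in W$, use the subrepresentation condition to get $i$ with $\Gamma(i)w \subset W$, use the $\Gamma$-sequence and $\Pi$-convergence conditions to find a common large index $j$, and conclude $\mu(w) = \lambda(g_j w) = 0$. (One tiny remark: since $V$ is a representation of the full group $G$, the action $g_j w$ is always defined; the point of the index bookkeeping is only to ensure $g_j w$ lands in $W$.)
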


\begin{proof}
Let $\lambda \in W^{\perp}$ and let $\{g_j\}_{j \in J}$ be a $\Gamma$-sequence such that $g_j^{-1} \lambda$ converges in $V^*$ to some element $\mu$. Let $w \in W$. Since $W$ is a $\Gamma$-subrepresentation, there exists $i \in I$ such that $gw \in W$ for all $g \in \Gamma(i)$. Since $g_j^{-1} \lambda$ converges to $\mu$ there is some $j_0 \in J$ such that $\mu(w)=\lambda(g_jw)$ for all $j \ge j_0$. Let $j_1 \ge j_0$ be such that $g_j \in \Gamma(i)$ for all $j \ge j_1$. Then for $j \ge j_1$ we have $\mu(w)=\lambda(g_jw)=0$ since $g_iw \in W$ and $\lambda$ vanishes on $V$. Thus $\mu$ vanishes on $W$, and so $\mu \in W^{\perp}$. This shows that $W$ is a weak subrepresentation.
\end{proof}

\subsection{Generalized stabilizers} \label{ss:genstab}

Let $I$ be a directed set and let $\{X_i\}_{i \in I}$ be an inverse system of sets; for $i \le j$, let $\pi_{j,i} \colon X_j \to X_i$ be the transistion map. Let $X$ be the inverse limit of the system. For $i \in I$, we let $\pi_i \colon X \to X_i$ be the natural map. We suppose that a group $G$ acts on $X$, and that the action satisfies the following condition: given $g \in G$ and $i \in I$ there exists $j \in I$ such that $\pi_i \circ g$ factors through $\pi_j$; in other words, one can complete the following commutative diagram:
\begin{displaymath}
\xymatrix@C=3em{
X \ar[d]_{\pi_j} \ar[r]^g & X \ar[d]^{\pi_i} \\
X_j \ar@{..>}[r] & X_i }
\end{displaymath}
Equivalently, this means that each $g \in G$ acts uniformly continuously on $X$, when $X$ is endowed with the inverse limit uniform structure (and each $X_i$ with the discrete uniform structure).

We now come to a fundamental definition:

\begin{definition}
Let $x \in X$. For $i \in I$, let $\Gamma_x(i)$ be the set of elements $g \in G$ such that $\pi_i(g^{-1}x)=\pi_i(x)$. The \defn{generalized stabilizer} of $x$ is the system $\Gamma_x=\{\Gamma_x(i)\}_{i \in I}$.
\end{definition}

\begin{proposition}
The generalized stabilizer $\Gamma_x$ is a germinal subgroup of $G$.
\end{proposition}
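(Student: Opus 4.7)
The plan is to verify the three axioms (a), (b), (c) in Definition~\ref{def:asub} directly from the definition of $\Gamma_x(i)$, using only the inverse-limit structure on $X$ and the hypothesis that each $g \in G$ acts uniformly continuously. Axioms (a) and (b) are essentially immediate; all the content lies in (c), which uses uniform continuity of $g^{-1}$.

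For axiom (a), suppose $i \le j$ and $g \in \Gamma_x(j)$. Applying the transition map $\pi_{j,i}$ to the equality $\pi_j(g^{-1}x) = \pi_j(x)$ and using $\pi_i = \pi_{j,i} \circ \pi_j$ gives $\pi_i(g^{-1}x) = \pi_i(x)$, so $g \in \Gamma_x(i)$. Axiom (b) holds because the identity element satisfies $\pi_i(x) = \pi_i(x)$ for every $i \in I$.

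The main step is axiom (c). Given $i \in I$ and $g \in \Gamma_x(i)$, I would apply the uniform continuity hypothesis to $g^{-1}$: there exist some $j \in I$ and a map $f \colon X_j \to X_i$ making the diagram
\begin{displaymath}
\xymatrix@C=3em{
X \ar[d]_{\pi_j} \ar[r]^{g^{-1}} & X \ar[d]^{\pi_i} \\
X_j \ar[r]^{f} & X_i }
\end{displaymath}
commute. I claim this $j$ works: for any $h \in \Gamma_x(j)$, we have $\pi_j(h^{-1}x) = \pi_j(x)$, so
\begin{displaymath}
\pi_i\bigl((hg)^{-1}x\bigr) = \pi_i(g^{-1}h^{-1}x) = f\bigl(\pi_j(h^{-1}x)\bigr) = f\bigl(\pi_j(x)\bigr) = \pi_i(g^{-1}x) = \pi_i(x),
\end{displaymath}
where the last equality uses $g \in \Gamma_x(i)$. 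Hence $hg \in \Gamma_x(i)$, i.e., $\Gamma_x(j)g \subset \Gamma_x(i)$, as required.

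The only potential obstacle is making sure the uniform continuity hypothesis is applied to $g^{-1}$ rather than $g$; the statement is phrased for arbitrary group elements, so there is no real issue, but it is worth flagging that (c) uses continuity of the inverse, which is why the definition of $\Gamma_x(i)$ involves $g^{-1}$ rather than $g$ (as the author remarks in the introduction, this choice makes certain definitions cleaner).
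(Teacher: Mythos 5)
Your proof is correct and follows essentially the same route as the paper's: conditions (a) and (b) are immediate, and for (c) one applies the uniform continuity hypothesis to $g^{-1}$ to obtain $j$ and a factorization $\pi_i \circ g^{-1} = f \circ \pi_j$, then checks $\Gamma_x(j)g \subset \Gamma_x(i)$ exactly as you do. Your remark that the hypothesis must be invoked for $g^{-1}$ (which is legitimate since it is stated for all group elements) is the same observation implicit in the paper's argument.
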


\begin{proof}
We verify the three conditions of Definition~\ref{def:asub}. It is clear that $1 \in \Gamma_x(i)$ for all $i$, which verifies condition (a). If $g \in \Gamma_x(j)$ and $i \le j$ then taking the given identity $\pi_j(g^{-1}x)=\pi_j(x)$ and applying the transition map $\pi_{j,i}$, we find that $\pi_i(g^{-1}x)=\pi_i(x)$, and so $g \in \Gamma_x(i)$. This shows that $\Gamma_x(j) \subset \Gamma_x(i)$, which verifies condition (b).

Finally, we come to condition (c). Suppose $g \in \Gamma_i$. Let $j$ be such that we have a factorization $\pi_i \circ g^{-1} = \phi \circ \pi_j$ for some $\phi \colon X_j \to X_i$. Suppose $h \in \Gamma_x(j)$. Then $\pi_j(h^{-1}x)=\pi_j(x)$. Applying $\phi$, we find $\pi_i(g^{-1} h^{-1} x)=\pi_i(x)$, which shows that $hg \in \Gamma_x(i)$. Thus $\Gamma_x(j) g \subset \Gamma_x(i)$, as required.
\end{proof}

\begin{proposition}
The intersection $\bigcap_{i \in I} \Gamma_x(i)$ is the usual stabilizer of $x$, i.e., the set of all $g \in G$ such that $gx=x$.
\end{proposition}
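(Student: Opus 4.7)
The plan is to unwind the definitions and exploit the fact that $X$ is the inverse limit of the $X_i$. By definition, an element $g \in G$ lies in $\bigcap_{i \in I} \Gamma_x(i)$ if and only if $\pi_i(g^{-1}x) = \pi_i(x)$ for every $i \in I$. Since $X = \varprojlim_{i \in I} X_i$, the family $\{\pi_i\}_{i \in I}$ separates points of $X$: two elements of $X$ coincide precisely when they have the same image under every $\pi_i$. Applying this to the elements $g^{-1}x$ and $x$, the simultaneous system of equations above is equivalent to the single equation $g^{-1}x = x$, hence to $gx = x$; thus $g$ belongs to the usual stabilizer of $x$. The reverse inclusion is immediate, since if $gx = x$ then $g^{-1}x = x$ as well, and so $\pi_i(g^{-1}x) = \pi_i(x)$ for every $i$, i.e., $g \in \Gamma_x(i)$ for all $i$.

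In short, this is a purely definitional check and there is no genuine obstacle. The only substantive ingredient is the separation property of the maps $\pi_i$, which is exactly the universal property of the inverse limit; none of the additional hypotheses imposed in this subsection (the directedness of $I$, or the uniform-continuity condition on the $G$-action) play any role in the argument.
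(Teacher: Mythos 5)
Your proof is correct and is essentially identical to the paper's: both directions reduce to the fact that the projections $\pi_i$ separate points of the inverse limit $X=\varprojlim X_i$, so $\pi_i(g^{-1}x)=\pi_i(x)$ for all $i$ forces $g^{-1}x=x$. Your added remark that neither directedness of $I$ nor the uniform-continuity hypothesis is needed here is accurate.
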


\begin{proof}
It is clear that if $g$ stabilizes $x$ then $g \in \Gamma_x(i)$ for all $i$. Conversely, if $g \in \Gamma_x(i)$ for all $i$ then we have $\pi_i(g^{-1} x)=\pi_i(x)$ for all $i$, and so $g^{-1} x=x$, which shows that $g$ stabilizes $x$.
\end{proof}

\subsection{Representations from equivariant bundles} \label{ss:eqbun}

Maintain the notation from \S \ref{ss:genstab}. We now describe how to produce representations of $\Gamma_x$ from certain kinds of equivariant vector bundles on $X$. This discussion is included simply to offer some intuition for germinal subgroups, and is not used in what follows.

For each $i \in I$, let $E_i$ be a vector bundle on $X_i$; since $X_i$ is discrete, this simply amounts to giving a vector space $E_i(x)$ for each $x \in X_i$. To keep this discussion less technical, we assume that each $E_i(x)$ is finite dimensional. Suppose that the dual bundles $\{E_i^*\}_{i \in I}$ have the structure of an inverse system of vector bundles, and let $E^*$ be the inverse limit, which is a vector bundle on $X$ (in a loose sense; it may not be locally trivial). For a point $x=\{x_i\}_{i \in I}$ of $X$, the fiber $E^*(x)$ is the inverse limit of the vector spaces $E^*_i(x_i)$. Define $E(x)$ to be the corresponding direct limit; note that $E^*(x)$ is the dual space of $E(x)$. We say that $x \in X$ is \defn{good} if there exists $i_0 \in I$ such that the transition map $E_i(x_i) \to E_j(x_j)$ is injective for all $i_0 \le i \le j$.

Suppose now that $E^*$ is endowed with a $G$-equivariant structure. Thus for $g \in G$ and $x \in X$ we have linear isomorphisms $g \colon E^*(x) \to E^*(gx)$ and $g \colon E(x) \to E(gx)$ that satisfy the cocycle conditions. As in the previous section, we assume the map $g \colon E^* \to g^*(E^*)$ is uniformly continuous. Let $x=\{x_i\}_{i \in I}$ be a good point. We claim that $E(x)$ is naturally a representation of the generalized stabilizer $\Gamma_x$. Indeed, suppose $g \in \Gamma_x(i)$, so that $\pi_i(g^{-1}x)=\pi_i(x)$. We have a (likely non-commutative) diagram
\begin{displaymath}
\xymatrix{
& E_i(x_i) \ar[ld]_{\alpha} \ar[rd]^{\beta} \\
E(x) && E(g^{-1}x) \ar[ll]_g }
\end{displaymath}
Assuming $i$ is large enough, $\alpha$ is an inclusion. For $x=\alpha(y)$, we define $gx$ to be the element $g \beta(y)$. One easily verifies that this is independent of $i$, and defines the structure of a $\Gamma$-representation on $E(x)$.

\section{Generalized stabilizers on \texorpdfstring{$\GL$}{GL}-varieties} \label{s:glstab}

In this final section, we study the generalized stabilizer $\Gamma_x$ of a point $x$ on a $\GL$-variety $X$. Our main result provides an equivalence between the category of polynomial representations of $\Gamma_x$ and the category of $K$-modules when $x$ is a $\GL$-generic point on $X=\bA^{\ulambda}$. This yields the statements in \S \ref{ss:repgs}, as the corresponding statements for $\Mod_K$ have already been established.

\subsection{Generalized stabilizers on \texorpdfstring{$\GL$}{GL}-varieties}

Let $X=\Spec(R)$ be an irreducible affine $\GL$-variety over the field $k$. Let $R_n=R\{k^n\}$ be the ring obtained by evaluating $R$ on $k^n$ and let $X_n=\Spec(R_n)$, a finite dimensional variety over $k$. Then $X(k)$ is the inverse limit of the $X_n(k)$ in the category of sets. Let $\pi_n \colon X(k) \to X_n(k)$ be the natural map. Given $g \in \GL$ and $x \in X(k)$, we see that $\pi_n(gx)$ can be obtained from the image of $x$ in $X\{g^{-1}k^n\}$ by applying $g$. Thus if $m \ge n$ is such that $g^{-1}k^n \subset k^m$, then one can recover $\pi_n(gx)$ from $\pi_m(x)$. This shows that the action of $g$ is uniformly continuous, as described in \S \ref{ss:genstab}.

Fix a point $x \in X(k)$. Let $\Gamma_x$ be its generalized stabilizer for the action of $\GL$ on $X(k)$. Thus $\Gamma_x(n)$ is the set of elements $g \in \GL$ such that $g^{-1}x$ and $x$ have the same image in $X_n(k)$. Letting $\fm \subset R$ be the defining ideal of $x$, we see that $\Gamma_x(n)$ can also be described as the set of elements $g \in \GL$ such that $g^{-1} \fm \cap R_n=\fm \cap R_n$.

We say that a representation $V$ of $\Gamma_x$ is \defn{polynomial} if there is a polynomial representation $W$ of $\GL$ such that $V$ is isomorphic to a subquotient of $W$ (regarded as a representation of $\Gamma_x$). We write $\Rep^{\pol}(\Gamma_x)$ for the category of polynomial representations of $\Gamma_x$. It is a Grothendieck abelian category that is closed under tensor products.

\begin{remark}
One can also define a notion of algebraic representation of $\Gamma_x$ by using restrictions of algebraic representations of $\GL$ (as defined in, e.g., \cite[\S 3.1.1]{infrank}). In many cases, polynomial and algebraic representations coincide. We therefore confine our attention to the polynomial case.
\end{remark}

\subsection{From modules to representations}

Maintain the above setup. The following proposition is the key result that justifies our definitions:

\begin{proposition} \label{prop:Gamma-linear}
Let $V$ and $W$ be polynomial representations of $\GL$ and let $\phi \colon R \otimes V \to R \otimes W$ be a map of $R$-modules. Then the linear map $\phi_x \colon V \to W$ obtained by reducing $\phi$ modulo $\fm$ is a map of $\Gamma_x$-representations.
\end{proposition}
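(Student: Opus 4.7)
The plan is to unwind the definitions: express $\phi_x$ concretely using a finite basis expansion of $\phi(v)$ in $R \otimes W$, and then use the $\GL$-equivariance of $\phi$ together with the key property defining $\Gamma_x(n)$.

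First, I would fix $v \in V$ and view $v = 1 \otimes v$ as an element of $R \otimes V$; since $\phi$ is $R$-linear, $\phi(v) = \sum_{i=1}^k r_i \otimes w_i$ is a finite sum with $r_i \in R$ and $w_i \in W$. Because $R$ is a polynomial representation and $R = \bigcup_n R\{k^n\}$ (a polynomial functor commutes with filtered colimits of vector spaces), I can choose $n$ large enough so that every $r_i$ lies in $R_n = R\{k^n\}$. This $n$ will be the index witnessing $\Gamma_x$-equivariance of $\phi_x$ at $v$.

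Next, using that $\phi$ is a morphism in $\Mod_R$ (hence $\GL$-equivariant for the diagonal action on $R \otimes V$ and $R \otimes W$), and noting that $g \cdot (1 \otimes v) = 1 \otimes gv$ since $\GL$ acts on $R$ by algebra automorphisms fixing $1$, I get $\phi(gv) = g \phi(v) = \sum_i (g r_i) \otimes (g w_i)$ for every $g \in \GL$. Reducing modulo $\fm$, i.e., applying the evaluation map $R \to R/\fm = k$ at $x$, one has $\phi_x(v) = \sum_i r_i(x)\, w_i$ and
\begin{displaymath}
\phi_x(gv) = \sum_i (g r_i)(x)\, g w_i = \sum_i r_i(g^{-1} x)\, g w_i,
\end{displaymath}
using the standard identity $(gr)(x) = r(g^{-1}x)$ for the $\GL$-action on functions.

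Finally, for $g \in \Gamma_x(n)$ we have $\pi_n(g^{-1} x) = \pi_n(x)$ by definition, so every function in $R_n$ takes the same value at $g^{-1}x$ and at $x$; in particular $r_i(g^{-1}x) = r_i(x)$ for each $i$. Substituting gives $\phi_x(gv) = \sum_i r_i(x)\, gw_i = g \phi_x(v)$, which is precisely the condition that $\phi_x$ is a morphism of $\Gamma_x$-representations. There is no real obstacle here; the only thing to be careful about is the finiteness step (choosing $n$ bounding all the ``coefficients'' of $\phi(v)$ simultaneously), which is exactly what the filtered description $R = \bigcup_n R_n$ provides, and the observation that $V \subset R \otimes V$ is $\GL$-stable so that $gv$ makes sense in $V$ before and after reduction.
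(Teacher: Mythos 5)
Your proof is correct and follows essentially the same route as the paper: expand $\phi(1\otimes v)=\sum_i r_i\otimes w_i$, use $\GL$-equivariance of $\phi$, and reduce to showing $r_i(g^{-1}x)=r_i(x)$ for $g\in\Gamma_x(n)$, which is exactly the defining condition of $\Gamma_x(n)$ once the $r_i$ lie in $R_n$. The only (immaterial) difference is how $n$ is chosen: you pick $n$ directly so that all $r_i\in R_n$, whereas the paper picks $n$ so that $v$ is $G(n)$-invariant and deduces $r_i\in R_n$ from equivariance.
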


\begin{proof}
Let $v \in V$ be given. Let $n$ be such that $v$ is invariant under $G(n)$. We claim that $\phi_x(gv)=g \phi_x(v)$ for $g \in \Gamma_x(n)$, which will complete the proof. Thus let $g \in \Gamma_x(n)$ be given. Write $\phi(1 \otimes v)=\sum_{i=1}^r f_i \otimes w_i$ with $f_i \in R$ and $w_i \in W$. Then we have
\begin{displaymath}
\phi_x(gv)=\sum_{i=1}^r f_i(g^{-1}x) gw_i, \qquad
g\phi_x(v)=\sum_{i=1}^r f_i(x) gw_i,
\end{displaymath}
so it is enough to show that $f_i(g^{-1}x)=f_i(x)$ for each $1 \le i \le r$. Since $v$ is $G(n)$-invariant, so is $f_i$; in other words, $f_i \in R_n$. We thus see that $f_i-f_i(x)$ belongs to $\fm \cap R_n$. By definition of $\Gamma_x$, we have $g^{-1}\fm \cap R_n=\fm \cap R_n$, and so $f_i-f_i(x)$ belongs to $g^{-1}\fm$. This exactly means that $f_i-f_i(x)$ vanishes at $g^{-1}x$, i.e., $f_i(g^{-1}x)=f_i(x)$. This verifies the claim.
\end{proof}

We now suppose that $x$ is $\GL$-generic; if it is not, one can simply replace $X$ with the orbit closure of $x$. The following proposition is our main construction of $\Gamma_x$-representations:

\begin{proposition} \label{prop:Psi}
There exists a unique right exact functor
\begin{displaymath}
\tilde{\Psi}_x \colon \Mod_R \to \Rep^{\pol}(\Gamma_x)
\end{displaymath}
satisfying the following two conditions:
\begin{enumerate}
\item We have $\tilde{\Psi}_x(M)=M/\fm M$ as vector spaces (and similarly for morphisms).
\item If $V$ is a polynomial representation then the $\Gamma_x$-action on $\tilde{\Psi}_x(R \otimes V) \cong V$ is the restriction of the $\GL$ action.
\end{enumerate}
The functor $\tilde{\Psi}_x$ is exact and kills the torsion category, and thus induces a functor
\begin{displaymath}
\Psi_x \colon \Mod_K \to \Rep^{\pol}(\Gamma_x).
\end{displaymath}
The functor $\Psi_x$ is exact, cocontinuous, faithful, $k$-linear, and naturally symmetric monoidal.
\end{proposition}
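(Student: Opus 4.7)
The underlying vector-space part of $\tilde{\Psi}_x$ is the fiber functor $\tilde{\Phi}_x$ from \S\ref{ss:fiber-ex}, which is already exact, cocontinuous, $k$-linear, faithful, and symmetric monoidal (Proposition~\ref{prop:fiber}). The task is therefore to lift $\tilde{\Phi}_x$ along the forgetful functor $\Rep^{\pol}(\Gamma_x)\to\Vec_k$, by equipping each $M/\fm M$ with a natural, functorial $\Gamma_x$-action, and then to transfer properties back across the (exact and faithful) forgetful functor.

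My plan is to build $\tilde{\Psi}_x$ from the projective generators. Let $\cP\subset\Mod_R$ be the full subcategory on objects of the form $R\otimes V$ with $V$ a polynomial representation of $\GL$. Proposition~\ref{prop:Gamma-linear} defines a $k$-linear functor $\cP\to\Rep^{\pol}(\Gamma_x)$ by $R\otimes V\mapsto V$ and $\phi\mapsto\phi_x$; the only nontrivial point, that each $\phi_x$ is $\Gamma_x$-equivariant, is exactly the content of that proposition, while $(\psi\phi)_x=\psi_x\phi_x$ is immediate from reduction modulo $\fm$. Since $\cP$ is a family of projective generators and $\Rep^{\pol}(\Gamma_x)$ is cocomplete, this extends uniquely (up to canonical isomorphism) to a right-exact cocontinuous functor $\tilde{\Psi}_x\colon\Mod_R\to\Rep^{\pol}(\Gamma_x)$: choose a presentation $R\otimes V_1\to R\otimes V_0\to M\to 0$ and take the cokernel in $\Rep^{\pol}(\Gamma_x)$. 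Independence of the presentation follows from the standard comparison argument, which is legitimate here because Proposition~\ref{prop:Gamma-linear} guarantees that every comparison map between free modules descends to a $\Gamma_x$-equivariant map on the cokernel. Applying the right-exact functor $-\otimes_R R/\fm$ to the chosen presentation identifies the underlying vector space of $\tilde{\Psi}_x(M)$ with $M/\fm M$, so properties (a) and (b) hold and $\tilde{\Psi}_x$ lifts $\tilde{\Phi}_x$.

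Exactness and vanishing on torsion for $\tilde{\Psi}_x$ now follow from the same properties of $\tilde{\Phi}_x$: exactness comes from Corollary~\ref{cor:flat} (every $R$-module is flat at the $\GL$-generic maximal ideal $\fm$), and torsion vanishes because a finitely generated torsion module $T$ has $T_\fm$ free by Proposition~\ref{prop:loc-free} hence $T_\fm=0$, and $(-)/\fm(-)$ commutes with filtered colimits. By the universal property of the Serre quotient (Proposition~\ref{prop:genK}(a)), $\tilde{\Psi}_x$ therefore descends to a functor $\Psi_x\colon\Mod_K\to\Rep^{\pol}(\Gamma_x)$. Exactness, $k$-linearity, cocontinuity, and faithfulness all transfer from $\tilde{\Phi}_x$ because the forgetful functor $\Rep^{\pol}(\Gamma_x)\to\Vec_k$ is exact, cocontinuous, $k$-linear, and faithful.

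Finally, for the symmetric monoidal structure I would use that on generators the canonical identification $(R\otimes V)\otimes_R(R\otimes W)=R\otimes(V\otimes W)$ yields a $\Gamma_x$-equivariant isomorphism $\tilde{\Psi}_x(R\otimes V)\otimes\tilde{\Psi}_x(R\otimes W)\cong\tilde{\Psi}_x\bigl((R\otimes V)\otimes_R(R\otimes W)\bigr)$, compatible with the symmetry. Extending this natural isomorphism to arbitrary modules via presentations, and using that $\otimes_R$ is right exact and cocontinuous in each variable, produces the required structure on $\tilde{\Psi}_x$ (hence on $\Psi_x$); the coherence axioms reduce, after forgetting the $\Gamma_x$-action, to those of $\tilde{\Phi}_x$, and $\Gamma_x$-equivariance of the structure morphisms is visible already on the generators. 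The main obstacle I expect is precisely this well-definedness/coherence bookkeeping — verifying that the $\Gamma_x$-action is insensitive to all the presentations chosen and that the monoidal constraints are $\Gamma_x$-equivariant. This is why Proposition~\ref{prop:Gamma-linear} is indispensable: it says that \emph{every} $R$-module map between free objects descends to a $\Gamma_x$-linear map after reduction, so no coherence obstruction can arise.
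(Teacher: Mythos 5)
Your proposal is correct and follows essentially the same route as the paper: define the $\Gamma_x$-action on $M/\fm M$ via a presentation by modules $R\otimes V$ using Proposition~\ref{prop:Gamma-linear}, get exactness and vanishing on torsion from Corollary~\ref{cor:flat} so that the functor descends to $\Mod_K$, and transfer the remaining properties (including the symmetric monoidal structure) from the fiber functor $\Phi_x$ through the forgetful functor to $\Vec_k$. The paper is terser on well-definedness and coherence, but the substance is identical.
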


\begin{proof}
Let $M$ be an $R$-module. Choose a presentation
\begin{displaymath}
\xymatrix@C=3em{
R \otimes V \ar[r]^{\phi} & R \otimes W \ar[r] & M \ar[r] & 0 }
\end{displaymath}
where $V$ and $W$ are polynomial representations. Applying $- \otimes_R R/\fm$, we obtain a sequence
\begin{displaymath}
\xymatrix@C=3em{
V \ar[r]^{\phi_x} & W \ar[r] & M/\fm M \ar[r] & 0. }
\end{displaymath}
By Proposition~\ref{prop:Gamma-linear}, the first map is one of $\Gamma_x$-representations. It follows that $M/\fm M$ inherits the structure of a $\Gamma_x$-representation, which is easily seen to be independent of the choice of presentation. This representation is polynomial since it is a quotient of $W$. One easily sees that this construction defines a right-exact functor
\begin{displaymath}
\tilde{\Psi}_x \colon \Mod_R \to \Rep^{\pol}(\Gamma_x), \qquad M \mapsto M/\fm M.
\end{displaymath}
It is clear that (a) and (b) hold. The uniqueness of $\tilde{\Psi}_x$ follows from the fact that it is right-exact and determined on the category of projective $R$-modules by (a) and (b).

Since $x$ is $\GL$-generic, $M$ is flat at $\fm$ (Corollary~\ref{cor:flat}), and so $\tilde{\Psi}_x$ is exact. It is clear that $\tilde{\Psi}_x$ kills the torsion subcategory. It thus factors through the generic category, which is equivalent to $\Mod_K$. We therefore obtain a functor $\Psi_x$ as in the statement of the proposition. Of course, ignoring the representation structure, $\Psi_x$ is just the fiber functor $\Phi_x$ we constructed in \S \ref{ss:fiber-ex}. In other words, the diagram
\begin{displaymath}
\xymatrix@C=3em{
\Mod_K \ar[r]^-{\Psi_x} \ar[rd]_{\Phi_x} & \Rep^{\pol}(\Gamma_x) \ar[d] \\
& \Vec_k }
\end{displaymath}
commutes, where the vertical arrow is the forgetful functor. It follows that $\Psi_x$ is exact, cocontinuous, faithful, and $k$-linear; moreover, one easily sees that the symmetric monoidal structure on $\Phi_x$ respects the $\Gamma_x$-structure, and so $\Psi_x$ is naturally symmetric monoidal as well.
\end{proof}

We expect that $\Psi_x$ is an equivalence in general. In the remainder of this section, we prove this when $K$ is a rational $\GL$-field (Theorem~\ref{thm:genstab}) and $k$ is algebraically clsoed.

\begin{remark}
The above construction is essentially a special case of the one from \S \ref{ss:eqbun}, as we can regard $\Spec(\Sym(M))$ as a vector bundle (loosely interpreted) over $X$.
\end{remark}

\begin{remark}
Let $M$ be a submodule of $R^{\oplus \ulambda}$, and let $V=M/\fm M \subset k^{\oplus \ulambda}$. It is easy to see that $V$ is a weak subrepresentation of $k^{\oplus \ulambda}$. Indeed, let $\cE=\Spec(\Sym(R^{\oplus \ulambda}/M))$, a closed $\GL$-subscheme of the vector bundle $X \times (k^{\oplus \ulambda})^*$, and $\cE(x)=V^{\perp}$. Suppose $\alpha \in \cE(x)$ and $\{g_i\}$ is a $\Gamma_x$-sequence such that $g_i \alpha$ converges to $\beta$ in $(k^{\oplus \ulambda})^*$. Since $g_ix$ converges to $x$, it follows that $g_i(x,\alpha)$ converges in $X \times (k^{\oplus \ulambda})^*$ to $(x,\beta)$. Since each $g_i(x,\alpha)$ belongs to $\cE$ and $\cE$ is closed, we see that $\beta \in \cE(x)$. This verifies the claim.

We had originally defined a $\Gamma_x$-representation to be a pair $(V, k^{\oplus \ulambda})$ consisting of a polynomial representation $k^{\oplus \ulambda}$ and a weak subrepresentation $V$. This can be made to work, thanks to the above proposition. However, it is not a good definition since we really just want the space $V$; the ambient representation $k^{\oplus \ulambda}$ is extrinsic. (Also, it is not immediately clear that this definition yields an abelian category.) It took some time for us to realize that the data intrinsic to $V$ is that of a $\Gamma_x$-pre-representation, as in Definition~\ref{def:prerep}.
\end{remark}

\subsection{From representations to modules} \label{ss:rep-to-mod}

We assume for the remainder of \S \ref{s:glstab} that $k$ is algebraically closed. Fix a pure tuple $\usigma$, put $R=\Sym(k^{\oplus \usigma})$, put $X=\Spec(R)=\bA^{\usigma}$, and let $K=\Frac(R)$. Fix a $\GL$-generic $k$-point $x$ of $X$, and let $\fm \subset R$ be its defining ideal. The goal of this subsection is to prove the following proposition, which is the key to the proof of Theorem~\ref{thm:genstab}.

\begin{proposition} \label{prop:weaksub}
Let $\umu$ be a tuple and let $V$ be a subspace of $k^{\oplus \umu}$. The following are equivalent:
\begin{enumerate}
\item There is an $R$-submodule $M$ of $R^{\oplus \umu}$ such that $V=M/\fm M$.
\item The space $V$ is a $\Gamma_x$-subrepresentation of $k^{\oplus \umu}$.
\item The space $V$ is a weak $\Gamma_x$-subrepresentation of $k^{\oplus \umu}$.
\end{enumerate}
\end{proposition}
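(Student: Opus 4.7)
The three implications form the cycle $(a) \Rightarrow (b) \Rightarrow (c) \Rightarrow (a)$. For $(a) \Rightarrow (b)$, the plan is to apply the exact functor $\tilde{\Psi}_x$ from Proposition~\ref{prop:Psi} to the inclusion $M \hookrightarrow R^{\oplus \umu}$: since $\tilde{\Psi}_x(R^{\oplus \umu}) = k^{\oplus \umu}$ carries its canonical $\Gamma_x$-action by Proposition~\ref{prop:Psi}(b), exactness of $\tilde{\Psi}_x$ produces an inclusion of $\Gamma_x$-representations $M/\fm M \hookrightarrow k^{\oplus \umu}$ whose image is the given subspace $V$, exhibiting $V$ as a $\Gamma_x$-subrepresentation. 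The implication $(b) \Rightarrow (c)$ is simply Proposition~\ref{prop:weak-sub} applied to the $\GL$-representation $k^{\oplus \umu}$.

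The main content is $(c) \Rightarrow (a)$, and here the approach is geometric. Set $Y = \Spec(R \otimes \Sym(k^{\oplus \umu})) = X \times (k^{\oplus \umu})^*$, the total space of the trivial dual bundle over $X$. Let $\cZ$ be the Zariski closure of the $\GL$-orbit of $\{x\} \times V^{\perp}$ in $Y$, a closed $\GL$-invariant subscheme, and define
\begin{displaymath}
M = \{m \in R^{\oplus \umu} : m \text{ vanishes on } \cZ\} \subset R^{\oplus \umu},
\end{displaymath}
where elements of $R^{\oplus \umu} = R \otimes k^{\oplus \umu}$ are regarded as linear-in-fiber regular functions on $Y$. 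Then $M$ is a $\GL$-stable $R$-submodule whose vanishing locus $\cZ^{\mathrm{lin}}$ is a closed sub-vector-bundle of $Y$ containing $\cZ$. Since $\fm$ is $\GL$-generic, Corollary~\ref{cor:flat} implies that $R^{\oplus \umu}/M$ is flat at $\fm$, so the canonical map $M/\fm M \to k^{\oplus \umu}$ is injective and identifies $M/\fm M$ with the annihilator of the fiber $\cZ^{\mathrm{lin}}(x) \subset (k^{\oplus \umu})^*$. The proposition therefore reduces to the identity $\cZ^{\mathrm{lin}}(x) = V^{\perp}$.

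The inclusion $V^{\perp} \subseteq \cZ^{\mathrm{lin}}(x)$ is built into the construction. For the reverse inclusion, I would invoke Proposition~\ref{prop:Pi-Zariski} to replace the Zariski closure $\cZ$ with the corresponding $\Pi$-closure, so that any $(x, \alpha) \in \cZ$ arises as a $\Pi$-limit of points $(g_i x,\, g_i \alpha_i)$ with $\alpha_i \in V^{\perp}$. Setting $h_i = g_i^{-1}$ produces a $\Gamma_x$-sequence $\{h_i\}$ with $h_i^{-1}\alpha_i \to \alpha$ in the $\Pi$-topology on $(k^{\oplus \umu})^*$, and the weak $\Gamma_x$-subrepresentation hypothesis must then force $\alpha \in V^{\perp}$. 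The passage from $\cZ$ to $\cZ^{\mathrm{lin}}$ would be handled using that $V^{\perp}$ is $\Pi$-closed and hence cut out by linear equations, so that linearizing a subscheme whose fiber at $x$ sits inside $V^{\perp}$ preserves this property.

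The principal obstacle is that hypothesis (c) concerns a \emph{fixed} functional $\lambda \in V^{\perp}$, while the $\Pi$-convergence $h_i^{-1}\alpha_i \to \alpha$ above involves a \emph{varying} family $\alpha_i$. I expect to bridge this by first establishing the auxiliary implication $(c) \Rightarrow (b)$, i.e.\ that every weak $\Gamma_x$-subrepresentation is already a $\Gamma_x$-subrepresentation: by contraposition, suppose $v \in V$ and $g_n \in \Gamma_x(n)$ satisfy $g_n v \notin V$ (so $\{g_n\}$ is automatically a $\Gamma_x$-sequence), and pick $\alpha_n \in V^{\perp}$ separating $g_n v$ from $V$; the goal is then to extract, using the $\Pi$-closedness of $V^{\perp}$ together with the polynomial/weight-graded structure of $k^{\oplus \umu}$, a single $\alpha \in V^{\perp}$ whose translates $g_n^{-1}\alpha$ converge to a $\mu$ with $\mu(v) \ne 0$, contradicting the weak subrepresentation property. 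Making this Baire-style selection argument precise in the infinite-dimensional $\Pi$-topology is the step I anticipate to require the greatest care.
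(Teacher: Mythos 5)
Your reductions (a)$\Rightarrow$(b) and (b)$\Rightarrow$(c) are exactly the paper's (Propositions~\ref{prop:Psi} and~\ref{prop:weak-sub}). The implication (c)$\Rightarrow$(a), which is the real content, has two genuine gaps. First, you invoke Proposition~\ref{prop:Pi-Zariski} to replace the Zariski closure of $\GL\cdot(\{x\}\times V^{\perp})$ by a $\Pi$-closure. That proposition compares the Zariski and $\Pi$ closures only of the orbit of a \emph{single} point, and only in the situation where two points generate the same orbit closure; it does not say that the Zariski closure of an arbitrary $\GL$-stable set (here, the orbit of a whole linear subspace) coincides with its $\Pi$-closure --- in general the Zariski closure is strictly larger. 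So the claim that every $(x,\alpha)$ in your orbit-closure variety is a $\Pi$-limit of points $(g_ix,\,g_i\alpha_i)$ with $\alpha_i\in V^{\perp}$ is unsupported. Second, the obstacle you yourself flag --- that hypothesis (c) constrains $\Gamma_x$-sequences applied to a \emph{fixed} functional, whereas your limits involve varying $\alpha_i$ --- is not resolved by the sketched ``Baire-style selection'' proof of (c)$\Rightarrow$(b). The $\Pi$-topology on $(k^{\oplus\umu})^*$ offers no compactness from which to extract a single $\alpha\in V^{\perp}$ with convergent translates detecting $g_nv$, and no argument is given; this is precisely the difficulty the statement of the proposition is designed to get around, so it cannot be deferred.

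The paper's proof sidesteps both issues with a device absent from your proposal: unirational parametrizations via systems of variables. For each fixed $p\in V^{\perp}$, Lemma~\ref{lem:weaksub-4} (using \cite[Theorem~9.5]{polygeom}) produces a morphism of $\GL$-varieties $f\colon\bA^{\unu}\times X\to\bA^{\umu}\times X$ over $X$ with $f(q,x)=(p,x)$ for some $\GL$-generic point $(q,x)$. Moving $q$ to any other point $q'$ with $(q',x)$ generic yields, by Proposition~\ref{prop:Pi-Zariski} applied legitimately to the single point $(q,x)$, a $\Gamma_x$-sequence acting on the one fixed functional $p$; the weak-subrepresentation hypothesis then applies verbatim and forces $f_x(q')\in V^{\perp}$, whence $\im(f_x)\subset V^{\perp}$ by Zariski density (Lemmas~\ref{lem:weaksub-0} and~\ref{lem:weaksub-1}). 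Linearization is then handled in Lemma~\ref{lem:weaksub-2} by factoring $f$ through a fiberwise-linear map followed by a Veronese-type map and taking the $\Pi$-closure of the span, using flatness at the $\GL$-generic point. Even after all this, one only obtains, for each $p$, a submodule $N_p$ with $V\subset N_p/\fm N_p\subset\ker(p)$; the paper must still intersect these over a basis of $V^{\perp}$ and use that $R^{\oplus\umu}$ is artinian in the generic category (Theorem~\ref{thm:rat-struc}(a)) to stabilize the descending chain and land exactly on $V$. Your single module $M$ of linear functions vanishing on the orbit closure would require essentially the same machinery before the equality $M/\fm M=V$ could be certified; as written, the inclusion of its fiber into $V^{\perp}$ is not established.
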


We have already seen that (a) implies (b) (Proposition~\ref{prop:Psi}), and that (b) implies (c) (Proposition~\ref{prop:weak-sub}), so it suffices to prove that (c) implies (a). This will take the remainder of the subsection.

We use the theory of systems of variables from \cite[\S 9.1]{polygeom}. We say that a $k$-point of $\bA^{\ulambda}$ is \emph{degenerate} if it is not $\GL$-generic, and \emph{non-degenerate} otherwise.  For a single partition $\lambda$, the degenerate points in $\bA^{\lambda}(k)$ form a $k$-subspace \cite[Proposition~9.2]{polygeom}. A \emph{system of $\lambda$-variables} is a set of points in $\bA^{\lambda}(k)$ that forms a basis modulo the subspace of degenerate elements. A \emph{system of variables} is a choice of system of $\lambda$-variables for all $\lambda$.

\begin{lemma} \label{lem:weaksub-0}
Let $\umu$ and $\unu$ be pure tuples, let $p \in \bA^{\umu}(k)$ be $\GL$-generic, and let $E \subset \bA^{\unu}$ be the set of $k$-points $q$ such that $(q,p) \in \bA^{\unu} \times \bA^{\umu}$ is $\GL$-generic. Then $E$ is Zariski dense in $\bA^{\unu}$.
\end{lemma}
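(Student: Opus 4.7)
The plan is to show that for any non-empty Zariski open $U\subseteq \bA^{\unu}$ we have $U\cap E\neq\emptyset$, which by irreducibility of $\bA^{\unu}$ gives Zariski density. The first step is pointwise. Note that $\bA^{\unu}\setminus E$ is the union, over proper $\GL$-stable closed subvarieties $W\subsetneq \bA^{\unu}\times\bA^{\umu}$, of the fibers $W_p := \{q\in \bA^{\unu} : (q,p)\in W\}$; each such fiber is Zariski closed, but the union is a priori over a very large family. I would first check that each individual $W_p$ is proper: if $\bA^{\unu}\times\{p\}\subseteq W$, then $\GL$-stability gives $\bA^{\unu}\times(\GL\cdot p)\subseteq W$, and since $p$ is $\GL$-generic in $\bA^{\umu}$ we have $\ol{\GL\cdot p}=\bA^{\umu}$; using $\ol{X\times S}=X\times\ol{S}$ for integral schemes (which follows from the tensor decomposition $k[X\times Y]=k[X]\otimes k[Y]$) yields $\bA^{\unu}\times\bA^{\umu}\subseteq W$, contradicting $W\subsetneq \bA^{\unu}\times\bA^{\umu}$.

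To pass from each $W_p$ being proper to $U\cap E$ being non-empty, I would directly construct a point of $U\cap E$ in the spirit of the proof of Proposition~\ref{prop:Pi-Zariski}. The $\GL$-saturation $\GL\cdot(U\times\bA^{\umu})$ is a non-empty $\GL$-stable Zariski open subset, so by \cite[Proposition~3.4]{polygeom} it contains every $\GL$-generic $k$-point of $\bA^{\unu}\times\bA^{\umu}$; applying a suitable group element one then obtains a $\GL$-generic $(q_0,p_0)$ with $q_0\in U$. By \cite[Corollary~2.6.3]{universality}, for each $n$ one may choose $g_n\in\GL$ with $\pi_n(g_n q_0)=\pi_n(q_0)$ and $\pi_n(g_n p_0)=\pi_n(p)$, yielding a $\Pi$-convergent sequence $g_n(q_0,p_0)\to(q_0,p)$ inside the single $\GL$-generic orbit $\GL\cdot(q_0,p_0)$.

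The main obstacle is that being a $\Pi$-limit of a $\GL$-generic orbit is not the same as being $\GL$-generic (the origin lies in every orbit closure). To finish I would invoke the theory of systems of variables from \cite[\S 9.1]{polygeom}: $\GL$-genericity of a $k$-point of $\bA^{\unu\sqcup\umu}$ admits an explicit combinatorial criterion in terms of how its coordinates involve a given system of variables. Since $p$ is $\GL$-generic in $\bA^{\umu}$, it exhausts a system of $\umu$-variables. The plan is then to build $q\in \bA^{\unu}$ whose coordinates match those of $q_0$ on a sufficiently large initial segment (ensuring $q\in U$, since $U$ is a cylinder over some $U_N\subseteq \bA^{\unu}\{k^N\}$) and whose tail involves a fresh system of $\unu$-variables independent of the variables used by $p$, so that $(q,p)$ satisfies the $\GL$-genericity criterion for $\bA^{\unu\sqcup\umu}$. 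The hard part of executing this will be verifying the independence of the tail, for which the crucial input is the purity of the tuples $\unu$ and $\umu$ together with the structure theorems for $\GL$-generic points of pure affine $\GL$-varieties.
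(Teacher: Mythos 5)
Your final paragraph is the right strategy and, once executed, would coincide with the paper's proof; but the step you defer as ``the hard part'' is where the entire content of the lemma lives, and the proposal does not supply the facts needed to carry it out. What makes the ``independence of the tail'' verification routine is the linear structure of degeneracy from \cite[\S 9.1]{polygeom}: by \cite[Proposition~9.3]{polygeom} a point is non-degenerate iff each of its homogeneous pieces is, so one reduces to tuples whose partitions all have a fixed size $d$; and for a single partition of size $d$ the degenerate points of $\bA^{\lambda}(k)$ form a $k$-subspace \cite[Proposition~9.2]{polygeom}, so genericity of a constant-degree tuple becomes linear independence of its components modulo the degenerate subspace. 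Granting this, the argument is short: extend the components of $p$ to a system of variables and take the components of $q$ to be other elements of that system (so $(q,p)$ is generic and $E\neq\emptyset$); then note that any point pulled back from $\bA^{\unu}\{k^N\}$ is degenerate, so adding such a correction $r$ realizing prescribed values at finitely many coordinates leaves $(q+r,p)$ generic. Your ``match $q_0$ on an initial segment, fresh tail'' point is exactly this sum $q+r$, but without the subspace property of degenerates and the reduction to homogeneous pieces you have no criterion with which to verify genericity of the perturbed point; as written that is a genuine gap, not a verification.

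Two smaller points. Your first paragraph (properness of each fiber $W_p$) is correct but goes nowhere: knowing each $W_p$ is a proper closed subset does not control the union over the entire family of proper closed $\GL$-stable subvarieties, and you rightly abandon it. Your second paragraph is also a detour: the $\Pi$-convergent sequence $g_n(q_0,p_0)\to (q_0,p)$ tells you nothing about genericity of the limit (as you acknowledge), and the only thing you ultimately extract from it, namely a point of $U_N$, is immediate from $U\neq\emptyset$ once you reduce to $U=D(f)$ a basic open set --- a reduction you need anyway for $U$ to be a cylinder, since a general Zariski open subset of $\bA^{\unu}$ is not one.
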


\begin{proof}
A point is non-degenerate if and only if each homogeneous piece of it is non-degenerate \cite[Proposition~9.3]{polygeom}. It thus suffices to prove the lemma when $\umu$ and $\unu$ are composed of partitions of some constant size $d$. First suppose that $d=1$. Then a point is non-degenerate if its components are linearly independent. We can clearly choose $q$ such that the components of $(q,p)$ are independent while at the same time realizing arbitary values at finitely many coordinates of $q$. Since any non-zero function $f$ on $\bA^{\unu}$ uses only finitely many coordinates, it follows that we can choose $q \in E$ such that $f(q) \ne 0$. Thus $E$ is Zariski dense.

The case when $d>1$ is similar. The set $E$ is non-empty: we can choose a system of variables that includes the components of $p$, and then take the components of $q$ to be other elements from the system. Let $q \in E$. Then we can find a degenerate $k$-point $r$ of $\bA^{\unu}$ realizing arbitary values at finitely many coordiantes. It follows that $q+r \in E$ also realizes arbitrary values at these coordinates, and so again $E$ is Zariski dense.
\end{proof}

\begin{lemma} \label{lem:weaksub-4}
Let $\umu$ be a tuple and let $p$ be a $k$-point of $\bA^{\umu}$. Then there exists a pure tuple $\unu$, a $k$-point $q$ of $\bA^{\unu}$ such that $(q,x) \in \bA^{\unu} \times X$ is $\GL$-generic, and a map of $\GL$-varieties $f \colon \bA^{\unu} \times X \to \bA^{\umu} \times X$ over $X$ such that $f(q,x)=(y,x)$.
\end{lemma}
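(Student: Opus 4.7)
The plan is to build $\unu$, $q$, and $f$ by parameterizing the $\GL$-orbit closure of $(p,x)$ via typical-morphism machinery and then using Lemma~\ref{lem:weaksub-0} to upgrade the resulting specific lift to a $\GL$-generic base point of an enlarged parameterizing space. First I would observe that each empty-partition entry of $\umu$ contributes only an $\bA^{1}$-factor to $\bA^{\umu}$, and the corresponding scalar component of $p$ can be installed as a constant in the output of $f$ without affecting $\unu$ or $q$. So I may assume $\umu$ is pure.

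Let $Z\subseteq\bA^{\umu}\times X$ be the Zariski closure of $\GL\cdot(p,x)$; this is an irreducible $\GL$-variety in which $(p,x)$ is $\GL$-generic. By the typical-morphism theory \cite[\S 8.1, Proposition~7.15]{polygeom} together with the parameter-collapsing argument used in the proof of Proposition~\ref{prop:Pi-Zariski}, I obtain a pure tuple $\ulambda$, a $\GL$-generic $k$-point $\widetilde p\in\bA^{\ulambda}$, and a $\GL$-morphism $\psi=(\psi_{1},\psi_{2})\colon\bA^{\ulambda}\to Z\subseteq\bA^{\umu}\times X$ with $\psi(\widetilde p)=(p,x)$. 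I would then set $\unu=\ulambda\sqcup\umu$ (still pure) and define the $\GL$-morphism over $X$ by
\[
f\bigl((y_1,y_2),z\bigr)=\bigl(\psi_{1}(y_1)+y_2,\,z\bigr),
\]
whose fiber over $(p,x)$ is the affine-translated graph $\{(y_1,\,p-\psi_{1}(y_1)):y_1\in\bA^{\ulambda}\}$. It then suffices to exhibit a $k$-point $q=(y_1,\,p-\psi_{1}(y_1))$ of this fiber with $(q,x)$ $\GL$-generic in $\bA^{\unu}\times X$.

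The hard part will be securing this $\GL$-genericity, since the orbit of $(y_1,p-\psi_1(y_1),x)$ is a priori trapped inside the closed $\GL$-invariant subvariety $\{(a,b,c):b+\psi_1(a)\in\overline{\GL\cdot p}\}\subseteq\bA^{\unu}\times X$. I would first apply Lemma~\ref{lem:weaksub-0} with $x$ as the $\GL$-generic base point to secure a Zariski-dense set of $y_1\in\bA^{\ulambda}$ with $(y_1,x)$ $\GL$-generic in $\bA^{\ulambda\sqcup\usigma}$, and then verify via a system-of-variables argument analogous to the proof of Lemma~\ref{lem:weaksub-0} (expanding $y_1$, $p-\psi_{1}(y_1)$, and $x$ in a common system and checking that their leading non-degenerate parts remain independent) that for suitably chosen $y_1$ the whole point $(y_1,p-\psi_{1}(y_1),x)$ is $\GL$-generic in $\bA^{\unu}\times X$. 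If the residual orbit closure happens to remain proper, I would iterate the construction, replacing $p$ by the residual point and repeating with the ambient variety replaced by its orbit closure; the process terminates because each iteration strictly enlarges the $\GL$-orbit closure of the accumulated base point while leaving $(p,x)$ fixed.
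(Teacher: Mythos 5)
There is a genuine obstruction that makes your main construction fail: with $\unu=\ulambda\sqcup\umu$ and $q=(y_1,\,p-\psi_1(y_1))$, the point $(q,x)$ can \emph{never} be $\GL$-generic in $\bA^{\unu}\times X$ unless $p$ was already $\GL$-generic in $\bA^{\umu}$. Indeed, the map $\mu\colon \bA^{\ulambda}\times\bA^{\umu}\times X\to\bA^{\umu}$, $(a,b,c)\mapsto \psi_1(a)+b$, is a morphism of $\GL$-varieties, and $\mu(q,x)=p$; hence $\mu\bigl(\overline{\GL\cdot(q,x)}\bigr)\subseteq\overline{\GL\cdot p}$, so $\overline{\GL\cdot(q,x)}\subseteq\mu^{-1}\bigl(\overline{\GL\cdot p}\bigr)$, which is a proper closed $\GL$-subvariety whenever $\overline{\GL\cdot p}\neq\bA^{\umu}$ (e.g.\ already for $p=0$). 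This is exactly the ``trapped'' subvariety you point out, but it is not merely an a priori worry to be dispelled by choosing $y_1$ well: no choice of $y_1$ escapes it, so the promised system-of-variables verification cannot succeed. The closing suggestion to ``iterate, replacing $p$ by the residual point'' does not identify what the residual point is, why the orbit closure would strictly grow, or how the iteration would ever produce a $q$ whose orbit closure is all of $\bA^{\unu}\times X$; as written it does not repair the construction.

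The structural lesson is that the non-genericity of $p$ must be absorbed into the \emph{map} $f$, not into an additive correction of the point $q$ (any such correction term is itself constrained, and the constraint is $\GL$-invariant). This is what the paper's proof does via \cite[Theorem~9.5]{polygeom}: choose a system of variables containing the components $x_1,\ldots,x_r$ of $x$, write $p=g(\xi_1,\ldots,\xi_s)$ for a $\GL$-morphism $g\colon\bA^{\ulambda}\to\bA^{\umu}$ with the $\xi_i$ \emph{distinct} elements of that system, let $q$ consist of the $\xi_i$ not among the $x_j$, and let $f$ be $g\times\id_X$ precomposed with the diagonal copying the relevant coordinates of $X$ into $\bA^{\ulambda}$. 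Genericity of $(q,x)$ is then automatic from the definition of a system of variables, rather than something to be fought for after the fact. Your first step (reducing to $\umu$ pure) and your use of the typical-morphism lift of $(p,x)$ are both fine, but the lift $\psi$ parametrizes only the orbit closure $Z$ of $(p,x)$ and cannot by itself yield a generic parameter for the ambient $\bA^{\umu}\times X$; some input like Theorem~9.5 is genuinely needed.
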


\begin{proof}
Write $\usigma=[\sigma_1, \ldots, \sigma_r]$ and let $x=(x_1, \ldots, x_r)$ be the components of $x$. Pick a system of variables including $x_1, \ldots, x_r$. By \cite[Theorem~9.5]{polygeom}, there exists a pure tuple $\ulambda=[\lambda_1, \ldots, \lambda_s]$ and a map of $\GL$-varieties $g \colon \bA^{\ulambda} \to \bA^{\umu}$ such that $p=g(\xi_1, \ldots, \xi_s)$, where $\xi_1, \ldots, \xi_s$ are distinct elements from the system of variables. Now, after applying a permutation, we can assume that $\xi_i=x_i$ for $1 \le i \le t$ and the remaining $\xi_i$ and $x_j$ are distinct. Let $\unu=[\lambda_{t+1}, \ldots, \lambda_s]$ and $q=(\xi_{t+1}, \ldots, \xi_s) \in \bA^{\unu}$. Now, let $f$ be the composition
\begin{displaymath}
\xymatrix@C=3em{
\bA^{\unu} \times X \ar[r]^{\Delta} & \bA^{\ulambda} \times X \ar[r]^{g \times \id_X} & \bA^{\umu} \times X }
\end{displaymath}
where $\Delta$ is the diagonal map that copies the first $t$ coordinates of $X$ into those of $\bA^{\ulambda}$. Then $\Delta(q,x)=(\xi_1, \ldots, \xi_s, x_1, \ldots, x_r)$, and so $f(q,x)=(p,x)$. By construction $(q,x)$ is $\GL$-generic.
\end{proof}

Given vector spaces $V \subset U$, we let $V^{\perp}$ be the annihilator of $V$ in the dual space $U^*$.

\begin{lemma} \label{lem:weaksub-1}
Let $\umu$ be a tuple, let $V$ be a weak $\Gamma_x$-subrepresentation of $k^{\oplus \umu}$, and let $p \in V^{\perp}$. Then there exists a tuple $\unu$ and a map of $\GL$-varieties $f \colon \bA^{\unu} \times X \to \bA^{\umu} \times X$ over $X$ such that $\im(f_x)$ contains $p$ and is contained in $V^{\perp}$.
\end{lemma}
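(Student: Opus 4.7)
The plan is to use Lemma~\ref{lem:weaksub-4} to hand us the map $f$ itself, and then combine $\Pi$-continuity with the weak-subrepresentation hypothesis to force the image of $f_x$ into $V^\perp$.

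First I apply Lemma~\ref{lem:weaksub-4} to the $k$-point $p$, viewed as a point of $\bA^\umu(k)=(k^{\oplus \umu})^*$. This produces a pure tuple $\unu$, a $k$-point $q\in\bA^\unu(k)$ for which $(q,x)$ is $\GL$-generic in $\bA^\unu\times X$, and a morphism of $\GL$-varieties $f\colon \bA^\unu\times X\to \bA^\umu\times X$ over $X$ with $f(q,x)=(p,x)$. In particular $p=f_x(q)\in\im(f_x)$, so the only remaining task is to show $\im(f_x)\subset V^\perp$.

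Set $W=f_x^{-1}(V^\perp)\subset\bA^\unu$. Because $V^\perp$ is the common vanishing locus in $\bA^\umu$ of the linear functions coming from $V$ (hence Zariski closed) and $f_x$ is a morphism, $W$ is Zariski closed. By Lemma~\ref{lem:weaksub-0} the set
\[
E=\{\,q'\in\bA^\unu(k)\;:\;(q',x)\text{ is }\GL\text{-generic in }\bA^\unu\times X\,\}
\]
is Zariski dense in $\bA^\unu$, so it suffices to verify $E\subset W$. Fix $q'\in E$. Both $(q,x)$ and $(q',x)$ are $\GL$-generic in $\bA^\unu\times X$, so their $\GL$-orbits share the same Zariski closure (the whole variety); by Proposition~\ref{prop:Pi-Zariski} they share the same $\Pi$-closure, and Remark~\ref{rmk:gen-orb} then supplies a sequence $(g_n)\subset\GL$ with $g_n(q,x)\to(q',x)$ in the $\Pi$-topology. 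In particular $g_n x\to x$ in $\Pi$, and unwinding the definition of $\Gamma_x$ this says exactly that $(g_n^{-1})$ is a $\Gamma_x$-sequence. Meanwhile $\Pi$-continuity of $f$ together with its $\GL$-equivariance yields
\[
(g_n p,\, g_n x)\;=\;g_n\!\cdot\! f(q,x)\;=\;f(g_n q,\,g_n x)\;\longrightarrow\;f(q',x)\;=\;(f_x(q'),\,x),
\]
so $g_n p\to f_x(q')$ in the $\Pi$-topology on $(k^{\oplus \umu})^*$. Writing $g_n p=(g_n^{-1})^{-1}p$ and applying the weak-subrepresentation hypothesis to $\lambda=p\in V^\perp$ along the $\Gamma_x$-sequence $(g_n^{-1})$ then forces $f_x(q')\in V^\perp$, as desired.

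The main obstacle is keeping the inverses straight: the $\Pi$-convergence coming from Remark~\ref{rmk:gen-orb} is of the form $g_n x\to x$, which is the condition for $(g_n^{-1})$ (not $(g_n)$) to be a $\Gamma_x$-sequence, and the weak-subrepresentation condition is phrased in terms of $g_j^{-1}\lambda$, which is what makes the two inversions cancel. Once that alignment is noticed, the rest of the argument is formal: $W$ is Zariski closed, $E$ is Zariski dense by Lemma~\ref{lem:weaksub-0}, and $f$ is both $\GL$-equivariant and $\Pi$-continuous.
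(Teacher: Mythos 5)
Your proof is correct and follows essentially the same route as the paper's: apply Lemma~\ref{lem:weaksub-4} to get $f$ with $f(q,x)=(p,x)$, use $\GL$-genericity of $(q,x)$ and $(q',x)$ together with Proposition~\ref{prop:Pi-Zariski} to produce a $\Gamma_x$-sequence carrying $p$ to $f_x(q')$, invoke the weak-subrepresentation condition, and finish by Zariski density of $E$ (Lemma~\ref{lem:weaksub-0}) against the Zariski-closed set $V^{\perp}$. Your bookkeeping of the inverses (that $g_nx\to x$ makes $(g_n^{-1})$, not $(g_n)$, the $\Gamma_x$-sequence, which is exactly what the condition ``$g_j^{-1}\lambda$ converges'' needs) is in fact slightly more careful than the paper's own wording.
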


\begin{proof}
Note that $V^{\perp}$ is a subspace of $(k^{\oplus \umu})^*=\bA^{\umu}$. Applying Lemma~\ref{lem:weaksub-4}, there exists a pure tuple $\unu$, a $k$-point $q$ of $\bA^{\unu}$ such that $(q,x)$ is $\GL$-generic in $\bA^{\unu} \times X$, and a map of $\GL$-varieties $f \colon \bA^{\unu} \times X \to \bA^{\umu} \times X$ over $X$ such that $f(q,x)=(p,x)$. Thus $p \in \im(f_x)$.

Now, let $q'$ be a $k$-point of $\bA^{\unu}$ such that $(q',x)$ is $\GL$-generic. We claim that $f_x(q') \in V^{\perp}$. Since $(q,x)$ is $\GL$-generic there is a sequence $\{g_i\}_{i \ge 1}$ in $\GL$ such that $g_i (q,x)$ converges to $(q', x)$ in the $\Pi$-topology (Proposition~\ref{prop:Pi-Zariski}). We thus see that $g_i x$ converges to $x$ in the $\Pi$-topology, and so $\{g_i\}$ is a $\Gamma_x$-sequence. Applying $f$, we see that $g_i p$ converges to $f(q')$. Since $V$ is a weak subrepresentation, this implies that $f(q') \in V^{\perp}$, as claimed.

Now, let $E$ be the set of $k$-points $q' \in \bA^{\unu}$ such that $(q',x)$ is $\GL$-generic. By the previous paragraph, we see that $f_x(E) \subset V^{\perp}$. Since $E$ is Zariski dense in $\bA^{\unu}$ by Lemma~\ref{lem:weaksub-0} and $V^{\perp}$ is a Zariski closed subset of $\bA^{\umu}$, it follows that $\im(f_x) \subset V^{\perp}$, as required.
\end{proof}

\begin{lemma} \label{lem:weaksub-2}
Let $\umu$ and $\unu$ be tuples, and let $f \colon \bA^{\unu} \times X \to \bA^{\umu} \times X$ be a map of $\GL$-varieties over $X$. Then there exists a closed $\GL$-subvariety $Y$ of $\bA^{\umu} \times X$ such that the following two conditions hold:
\begin{enumerate}
\item $Y$ is defined by fiberwise linear equations, that is, $Y=\Spec(\Sym(M))$ for some $R$-module quotient $M$ of $R^{\oplus \umu}$
\item the $k$-subspace $Y_x$ of $\bA^{\umu}$ is exactly the $\Pi$-closure of the $k$-span of $\im(f_x)$.
\end{enumerate}
\end{lemma}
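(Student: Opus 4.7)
Plan: The idea is to take $Y$ to be the ``fiberwise linearization over $X$'' of the image of $f$. Concretely, let $\phi \colon R \otimes \Sym(k^{\oplus \umu}) \to R \otimes \Sym(k^{\oplus \unu})$ be the $R$-algebra map corresponding to $f$, and set $I_1 = \ker(\phi) \cap R^{\oplus \umu}$, where we identify $R^{\oplus \umu}$ with the degree-one piece $R \otimes k^{\oplus \umu}$ of $R \otimes \Sym(k^{\oplus \umu})$. Since $\phi$ is $R$-linear and $\GL$-equivariant, $I_1$ is a $\GL$-stable $R$-submodule of $R^{\oplus \umu}$. Put $M = R^{\oplus \umu}/I_1$ and $Y = \Spec(\Sym_R(M))$. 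By construction $Y$ is a closed $\GL$-subvariety of $\bA^{\umu} \times X$ cut out by the fiberwise linear equations in $I_1$, so condition (a) holds.

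For (b), let $W = \{\ell \in k^{\oplus \umu} : \ell|_{\im f_x} = 0\}$, the annihilator of $\im f_x$. By the bijection between subspaces of $k^{\oplus \umu}$ and $\Pi$-closed subspaces of $\bA^{\umu}(k) = (k^{\oplus \umu})^*$ recalled before Proposition~\ref{prop:weak-sub}, the $\Pi$-closure of $\text{span}_k(\im f_x)$ equals $W^{\perp}$. On the other hand $Y_x = \Spec(\Sym_k(M/\fm M))$, regarded as a linear subspace of $\bA^{\umu}$, is $\bar I_1^{\perp}$, where $\bar I_1 \subset k^{\oplus \umu}$ is the image of $I_1$ under the reduction $R^{\oplus \umu} \to R^{\oplus \umu}/\fm R^{\oplus \umu} = k^{\oplus \umu}$. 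So it suffices to prove $\bar I_1 = W$.

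The inclusion $\bar I_1 \subset W$ is immediate: every $\ell \in I_1$ is a fiberwise-linear function on $\bA^{\umu} \times X$ vanishing on $\im f$, so its reduction mod $\fm$ is a linear form on $\bA^{\umu}$ vanishing on $\im f_x$. For the reverse inclusion, let $N = \phi(R^{\oplus \umu})$, so that we have a short exact sequence $0 \to I_1 \to R^{\oplus \umu} \to N \to 0$. Because $\phi$ is $R$-linear and $\GL$-equivariant, $N$ is a $\GL$-stable $R$-submodule of $R \otimes \Sym(k^{\oplus \unu})$, and the quotient $Q = (R \otimes \Sym(k^{\oplus \unu}))/N$ is a polynomial representation of $\GL$, hence an $R$-module in our sense. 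Since $x$ is $\GL$-generic, $\fm$ is a $\GL$-generic prime, and Corollary~\ref{cor:flat} shows that $Q$ is flat at $\fm$; thus $\mathrm{Tor}_1^R(Q, k) = 0$, so $N/\fm N$ injects into $(R \otimes \Sym(k^{\oplus \unu}))/\fm = k \otimes \Sym(k^{\oplus \unu})$. Tensoring the displayed short exact sequence with $k$ gives the right-exact sequence $I_1 \otimes_R k \to k^{\oplus \umu} \to N/\fm N \to 0$, so $\bar I_1 = \ker(k^{\oplus \umu} \to N/\fm N)$; composing with the injection into $k \otimes \Sym(k^{\oplus \unu})$ identifies this kernel with the kernel of $\bar\phi$ restricted to $k^{\oplus \umu}$, which is exactly $W$.

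The main technical point is the appeal to Corollary~\ref{cor:flat}, which requires verifying that $Q = (R \otimes \Sym(k^{\oplus \unu}))/N$ really is an $R$-module of the type considered in this paper (i.e., a polynomial representation of $\GL$); this in turn rests on the fact that subrepresentations of polynomial representations are polynomial, so that $N$ and hence $Q$ carry the required structure. Everything else is a diagram chase.
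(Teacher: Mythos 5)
Your proof is correct, and it reaches the conclusion by a route that differs from the paper's in how the nonlinearity of $f$ is handled, though both arguments turn on the same key input, Corollary~\ref{cor:flat} (flatness of a cokernel at the $\GL$-generic point, which makes ``fiber of the image'' equal to ``image of the fiber''). The paper factors $f$ as a fiberwise-linear map composed with the Veronese-type map $\bA^{\unu} \times X \to \bA^{\Sym^{\le d}(k^{\oplus \unu})} \times X$, settles the linear case first, and then has to argue separately that the $k$-span of the image of the Veronese map is $\Pi$-dense and that the linear part is $\Pi$-continuous. You instead work entirely on the function side: you take $I_1$ to be the degree-one piece of $\ker(\phi)$, apply Corollary~\ref{cor:flat} to $(R \otimes \Sym(k^{\oplus\unu}))/N$ with $N = \phi(R^{\oplus\umu})$ (which, being finitely $\GL$-generated, is essentially the same module as the paper's $\im(g_1)$) to conclude that $\bar I_1$ is exactly the space of linear forms annihilated by $\bar\phi$, i.e.\ vanishing on $\im(f_x)$, and then the duality $W \mapsto W^{\perp}$ recalled before Proposition~\ref{prop:weak-sub} delivers the $\Pi$-closure statement for free. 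This buys you a cleaner argument --- no explicit factorization, no density claim for the Veronese image --- at the cost of making the geometric source of the $\Pi$-closure less visible; the paper's version shows concretely that the closure is needed only because the span of a Veronese image is dense rather than everything. Two points you elide but which are fine: the identification of $\ker(\bar\phi\vert_{k^{\oplus\umu}})$ with the annihilator of $\im(f_x)$ uses that $k$ is infinite (a polynomial vanishing on all $k$-points of $\bA^{\unu}$ is zero), and $Q$ lies in $\Mod_R$ because $\Rep^{\pol}(\GL)$ is closed under subquotients, so Corollary~\ref{cor:flat} indeed applies.
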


\begin{proof}
First suppose that $f$ is fiberwise linear. This means that $f$ is induced from a map of $R$-modules $g \colon R^{\oplus \umu} \to R^{\oplus \unu}$. Let $M$ be the image of $g$, and let $Y=\Spec(\Sym(M))$. Let $g_x \colon k^{\oplus \umu} \to k^{\oplus \unu}$ be the map obtained by reducing $g$ modulo the maximal ideal $\fm$. Since $\coker(g)$ is flat at $x$ (Corollary~\ref{cor:flat}), it follows that the image of $g_x$ is $M/\fm M$. As $f_x$ is the dual of $g_x$, we see that its image is the dual of $M/\fm M$, which is exactly $Y_x$. This completes the proof in the linear case. (In this case, taking the $\Pi$-closure is not necessary.)

We now treat the general case. The map $f$ corresponds to a map of $R$-algebras $g \colon R \otimes \Sym(k^{\oplus \umu}) \to R \otimes \Sym(k^{\oplus \unu})$. The image of $k^{\oplus \umu}$ under this map is contained in $R \otimes \Sym^{\le d}(k^{\oplus \unu})$ for some $d$, where $\Sym^{\le d}=\bigoplus_{i=0}^d \Sym^i$. The map $g$ then factors as
\begin{displaymath}
\xymatrix@C=3em{
R \otimes \Sym(k^{\oplus \umu}) \ar[r]^-{g_1} &
R \otimes \Sym(\Sym^{\le d}(k^{\oplus \unu})) \ar[r]^-{g_2} &
 R \otimes \Sym(k^{\otimes \unu}), }
\end{displaymath}
where $g_1$ is linear (i.e., induced from a map of $R$-modules). Let $f=f_1 \circ f_2$ be the corresponding factorization of $f$. Let $Y \subset \bA^{\umu} \times X$ be the subvariety provided by the linear case, applied to $f_1$. The map $f_x$ factors as
\begin{displaymath}
\xymatrix@C=3em{
(k^{\oplus \unu})^* \ar[r]^-{f_{2,x}} &
(\Sym^{\le d}(k^{\oplus \unu}))^* \ar[r]^-{f_{1,x}} &
(k^{\oplus \unu})^* }
\end{displaymath}
We know that the image of $f_{1,x}$ is exactly $Y_x$. The map $f_{2,x}$ is the canonical map, taking $a$ to $(1, a, \ldots, a^d)$. One easily sees that the $k$-span of the image of $f_{2,x}$ is $\Pi$-dense. Since $f_{1,x}$ is $\Pi$-continuous, the result follows.
\end{proof}

\begin{proof}[Proof of Proposition~\ref{prop:weaksub}]
Let $V$ be a weak $\Gamma_x$-subrepresentation of $k^{\oplus \umu}$, and fix an element $v \in V^{\perp}$. By Lemma~\ref{lem:weaksub-1}, we can find a tuple $\unu$ and a map of $\GL$-varieties $f \colon \bA^{\unu} \times X \to \bA^{\umu} \times X$ over $X$ such that $\im(f_x)$ contains $v$ and is contained in $V$. By Lemma~\ref{lem:weaksub-2}, there is an $R$-module $M=R^{\oplus \umu}/N$ such that $(M/\fm M)^*$ is the $\Pi$-closure of the span of $\im(f_x)$. Since $V^{\perp}$ is $\Pi$-closed, it follows that $(M/\fm M)^*$ is contained in $V^{\perp}$; of course, it also contains $v$. We thus see that $N/\fm N$ contains $V$ and is contained in $\ker(v)$.

Now, let $\{v_i\}_{i \in U}$ be a basis for $V^{\perp}$, and for each $i$ pick a submodule $N_i$ of $R^{\oplus \umu}$ as in the previous paragraph, so that $N_i/\fm N_i$ contains $V$ and is contained in $\ker(v_i)$. For a finite subset $I$ of $U$, let $N_I=\bigcap_{i \in I} N_i$. The $N_I$ form a descending family of submodules of $R^{\oplus \umu}$. Since $R^{\oplus \umu}$ is an artinian object in the generic category (Theorem~\ref{thm:rat-struc}(a)), it follows that there is some finite subset $J$ such that $N_J/N_I$ is torsion for all $J \subset I$. We thus have $N_J/\fm N_J=N_I/\fm N_I$ for all such $I$. It follows that $N_J/\fm N_J$ is contains $V$ and is contained in $\bigcap_{i \in U} \ker(v_i)=V$. This completes the proof.
\end{proof}

\subsection{The main theorem}

Maintain the setup from \S \ref{ss:rep-to-mod}. The following is our main theorem on representations of $\Gamma_x$:

\begin{theorem} \label{thm:genstab}
The functor $\Psi_x \colon \Mod_K \to \Rep^{\pol}(\Gamma_x)$ is an equivalence.
\end{theorem}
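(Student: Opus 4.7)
The plan is to complete the proof that $\Psi_x$ is an equivalence by establishing fullness and essential surjectivity, since Proposition~\ref{prop:Psi} already gives that $\Psi_x$ is exact, cocontinuous, faithful, $k$-linear, and symmetric monoidal. The central tool will be the following strengthening of Proposition~\ref{prop:weaksub}, which I will prove first: for any $K$-module $M$, the assignment $M' \mapsto \Psi_x M'$ is an order-preserving bijection between $K$-submodules of $M$ and $\Gamma_x$-subrepresentations of $\Psi_x M$. To establish this extension, I would first reduce to the case of finitely generated $M$ by writing $M$ as a directed union of its finitely generated submodules and using cocontinuity of $\Psi_x$. For finitely generated $M$, apply the embedding theorem (Corollary~\ref{cor:embed}) to fix an inclusion $M \hookrightarrow K^{\oplus \umu}$. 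A $\Gamma_x$-subrepresentation $V \subset \Psi_x M \subset k^{\oplus \umu}$ lifts via Proposition~\ref{prop:weaksub} to an $R$-submodule $P \subset R^{\oplus \umu}$ with $P/\fm P = V$; setting $\tilde P = K \otimes_R P$, one checks $\tilde P \subset M$ by observing that the image of $(\tilde P + M)/M \subset K^{\oplus \umu}/M$ under $\Psi_x$ is $(V + \Psi_x M)/\Psi_x M = 0$, so it vanishes by faithfulness of $\Psi_x$.

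For fullness, given a $\Gamma_x$-linear map $f \colon \Psi_x M \to \Psi_x N$, I would consider its graph as a $\Gamma_x$-subrepresentation of $\Psi_x(M \oplus N)$. By the extended proposition above, this graph lifts to a $K$-submodule $\Gamma \subset M \oplus N$ with $\Psi_x \Gamma$ equal to the graph of $f$. Applying $\Psi_x$ to the first projection $\Gamma \to M$ yields the isomorphism $\mathrm{gr}(f) \to \Psi_x M$, so the projection itself is an isomorphism by exactness and faithfulness of $\Psi_x$. Consequently $\Gamma$ is the graph of a $K$-linear map $g \colon M \to N$ with $\Psi_x g = f$ by construction.

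For essential surjectivity, write a given polynomial $\Gamma_x$-representation $V$ as a subquotient $W/U$ of some polynomial $\GL$-representation. Any polynomial $\GL$-representation decomposes as $\bigoplus_\lambda (k^{\oplus \lambda})^{\oplus m_\lambda}$, and since $k^{\oplus \lambda} = \Psi_x(K^{\oplus \lambda})$, cocontinuity of $\Psi_x$ gives $W = \Psi_x \tilde W$ for the $K$-module $\tilde W = \bigoplus_\lambda (K^{\oplus \lambda})^{\oplus m_\lambda}$. The extended proposition then lifts $U \subset W = \Psi_x \tilde W$ to a $K$-submodule $\tilde U \subset \tilde W$, giving $V = \Psi_x(\tilde W / \tilde U)$.

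The main obstacle is really Proposition~\ref{prop:weaksub}, whose proof absorbs all the geometric content (the $\Pi$-topology, typical morphisms, and $\GL$-generic points). Granted that proposition, the remaining work is comparatively formal: the key non-trivial step is the extension from free modules to arbitrary $K$-modules, which leans on the embedding theorem together with the faithful-exactness of $\Psi_x$ to guarantee the lifted submodule sits inside the prescribed $M$. This faithful-exactness mechanism is the linchpin that both closes the fullness argument (via the graph trick) and drives the essential surjectivity argument.
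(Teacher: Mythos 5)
Your proposal is correct and follows essentially the same route as the paper: the heart in both cases is upgrading Proposition~\ref{prop:weaksub} to an order-isomorphism between $K$-submodules of an arbitrary $M$ and $\Gamma_x$-subrepresentations of $\Psi_x(M)$, after which fullness follows from the graph trick and essential surjectivity from lifting subobjects (note that a subquotient $E_1/E_2$ of a polynomial representation $W$ need not be a quotient of $W$ itself, so you must apply your extended proposition twice, first to lift $E_1\subset W$ and then $E_2\subset E_1$). The only difference is the device for passing beyond free modules: you embed a finitely generated $M$ into $K^{\oplus \umu}$ via Corollary~\ref{cor:embed} and then take directed unions, whereas the paper embeds an arbitrary $M$ into its injective hull, which by Theorem~\ref{thm:rat-struc} is again of the form $K\otimes V_0$; both reductions rest on the same faithful-exactness mechanism you single out.
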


From the theorem, we see that all properties of $\Mod_K$ transer to $\Rep^{\pol}(\Gamma_x)$. This yields the statements of \S \ref{ss:repgs}. (We note that in the setting of \S \ref{ss:repgs}, there is no distinction between algebraic and polynomial representation.) Before proving the theorem, we require a lemma.

\begin{lemma} \label{lem:genstab}
Let $V$ be a $K$-module. Then the map
\begin{displaymath}
\alpha \colon \{ \text{$K$-submodules of $V$} \} \to \{ \text{$\Gamma_x$-subrepresentations of $\Psi_x(V)$} \}
\end{displaymath}
induced by $\Psi_x$ is an isomorphism of partially ordered sets.
\end{lemma}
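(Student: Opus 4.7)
The plan is to show $\alpha$ is an order-reflecting bijection; order-preservation is trivial, so the work splits into injectivity (with order-reflection) and surjectivity. The surjectivity reduces, in two steps, to Proposition~\ref{prop:weaksub}.

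For injectivity and order-reflection, I would use that $\Psi_x$ is exact and faithful (Proposition~\ref{prop:Psi}). Exactness implies that $\alpha$ commutes with finite intersections, so given $W_1,W_2 \subset V$ with $\alpha(W_1) \subset \alpha(W_2)$, we have $\alpha(W_1 \cap W_2) = \alpha(W_1) \cap \alpha(W_2) = \alpha(W_1)$. Applying $\Psi_x$ to the short exact sequence $0 \to W_1 \cap W_2 \to W_1 \to W_1/(W_1 \cap W_2) \to 0$ yields $\Psi_x(W_1/(W_1 \cap W_2)) = 0$, and faithfulness of $\Psi_x$ then forces $W_1 = W_1 \cap W_2 \subset W_2$. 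This handles both injectivity (setting $\alpha(W_1) = \alpha(W_2)$) and order-reflection.

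For surjectivity, I would first reduce to the case that $V$ is a (possibly infinite) direct sum $P = \bigoplus_{i \in I} K^{\oplus \lambda_i}$. Given general $V$ and a $\Gamma_x$-subrepresentation $U \subset \Psi_x(V)$, choose a surjection $\pi \colon P \to V$. Since $\Psi_x$ is right-exact, $\Psi_x(\pi)$ is surjective, and the preimage $\tilde{U}$ of $U$ in $\Psi_x(P)$ is a $\Gamma_x$-subrepresentation. If we can find $\tilde{W} \subset P$ with $\Psi_x(\tilde{W}) = \tilde{U}$, then $W := \pi(\tilde{W})$ satisfies $\Psi_x(W) = U$ by right-exactness. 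So it suffices to handle $V = P$.

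For $V = P$, I would realize $\tilde{U}$ one element at a time. Given $u \in \tilde{U} \subset \bigoplus_{i \in I} k^{\oplus \lambda_i}$, the support $F \subset I$ of $u$ is finite. Because $\GL$ (and hence $\Gamma_x$) acts diagonally on the direct sum, the $\Gamma_x$-subrepresentation $\langle u \rangle \subset \tilde{U}$ generated by $u$ is contained in $V_F := \bigoplus_{i \in F} k^{\oplus \lambda_i} = k^{\oplus \ulambda_F}$, where $\ulambda_F$ is the concatenated tuple. Thus $\langle u \rangle$ is a $\Gamma_x$-subrepresentation of $k^{\oplus \ulambda_F}$, so by Proposition~\ref{prop:weaksub} there is an $R$-submodule $M_u \subset R^{\oplus \ulambda_F}$ with $M_u/\fm M_u = \langle u \rangle$, and setting $W_u := K \otimes_R M_u \subset K^{\oplus \ulambda_F} \subset P$ gives $\Psi_x(W_u) = \langle u \rangle$. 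Taking $\tilde{W} := \sum_{u \in \tilde{U}} W_u \subset P$, cocontinuity of $\Psi_x$ yields $\Psi_x(\tilde{W}) = \sum_u \langle u \rangle = \tilde{U}$.

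The main obstacle is the passage from finite tuples (where Proposition~\ref{prop:weaksub} applies) to arbitrary direct sums in the second reduction. The crucial point that unlocks this is the diagonal action of $\GL$: the $\Gamma_x$-subrepresentation generated by any single element of a direct sum has finite support, placing it in the scope of the finite case. Everything else is formal consequence of exactness, faithfulness, and cocontinuity of $\Psi_x$.
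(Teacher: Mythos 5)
Your argument is correct, and the surjectivity step takes a genuinely different route from the paper's. The paper goes ``upward'': it embeds $V$ into an injective object, which by Theorem~\ref{thm:rat-struc} has the form $K \otimes V_0$, filters $V_0$ by finite length subrepresentations, applies Proposition~\ref{prop:weaksub} to each piece, and passes to the union, finally cutting back down to $V$ using strict order-preservation. You instead go ``downward'': cover $V$ by $P=\bigoplus_{i} K^{\oplus \lambda_i}$, pull the subrepresentation back along the surjection, and then exploit the fact that each element of $\Psi_x(P)$ has finite support, so that the finite-tuple case of Proposition~\ref{prop:weaksub} applies directly. Your route is more elementary within this lemma, since it does not invoke the classification of injectives or local noetherianity (though those are still used inside the proof of Proposition~\ref{prop:weaksub} itself); the paper's route has the mild advantage of never needing to discuss subrepresentations generated by single elements. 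On that last point, one small repair is advisable: the paper warns that arbitrary intersections of $\Gamma_x$-subrepresentations need not be computed set-theoretically, so ``the subrepresentation generated by $u$'' requires a word of justification. It is cleaner to replace $\langle u\rangle$ by the finite intersection $\tilde{U} \cap \bigoplus_{i \in F} k^{\oplus \lambda_i}$, which is a kernel and hence is computed on underlying vector spaces; this is a $\Gamma_x$-subrepresentation of $k^{\oplus \ulambda_F}$ containing $u$ and contained in $\tilde{U}$, which is all your argument needs. The injectivity and order-reflection argument via exactness and faithfulness of $\Psi_x$ is essentially the same as the paper's, just phrased categorically rather than through vanishing of fibers of $W^{\pol}/U^{\pol}$ at $\fm$.
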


\begin{proof}
We first show that $\alpha$ is injective. First suppose that $U \subset W$ are $K$-submodules of $V$ and $\alpha(U)=\alpha(W)$. Then the containment of $R$-modules $U^{\pol} \subset W^{\pol}$ induces an isomorphism modulo $\fm$. It follows that $W^{\pol}/U^{\pol}$ has vanishing fiber at $\fm$, and thus vanishes (see the proof of Proposition~\ref{prop:fiber}). Hence $U^{\pol}=W^{\pol}$, and so $U=W$. Now suppose that $U$ and $W$ are arbitary and $\alpha(U)=\alpha(W)$. Then $\alpha(U+W)=\alpha(U)+\alpha(W)=\alpha(U)$. Since $U \subset U+W$, the previous case shows that $U=U+W$, and so $W \subset U$. By symmetry, we have $U \subset W$. Thus $\alpha$ is injective.

We now see that $\alpha$ is strictly order-preserving. Indeed, let $U$ and $W$ be $K$-submodules of $V$. If $U \subset W$ then it is clear that $\alpha(U) \subset \alpha(W)$. Conversely, if $\alpha(U) \subset \alpha(W)$ then $\alpha(U+W)=\alpha(U)+\alpha(W)=\alpha(W)$, and so $U+W=W$ since $\alpha$ is injective, whence $U \subset W$.

To complete the proof, we must show that $\alpha$ is surjective. If $V=K \otimes V_0$ for a finite length polynomial representation $V_0$, then this follows from Proposition~\ref{prop:weaksub}. Suppose now that $V=K \otimes V_0$ for an arbitrary polynomial representation $V_0$. Write $V_0=\bigcup_{j \in J} V_{0,j}$ where $J$ is a directed set and $V_{0,j}$ has finite length, and put $V_j=K \otimes V_{0,j}$. Since $\Psi_x$ is cocontinuous, we have $\Psi_x(V)=\bigcup_{j \in J} \Psi_x(V_j)$. Let $E$ be a $\Gamma_x$-subrepresentation of $\Psi_x(V)$, and put $E_j=E \cap \Psi_x(V_j)$. Since $\Rep^{\pol}(\Gamma_x)$ is a Grothendieck category, we have $E=\bigcup_{j \in J} E_j$. By the finite length case, we have $E_j=\alpha(W_j)$ for a unique $K$-submodule $W_j$ of $V$. Since $\alpha$ is strictly order-preserving, it follows that $W_j \subset W_k$ if $j \le k$. Thus the $W_j$'s form a directed system. Let $W=\bigcup_{j \in J} W_j$. Again, by the cocontinuity of $\Psi_x$, we have $\alpha(W)=\bigcup_{j \in J} \alpha(W_j) = E$.

Finally, suppose that $V$ is an arbitrary $K$-module. Since $\Mod_K$ is a Grothendieck abelian category, $V$ embeds into an injective object $I$. Since $\Mod_K$ is locally noetherian (Theorem~\ref{thm:rat-struc}(a)), $I$ is a direct sum of indecomposable injectives. Thus $I$ has the form $K \otimes V_0$ for a polynomial representation $V_0$ (Theorem~\ref{thm:rat-struc}(b)). Now, suppose that $E$ is a $\Gamma_x$-subrepresentation of $\Psi_x(V)$. Since $\Psi_x(V) \subset \Psi_x(I)$, the previous paragraph shows that $E=\alpha(W)$ for some $K$-submodule $W$ of $I$. Since $\alpha$ is strictly order preserving, it follows that $W \subset V$, which completes the proof.
\end{proof}

\begin{proof}[Proof of Theorem~\ref{thm:genstab}]
We first show that $\Psi_x$ is essentially surjective. Thus let $E$ be a given polynomial representation of $\Gamma_x$. By definition, there is some polynomial representation $V$ of $\GL$ and $\Gamma_x$-subrepresentations $E_2 \subset E_1 \subset V$ such that $E \cong E_1/E_2$. By Lemma~\ref{lem:genstab}, there exist $K$-submodules $W_2 \subset W_1 \subset K \otimes V$ such that $E_i=\Psi_x(W_i)$. Thus $E \cong \Psi_x(W_2/W_1)$, and so $\Psi_x$ is essentially surjective.

We now prove that $\Psi_x$ is full. Let $V$ and $W$ be $K$-modules and let $f \colon \Psi_x(V) \to \Psi_x(W)$ be a map of $\Gamma_x$-subrepresentations. Let $E \subset \Psi_x(V) \oplus \Psi_x(W)$ be the graph of $f$. By Lemma~\ref{lem:genstab}, we have $E=\Psi_x(U)$ for a unique $K$-submodule $U \subset V \oplus W$. The projection map $U \to V$ becomes an isomorphism after applying $\Psi_x$, and is therefore an isomorphism since $\Psi_x$ is exact and faithful. Thus $U$ is the graph of a morphism $g \colon V \to W$ of $K$-modules, and clearly $f=\Psi_x(g)$.

We have already seen that $\Psi_x$ is faithful, and so it is an equivalence.
\end{proof}

\end{document}